\documentclass[12pt, reqno]{amsart}

\usepackage{amsmath, amssymb, amscd, amsthm, comment, amsxtra}
\usepackage{graphicx}
\usepackage{subfigure}
\usepackage{enumerate}
\usepackage[pdftex, linktocpage=true]{hyperref}
\usepackage{euscript}
\usepackage[format=plain,labelfont=bf,up,width=.99\textwidth]{caption}
\usepackage[toc,page]{appendix}
\usepackage{mathabx}
\usepackage{xcolor}
\usepackage{accents}

\setlength{\topmargin}{0.1in}
\setlength{\textheight}{8.27in}
\setlength{\oddsidemargin}{0.15in}
\setlength{\evensidemargin}{0.15in}
\setlength{\textwidth}{6in}

\theoremstyle{plain}
\newtheorem{thm}{Theorem}[section]
\newtheorem*{thma}{Theorem A}
\newtheorem*{thmb}{Theorem B}
\newtheorem{lem}[thm]{Lemma}
\newtheorem{prop}[thm]{Proposition}
\newtheorem{cor}[thm]{Corollary}

\theoremstyle{definition}

\newtheorem{rem}[thm]{Remark}

\newtheorem{notn}[thm]{Notation}

\newcommand{\diam}{\operatorname{diam}}

\newcommand{\dist}{\operatorname{dist}}

\renewcommand{\Im}{\operatorname{Im}}
\renewcommand{\mod}{\operatorname{mod}}

\newcommand{\Crit}{\operatorname{Crit}}

\newcommand{\rot}{\operatorname{rot}}
\newcommand{\crit}{\operatorname{crit}}
\newcommand{\ixp}{\operatorname{ixp}}
\newcommand{\gen}{\operatorname{gen}}

\newcommand{\In}{\operatorname{in}}
\newcommand{\ot}{\operatorname{out}}
\newcommand{\tp}{\operatorname{top}}
\newcommand{\bt}{\operatorname{bot}}

\newcommand{\bg}{\operatorname{big}}
\newcommand{\sm}{\operatorname{sm}}
\newcommand{\ext}{\operatorname{ext}}
\newcommand{\Int}{\operatorname{int}}

\newcommand{\Ang}{\operatorname{Ang}}
\newcommand{\Imp}{\operatorname{Imp}}

\numberwithin{equation}{section}
\newcommand{\thmref}[1]{Theorem~\ref{#1}}
\newcommand{\propref}[1]{Proposition~\ref{#1}}
\newcommand{\lemref}[1]{Lemma~\ref{#1}}
\newcommand{\corref}[1]{Corollary~\ref{#1}}
\newcommand{\figref}[1]{Figure~\ref{#1}}
\newcommand{\secref}[1]{Section~\ref{#1}}

\newcommand{\notref}[1]{Notation~\ref{#1}}

\newcommand{\bbN}{\mathbb N}
\newcommand{\bbZ}{\mathbb Z}
\newcommand{\bbQ}{\mathbb Q}
\newcommand{\bbR}{\mathbb R}
\newcommand{\bbC}{\mathbb C}
\newcommand{\bbD}{\mathbb D}

\newcommand{\pbbD}{{\partial \mathbb D}}

\newcommand{\hbbC}{\hat{\mathbb C}}

\newcommand{\fJ}{\mathfrak J}
\newcommand{\fU}{\mathfrak U}
\newcommand{\fr}{\mathfrak r}
\newcommand{\fd}{\mathfrak d}

\newcommand{\cE}{\mathcal E}
\newcommand{\cF}{\mathcal F}

\newcommand{\cH}{\mathcal H}
\newcommand{\cI}{\mathcal I}
\newcommand{\cJ}{\mathcal J}

\newcommand{\cL}{\mathcal L}

\newcommand{\cQ}{\mathcal Q}
\newcommand{\cR}{\mathcal R}

\newcommand{\cV}{\mathcal V}
\newcommand{\cW}{\mathcal W}

\newcommand{\cZ}{\mathcal Z}

\newcommand{\bfA}{\mathbf A}
\newcommand{\bfB}{\mathbf B}

\newcommand{\bfD}{\mathbf D}
\newcommand{\bfE}{\mathbf E}

\newcommand{\bfH}{\mathbf H}

\newcommand{\bfP}{\mathbf P}

\newcommand{\bfR}{\mathbf R}

\newcommand{\bfT}{\mathbf T}

\newcommand{\bfV}{\mathbf V}

\newcommand{\bfe}{\mathbf e}

\newcommand{\bfi}{\mathbf i}

\newcommand{\bfr}{\mathbf r}

\newcommand{\bn}{\bar n}
\newcommand{\bN}{\bar N}
\newcommand{\bm}{\bar m}

\newcommand{\bk}{\bar k}

\newcommand{\tif}{\tilde f}

\newcommand{\hA}{\hat A}

\newcommand{\hU}{\hat U}

\newcommand{\hC}{\hat C}
\newcommand{\tiF}{\tilde F}
\newcommand{\tiH}{\tilde H}
\newcommand{\tiI}{\tilde I}

\newcommand{\tiS}{\tilde S}
\newcommand{\ticE}{\tilde \cE}

\newcommand{\hg}{\hat g}
\newcommand{\chR}{\check \cR}

\newcommand{\chB}{\check B}

\newcommand{\hI}{\hat I}
\newcommand{\hn}{\hat n}

\newcommand{\hE}{\hat E}
\newcommand{\hJ}{\hat J}
\newcommand{\hV}{\hat V}

\newcommand{\hDelta}{\hat \Delta}
\newcommand{\hgamma}{\hat \gamma}
\newcommand{\tiJ}{\tilde J}
\newcommand{\tiP}{\tilde P}
\newcommand{\tiX}{\tilde X}
\newcommand{\tiE}{\tilde E}

\newcommand{\tiZ}{\tilde Z}
\newcommand{\ticZ}{\tilde \cZ}

\newcommand{\hGamma}{\hat \Gamma}
\newcommand{\chGamma}{\check \Gamma}
\newcommand{\chDelta}{\check \Delta}
\newcommand{\tiGamma}{\tilde \Gamma}

\newcommand{\hlambda}{\hat \lambda}

\newcommand{\matsp}[1]{\hspace{5mm} \text{#1} \hspace{5mm}}


\title[Local Connectivity at Polynomial Siegel Boundaries]{Local Connectivity of Polynomial Julia sets at Bounded Type Siegel Boundaries}
\author{Jonguk Yang}

\begin{document}

\begin{abstract}
Consider a polynomial $f$ of degree $d \geq 2$ that has a Siegel disk $\Delta_f$ with a rotation number of bounded type. We prove that there does not exist a hedgehog containing $\Delta_f$. Moreover, if the Julia set $J_f$ of $f$ is connected, then it is locally connected at the Siegel boundary $\partial \Delta_f$.
\end{abstract}

\maketitle


\section{Introduction}\label{sec:intro}

The object of this paper is to study polynomial dynamical systems that feature irrationally indifferent periodic points. Towards this end, let $f : \hat\bbC \to \hat\bbC$ be a polynomial of degree $d \geq 2$, and suppose that $0$ is a $p$-periodic point with an irrational rotation number $\rho \in (\bbR \setminus \bbQ)/\bbZ$. This means that the multiplier of $0$ is given by
$$
(f^p)'(0) = e^{2\pi i \rho}.
$$
By replacing $f$ by $f^p$ if necessary, we may assume without loss of generality that $0$ is a fixed point (i.e. $p=1$).

Since $f$ is a polynomial, $\infty$ is a superattracting fixed point. The {\it attracting basin of infinity} is the set of all points which converge to $\infty$ under iteration of $f$:
$$
A^\infty_f := \{z \in \hbbC \, | \, f^n(z) \to \infty \text{ as } n \to \infty\}.
$$
The {\it filled Julia set} is the set of all points whose orbits are bounded:
$$
K_f := \hbbC \setminus A^\infty_f.
$$
The {\it Julia set} is the common boundary of these two sets:
$$
J_f := \partial A^\infty_f = \partial K_f.
$$
Lastly, the {\it Fatou set} is the complement of the Julia set:
$$
F_f := \hbbC \setminus J_f = \mathring{K_f} \cup A^\infty_f.
$$
Alternately, the Fatou set $F_f$ can be defined as the domain of normality of the iterates of $f$, and the Julia set $J_f$ as the complement of $F_f$. The latter definition is more general as it also applies to the dynamics of non-polynomial rational maps.

Let $K_f^0$ be the connected component of $K_f$ containing $0$. Then we have $f(K_f^0) = K_f^0$. Moreover, it can be shown that there exists a polynomial $g$ with a connected filled Julia set $K_g$ such that $f|_{K_f^0}$ and $g|_{K_g}$ are quasiconformally conjugate (see \cite{Ki} Lemma 4.2). Hence, we may assume without loss of generality that $K_f$ is connected. This is equivalent to assuming that $\infty$ is the only critical point of $f$ contained in $A^\infty_f$.

Since $K_f$ is connected, $A^\infty_f$ is a simply connected domain. By the Riemann Mapping Theorem, there exists a unique conformal map $\phi^\infty_f : A^\infty_f \to \hbbC \setminus \overline{\bbD}$ such that $\phi^\infty_f(\infty) = \infty$ and $(\phi^\infty_f)'(\infty) =1$. Moreover, it is known (see \cite{Mi2} Theorem 9.5) that $\phi^\infty_f$ conjugates $f$ to the power map $z \mapsto z^{d}$:
$$
\phi^\infty_f \circ f \circ (\phi^\infty_f)^{-1}(z) = z^{d}
\matsp{for}
z\in \hat\bbC \setminus \overline{\bbD}.
$$
The map $\phi^\infty_f$ is called the {\it B\"ottcher uniformization} of $f$.

A Hausdorff space $X$ is {\it locally connected} at $x \in X$ if $x$ has arbitrarily small connected open neighborhoods in $X$. If this is true at every point in $X$, then $X$ is said to be {\it locally connected}. By Carath\'eodory's Theorem, the inverse B\"ottcher uniformization $(\phi^\infty_f)^{-1}$ extends to a continuous map from $\partial \bbD$ to $J_f$ if and only if $J_f$ is locally connected. In this case, the map
$$
\chi_f := (\phi^\infty_f)^{-1}|_{\partial \bbD}
$$
gives a continuous parameterization of $J_f$ by $\bbR /\bbZ \cong \partial \bbD$. Moreover, $\chi_f$ is a semi-conjugacy between $f$ and the angle $d$-tupling map $t \mapsto d t$:
$$
f \circ \chi_f(t) = \chi_f(d t)
\matsp{for}
t \in \bbR /\bbZ.
$$
Thus, $\chi_f$ can be viewed as providing a topological model of the dynamics of $f$ on $J_f$.

Typically, local connectivity of $J_f$ is proved by showing that the dynamics of $f$ is {\it combinatorially rigid}. Loosely speaking, this means that every point in $J_f$ exhibits a distinct combinatorial behavior under iteration by $f$ with respect to some suitable Markov partition of $J_f$ (called a {\it puzzle partition}).

Our discussion thus far applies generally to all polynomials. We now focus specifically on the dynamics of $f$ near the irrationally indifferent fixed point $0$. We say that $0$ is a {\it Siegel point} if $f$ is linearizable in a neighborhood of $0$. Otherwise, $0$ is called a {\it Cremer point}. It is known that the point $0$ is Siegel if and only if $0 \in F_f$. In this case, let $\Delta_f \subset F_f$ be the Fatou component containing $0$. Then the linearizing conjugacy near $0$ has a maximal extension to a conformal map $\phi^0_f : (\Delta_f, 0) \to (\bbD, 0)$ such that $|(\phi^0_f)'(0)| = 1$ and
$$
\phi^0_f \circ f \circ (\phi^0_f)^{-1}(z) = e^{2\pi i \rho} z
\matsp{for}
z\in \bbD
$$
(see \cite{Mi2} Lemma 11.1). The Fatou component $\Delta_f$ is called a {\it Siegel disk}. Clearly, the Siegel boundary $\partial \Delta_f$ is contained in $J_f$.

There exists a critical point $c \in J_f$ that either accumulates to $0 \in J_f$ if $0$ is Cremer, or to $\partial \Delta_f \subset J_f$ if $0$ is Siegel (see \cite{Mi2} Theorem 11.17). Moreover, it follows from a classical result by Ma\~n\'e \cite{Ma} that $c$ must be recurrent. In such a highly nonlinear situation, it is possible for the geometry of $J_f$ to become wildly distorted. Indeed, one can show from general principles that if $0$ is Cremer, or if $0$ is Siegel and $\partial \Delta_f$ does not contain a critical point, then $J_f$ cannot be locally connected (see \cite{Mi2} Corollary 18.6).

The dynamical behavior of $f$ near $0$ is strongly dependent on the arithmetic nature of the rotation number $\rho \in (\bbR \setminus \bbQ)/\bbZ$. Let $p_n/q_n$ be the continued fraction convergents of $\rho$ (see \secref{sec:a priori}). We say that $\rho$ is {\it Diophantine (of order $k \geq 2$)} if for some $C >0$, we have $q_{n+1} < C q_n^k$. If $k=2$, then $\rho$ is said to be {\it of bounded type}. More generally, $\rho$ is {\it Brjuno} if
$$
\sum_{n=1}^\infty \frac{\log q_{n+1}}{q_n} < \infty.
$$
Lastly, we say that $\rho$ is {\it Herman} if every analytic circle diffeomorphism with rotation number $\rho$ is analytically linearizable (an explicit arithmetical description of this property is given by Yoccoz \cite{Yo1}). Let $\bfB\bfT$, $\bfD\bfi$, $\bfH\bfe$ and $\bfB\bfr$ denote the set of bounded type, Diophantine, Herman and Brjuno numbers respectively. Then we have
$$
\bfB\bfT \subsetneq \bfD\bfi \subsetneq \bfH\bfe \subsetneq \bfB\bfr.
$$
Siegel proved that if $\rho \in \bfD\bfi$, then $0$ is a Siegel point \cite{Si}. This was generalized to $\rho \in \bfB\bfr$ by Brjuno \cite{Br}. Yoccoz showed that the converse to Brjuno's theorem is true if $f$ is a quadratic polynomial \cite{Yo2}. Finally, for $\rho \in \bfH\bfe$, Herman proved that $\partial \Delta_f$ contains a critical point if $f$ is injective on $\overline{\Delta_f}$ \cite{He1}.

Suppose $f$ has a Siegel disk $\Delta_f \ni 0$. Define a {\it Siegel continuum} $\hDelta_f$ as any compact, connected set containing $\overline{\Delta_f}$ that maps bijectively into itself by $f$. This definition can be viewed as identifying a part of the filled Julia set $K_f$ that is relevant to the local dynamics of $f$ near $\Delta_f$. Following Perez-Marco \cite{PM}, we say that $\hDelta_f$ is a {\it Siegel hedgehog} if $\hDelta_f \neq \overline{\Delta_f}$. The known examples suggest that if a Siegel hedgehog exists, then it is likely to be a highly non-locally connected set (see \cite{Chera}).

The main goal of this paper is to prove the following two results.

\begin{thma}[Local connectivity at the Siegel boundary]
Let $f : \hat\bbC \to \hat\bbC$ be a polynomial of degree $d \geq 2$ with a connected Julia set $J_f$. Suppose $f$ has a Siegel disk $\Delta_f$ whose rotation number $\rho \in (\bbR \setminus \bbQ)/\bbZ$ is of bounded type. Then $J_f$ is locally connected at every point in $\partial \Delta_f$.
\end{thma}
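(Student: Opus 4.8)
The plan is to build a combinatorially adapted puzzle partition near $\partial \Delta_f$ using the renormalization theory for bounded-type Siegel dynamics, and then run the standard rigidity machine to get local connectivity. The first step is to set up the near-degenerate (or holomorphic-pair) renormalization of $f$ around the Siegel disk: because $\rho$ is of bounded type, the rotation number's continued fraction is $q_{n+1} = O(q_n)$, and the sequence of renormalizations $R^n f$ around $\Delta_f$ is precompact in an appropriate (say, maximal-dilatation-bounded) space of Siegel-type pairs. This is the engine that should provide, for each scale $n$, a dynamically defined annular neighborhood $A_n$ of $\partial \Delta_f$ of bounded modulus, whose geometry does not degenerate, and whose boundary consists of pieces of equipotentials and external rays landing at preimages of the critical point $c$. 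Here I would lean on the fact (cited in the excerpt, \cite{Mi2} Theorem 11.17) that there is a recurrent critical point $c$ accumulating on $\partial \Delta_f$, and on Herman/Ghys-type results that make $f|_{\partial\Delta_f}$ quasisymmetrically conjugate to the rigid rotation; in particular $\partial \Delta_f$ is a quasicircle passing through $c$ (for bounded type this is the Graczyk--Świątek / Petersen type statement in the polynomial setting).

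The second step is to convert these renormalization-scale annuli into a genuine puzzle partition: take a high equipotential $E$ of $A^\infty_f$ together with finitely many external rays landing at $c$ and its first few preimages, plus the dynamically chosen curves around $\Delta_f$ coming from the renormalization tiling, and pull back under $f$. This yields a nested sequence of puzzle pieces $\{P_n(z)\}$ for each $z$ near $\partial\Delta_f$. The key geometric input I want from Step 1 is that the puzzle pieces straddling $\partial \Delta_f$ shrink: either a point $z \in \partial\Delta_f$ has $\diam P_n(z) \to 0$ because its itinerary hits the renormalization annuli with the right combinatorics, or $z$ lies in the grand orbit of $c$ and one handles it separately using that $\partial\Delta_f$ is a quasicircle. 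The standard dichotomy (à la Yoccoz, Lyubich, Kahn--Lyubich) is: either the pieces shrink, or there is a nondegenerate nest of annuli with summable-to-infinity moduli, forcing shrinking anyway; the bounded-type hypothesis is exactly what guarantees a definite modulus gained at each renormalization level, so $\sum \mod(P_n \setminus \overline{P_{n+1}}) = \infty$. Shrinking of puzzle pieces at every point of $J_f$ near $\partial\Delta_f$ then gives, via the standard argument, that $\chi_f$ extends continuously there, i.e. $J_f$ is locally connected at each point of $\partial\Delta_f$.

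I would organize the proof so that Theorem~A follows from two separate statements proved earlier in the paper: (i) there is no Siegel hedgehog, so $\hDelta_f = \overline{\Delta_f}$ and $\partial\Delta_f$ is a quasicircle through the critical point; and (ii) a ``complex a priori bounds'' statement giving the non-degenerating annuli around $\partial\Delta_f$. Given (i) and (ii), the endgame is the puzzle-shrinking argument sketched above, applied scale by scale using the bounded-type control on $q_n$. The main obstacle I expect is (ii), the complex a priori bounds: controlling the renormalization of $f$ near $\Delta_f$ when $\partial\Delta_f$ carries a critical point is genuinely harder than the Feigenbaum or the Petersen quadratic case, because one must handle arbitrary degree $d \geq 2$ and the possibility of extra critical points in $K_f$ interacting with the Siegel combinatorics; making the renormalization operator well-defined, precompact, and contracting enough to produce a definite annulus at each level is where the real work lies. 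The rest — assembling the puzzle, verifying the Markov property of the partition, and deducing local connectivity from shrinking — is, by comparison, routine adaptation of the Branner--Hubbard--Yoccoz--Lyubich toolkit.
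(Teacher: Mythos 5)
Your proposal has the right overall silhouette (puzzles adapted to the Siegel combinatorics, a nest of annuli with divergent total modulus, Gr\"otzsch, Carath\'eodory), but it passes over the two places where the actual difficulty sits, and in one place the logical order is reversed.

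First, the combinatorial obstruction. You build the puzzle from equipotentials and external rays landing at the critical point and its preimages, and then invoke the ``standard rigidity machine.'' But external rays transform by angle $d$-tupling, while the dynamics on $\partial\Delta_f$ is an irrational rotation; consequently a puzzle piece whose closure meets $\partial\Delta_f$ does \emph{not} map onto a union of puzzle pieces --- its images develop slits along the Siegel boundary, and the Markov property you need for the standard machine fails exactly at the points Theorem~A is about. The paper's entire architecture is built to get around this: it first replaces $f$ by a Blaschke product model via Douady--Ghys surgery (so that $\partial\Delta_f$ becomes $\pbbD$ and $f|_{\partial\Delta_f}$ becomes an \emph{analytic} circle homeomorphism), then uses \emph{bubble rays} (iterated preimages of $\bbD$), which carry rotation-compatible combinatorics, to cut puzzle disks $D^n$ by pullback along the circle, and finally proves an extremal-length estimate showing that cutting slits into an already nearly degenerate annulus changes its modulus only by a bounded amount. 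That estimate, combined with the Kahn--Lyubich Covering Lemma, is what produces the uniform lower bound on $\mod(D^{n-2}\setminus\overline{D^n})$. Your sketch never confronts the slitting problem, and your ``dichotomy'' (either pieces shrink, or there is a nest of annuli with divergent moduli, forcing shrinking) is not a dichotomy --- both horns are the desired conclusion; the content is in ruling out degeneration of the moduli, which is where the Covering Lemma enters.

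Second, the analytic input and the logical order. You defer the hard part to precompactness of a renormalization operator $R^nf$ around $\Delta_f$ for arbitrary-degree, multicritical $f$; no such operator is constructed in the paper (or available in the literature at this generality), and the paper does not need one: after the surgery it only uses the Herman--\'Swi\c atek real a priori bounds for analytic circle homeomorphisms together with a soft ``uniform complex extension'' statement for the factorization of return maps. Separately, you propose to use the nonexistence of Siegel hedgehogs (Theorem~B) as an input to Theorem~A; in the paper the dependence runs the other way --- Theorem~B is deduced from the triviality of the fibers $X_s$, which is the same result that yields Theorem~A --- so assuming it at the outset would be circular in this framework.
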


\begin{thmb}[No Siegel hedgehogs]
Let $f : \hat\bbC \to \hat\bbC$ be a polynomial of degree $d \geq 2$. Suppose $f$ has a Siegel disk $\Delta_f$ whose rotation number $\rho \in (\bbR \setminus \bbQ)/\bbZ$ is of bounded type. Then $\overline{\Delta_f}$ is the only Siegel continuum containing $\Delta_f$.
\end{thmb}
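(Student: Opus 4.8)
A Siegel continuum $\hDelta_f$ is forward invariant, so every point of it has bounded orbit; hence $\hDelta_f\subset K_f$, and since $\hDelta_f\supset\overline{\Delta_f}\ni 0$ is connected it lies in the component $K_f^0$. By the quasiconformal conjugacy $f|_{K_f^0}\cong g|_{K_g}$ of \cite{Ki}, Siegel continua of $f$ correspond bijectively to those of $g$, where $g$ has a connected filled Julia set and a Siegel disk with the same (bounded-type) rotation number; so I may assume $K_f$ connected, and, replacing $f$ by an iterate, that $0$ is fixed. I will also use one elementary fact, to be invoked at the end: $\hDelta_f$ has empty interior in $\hbbC\setminus\overline{\Delta_f}$. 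This follows because $\mathring{\hDelta_f}$ is open and forward invariant and $\{f^n\}$ is normal there (injective holomorphic maps into the bounded set $K_f$), so each component of $\mathring{\hDelta_f}$ lies in a Fatou component; injectivity of every $f^n$ on $\hDelta_f$, together with the classification of Fatou components, then forces the only such component to be $\Delta_f$ itself.

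\smallskip
\noindent\textbf{Renormalization and complex \emph{a priori} bounds.} The core of the plan is to package the first-return dynamics of $f$ to a neighbourhood of $\partial\Delta_f$ into a renormalization operator $\cR$ acting on germs with a bounded-type Siegel disk, and -- using crucially that $\rho$ is of bounded type -- to prove \emph{complex a priori bounds}: the normalized iterates $\{\cR^n f\}$ form a precompact family, and every limit is a nondegenerate holomorphic map with a bounded-type Siegel disk. Concretely this yields nested ``puzzle'' neighbourhoods $\cdots\subset P_{n+1}\subset P_n$ with $\bigcap_n P_n=\partial\Delta_f$, together with return maps $f^{q_n}$ realizing bounded-degree coverings $P_n\to P_{n-1}$ whose shape distortion is bounded independently of $n$. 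This is the step that must absorb the recurrent critical point lying on $\partial\Delta_f$, which is why naive round annular puzzle pieces cannot be used; it would be carried out along the lines of the renormalization theory of critical circle maps and of near-parabolic / pacman renormalization.

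\smallskip
\noindent\textbf{Excluding the hedgehog.} Suppose $\hDelta_f\supsetneq\overline{\Delta_f}$ and fix $p\in\hDelta_f\setminus\overline{\Delta_f}$. Being connected and joining $\partial\Delta_f$ to $p$, $\hDelta_f$ contains ``excess mass'' -- points off $\overline{\Delta_f}$ -- accumulating on $\partial\Delta_f$; say a crossing piece of $\hDelta_f$ in the annulus $P_{n-1}\setminus P_n$ has diameter comparable to $\diam P_{n-1}$, by the bounded geometry above. Because $f$ is injective on $\hDelta_f$ and $f(\hDelta_f)\subseteq\hDelta_f$, this excess is spread around $\partial\Delta_f$ by the forward dynamics, which (bounded type) acts minimally on $\partial\Delta_f$; and since the return maps $f^{q_n}$ have bounded distortion and preserve both $\Delta_f$ and its complement, the excess does not degenerate as it is transported -- more precisely, for every $n$ the renormalization $\cR^n f$ inherits a Siegel continuum $\hDelta_n\supseteq\overline{\Delta_{\cR^n f}}$ with $\diam\hDelta_n\ge c_0\,\diam\Delta_{\cR^n f}$, $c_0>0$ independent of $n$. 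Passing to a Hausdorff subsequential limit $\cR^{n_k}f\to f_\ast$, $\hDelta_{n_k}\to\hDelta_\ast$, one gets a Siegel continuum $\hDelta_\ast\supsetneq\overline{\Delta_{f_\ast}}$ for the nondegenerate limit $f_\ast$; but $f_\ast$ lies in the renormalization attractor, where the \emph{a priori} bounds force strong rigidity (in particular $\partial\Delta_{f_\ast}$ is a quasicircle and the associated invariant set is locally connected), and for such a model map one checks directly that $\overline{\Delta_{f_\ast}}$ is the only Siegel continuum. (Equivalently, spreading the excess densely around $\partial\Delta_f$ and taking the closure produces a one-sided collar of $\hDelta_f$ with nonempty interior off $\overline{\Delta_f}$, contradicting the elementary fact from the reduction.) Either way this contradicts the existence of $\hDelta_f$, proving Theorem~B.

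\smallskip
\noindent\textbf{The main obstacle.} It is the complex \emph{a priori} bounds for the renormalization at bounded type. The recurrent critical point on $\partial\Delta_f$ makes round annular puzzle pieces degenerate, so one must choose the renormalization and its pieces so that bounded geometry persists at every depth -- the hard analytic heart, in the tradition of de~Faria--de~Melo, Yampolsky, Inou--Shishikura and Dudko--Lyubich--Selinger. By comparison, turning the bounds into the non-existence of excess hedgehog mass is comparatively soft; the only mild subtlety there is checking that the limiting object $f_\ast$ admits no larger Siegel continuum, which follows from the quasiconformal rigidity of its Siegel boundary.
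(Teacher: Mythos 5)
There is a genuine gap, and it sits exactly where you place the ``hard analytic heart.'' Your argument rests on a renormalization operator for the first-return dynamics near $\partial\Delta_f$ together with complex \emph{a priori} bounds making $\{\cR^n f\}$ precompact with nondegenerate limits. For a polynomial of arbitrary degree this theory does not exist, and the obstruction is precisely the one the paper is built to circumvent: there may be critical points that accumulate on $\partial\Delta_f$ without lying on it (critical fibers at ``height $0$'' off the circle). The known complex bounds (Yampolsky for unicritical circle maps; Estevez--Smania--Yampolsky for multicritical circle maps of bounded type) all require the relevant critical orbits to be confined to the invariant circle, and the paper states explicitly that this is why those results do not suffice here. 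The paper therefore does not renormalize at all: it passes to a Blaschke product model, proves only a \emph{soft} complex extension statement for the circle dynamics (\thmref{complex extension}), builds puzzle disks from bubble rays, and drives the moduli estimates with the Kahn--Lyubich Covering Lemma (\thmref{cover lem}) to get triviality of every fiber $X_s$ (\corref{last cor}). Theorem~B is then a short combinatorial corollary: the hedgehog part $\hDelta_f\setminus\Delta_f$, being forward invariant and connected, cannot cross the boundary of any puzzle neighborhood $\bfP^n$ of $\pbbD$ once $\overline{\bfP^n}$ contains no fixed points, so it is trapped in $\bigcap_n\bfP^n=\pbbD$. So the actual content of Theorem~B is Theorem~A's fiber triviality, not a compactness/rigidity argument for a renormalization attractor.

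Even granting your renormalization bounds, the exclusion step is not sound as written. First, the claim that the limit $f_\ast$ ``admits no larger Siegel continuum'' because $\partial\Delta_{f_\ast}$ is a quasicircle is circular: $f_\ast$ is a map of the same type as $f$, and the paper emphasizes that knowing the Siegel boundary is a (quasi)circle does \emph{not} exclude a one-sided hairy Jordan curve with a critical point at the tip of a hair --- that is the very counterexample scenario Theorem~B must rule out. Second, your parenthetical alternative is false: spreading the excess mass densely around $\partial\Delta_f$ and taking closures does \emph{not} produce nonempty interior; a one-sided hairy Jordan curve has dense hairs and empty interior, so the ``elementary fact'' from your reduction is never contradicted. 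What is actually needed is a quantitative statement that points of $\hDelta_f$ off $\overline{\Delta_f}$ would have to survive inside arbitrarily deep shrinking neighborhoods of $\partial\Delta_f$, and that is supplied in the paper by the Gr\"otzsch-type divergence of puzzle-annulus moduli --- i.e., by the full strength of the local connectivity machinery.
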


In general, it is not possible to extend Theorem A to the entire Julia set, as there exist polynomials with Siegel disks of bounded type whose Julia sets are not locally connected everywhere. For example, in the phase space of a cubic polynomial, a Siegel disk of bounded type can coexist with a Cremer point (see \cite{Za}).

\subsection{Background}

The importance of local connectivity in polynomial dynamics was brought to limelight by Douady-Hubbard, who initiated the modern approach to the subject in their seminal Orsay Notes \cite{DoHu}. The use of puzzles for combinatorial rigidity was pioneered by Yoccoz, who showed that the Julia set of a quadratic polynomial is locally connected if it is at most finitely renormalizable and has no indifferent periodic orbits \cite{Mi1}. In his proof, the Gr\"otzsch inequality is applied to an infinite nest of annuli to conclude that puzzle pieces shrink to singletons. Unfortunately, the basic estimate he used to obtain the required lower bound on the conformal moduli is insufficient for higher degree polynomials. In \cite{KaLy2}, Kahn-Lyubich developed a far more sophisticated version of this estimate called the Covering Lemma (see \thmref{cover lem}), which allowed them to generalize Yoccoz's result to the unicritical case of higher degree \cite{KaLy1}. Finally, Kozlovski-van Strien combined the Covering lemma with a combinatorial tool called enhanced nests to prove the following theorem.

\begin{thm}[Kozlovski-van Strien \cite{KovS}]\label{yoccoz}
Let $f : \hat\bbC \to \hat\bbC$ be a polynomial of degree $d \geq 2$ with a connected Julia set $J_f$. Suppose $f$ has no indifferent periodic orbits, and is at most finitely renormalizable (in the sense of polynomial-like mappings).  Then $J_f$ is locally connected.
\end{thm}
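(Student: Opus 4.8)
The plan is to construct a Yoccoz-type puzzle for $f$ and prove that the puzzle pieces shrink to points; combined with Carath\'eodory's theorem and the standard analysis of external-ray impressions, this yields local connectivity of $J_f$. Since $J_f$ is connected and $f$ carries no indifferent cycles, every periodic point on $J_f$ is repelling, which is precisely what is needed to anchor such a puzzle.

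First I would build the puzzle: using the B\"ottcher uniformization $\phi^\infty_f$, fix an equipotential together with a finite $f$-invariant collection of external rays landing at repelling (pre)periodic points, chosen so that the depth-$0$ graph they form separates the critical points of $f$; the depth-$n$ pieces are then the connected components of the $f^{-n}$-preimage of the complement of this graph. (In full generality one phrases the same combinatorial content through equivalent complex box mappings.) The goal reduces to showing that for every $z \in J_f$ the nest $P_n(z) \ni z$ of puzzle pieces satisfies $\diam P_n(z) \to 0$.

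I would split the shrinking into cases. For $z$ whose $\omega$-limit set avoids $\Crit(f)$, Ma\~n\'e's theorem supplies expansion along the orbit, and a pullback argument forces $\diam P_n(z) \to 0$. The essential case is $z$ in the closure of the orbit of a recurrent critical point $c$; by transporting estimates along the dynamics it suffices to prove $\bigcap_n P_n(c) = \{c\}$. For this I would invoke the enhanced nest of Kozlovski--van Strien: one selects from the principal nest a subsequence of critical puzzle pieces $c \in \dots \subset K_{n+1} \subset K_n \subset \dots$ whose first-return maps have degree bounded solely in terms of $d$ and whose shapes are combinatorially controlled, then feeds these return maps into the Covering Lemma (\thmref{cover lem}) to obtain non-summable lower bounds on $\operatorname{mod}(K_n \setminus \overline{K_{n+1}})$; the Gr\"otzsch inequality then forces $\bigcap_n K_n = \{c\}$, hence all puzzle pieces shrink.

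The finite-renormalizability hypothesis enters to guarantee that the principal nest around each critical point is eventually non-renormalizable, so that the enhanced nest is genuinely infinite and the return-map combinatorics does not collapse onto a polynomial-like restriction outside the scope of the argument; the finitely many renormalization levels are handled by running the same scheme on the associated little Julia sets. The hard part will be the moduli step: controlling the degrees and geometry of the return maps precisely enough that the Covering Lemma yields a genuinely non-degenerate estimate rather than a vacuous one — this is the technical core of \cite{KovS} and of the Kahn--Lyubich Covering Lemma it builds on.
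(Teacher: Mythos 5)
This theorem is quoted in the paper as a background result from \cite{KovS} and is not proved there; the paper only remarks that Kozlovski--van Strien obtained it by combining the Kahn--Lyubich Covering Lemma with enhanced nests. Your outline (Yoccoz puzzle anchored at repelling periodic points, Ma\~n\'e's theorem off the critical $\omega$-limit sets, enhanced nests fed into the Covering Lemma plus the Gr\"otzsch inequality at recurrent critical points, with finite renormalizability guaranteeing the nest is infinite) accurately reproduces exactly that strategy, so it is consistent with the paper --- with the understanding that, as you acknowledge, the moduli estimates constituting the technical core are deferred to \cite{KovS} rather than supplied.
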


In \cite{RoYi}, Roesch-Yin extended the aforementioned techniques to also allow for rationally indifferent periodic orbits. With this, they were able to prove that the boundaries of bounded attracting or parabolic Fatou components of polynomials are Jordan curves.

Yoccoz's result and subsequent generalizations provide important motivation for our work, as they show that the existence of irrationally indifferent periodic orbits is one of the only two possible obstructions to local connectivity of polynomial Julia sets. In terms of techniques, the notion of puzzles and the Covering Lemma are both used in our paper in essential ways. On the other hand, the particular combinatorial aspects of the proofs (including the use of enhanced nests) are not applicable to our setting, since the underlying dynamical model is given by angle rotation rather than angle $d$-tupling.

A major early breakthrough in the study of Siegel disks was due to the combined efforts of Douady, Ghys, Herman, Shishikura and \'Swi\c atek. They discovered that a quadratic polynomial with a Siegel disk of bounded type rotation number can be modeled by a Blaschke product via quasiconformal surgery. This led to the celebrated result that the boundaries of such Siegel disks are quasi-circles containing the critical point \cite{Do}. By analyzing the surgery map defined on a certain space of some degree-5 Blaschke products, Zakeri proved that the analog of the above results is also true for cubic polynomials \cite{Za}. For polynomials of higher degree, Shishikura announced on his webpage that Siegel boundaries of bounded type rotation numbers are quasi-circles; each cycle of which contains a critical point. This was then generalized by Zhang to apply to all rational maps.

\begin{thm}[Zhang \cite{Zh}]\label{quasicircle}
Let $f : \hat \bbC \to \hat \bbC$ be a rational map of degree $d \geq 2$ with a $p$-periodic Siegel disk $\Delta_f$ of bounded type rotation number. Then $\partial \Delta_f$ is a quasi-circle whose cycle $\partial \Delta_f \cup \ldots \cup f^p(\partial \Delta_f)$ contains a critical point.
\end{thm}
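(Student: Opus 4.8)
The plan is to reduce to the case $p = 1$ and then carry out the quasiconformal surgery of Douady--Ghys--Herman--Shishikura--\'Swi\c atek (the quadratic case), Zakeri (the cubic case) and Zhang (the general case), whose engine is the quasi-symmetric rigidity theorem of \'Swi\c atek and Herman for critical circle maps of bounded type rotation number. Reducing to $p=1$ is harmless: passing to $f^p$ leaves the sets $\partial \Delta_f, \ldots, f^{p-1}(\partial \Delta_f)$ unchanged, each of them being a \emph{fixed} Siegel boundary of $f^p$ of the same rotation number $\rho$, so a quasicircle statement for each pair $(f^p, f^j(\Delta_f))$ yields it for the whole cycle; and by the chain rule a critical point of $f^p$ lying on $\overline{\Delta_f}$ is an iterated $f$-preimage of a critical point of $f$ lying on $\partial \Delta_f \cup \cdots \cup f^{p-1}(\partial \Delta_f)$. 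So assume $0 \in \Delta_f$ is a fixed point with multiplier $e^{2\pi i \rho}$, $\rho$ of bounded type, and let $\phi^0_f : \Delta_f \to \bbD$ be the linearizer.

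The first step is to fix a \emph{Blaschke model}: a rational map $B_\theta$ preserving $\partial\bbD$, of circle degree one, whose configuration of critical points on $\partial\bbD$ matches the local mapping structure of $f$ along $\partial\Delta_f$, so that $B_\theta|_{\partial\bbD}$ is a real-analytic critical circle map. Since the rotation number of $B_\theta|_{\partial\bbD}$ depends continuously and monotonically on $\theta$ and increases by one as $\theta$ runs over $[0,1)$, there is a unique $\theta = \theta(\rho)$ making that rotation number equal to $\rho$. By the \'Swi\c atek--Herman theorem, $B_\theta|_{\partial\bbD}$ is then quasi-symmetrically conjugate to the rigid rotation $R_\rho : z \mapsto e^{2\pi i \rho} z$ by some homeomorphism $h : \partial\bbD \to \partial\bbD$, which we extend to a quasiconformal homeomorphism of $\overline{\bbD}$.

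Now comes the surgery. Define $F : \hbbC \to \hbbC$ by setting $F = B_\theta$ on $\hbbC \setminus \bbD$ and $F = h^{-1} \circ R_\rho \circ h$ on $\overline{\bbD}$; the two definitions agree on $\partial\bbD$, so $F$ is quasiregular --- holomorphic outside $\bbD$ and quasiconformally conjugate to a rotation inside. The Beltrami form that is trivial on $\hbbC \setminus \bbD$ and equals the pullback by $h$ of the standard structure on $\bbD$ is $F$-invariant with dilatation bounded by that of $h$; straightening it via the measurable Riemann mapping theorem produces a quasiconformal $\psi$ such that $g := \psi \circ F \circ \psi^{-1}$ is holomorphic, hence rational of degree determined by the branching of $F$. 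This $g$ has a fixed Siegel disk $\psi(\bbD)$ of rotation number $\rho$; its boundary $\psi(\partial\bbD)$ is a quasicircle; and the critical point of $B_\theta$ on $\partial\bbD$ is carried by $\psi$ to a critical point of $g$ on $\partial\psi(\bbD)$, whose orbit stays on that invariant quasicircle and never enters the open disk. It remains to transport this picture to $f$ itself: one needs a quasiconformal homeomorphism between a neighborhood of $\overline{\Delta_f}$ and a neighborhood of $\overline{\psi(\bbD)}$ conjugating $f$ to $g$, after which $\partial\Delta_f$ inherits both the quasicircle property and the boundary critical point.

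\textbf{The hard part is exactly this transfer.} When $f$ is quadratic it is immediate: $g$ is then a quadratic polynomial with a fixed Siegel disk of rotation number $\rho$, and by Douady--Hubbard's parametrization of the Mandelbrot set there is a unique such polynomial up to affine conjugacy, so $f$ and $g$ are affinely conjugate and one is done. For polynomials of higher degree and for general rational maps there is no such classification, so the conjugacy must be built by hand --- via a pullback/holomorphic-motion argument that starts from the common rotational dynamics inside the Siegel disks and propagates it across the Siegel boundaries. Making this succeed requires controlling the geometry of $\partial\Delta_f$ a priori rather than obtaining it as an output, which rests on the complex a priori bounds for critical circle maps of bounded type (Herman, \'Swi\c atek, de Faria--de Melo, Yampolsky), on choosing the branching data of $B_\theta$ so that it genuinely matches $f$ near $\partial\Delta_f$ (in the multicritical case this also requires the corresponding multicritical rigidity), and on making the surgery compatible with the possibly non-injective global dynamics of $f$ away from $\Delta_f$; this is the technical core of Zhang's argument. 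A self-contained alternative would be to bypass the model altogether and derive the quasicircle property directly from a priori bounds for the cylinder renormalization of $f$ at $\Delta_f$, in the spirit of McMullen's and Yampolsky's work on Siegel disks.
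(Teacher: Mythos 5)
Your proposal does not actually prove the theorem: you set up the forward surgery (build a Blaschke model $B_\theta$ with rotation number $\rho$, apply \'Swi\c atek--Herman, straighten, obtain a rational $g$ with a quasicircle Siegel boundary) and then correctly observe that everything hinges on transporting this picture back to $f$ --- but you leave that transfer as an acknowledged open step. Outside the quadratic case there is no uniqueness statement that identifies $f$ with the surgered model, so as written the argument establishes the quasicircle property only for the artificial map $g$, not for $f$. That is precisely the gap.

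Zhang's proof, which this paper summarizes as ``approximating the Siegel boundary by quasi-circles within the Siegel disk'' and reproduces in detail in the proof of \thmref{blaschke model}, runs the surgery in the opposite direction and thereby never faces your transfer problem. One truncates the Siegel disk at the invariant analytic curves $\partial\Delta_f^r = (\phi^0_f)^{-1}(\{|z|=r\})$, $r<1$, and for each $r$ performs a surgery \emph{starting from $f$ itself}: reflecting across $\partial\Delta_f^r$ produces a degree $2d-1$ quasiregular map $\tif_r$ with an invariant ring, which is straightened to a Blaschke product $F_r \in \cH^d_\rho$ with a Herman ring, together with a quasiconformal conjugacy $\eta_r$ back to $f$ carrying $\pbbD$ to $\partial\Delta_f^r$. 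The decisive input is Herman's uniformity theorem (\thmref{hermanthm}) together with the compactness of the family $\cH^d_\rho$ (\propref{herman compact}): these give a quasiconformality bound $K$ for $\eta_r$ that is \emph{independent of $r$}, even though the intermediate constants $K_r$ blow up. Passing to a limit $r_n \to 1$ along a convergent subsequence of $K$-quasiconformal maps then exhibits $\partial\Delta_f$ as the image of $\pbbD$ under a $K$-quasiconformal map, hence a quasicircle, and the critical point of the limiting Blaschke product on $\pbbD$ maps to a critical point on the cycle of $\partial\Delta_f$. So the missing idea in your write-up is not extra rigidity or complex a priori bounds for the renormalization of $f$; it is the approximation-from-inside scheme plus the uniform quasisymmetry constant for the compact Blaschke family, which replaces the transfer step entirely.
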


\thmref{quasicircle} was proved by approximating the Siegel boundary by quasi-circles within the Siegel disk. Building on this argument, we prove that every polynomial with a Siegel disk of bounded type has a Blaschke product model (\thmref{blaschke model}).

In the phase space of a Blaschke product model, the Siegel boundary is straightened to the unit circle. This added symmetry greatly facilitates efforts to control the geometry of the system. In particular, on the unit circle itself, the geometry of the orbit structure is controlled by {\it real a priori} bounds. This was proved by Herman \cite{He2} who used estimates derived by \'Swi\c atek \cite{Sw}.

Working with the Blaschke product models of quadratic Siegel polynomials, Petersen was able to prove the following result.

\begin{thm}[Petersen \cite{Pe}]\label{quadratic}
Let $f : \hat \bbC \to \hat \bbC$ be a quadratic polynomial with a Siegel disk $\Delta_f$ of bounded type rotation number. Then its Julia set $J_f$ is locally connected.
\end{thm}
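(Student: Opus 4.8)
The plan is to transplant the Yoccoz puzzle machinery to the setting of a polynomial carrying an invariant rotation domain. As in the proof of \thmref{yoccoz}, local connectivity of $J_f$ follows once one produces, for every $x \in J_f$, a nested sequence of connected closed \emph{puzzle pieces} shrinking to $\{x\}$ in diameter. The genuinely new feature here is the Siegel boundary $\partial\Delta_f$: on it $f$ is conjugate to the rigid rotation $R_\rho$, so no contraction is available from the dynamics there, and the puzzle must be designed so that its combinatorics is simultaneously compatible with the angle-doubling model on $J_f$ and with $R_\rho$ on $\partial\Delta_f$. The first step is to pass to a Blaschke product model: by the quasiconformal surgery of Douady--Ghys--Herman--Shishikura--\'Swi\c atek (of which \thmref{quasicircle} is the general form), $f$ is quasiconformally conjugate, in a neighborhood of $J_f$, to a cubic Blaschke product $B$ preserving $\partial\bbD$, with $B|_{\partial\bbD}$ a critical circle homeomorphism of rotation number $\rho$ and $\overline{\Delta_f}$ corresponding to $\overline{\bbD}$; by \thmref{quasicircle}, $\partial\Delta_f$ is in fact a quasicircle through the critical point $c$. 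Since $\rho$ is of bounded type, \'Swi\c atek's real \emph{a priori} bounds apply to $B|_{\partial\bbD}$: successive arcs of the $n$-th dynamical partition generated by the orbit of $c$ are $K$-commensurable, uniformly in $n$, for some $K = K(\rho)$.

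Next I would construct the puzzle. Take the depth-zero pieces to be the closures of the bounded components of the complement of the union of a fixed equipotential of $A^\infty_f$, finitely many external rays chosen to land at the endpoints of the first few partition arcs on $\partial\Delta_f$ (and at suitable preimages of these points), and the curve system consisting of $\partial\Delta_f$ together with its first $f$-preimages; the depth-$(n+1)$ pieces are the connected components of the $f$-preimages of the depth-$n$ pieces. By construction this is a Markov partition away from $\overline{\Delta_f}$ whose trace on $\partial\Delta_f$ refines to the dynamical partitions of $B|_{\partial\bbD}$, so that its geometry along $\partial\Delta_f$ is controlled by the real bounds. For a point $x \in J_f$ whose forward orbit stays away from $\partial\Delta_f$, shrinking is classical: since the only finite critical point $c$ lies on $\partial\Delta_f$, the puzzle maps relevant to $x$ are eventually univalent on a definite neighborhood, producing an infinite nest of non-degenerate annuli of definite total modulus surrounding $x$, and the Gr\"otzsch inequality then forces the diameters to $0$ (for a quadratic the original Yoccoz modulus estimate suffices; in higher degree one would invoke the Covering Lemma, \thmref{cover lem}). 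As the finite critical orbit is confined to $\partial\Delta_f$, the combinatorial analysis underlying \thmref{yoccoz} disposes of every point whose orbit avoids a neighborhood of $\partial\Delta_f$.

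The crux is to show that puzzle pieces abutting $\partial\Delta_f$ — and, by pulling back along finitely many branches, those meeting preimages of $\partial\Delta_f$ or of $c$ — also shrink to points. The real \emph{a priori} bounds at once give that the \emph{arcs} of $\partial\Delta_f$ cut out by the pieces have diameters tending to $0$, since the quasicircle $\partial\Delta_f$ transports the commensurability of the dynamical partitions. What is not automatic, and is the heart of Petersen's argument, is control transverse to $\partial\Delta_f$: one must establish a form of complex bounds, namely that a piece $P$ adjacent to $\partial\Delta_f$ has comparable inner and outer radii, so that the shrinking of its boundary arc on $\partial\Delta_f$ forces the shrinking of $P$ itself. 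I would try to obtain this by combining the univalence of the first-return-type maps to $P$ on the Julia-set side, the rigidity of $R_\rho$ on the Siegel side, and the quasiconformality of the surgery, to bound the shape of $P$ from both sides, and then iterating, using the real bounds to guarantee that the successive pullbacks do not degenerate. Granting this last estimate, every point of $J_f$ has a neighborhood basis of connected puzzle pieces of diameter tending to $0$, and hence $J_f$ is locally connected. I expect this transverse complex bound at $\partial\Delta_f$ — reconciling the conformal geometry of the Julia set with the rigid rotation inside the Siegel disk — to be the main obstacle; the remainder is standard puzzle technology combined with the classical real bounds of \'Swi\c atek and Herman.
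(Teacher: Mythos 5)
Your overall architecture (Blaschke model via Douady--Ghys surgery, \'Swi\c atek--Herman real bounds on the circle, a Yoccoz-type puzzle, Gr\"otzsch for points off the Siegel boundary) matches Petersen's proof and the scheme this paper follows for its generalization, but there are two genuine gaps. First, the puzzle construction as you describe it does not work: you propose to bound depth-zero pieces by ``external rays chosen to land at the endpoints of the first few partition arcs on $\partial\Delta_f$,'' but those endpoints are points of the critical orbit on the Siegel boundary, and one cannot assume any external ray lands there --- landing of rays on $\partial\Delta_f$ is essentially equivalent to the local connectivity one is trying to prove. Likewise, ``$\partial\Delta_f$ together with its first $f$-preimages'' does not by itself separate the plane out to the equipotential. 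This is exactly why Petersen (and this paper, in \secref{sec:puzzle}) uses \emph{bubble rays}: infinite chains of iterated preimages of $\overline{\bbD}$ whose landing at repelling periodic points can be established a priori, joined to the external rays that co-land at those repelling points. Without this the depth-zero graph is not closed and the pieces are not well defined.

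Second, and more seriously, the step you label ``the crux'' --- the transverse complex bound showing that a piece abutting $\partial\Delta_f$ has diameter comparable to its trace arc --- is only gestured at (``I would try to obtain this by combining\dots,'' ``Granting this last estimate\dots''). This is the actual content of Petersen's theorem; everything else is standard. The specific obstruction, which your sketch does not engage with, is that pieces meeting $\pbbD$ do \emph{not} map to pieces under iteration: the external-ray part of their boundary transforms by angle doubling while the circle part transforms by the rotation combinatorics, so iterated images develop slits along the circle and the naive Markov/pullback argument for moduli fails. Petersen resolves this with a bespoke geometric construction, Yampolsky with complex a priori bounds for the critical circle map, and the present paper with a soft complex-extension estimate (\thmref{complex extension}) plus the slitting argument of \secref{sec:lc at crit} and the Covering Lemma. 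As written, your proposal correctly locates the difficulty but does not supply the estimate that constitutes the proof.
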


In \cite{Ya1}, Yampolsky gave an alternative proof of this result by using {\it complex a priori} bounds for unicritical circle maps. However, neither of these proofs generalize to higher degrees (not even to say, cubics), since they do not account for the existence of any critical points whose orbits are not strictly confined to the unit circle. Since then, Yampolsky, with Estevez and Smania, have generalized complex a priori bounds to multicritical circle maps with bounded type rotation numbers \cite{EsSmYa}. However, this alone does not solve the aforementioned problem, since the orbits of the relevant critical points in their system are still confined to the circle.

Theorem A of our paper is an extension of \thmref{quadratic} to higher degree polynomials (note however, that global local connectivity is no longer necessarily true when there are two or more critical points). Some aspects of the work of Petersen and Yampolsky are important in our argument as well. In particular, we use bubble rays to construct puzzle partitions similarly to Petersen. Additionally, we develop and use a softer version of complex a priori bounds (see Subsection \ref{subsec:comp ext}) to control local geometry near the unit circle similarly to Yampolsky.

Our work also connects to and builds upon the deep body of research concerning the topological structure of invariant rotation continuums centered at irrationally indifferent periodic points. What makes this topic so profoundly difficult is that the setting inherently lacks hyperbolicity---a property which has been proved so fruitful in the study of other kinds of dynamical systems. Nonetheless, there have been substantial progress due to important contributions from numerous authors (see e.g. \cite{Rog}, \cite{PM}, \cite{PeZa}, \cite{AvBuCh}, \cite{Chi}, \cite{Cheri3}, \cite{Chera}, \cite{WaYaZhZh}, and the references therein). In a remarkable recent development, Cheraghi used the results of Inou-Shishikura's near-parabolic renormalization theory \cite{InSh} to prove the following theorem.

\begin{thm}[Cheraghi \cite{Chera}]\label{cheraghi}
Let $f : \hat \bbC \to \hat \bbC$ be a quadratic polynomial with an irrationally indifferent fixed point $0$ whose rotation number $\rho \in (\bbR\setminus \bbQ)/\bbZ$ is of sufficiently high type. Denote the critical point and the Siegel disk (if it exists) of $f$ by $c$ and $\Delta_f \ni 0$ respectively. Then one of the following statements hold.
\begin{enumerate}[i)]
\item If $\rho$ is Herman, then $\omega(c) = \partial \Delta_f$ is a Jordan curve.
\item If $\rho$ is Brjuno but not Herman, then $\partial \Delta_f$ is a Jordan curve, and $\omega(c) \supsetneq \partial \Delta_f$ is a one-sided hairy Jordan curve.
\item If $\rho$ is not Brjuno, then $\omega(c) \ni 0$ is a Cantor bouquet.
\end{enumerate}
\end{thm}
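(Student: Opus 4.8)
The plan is to run the near-parabolic renormalization scheme of Inou--Shishikura \cite{InSh}. After conjugating $f$ to the normal form $z \mapsto e^{2\pi i \rho} z + z^2$, I would first check that, because $\rho$ is of sufficiently high type (all continued-fraction entries large), $f$ lies in a small neighborhood of the Inou--Shishikura class---call it $\mathcal{IS}$---and then establish the analytic core of the whole argument: this class is \emph{invariant} under renormalization, so that $\cR f$ again has the same form, with rotation number obtained from $\rho$ by the Gauss map and a critical value whose position is dictated by the previous geometry. Iterating, one obtains a renormalization tower $f_n := \cR^n f \in \mathcal{IS}$ with rotation numbers $\rho_n = G^n(\rho)$, and---using that the entries stay large---\emph{beau} (bounded, eventually universal) a priori bounds: the domains of the $f_n$, the positions of their critical values, and the moduli of the pertinent annuli are all controlled uniformly in $n$ and eventually in $f$.

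The second step is to reconstruct $\omega(c)$ out of this tower. Each $f_n$ carries a distinguished region $P_n \ni 0$---a ``pseudo-Siegel disk'' or ``pseudo-petal'' read off from the linearizing/Fatou coordinate of $f_n$---and, pulling back through the successive renormalization changes of coordinate, this produces a nest $\widehat P_n \subset \widehat P_{n-1} \subset \cdots \subset \widehat P_0$ in the dynamical plane of $f$. The orbit of $c$ then visits these regions in a combinatorially prescribed pattern (encoded by the entries of $\rho$), and $\omega(c)$ is obtained as a controlled limit of the ``decorations'' attached to the $\widehat P_n$---roughly, the forward images of the critical value of $f_n$ that leave $P_n$ before being recaptured at level $n-1$. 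The whole problem is thereby reduced to (a) deciding when $\bigcap_n \widehat P_n$ is a single point versus a nondegenerate continuum, and (b) identifying the topology of the union of that base set with its decorations.

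For (a), the dichotomy is exactly the Brjuno condition: the loss of conformal modulus per renormalization step is governed by $\log q_{n+1}/q_n$, so the nest shrinks to $\{0\}$ precisely when $\sum_n \log q_{n+1}/q_n = \infty$, i.e.\ when $\rho \notin \bfB\bfr$; when $\rho \in \bfB\bfr$ the nest has nonempty interior and coincides with $\overline{\Delta_f}$. For (b), in the Herman case one shows the decorations are empty---the Fatou-coordinate renormalizations converge to pure rotations and the critical orbit never escapes the linearization circle---so $\omega(c) = \partial\Delta_f$, a Jordan curve (consistent, for the bounded-type subcase, with \thmref{quasicircle}). In the Brjuno but non-Herman case the decorations are nonvanishing but have ``length'' tending to $0$ along a dense set of attaching points on $\partial\Delta_f$, and one matches the resulting set with the abstract topological model of a one-sided hairy Jordan curve (via a characterization theorem in the spirit of Aarts--Oversteegen). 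In the non-Brjuno case the base $\overline{\Delta_f}$ has collapsed to the Cremer point $0$, and the same decorations---now accumulating at $0$ from a Cantor set of directions---assemble into a Cantor bouquet.

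The main obstacle is twofold. The analytic difficulty is proving the invariance of $\mathcal{IS}$ under $\cR$ together with the beau bounds: this is essentially the Inou--Shishikura theorem (plus its later refinements), and it is genuinely hard, requiring delicate estimates on Fatou coordinates of near-parabolic maps and compactness arguments in a suitable space of univalent maps. The topological difficulty is the precise identification in cases ii) and iii): one must show the decorations glue together \emph{exactly} into a hairy Jordan curve, respectively a Cantor bouquet, which means controlling not only the sizes but also the nesting and the ``transversality'' of pieces at consecutive levels, verifying density of the attaching set, and ruling out unexpected identifications in the limit---and it is here that the uniformity of the a priori bounds gets used most heavily.
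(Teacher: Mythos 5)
This theorem is not proved in the paper: it is quoted verbatim from Cheraghi's work \cite{Chera}, and the paper's only ``proof'' content is the remark that Cheraghi obtained it via Inou--Shishikura's near-parabolic renormalization theory \cite{InSh}. Your outline follows exactly that attributed strategy (renormalization tower in the Inou--Shishikura class, beau bounds, reconstruction of $\omega(c)$ from nested pseudo-Siegel regions and their decorations, with the Brjuno/Herman conditions deciding degeneracy of the nest and the presence of hairs), so at the level of approach it matches; of course it remains a roadmap rather than a proof, with the genuinely hard analytic and topological steps correctly identified but deferred.
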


As there are no known counterexamples, it is conjectured that \thmref{cheraghi} can be generalized to all rational maps $f$ and all irrational rotation numbers $\rho$. Theorem B in our paper can be viewed as verifying this conjecture in the case that $f$ is a polynomial and $\rho$ is of bounded type. In the unicritical case, if $\rho$ is Herman, it suffices to show that $\partial \Delta_f$ is a Jordan curve. Then $f|_{\Delta_f}$ extends to a homeomorphism on $\overline{\Delta_f}$, and the unique critical point must be trapped in $\partial \Delta_f$ by Herman's result \cite{He1}. However, in the multicritical case, knowing that $\partial \Delta_f$ is a Jordan curve is not enough. A priori, it is still possible to have a one-sided hairy Jordan Siegel disk, as long as one of the additional critical points is at the tip of one of the ``hairs.'' When reading our paper, it may be helpful to keep this picture in mind as the most likely counterexample that we must rule out. To the best of author's knowledge, Theorem B of our paper is the first result that establishes the topology of multicritical invariant rotation continuums within a certain rotation number class.

\subsection{Strategy of proof}

To prove the main theorems of our paper, we first model the dynamics of the polynomial $f$ by that of a Blaschke product $F$ (\secref{sec:blaschke}). In this model, the Siegel boundary $\partial \Delta_f$ is straightened to the unit circle $\pbbD$. This allows us to invoke the renormalization theory of analytic circle homeomorphisms to control the local geometry of $F$ near $\pbbD$ (\secref{sec:a priori} and \secref{sec:geometry}).

Next, we partition the phase space of $F$ into combinatorial pieces called {\it puzzles}. To do this, we use external rays inside the basin of infinity and the basin of $0$, as well as structures inside $J_F$, called {\it bubble rays}, that are constructed from preimages of $\pbbD$ (\secref{sec:puzzle} and \secref{sec:disks}). Using these puzzles, we analyze the conformal geometry of $F$ near $\pbbD$. More specifically, we form annuli using strictly nested puzzles that intersect $\pbbD$, then study how their moduli transform under the dynamics.

The key difficulty we must overcome is that puzzles intersecting $\pbbD$ break down under iteration of $F$. This is caused by the incompatibility of the combinatorics of the external rays with the combinatorics of the bubble rays. The former is governed by the angle multiplier map, while the latter is governed by the angle rotation map. As a result, the iterated images of the puzzles start to develop slits along $\pbbD$. However, using a priori bounds, we show that cutting slits into annuli that are already nearly degenerate does not significantly decrease their moduli. Supplementing this argument with the Kahn-Lyubich Covering Lemma, we are able to prove that nested puzzle annuli surrounding a point on $\pbbD$ has infinite modulus (\secref{sec:lc at crit} and \secref{sec:lc on circ}). Theorem A then follows by Gr\"otzsch inequality.

Finally, Theorem B is proved by showing that any invariant rotation continuum $\hDelta_f$ containing the Siegel disk $\Delta_f$ must be trapped inside puzzle neighborhoods intersecting $\partial \Delta_f$. Since any infinite nest of puzzle pieces in these neighborhoods must shrink to a point in $\partial \Delta_f$, it follows that $\hDelta_f \subseteq \overline{\Delta_f}$ as claimed.

\subsection*{Acknowledgement}

The author would like to thank M. Yampolsky and D. Dudko for the many helpful discussions.

\section{A Priori Bounds for Analytic Circle Maps}\label{sec:a priori}

Let $g : \partial \bbD \to \partial \bbD$ be an orientation-preserving circle homeomorphism with an irrational rotation number $\rho \in (\bbR \setminus \bbQ) /\bbZ$ (not necessarily of bounded type). Writing $\rho$ as a continued fraction, we have
\begin{equation}\label{eq:contin frac}
\rho = [a_1, a_2, \ldots] = \cfrac{1}{a_1+\cfrac{1}{a_2+ \ldots{}}}
\end{equation}
for some $a_i \in \bbN$ for $i \in \bbN$. The $a_i$'s are referred to as the {\it coefficients} of the continued fraction. Recall that $\rho$ is of bounded type if there exists a uniform bound $\tau \in \bbN$ such that $a_i \leq \tau$ for all $i \in \bbN$.

For $n \geq 2$, denote the {\it $n$th partial convergent} of $\rho$ by
$$
\frac{p_n}{q_n} := [a_1, \ldots, a_{n-1}].
$$
Letting $q_0  := 0$ and  $q_1 := 1$, it is an elementary exercise to show that the following inductive relation holds:
$$
q_n = a_{n-1}q_{n-1} + q_{n-2}.
$$

For $n \geq 1$, the number $q_n$ is referred to as the {\it $n$th closest return time}. It has the following dynamical meaning. Choose some initial point $x_0 \in \partial \bbD$, and denote $x_k := g^k(x_0)$ for $k \in \bbZ$. Define the {\it $n$th closest return arc} $I_n \subset \partial \bbD$ is the open arc with endpoints $x_0$ and $x_{q_n}$ that does not contain $x_{q_{n+1}}$. Then we have
$$
g^i(I_n) \cap (I_n \cup I_{n+1}) = \varnothing
\matsp{for}
1 \leq i < q_{n+1},
$$
and
$$
g^{q_{n+1}}(I_n) \subset I_n \cup I_{n+1}.
$$
In other words, $g^{q_{n+1}}|_{I_n}$ is the {\it first return map} of $g$ on $I_n$ to $I_n \cup I_{n+1}$.

The collection of arcs
\begin{equation}\label{eq:dyn part}
\cI_n := \{g^i(I_n) \; | \; 0\leq i < q_{n+1}\} \cup \{g^i(I_{n+1}) \; | \; 0 \leq i < q_n\}
\end{equation}
partitions $\partial \bbD$. We call $\cI_n$ the {\it $n$th dynamical partition of $\partial \bbD$}. It is easy to see that the arc $I_n$ can be partitioned into the following collection of subarcs (listed in the order they appear from $x_{q_n}$ to $x_0$):
$$
\hat \cI_{n+1} := \{g^{q_n}(I_{n+1}), g^{q_n + q_{n+1}}(I_{n+1}), \ldots, g^{q_n + (a_{n+1}-1)q_{n+1}}(I_{n+1}), I_{n+2}\}.
$$
Replacing $g^i(I_n)$ in $\cI_n$ by the images of the subarcs in $\hat \cI_{n+1}$ under $g^i$ for $i < q_{n+1}$ refines $\cI_n$ to $\cI_{n+1}$.

\subsection{Real \emph{a priori} bounds}

Henceforth, assume that the circle homeomorphism $g$ is analytic. Let $\Crit(g) \subset \pbbD$ be the finite set of critical points of $g$, and let
$$
\deg(\Crit(g)) := \{\deg(c) \; | \; c \in \Crit(g)\}.
$$

\begin{notn}
Let $I \subset \partial \bbD$ be an arc. Denote its arclength by $|I|$.
\end{notn}

In \cite{He2}, Herman proved the following geometric result about dynamic partitions of $\pbbD$ generated by analytic circle homeomorphisms (see also the translation by Ch\'eritat \cite{Cheri1}). It is based on estimates obtained by \'Swi\c atek in \cite{Sw}.

\begin{thm}[Bounded real geometry]\label{geometry bound}
Let $n \geq 0$. For each adjacent arcs $I$ and $J$ in the $n$th dynamic partition $\cI_n$, we have
$$
\frac{1}{K}|J| < |I| < K |J|,
$$
for some $K >1$ depending only on $g$. Consequently, there exist universal constants $0 < \mu_1 < \mu_2 < 1$ such that
$$
\frac{1}{K}\mu_1^n < |I_n| < K\mu_2^n.
$$
\end{thm}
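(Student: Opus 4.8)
The plan is to recognize this as Herman's theorem and present its proof as a reduction to \'Swi\c atek's cross‑ratio distortion estimates, followed by the now‑standard ``real bounds'' bootstrap; I will only lay out the architecture.

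First I would recall \'Swi\c atek's distortion tool. For four ordered points on an arc one has the cross‑ratio $\mathbf{Cr}$ (equivalently the Poincar\'e length of one subarc inside another), and the content of \'Swi\c atek's estimate is that an analytic circle homeomorphism distorts $\mathbf{Cr}$ in a controlled way: on a neighbourhood of $\pbbD \setminus \Crit(g)$ the map is a uniformly smooth diffeomorphism, hence multiplies $\mathbf{Cr}$ by $1 + O(\ell)$, where $\ell$ is the length of the shortest arc spanning the four points (a Koebe / bounded‑nonlinearity bound); while near each $c \in \Crit(g)$ the map factors as a diffeomorphism composed with $z \mapsto z^{\deg(c)}$ composed with a diffeomorphism, and the power map multiplies $\mathbf{Cr}$ by a factor bounded purely in terms of $\deg(c)$. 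Consequently an iterate $g^{j}$ expands $\mathbf{Cr}$ by at most a product of finitely many power‑map factors — one for each time the relevant arc straddles a critical point as it is pushed forward — times $\exp\big(O(\sum_i \ell_i)\big)$, where the $\ell_i$ run over the lengths of the successive images of that arc.

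Next I would exploit the closest‑return combinatorics already set up above. The property $g^{i}(I_n) \cap (I_n \cup I_{n+1}) = \varnothing$ for $1 \le i < q_{n+1}$ forces the arcs $I_n, g(I_n), \ldots, g^{q_{n+1}-1}(I_n)$ to be pairwise disjoint, and a standard combinatorial lemma shows that the arcs $g^{j}(I_{n-1}\cup I_n)$ along the first‑return orbit cover $\pbbD$ with uniformly bounded multiplicity. Hence the total length $\sum_i \ell_i$ over such an orbit is $\le C|\pbbD|$, and each critical point of $g$ lies in only boundedly many of these arcs, so it perturbs the forward orbit only boundedly often. Feeding this into \'Swi\c atek's estimate gives: $g^{j}|_{I_{n-1}\cup I_n}$ has $\mathbf{Cr}$‑distortion bounded by a constant $C = C(g)$, uniformly in $n$ and in $j$ along the first‑return orbit. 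From this the real bounds follow by a short bootstrap: every adjacent pair of atoms of $\cI_n$ is the $g^{j}$‑image of an adjacent pair among the base arcs $I_{n-1}, I_n, I_{n+1}, I_{n+2}$ around $x_0$, so bounded distortion reduces the statement to the base estimate $|I_n| \asymp |I_{n+1}| \asymp |I_{n+2}|$, which in turn one gets by comparing $I_{n+1} \subset I_{n-1}$ with its image under the first‑return map $g^{q_n}|_{I_{n-1}}$, using bounded $\mathbf{Cr}$‑distortion together with $g^{q_n}(I_{n-1}) \subset I_{n-1}\cup I_n$. This produces the constant $K = K(g)$. The two exponential bounds are then elementary counting: the refinement formula exhibits $I_n$ as the disjoint union of $I_{n+2}$ with $a_{n+1}\ge 1$ further atoms of $\cI_{n+1}$, one adjacent to $I_{n+2}$ and so of length $> K^{-1}|I_{n+2}|$, whence $|I_{n+2}| < (1+K^{-1})^{-1}|I_n|$ and thus $|I_n| < K\mu_2^{n}$; the lower bound $|I_n| > K^{-1}\mu_1^{n}$ follows by the dual count, comparing $I_n \subset I_{n-2}$ with the boundedly many comparably sized atoms of $\cI_{n-1}$ that fill out $I_{n-2}$.

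I expect the one genuine obstacle to be the first step — obtaining distortion control in the presence of the critical set $\Crit(g)$, where the Koebe‑type arguments that suffice for diffeomorphisms break down. \'Swi\c atek's power‑map factorization, combined with the disjointness of the orbit pieces of $I_n$ (so that any given critical point is ``felt'' only a bounded number of times along the first‑return orbit), is precisely what converts a potentially unbounded accumulation of distortion into a bound depending only on $g$; everything downstream is routine.
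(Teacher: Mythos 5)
The paper does not actually prove this statement: it is quoted from Herman \cite{He2} (based on \'Swi\c atek's estimates \cite{Sw}), with a pointer to Ch\'eritat's translation, and is used downstream as a black box. So the only meaningful comparison is with the cited proof, and your outline is indeed the standard \'Swi\c atek--Herman argument: cross-ratio distortion controlled by the total length of the disjoint orbit of $I_n$ plus boundedly many power-map factors, followed by a bootstrap from bounded distortion of the return maps to comparability of adjacent atoms, and then the elementary counting for the exponential bounds. Two caveats. First, the critical set is not merely an obstacle to distortion control but is essential to the truth of the statement: for a $g$ with $\Crit(g)=\varnothing$ and unbounded partial quotients (already for a rigid rotation) one has $|I_{n+1}|/|I_n|\asymp 1/a_{n+1}$, so adjacent atoms of $\cI_n$ need not be comparable with a constant depending only on $g$; the theorem is really about critical circle maps, which is the situation in the paper ($g=F|_{\pbbD}$ has the critical point at $1$). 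Second, your reduction step is stated too strongly: an adjacent pair such as $g^{i}(I_n)$ and $g^{i-q_n}(I_n)$ pulls back to the pair $\bigl(I_n, g^{q_n}(I_n)\bigr)$, not to a pair among $I_{n-1},\dots,I_{n+2}$, and inside $I_n$ one must also compare the $a_{n+1}$ consecutive return images $g^{q_n+jq_{n+1}}(I_{n+1})$ to one another; the standard proof handles all of these with the same cross-ratio inequality (via bounded distortion of $g^{q_{n+1}}$ on $I_n$), but the chain of comparisons is longer than your sketch suggests. Also, for the lower bound $|I_n|>K^{-1}\mu_1^n$ no ``dual count'' is needed: $I_n$ and $I_{n+1}$ are themselves adjacent atoms of $\cI_n$, so $|I_{n+1}|>K^{-1}|I_n|$ and one just iterates. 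None of this affects the viability of the architecture.
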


\begin{cor}[Quasisymmetric conjugacy]\label{hermancor}
Suppose the rotation number $\rho \in (\bbR \setminus \bbQ)/\bbZ$ is of bounded type. Then there exists $K>1$, and a $K$-quasisymmetric homeomorphism $h : \partial \bbD \to \partial \bbD$ such that
$$
h \circ g \circ h^{-1}(z) = e^{2\pi i \rho} z
\matsp{for}
z \in \partial \bbD.
$$
\end{cor}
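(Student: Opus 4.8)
The plan is to deduce \corref{hermancor} from \thmref{geometry bound} via the classical principle that a circle homeomorphism realizing a combinatorial isomorphism between two dynamical partitions, each of which has bounded geometry, is automatically quasisymmetric. First I would produce $h$ for soft reasons: since $g$ is analytic it is in particular $C^{2}$, so by Denjoy's theorem there is a homeomorphism $h:\pbbD\to\pbbD$ with $h\circ g\circ h^{-1}(z)=e^{2\pi i\rho}z=:R(z)$. Fix the base point $x_{0}$ used to build the partitions $\cI_{n}$ and normalize $h$ by $h(x_{0})=1$. Since $R$ has the same rotation number $\rho$, hence the same closest return times $q_{n}$, the point $1$ generates a dynamical partition $\cI_{n}^{R}$ of $R$ by the very same recipe \eqref{eq:dyn part}; and because $h$ conjugates $g$ to $R$ while preserving cyclic order and the marked orbit, $h$ carries $\cI_{n}$ onto $\cI_{n}^{R}$ arc by arc and order-preservingly, for every $n\ge 1$.

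Next I would extract from the bounded-type hypothesis that both partition sequences have \emph{bounded geometry}: there is a constant $C=C(g)\ge 1$ such that, in each of $\{\cI_{n}\}$ and $\{\cI_{n}^{R}\}$, all arcs of a given generation have mutually comparable lengths with ratio at most $C$. For $\{\cI_{n}^{R}\}$ this is elementary, since $\cI_{n}^{R}$ has only the two arclengths $\dist(q_{n}\rho,\bbZ)$ and $\dist(q_{n+1}\rho,\bbZ)$, and these are comparable because $q_{n}\le q_{n+1}\le(\tau+1)q_{n}$ and $\dist(q_{k}\rho,\bbZ)\asymp q_{k+1}^{-1}$. For $\{\cI_{n}\}$ this is the real \emph{a priori} bounds of \'Swi\c atek and Herman underlying \thmref{geometry bound}: the adjacency estimate of \thmref{geometry bound} applied across generations, together with the observation that passing from $\cI_{n}$ to $\cI_{n+1}$ subdivides each arc into at most $a_{n+1}+1\le\tau+1$ subarcs (and subdivides only the arcs $g^{i}(I_{n})$), yields in the bounded-type case that all arcs of $\cI_{n}$ are mutually comparable, uniformly in $n$. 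In either sequence, since the $n$th partition then consists of $q_{n}+q_{n+1}\asymp q_{n}$ arcs of comparable length tiling $\pbbD$, every arc has length $\asymp q_{n}^{-1}$; in particular $\operatorname{mesh}(\cI_{n})\asymp q_{n}^{-1}$ decreases to $0$, and consecutive meshes differ by a factor at most $\sim\tau+1$.

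Then I would run the three-point characterization of quasisymmetry. Given $x,y,z\in\pbbD$ with $|[x,y]|=|[y,z]|=\ell$, choose $n$ with $q_{n}^{-1}\asymp\ell$ (possible since $\{q_{n}^{-1}\}$ decreases to $0$ with consecutive ratios bounded by $\tau+1$). Then every arc of $\cI_{n}$ has length comparable to $\ell$, so each of $[x,y]$ and $[y,z]$ meets between one and $O(1)$ arcs of $\cI_{n}$ and lies in the union of those; passing to a bounded-depth refinement $\cI_{n+m}$ one also gets that each of $[x,y],[y,z]$ \emph{contains} a whole arc of comparable length. Applying $h$ and using that $\cI_{n}^{R}$ (and its refinements) likewise has all arcs of length $\asymp q_{n}^{-1}\asymp\ell$, it follows that $|[h(x),h(y)]|$ and $|[h(y),h(z)]|$ are both comparable to $q_{n}^{-1}$, with a constant depending only on $C$ and $\tau$; hence their ratio is bounded above and below, which is the quasisymmetry inequality for $h$. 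This proves \corref{hermancor}.

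The step I expect to be the genuine obstacle is the assertion in the middle paragraph that the dynamical partitions $\cI_{n}$ of the analytic circle homeomorphism $g$ have the \emph{strong} bounded-geometry property---all arcs of $\cI_{n}$ mutually comparable, with a constant independent of $n$---precisely when $\rho$ is of bounded type. This is not formal from the adjacency bound alone (which only controls neighboring arcs and could in principle permit long chains of geometrically shrinking arcs); it is the substance of \'Swi\c atek's distortion estimates, as packaged by Herman in \cite{He2}, and is in effect equivalent in strength to \corref{hermancor} itself. Granting it, Steps 1 and 3 are soft, and the only remaining care is to fix a single constant $C$ that serves both partition sequences simultaneously.
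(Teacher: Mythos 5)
There is a genuine gap, and it sits exactly where you suspected---but the situation is worse than you describe: the ``strong bounded geometry'' property you need in the middle paragraph (all arcs of $\cI_n$ mutually comparable, uniformly in $n$) is not merely hard to extract from \thmref{geometry bound}; it is \emph{false} for critical circle maps. Take the base point $x_0$ to be the critical point $c$ of local degree $2m+1$ (as the paper does in \secref{sec:puzzle}). Then $I_n=(x_0,x_{q_n})_{\pbbD}$ and $g(I_n)=g^1(I_n)$ are both atoms of $\cI_n$ for $n\geq 2$, yet $|g(I_n)|\asymp|I_n|^{2m+1}$ because $g(x)-g(c)\asymp(x-c)^{2m+1}$ near $c$; hence the ratio of these two atoms is $\asymp|I_n|^{2m}\to 0$. (For an arbitrary base point the same phenomenon occurs for the atom whose image under one more iterate of $g$ has just passed through the critical point.) This does not contradict \thmref{geometry bound} because $I_n$ and $g(I_n)$ are not adjacent, but it destroys the third paragraph of your argument, which needs every arc of $\cI_n$ to have length $\asymp q_n^{-1}$. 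Note also that the claim is not ``equivalent in strength to \corref{hermancor}'': a quasisymmetric circle homeomorphism is only H\"older, so the fact that $h$ sends all atoms of $\cI_n$ to arcs of length $\asymp q_{n+1}^{-1}$ constrains the atoms of $\cI_n$ only up to a power, not up to a bounded ratio. A smaller but real problem is the first step: $g$ is an analytic circle \emph{homeomorphism} with critical points, not a diffeomorphism, so Denjoy's theorem does not apply; the topological conjugacy requires Yoccoz's no-wandering-intervals theorem for non-flat critical circle maps.

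For comparison: the paper offers no argument at all---\corref{hermancor} is quoted as the classical Herman--\'Swi\c atek theorem, whose substance lives in \cite{He2}, \cite{Sw} (see also \cite{Cheri1}). The actual derivation of quasisymmetry there uses only the \emph{local} bounded geometry that \thmref{geometry bound} really provides (adjacent atoms comparable, and, in the bounded type case, each atom of $\cI_{n+1}$ comparable to its parent in $\cI_n$ since a parent has at most $a_{n+2}+1\leq\tau+1$ children), and it copes with the long monotone chains of geometrically drifting atoms---which genuinely occur, by the example above---via \'Swi\c atek's cross-ratio inequality and the ``fine grid'' criterion for quasisymmetry, not via a naive count of same-scale atoms. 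If you want a self-contained proof, that is the machinery you would need to reproduce; your Steps 1 and 3 cannot be repaired by any choice of constant because the intermediate statement they rely on fails.
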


The proof of \thmref{geometry bound} involves controlling the distortions of $g$ along the orbits of the closest return arcs. To state this result, it is convenient to lift the action of $g$ on $\pbbD$ to the real line $\bbR$.

Define $\ixp(z) := e^{2\pi iz}$. Then $\ixp$ is a covering map from $(\bbC, 0)$ to $(\bbC^*, 1)$. In particular, we have $\ixp(\bbR) = \pbbD$. Let $\hg :\bbR \to \bbR$ be the lift of $g : \pbbD \to \pbbD$ via $\ixp$ such that
$$
g \circ \ixp (x) = \ixp \circ \hg(x)
\matsp{for}
x\in \bbR,
$$
and $\hg(0) \in (0, 1)$. Then $\hg(x+m) = \hg(x)+m$ for $x \in \bbR$ and $m \in \bbZ$, and
$$
\rho = \lim_{n \to \infty}\frac{\hg^n(x) - x}{n}.
$$
For $n \geq 1$, let $\hI_n$ be the open interval in $\bbR$ with one endpoint at $0$ such that $\ixp$ maps $\hI_n$ to the $n$th closest return arc $I_n \subset \pbbD$. We refer to $\hI_n$ as the {\it $n$th closest return interval}.

A {\it power map} $P : \bbC \to \bbC$ of {\it degree $d\in\bbN$} is given by
$$
P(z) := (z-a)^d+b
\matsp{for}
z \in \bbC,
$$
where $a, b \in \bbC$.  We say that $P$ is {\it real} if $a, b \in \bbR$, and {\it odd} if $d$ is odd. Real odd power maps restrict to homeomorphisms of $\bbR$.

Let $I \subset \bbR$ be an interval, and let $\phi : I \to \phi(I) \subset \bbR$ be an orientation-preserving diffeomorphism. We say that $\phi$ has {\it $K$-bounded distortion} for some $K > 0$ if
$$
\frac{1}{K} \leq \frac{\phi'(x)}{\phi'(y)} \leq K
\matsp{for all}
x,y \in I.
$$

Recall that the first return map of $g$ on the $n$th closest return arc $I_n \in \cI_n$ is given by $g^{q_{n+1}}|_{I_n}$. \thmref{geometry bound} is a consequence of the following result proved in \cite{He2}.

\begin{thm}[Bounded real distortion]\label{distortion bound}
For $n \geq 1$ and $0 \leq i \leq q_{n+1}$, the iterate $\hg^i$ restricted to the $n$th closest return interval $\hI_n \subset \bbR$ factors into a composition of the form:
\begin{equation}\label{eq:real factor}
\hg^i|_{\hI_n} =\phi_0 \circ P_1 \circ  \phi_1 \circ \ldots \circ P_l \circ \phi_l,
\end{equation}
where $P_k$ is a real odd power map of degree $d_k \in \deg(\Crit(g))$, and $\phi_k$ is a real analytic diffeomorphism. Moreover, $l \leq 2\#\Crit(g)$, and the distortion of $\phi_k$ is uniformly bounded independently of $n$ and $i$.
\end{thm}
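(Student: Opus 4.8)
\emph{Strategy.} The plan is to reconstruct the argument of \'Swi\c atek \cite{Sw} and Herman \cite{He2} (see also the exposition by Ch\'eritat \cite{Cheri1}), organized around the combinatorial disjointness of the orbit of the closest return interval. The structural input is that, since the collection $\cI_n$ from \eqref{eq:dyn part} partitions $\pbbD$, the intervals $\hg^j(\hI_n)$ for $0 \le j < q_{n+1}$ have pairwise disjoint interiors; hence for every $i \le q_{n+1}$ the orbit segment $\hI_n, \hg(\hI_n), \dots, \hg^{i-1}(\hI_n)$ is a chain of intervals whose projections to $\pbbD$ are disjoint, so $\sum_j |\hg^j(\hI_n)| \le 1$. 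Away from $\Crit(g)$ the analytic map $\hg$ is a local diffeomorphism, while near each critical point $c$ --- which, since $g$ is an orientation-preserving circle homeomorphism, has finite \emph{odd} local degree $d_c$ --- one can write, on a fixed neighborhood of $c$, $\hg = L_c \circ Q_c \circ R_c$ with $Q_c(z) = (z-c)^{d_c}$ a real odd power map and $L_c$, $R_c$ real-analytic diffeomorphisms of distortion depending only on $g$. So the only mechanism that can produce unbounded distortion in $\hg^i|_{\hI_n}$ is the orbit of $\hI_n$ coming close to a critical point.

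\emph{Step 1 (combinatorics: boundedly many critical passages).} I would first show that for each fixed $c \in \Crit(g)$ there are at most two indices $j \in \{0, \dots, i-1\}$ at which the orbit interval $\hg^j(\hI_n)$ is ``critically close'' to $c$, i.e., lies within a distance of $c$ comparable to its own length (equivalently, at which the local power-map factor $Q_c$ cannot simply be absorbed, with bounded distortion, into the adjoining diffeomorphisms). This rests on the pairwise disjointness of the $\hg^j(\hI_n)$ together with the structure of $\hg$ near $c$; the precise bookkeeping --- including the boundary effects at $\partial \hI_n$ that are responsible for the factor of two --- is carried out in \cite{Sw,He2}. Let $0 \le j_1 < \dots < j_l < i$ with $l \le 2\#\Crit(g)$ be the resulting critical times.

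\emph{Step 2 (distortion of the diffeomorphic stretches, and assembly).} Between consecutive critical times, the corresponding iterate of $\hg$, restricted to the relevant interval in the orbit of $\hI_n$, is a diffeomorphism whose orbit avoids $\Crit(g)$; I would bound its distortion uniformly in $n$ and $i$ by the real cross-ratio / macroscopic Koebe estimates for analytic interval maps, whose inputs are: (i) analyticity of $\hg$ (which supplies a uniform nonlinearity bound on compact subsets of $\pbbD \setminus \Crit(g)$ together with the cross-ratio inequality of \'Swi\c atek); (ii) the summability $\sum_j |\hg^j(\hI_n)| \le 1$ recorded above; and (iii) a definite amount of Koebe space around $\hI_n$, obtained by enlarging it to a union of a bounded number of adjacent elements of the dynamical partition $\cI_n$ --- a purely combinatorial enlargement whose orbit under $\hg^j$, $0 \le j < i$, still has bounded overlap multiplicity. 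Splitting $\hg^i|_{\hI_n}$ at the critical times $j_1 < \dots < j_l$, inserting at each one the power map $Q_c$ together with its bounded-distortion local conjugating diffeomorphisms, and recording the intervening diffeomorphic stretches (absorbing the $L_c$, $R_c$ into them) as the maps $\phi_0, \dots, \phi_l$, yields a factorization of the form \eqref{eq:real factor} with $l \le 2\#\Crit(g)$ and with every $\phi_k$ of uniformly bounded distortion.

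\emph{Main obstacle.} The crux is the distortion bound in Step 2. Since \thmref{geometry bound} is deduced \emph{from} the present theorem, no a priori metric regularity of the dynamical partitions is available, so the distortion control for the long diffeomorphic stretches must rest solely on disjointness (hence summability of lengths) and on the analyticity of $\hg$. Supplying this --- the cross-ratio inequality and the resulting macroscopic Koebe principle for analytic circle maps with finitely many critical points of odd type --- is the substantive part of the proof, and is precisely the content of \'Swi\c atek's estimates as exploited by Herman. A secondary, purely combinatorial difficulty is the sharp bookkeeping of Step 1 that keeps the number of power-map factors bounded by $2\#\Crit(g)$.
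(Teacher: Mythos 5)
The paper does not prove this theorem: it is quoted verbatim as a result of Herman \cite{He2}, resting on \'Swi\c atek's estimates \cite{Sw}, so there is no in-paper argument to compare against. Your outline faithfully reconstructs the standard route in those references --- disjointness of the orbit of $\hI_n$ up to the first return time, at most two critical passages per critical point (cf.\ the paper's own \lemref{inverse orbit intersect}, which does the same counting with the enlarged intervals $I'_n$), and cross-ratio/macroscopic-Koebe control of the intervening diffeomorphic stretches --- and you correctly flag that \thmref{geometry bound} cannot be used as input since it is deduced from this theorem. Be aware, though, that your write-up is a proof outline rather than a proof: the two substantive points you isolate (the cross-ratio inequality for analytic maps with odd critical points, and the bookkeeping giving $l \le 2\#\Crit(g)$) are both deferred back to \cite{Sw,He2}, which is exactly the level of detail at which the paper itself treats the statement.
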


\thmref{geometry bound} and \thmref{distortion bound} are collectively referred to as real {\it a priori} bounds.

\subsection{Complex extensions}\label{subsec:comp ext}

The first return map $g^{q_{n+1}}|_{I_n}$ of $g$ on the $n$th closest return arc $I_n \subset \pbbD$ extends analytically to a neighborhood of $I_n$ in $\bbC$. Loosely speaking, we say that $g$ has complex {\it a priori} bounds if the modulus of the fundamental annulus of this extension has a uniform lower bound independent of $n$. This estimate was obtained for uni-critical circle maps by Yampolsky in \cite{Ya1}, and for multi-critical circle maps with bounded type rotation numbers by Estevez, Smania and Yampolsky in \cite{EsSmYa}. 

Complex a priori bounds provides strong control over the small-scale geometry of complex extensions of analytic circle maps. However, for our application, we only need a softer version of this result, which we formulate and prove below for all irrational rotation numbers.

\begin{notn}\label{neighborhood}
Let $S \subset \bbC$. For $r >0$, denote the $r$-neighborhood of $S$ in $\bbC$ by
$$
N_r(S) :=  \{z \in \bbC \; | \; \dist(z, S) < r\}.
$$
\end{notn}

Let $I \subset \bbR$ be a compact interval, and let $\phi : I \to \phi(I) \subset \bbR$ be a real analytic diffeomorphism. We say that $\phi$ has an {\it $\eta$-complex extension} for some $\eta > 0$ if $\phi$ extends to a conformal map on $N_{\eta|I|}(I)$.

\begin{thm}[Uniform complex extension]\label{complex extension}
There exists a uniform constant $\eta >0$ independent of $n$ and $i$ such that the analytic diffeomorphisms $\phi_k$'s in \eqref{eq:real factor} have $\eta$-complex extensions.\footnote{That \thmref{complex extension} does not follow immediately from real {\it a priori} bounds was pointed out to me by D. Dudko and M. Lyubich.}
\end{thm}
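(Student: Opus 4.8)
The plan is to reduce the statement to a bounded-geometry estimate on the closest return arcs combined with the Koebe distortion theorem applied to inverse branches. The key point is that although the factorization \eqref{eq:real factor} passes through power maps $P_k$ which do not have complex extensions near their critical values (their inverses have definite ramification), each diffeomorphism $\phi_k$ is, by construction in the proof of \thmref{distortion bound}, a restriction of an \emph{iterate} $\hg^j$ of $\hg$ — or an inverse branch of such — defined on an interval $T_k \subset \bbR$ that is a ``puzzle interval'' of the dynamical partition several generations above $\hI_n$. So I would first recall from \cite{He2} (or reconstruct) precisely which intervals $T_k$ appear and what their images are: each $\phi_k : T_k \to \phi_k(T_k)$ is a univalent iterate of $\hg$, and crucially $T_k$ sits inside a strictly larger interval $\widehat{T}_k$ on which the \emph{same} iterate is still univalent, with $|\widehat{T}_k| / |T_k|$ uniformly bounded below and $\dist(T_k, \partial \widehat{T}_k) \geq \theta |T_k|$ for a universal $\theta > 0$. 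This last ``definite space'' property is exactly what bounded real geometry (\thmref{geometry bound}) furnishes: adjacent arcs in a dynamical partition are comparable, so $T_k$ is flanked on both sides by intervals of comparable size that are mapped univalently along with it.

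Given this, the second step is to produce the complex extension directly. The iterate $\hg^j$ extends holomorphically to a neighborhood of $\bbR$ in $\bbC$ because $g$ is analytic; the issue is only to get a \emph{definite} neighborhood, i.e. one of size comparable to $|T_k|$, on which it is still univalent. For a single application of $\hg$ this is immediate from analyticity and compactness (away from critical points, $\hg$ is a local diffeomorphism with derivative bounded away from $0$ and $\infty$ on a fixed real neighborhood, hence univalent on a fixed complex neighborhood). The problem is the composition: after many iterates the real neighborhood on which univalence survives can shrink. Here I would invoke the Schwarz–Koebe philosophy: consider the univalent map $\hg^{-j}$ (inverse branch) defined on the image $\hg^j(\widehat{T}_k)$, which is a real interval. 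Because $\hg$ has an analytic, hence holomorphic, extension to a fixed complex strip $\{|\Im z| < \epsilon_0\}$ with bounded real trace, and because $\hg^j(\widehat{T}_k)$ is an interval of length controlled by bounded real geometry (uniformly small, contained in a single period), the forward iterate $\hg^j$ maps a complex neighborhood of $\widehat{T}_k$ of size comparable to $|\widehat{T}_k|$ into the strip; then Koebe applied to the restriction of this univalent map to the smaller interval $T_k$ (which has definite space inside $\widehat{T}_k$) shows $\hg^j$ is univalent on $N_{\eta |T_k|}(T_k)$ for a universal $\eta$ depending only on $\theta$, $K$ and the degrees. The degrees enter only through the bound $l \leq 2\#\Crit(g)$ on the number of factors and through the universal constant $K$ in \thmref{distortion bound}; since $l$ is bounded, the constants compose a bounded number of times.

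The main obstacle I anticipate is making rigorous the claim that each $\phi_k$ admits an enlargement $\widehat{T}_k$ with definite space on which the \emph{same} univalent iterate is defined — this requires going back into the combinatorial structure of Herman–Świątek's factorization rather than using \thmref{distortion bound} as a black box, because the naive enlargement of $T_k$ might hit a critical point of the relevant iterate. The fix is to choose $\widehat{T}_k$ to be the appropriate union of adjacent partition arcs that stays within a single "monotone branch" of $\hg^{q_{n+1}}$ — bounded real geometry guarantees such a choice of comparable size exists because the critical points, being finitely many and lying on $\pbbD$, occupy only finitely many partition arcs at each generation, and one can always retreat one generation to gain definite space while losing only a bounded factor in size. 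Once this combinatorial bookkeeping is in place, the analytic part (Koebe + holomorphic extension to a fixed strip + compactness) is routine, and the uniformity of $\eta$ over all $n$ and $i$ follows since every constant invoked — $K$, $\theta$, $\epsilon_0$, $l$, the elements of $\deg(\Crit(g))$ — depends only on $g$, not on $n$ or $i$.
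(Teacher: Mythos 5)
There is a genuine gap at the central analytic step of your argument. You assert that ``the forward iterate $\hg^j$ maps a complex neighborhood of $\widehat{T}_k$ of size comparable to $|\widehat{T}_k|$ into the strip'' and then apply Koebe to the restriction to $T_k$. But Koebe distortion requires the map (or its inverse branch) to already be univalent on a two-dimensional domain of definite modulus around the interval, and that is precisely what needs to be proved: definite space on the real line ($T_k \Subset \widehat{T}_k$, which indeed follows from \thmref{geometry bound}) gives you nothing two-dimensional for free. For a long composition $\hg^j = \hg \circ \cdots \circ \hg$, each factor is conformal on a fixed strip away from the critical points, but the image of a Euclidean disk of radius comparable to the (geometrically shrinking) interval lengths can leave the strip or fail to stay univalent, and one cannot control where it goes without already knowing the distortion of the full composition --- which is the circularity. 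This is exactly the pitfall flagged in the footnote to the theorem: the complex extension does not follow from real a priori bounds plus ``routine'' Koebe and compactness.

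The paper breaks this circularity with the Poincar\'e-neighborhood machinery: one works with hyperbolic neighborhoods $D_\theta(I)$ of intervals in the double-slit plane $\bbC|_I$, which are genuinely Schwarz-invariant under maps of the slit plane, and one shows (\lemref{quasi inv} and \lemref{quasi inv iterate}, imported from Yampolsky and de Faria--de Melo) that single and iterated inverse branches of $\hg$ are only \emph{quasi}-invariant, losing a factor $(1-|I|^\kappa)$ of external angle per step; the total angle loss along the backward orbit is summable precisely because the real bounds make the interval lengths decay geometrically. A companion statement (\lemref{quasi inv power}) handles the passage through each power map $P_k$. The proof then pulls back $D_{\pi/2}(J'_0)$ inductively through the factorization, keeping all external angles bounded away from $0$ and $\pi$, and converts the resulting definite Poincar\'e neighborhoods into a definite $\eta$. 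Your combinatorial preparation --- retreating to enlarged intervals with definite space, bounding the number of critical passages by $2\#\Crit(g)$ --- matches the paper's setup, but without the angle-loss bookkeeping (or an equivalent substitute) the analytic core of the proof is missing.
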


To prove \thmref{complex extension}, first observe that the lifted map $\hg : \bbR \to \bbR$ extends analytically to a neighborhood $U$ of $\bbR$ in $\bbC$ such that $\Crit(\hg|_U) = \Crit(\hg|_\bbR)$, and $V := \hg(U) \supset \bbR$ contains a horizontal strip $N_R(\bbR) = \{|\Im(z)| < R\}$ for some $R >0$. Moreover, if $r >0$ is sufficiently small, then for any $c \in \Crit(\hg)$, the restriction of $\hg$ to the $r$-neighborhood $N_r(c) = \{|z-c|<r\} \subset \bbC$ of $c$ factors into the composition
\begin{equation}\label{eq:power decomp}
\hg|_{N_r(c)} = P_c \circ \psi_c,
\end{equation}
where $P_c$ is a real odd power map of degree $\deg(c)$, and $\psi_c$ is a conformal map on $N_r(c)$.

Recall that the endpoints of the arc $I_n \subset \pbbD$ are $x_0$ and $x_{q_n} := g^{q_n}(x_0)$, where $x_0 \in \pbbD$ is some given point. Let $I'_n \Supset I_n$ be the arc with endpoints $x_{q_{n+1}}$ and $x_{q_n+q_{n+2}}$ that does not contain $x_{q_{n-1}}$. Denote by $\hI'_n \Supset \hI_n$ the lift of $I'_n$ such that $\ixp(\hI'_n) = I'_n$.

Let $\cI$ be a collection of arcs in $\partial\bbD$. The {\it intersection multiplicity} of $\cI$ is the maximum number of arcs in $\cI$ whose interiors have a nonempty intersection. The following result is elementary.

\begin{lem}\label{inverse orbit intersect}
The intersection multiplicity of $\{g^j(I'_n)\}_{j=0}^{q_{n+1}-1}$ is $2$. Consequently, for $n$ sufficiently large, every critical point $c \in \Crit(g)$ is contained in at most two elements in $\{g^j(I'_n)\}_{j=0}^{q_{n+1}-1}$.
\end{lem}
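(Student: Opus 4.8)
The plan is to read off the combinatorial shape of $I'_n$ from the refinement rule for the dynamical partitions and then transport it by the iterates $g^j$. First I would establish the structural identity
$$
I'_n \;=\; I_{n+1}\;\cup\;I_n\;\cup\;g^{q_n}(I_{n+2}),
$$
a union of three arcs meeting only at the shared endpoints $x_0$ (between $I_{n+1}$ and $I_n$) and $x_{q_n}$ (between $I_n$ and $g^{q_n}(I_{n+2})$), in which $I_{n+1}$ is the element of $\cI_n$ abutting $I_n$ at $x_0$, and $g^{q_n}(I_{n+2})$ is the extreme sub-arc -- in the refinement $\hat\cI_{n+1}$ of $I_n$, read from $x_{q_n}$ -- of the element $g^{q_n}(I_n)\in\cI_n$ abutting $I_n$ at $x_{q_n}$. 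This is an elementary consequence of the definitions of the closest return arcs and the continued-fraction identities recalled above: the free endpoints of the glued arc are $x_{q_{n+1}}$ and $x_{q_n+q_{n+2}}$, and a short check (using that the orbit point $x_{q_{n-1}}$ is an endpoint of elements of $\cI_n$, so lies in none of $I_{n+1}$, $I_n$, $g^{q_n}(I_n)$) shows the glued arc avoids $x_{q_{n-1}}$. Applying $g^j$ gives, for every $j$, the decomposition $g^j(I'_n)=g^j(I_{n+1})\cup g^j(I_n)\cup g^{j+q_n}(I_{n+2})$. The lower bound on the multiplicity is then immediate: since $I_{n+2}\subset I_n$ we have $g^{q_n}(I'_n)\supseteq g^{q_n}(I_n)\supseteq g^{q_n}(I_{n+2})$, while $I'_n\supseteq g^{q_n}(I_{n+2})$ by the identity, so these two arcs share the sub-arc $g^{q_n}(I_{n+2})$, which has nonempty interior, and $q_n\neq 0$.

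For the upper bound, fix $y\in\pbbD$ that is not an endpoint of any of the finitely many arcs $g^j(I_n)$, $g^i(I_{n+1})$, $g^j(I'_n)$. I would use two inputs: (i) the families $\{g^j(I_n):0\le j<q_{n+1}\}$ and $\{g^i(I_{n+1}):0\le i<q_n\}$ each consist of arcs with pairwise disjoint interiors, being sub-families of the partition $\cI_n$ of $\pbbD$; and (ii) the refinement formulas yield the inclusions $g^{q_n}(I_{n+1})\subset I_n$, $I_{n+2}\subset I_n$, $g^{q_{n+1}}(I_{n+2})\subset I_{n+1}$, and exhibit $g^{q_n}(I_{n+1})$ and $I_{n+2}$ as the two \emph{distinct} extreme sub-arcs of $I_n$, at its $x_{q_n}$-end and its $x_0$-end respectively. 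Feeding (i)--(ii) into the decomposition of $g^j(I'_n)$, and reducing indices once modulo $q_{n+1}$ where necessary, one finds: if $y$ lies in the interior of $g^{j_1}(I_n)$ -- necessarily for a unique $j_1$ -- then every $j$ with $y$ interior to $g^j(I'_n)$ lies in $\{j_1,\,j_1+q_n,\,j_1-q_n\}$; and membership of $y$ in $g^{j_1+q_n}(I'_n)$, resp. $g^{j_1-q_n}(I'_n)$, forces $y$ into $g^{j_1+q_n}(I_{n+1})$, resp. $g^{j_1}(I_{n+2})$, which are the $g^{j_1}$-images of the two disjoint extreme sub-arcs of $I_n$ and hence lie at opposite ends of $g^{j_1}(I_n)$; so at most one of $j_1\pm q_n$ can occur. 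The remaining case, that $y$ lies in the interior of some $g^{i_0}(I_{n+1})\in\cI_n$, is handled by the same bookkeeping and leaves only $j\in\{i_0,\,q_{n+1}+i_0-q_n\}$. In every case $y$ is interior to at most two of the arcs, so the multiplicity equals $2$.

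For the corollary, \thmref{geometry bound} forces the mesh of $\cI_n$, hence $\max_j|g^j(I'_n)|$, to tend to $0$, so for $n$ large a critical point contained in $g^j(I'_n)$ lies in its interior unless it is an endpoint; since we may take $x_0$ outside the countable set $\bigcup_{k\ge0}g^{-k}(\Crit(g))$, the endpoints $x_{j+q_{n+1}}$ and $x_{j+q_n+q_{n+2}}$ of $g^j(I'_n)$ are never critical, and the claim follows from the multiplicity bound. The step I expect to be the real work is the upper bound: correctly tracking, through at most one reduction modulo $q_{n+1}$, which element of $\cI_n$ each overflow piece $g^j(I_{n+1})$ and $g^{j+q_n}(I_{n+2})$ lands inside, and verifying that the two candidate overflow arcs at a given $I_n$-image sit at its opposite ends. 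Establishing the structural identity of the first paragraph from the continued-fraction combinatorics is routine but also requires care.
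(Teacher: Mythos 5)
Your proof is correct. The paper offers no argument for this lemma---it is simply declared elementary---so there is nothing to compare against; your structural identity $I'_n = I_{n+1}\cup\{x_0\}\cup I_n\cup\{x_{q_n}\}\cup g^{q_n}(I_{n+2})$, together with the case analysis of a non-endpoint $y$ against the partition $\cI_n$ and the reduction of candidate indices to $\{j_1,\,j_1\pm q_n\}$ (mutually exclusive for the two overflow pieces, which sit at opposite ends of $g^{j_1}(I_n)$) resp.\ $\{i_0,\,i_0+q_{n+1}-q_n\}$, is exactly the standard verification and is carried out accurately. One minor remark on the corollary: in the paper's eventual application the base point $x_0$ is the critical point $c_0$ itself (with the standing assumption that $c_k$ is non-critical for $k\geq 1$), so one cannot freely move $x_0$ off $\bigcup_{k\geq 0}g^{-k}(\Crit(g))$; but this is harmless, since every endpoint of every $g^j(I'_n)$ is a forward iterate $x_i$ with $i\geq q_{n+1}\geq 1$ (and in the general setup any coincidence $c=x_i$ fixes $i$, which is eventually smaller than $q_{n+1}$), so the endpoints are non-critical for $n$ large and your conclusion stands.
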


Following \cite{EsSmYa}, consider the inverse orbit of $J_0 := \hg^{q_{n+1}}(\hI_n)$:
$$
\cJ_n:= \{J_{-j} := \hg^{q_{n+1}-j}(\hI_n) \; | \; 0 < j \leq q_{n+1}\}.
$$
This inverse orbit is compactly contained in the inverse orbit of $J'_0 := \hg^{q_{n+1}}(\hI'_n)$:
$$
\cJ'_n:= \{J'_{-j} := \hg^{q_{n+1}-j}(\hI'_n) \; | \; 0 < j \leq q_{n+1}\}.
$$
By \thmref{geometry bound}, there exists a uniform constant $K >1$ such that the two components of $J'_{-j} \setminus J_{-j}$ are $K$-commensurate in length to $J_{-j}$ for $0 \leq j \leq q_{n+1}$. Moreover, by \lemref{inverse orbit intersect}, there exists a sequence $0 \leq j_1 < \ldots < j_l < q_{n+1}$ with $l \leq 2 \#\Crit(g)$ such that $J'_{-j-1} \in \cJ'_n$ contains a critical point $c_k \in \Crit(\hg)$ if and only if $j = j_k$ for some $1 \leq k\leq l$.

Let $j_{l+1} := q_{n+1}$. For $1 \leq k \leq l$, we have
$$
\hg^{j_{k+1} - j_k}(J'_{-j_{k+1}}) = J'_{-j_k}.
$$
We may assume the map $\phi_k$ in \thmref{distortion bound} extends to a real analytic diffeomorphism on $J'_{-j_{k+1}}$ such that
$$
\phi_k = \psi_k \circ \hg^{j_{k+1} - j_k-1}|_{J'_{-j_{k+1}}},
$$
where $\psi_k := \psi_{c_k}$ is given in \eqref{eq:power decomp}. Denote $P_k := P_{c_k}$. Then
$$
\hg^{j_{k+1} - j_k}|_{J'_{-j_{k+1}}} = P_k \circ \phi_k.
$$
Lastly, we can assume that $\phi_0$ extends to a real analytic diffeomorphism on $J'_{-j_1}$ such that $\phi_0 = \hg^{j_1}|_{J'_{-j_1}}$.

The {\it Poincar\'e neighborhood} of an interval $I \Subset \bbR$ of hyperbolic radius $r >0$ is defined as the set of points in $\bbC|_I := (\bbC \setminus \bbR)\cup I$ whose hyperbolic distance in $\bbC|_I$ to $I$ is less than $r$. It turns out that this set is given by the $\bbR$-symmetric union of two Euclidean disks whose intersection with $\bbR$ is equal to $I$. It is clear that these disks are determined uniquely by the external angle $\theta \in (0, \pi)$ between their boundaries and $\bbR$. Henceforth, we denote the Poincare neighborhood of $I$ with external angle $\theta$ by $D_\theta(I)$. It is easy to see that
\begin{enumerate}[i)]
\item $D_{\theta_1}(I) \supset D_{\theta_2}(I)$ if $\theta_1 < \theta_2$;
\item $D_\theta(I)$ converges to $\bbC|_I$ and $I$ as $\theta$ goes to $0$ and $\pi$ respectively; and
\item $D_{\pi/2}(I)$ is a single Euclidean disk of diameter $|I|$.
\end{enumerate}

Let $I \Subset \bbR$ be an interval, and let $\phi : \bbC|_I \to \bbC|_{\phi(I)}$ be a real analytic map. Then by Schwarz lemma, $\phi(D_\theta(I)) \subset D_\theta(\phi(I))$ for any $\theta \in (0, \pi)$. This statement does not directly apply to inverse branches of $\hg$, since they do not extend globally to the entire double-slit plane. However, if the base intervals are sufficiently small, then we still have the following quasi-invariance of Poincar\'e neighborhoods (see Lemma 4.4 in \cite{Ya2}).

\begin{lem}\label{quasi inv}
Let $I \Subset \bbR$ be an interval such that $\hg^{-1}$ maps $I$ diffeomorphically onto $\hg^{-1}(I)$. Then there exist $\delta = \delta(|I|) \in (0, \pi)$ with $|I|/\delta(|I|) \to 0$ as $|I| \to 0$, and $\kappa \in (1, 2)$ such that for $\theta \in (\delta, \pi)$ and $0<\tilde \theta \leq \theta(1-|I|^\kappa)$, the inverse map $\hg^{-1}|_I$ extends analytically to a conformal map on $D_{\theta}(I)$, and $\hg^{-1}(D_\theta(I)) \subset D_{\tilde \theta}(\hg^{-1}(I))$.
\end{lem}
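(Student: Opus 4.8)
The plan is to split the statement into a conformal-extension part and a contraction part. First I would show that the branch of $\hg^{-1}$ which is a diffeomorphism on $I$ extends to a conformal map on the Poincar\'e neighborhood $D_\theta(I)$ for every $\theta$ above a threshold $\delta(|I|)$; then I would establish the inclusion $\hg^{-1}(D_\theta(I)) \subset D_{\tilde\theta}(\hg^{-1}(I))$ by a Schwarz-lemma argument, the admissible angular loss $\theta \mapsto \theta(1-|I|^\kappa)$ coming from the fact that on the small interval $I$ the map $\hg^{-1}$ is close to affine.

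For the extension, recall that $\hg$ continues analytically to a neighborhood $U$ of $\bbR$ with $\Crit(\hg|_U)=\Crit(\hg|_\bbR)\subset\bbR$, and near each $c\in\Crit(\hg)$ factors as $\hg|_{N_r(c)}=P_c\circ\psi_c$ with $\psi_c$ conformal and $P_c$ a real odd power map, as in \eqref{eq:power decomp}. Since by hypothesis $\hg^{-1}$ maps $I$ diffeomorphically onto $J:=\hg^{-1}(I)$, the interval $J$ misses $\Crit(\hg)$, so $\hg'$ is nonvanishing on a complex neighborhood of $J$ and $\hg^{-1}$ extends conformally to some neighborhood $\Omega\supset I$ in $\bbC$. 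Using the description of $D_\theta(I)$ near its endpoints --- locally a sector of opening $\pi-\theta$ adjacent to $I$, so that $D_\theta(I)$ collapses onto $I$ as $\theta\to\pi$ --- one checks that $D_\theta(I)\subset\Omega$ once $\theta$ exceeds a threshold $\delta(|I|)$ governed by the distance from $I$ to the nearest critical value (equivalently, by the branch point of $\hg^{-1}$ produced over that value); when $I$ lies at distance $\gtrsim|I|$ from the critical values this threshold is a universal constant, and in general it is $<\pi$ with $|I|/\delta(|I|)\to0$ as $|I|\to0$. I would in fact record the stronger claim that $\hg^{-1}$ still extends conformally to $D_{\delta'}(I)$ for some $\delta'<\delta(|I|)$, so that every $D_\theta(I)$ with $\theta>\delta(|I|)$ is relatively compact in the extension domain.

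For the contraction part, note first that $\hg^{-1}(D_\theta(I))\subset\bbC|_J$: the map $\hg^{-1}$ commutes with complex conjugation and is injective, so it cannot send a non-real point of $D_\theta(I)$ into $\bbR$, while $D_\theta(I)\cap\bbR=I$ is carried into $J\subset\bbR$. Then the Schwarz lemma, applied to the holomorphic map $\hg^{-1}:D_{\delta'}(I)\to\bbC|_J$ carrying $I$ onto $J$, gives for $w\in D_\theta(I)$
$$
\dist_{\bbC|_J}\!\big(\hg^{-1}(w),J\big)\ \le\ \dist_{D_{\delta'}(I)}(w,I).
$$
On the relatively compact subdomain $D_\theta(I)$ the right side is bounded by a universal function of $\theta$ and $\delta'$; comparing this with the corresponding hyperbolic radius of $D_\theta(J)$ inside $\bbC|_J$ and controlling the discrepancy by the distortion of $\hg^{-1}|_I$ --- which, after rescaling $I$ to a fixed interval, converges to an affine map as $|I|\to0$ with a polynomial-in-$|I|$ error --- shows that $\hg^{-1}(D_\theta(I))$ lies in the Poincar\'e neighborhood of $J$ of external angle $\theta(1-|I|^\kappa)$, hence in $D_{\tilde\theta}(\hg^{-1}(I))$ for every $\tilde\theta\le\theta(1-|I|^\kappa)$, for an appropriate $\kappa\in(1,2)$.

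The main obstacle I anticipate is making this last comparison genuinely uniform. The nonlinearity of $\hg^{-1}$ on $I$ can be as large as $\asymp|I|^{-1}$ when $I$ abuts the scale of a critical point, so a polynomial bound on the angular loss can only survive because it is weighed against the thinness of $D_\theta(I)$ for $\theta>\delta(|I|)$; pinning down the right balance between the threshold $\delta(|I|)$ and the exponent $\kappa$, with constants independent of $I$ and of $\theta$ in the admissible range, is the delicate point. This is where one must use both the explicit geometry of Poincar\'e neighborhoods (in particular item (iii), that $D_{\pi/2}(I)$ is a round disc of diameter $|I|$) and the power decomposition \eqref{eq:power decomp} near the critical points.
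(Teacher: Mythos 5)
The paper does not actually prove this lemma: it is quoted from Lemma~4.4 of \cite{Ya2} (see also Lemma~3.3 of \cite{dFdM}), and your outline follows the same general route as that argument --- analytic continuation of the inverse branch to $D_\theta(I)$, followed by a Schwarz-lemma comparison of hyperbolic neighborhoods with a distortion correction. The extension half of your sketch is essentially fine. The gap is in the contraction half, and it is exactly at the point you flag without resolving: your estimate rests on the claim that $\hg^{-1}|_I$, rescaled to unit size, is affine up to a polynomial-in-$|I|$ error, while your last paragraph correctly observes that the nonlinearity can be of order $|I|^{-1}$ when $I$ sits at distance comparable to $|I|$ from a critical value. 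These two statements are incompatible, and in that regime the rescaled inverse branch converges to a root map $z\mapsto z^{1/d}$, not to an affine map, so no exponent $\kappa$ makes the near-affine comparison close.

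The missing idea --- which you gesture at by invoking \eqref{eq:power decomp} but never deploy --- is to factor $\hg^{-1}|_I=\psi_c^{-1}\circ P_c^{-1}$ near a critical point and treat the two factors differently. The branch of $P_c^{-1}$ agreeing with the real branch on $I$ extends to the \emph{entire} doubly-slit plane $\bbC|_I$, because the only critical value of $P_c$ is real and lies outside $I$; being injective and real-symmetric, it maps $\bbC|_I$ into $\bbC|_{P_c^{-1}(I)}$, so the exact Schwarz lemma gives $P_c^{-1}(D_\theta(I))\subset D_\theta(P_c^{-1}(I))$ with \emph{no} angular loss, no matter how close $I$ is to the critical value. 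The polynomial loss $\theta\mapsto\theta(1-|I|^\kappa)$ is then incurred only by the diffeomorphism factor $\psi_c^{-1}$ (and, away from critical points, by $\hg^{-1}$ itself), i.e.\ by maps univalent on a Euclidean neighborhood of definite size, where the Koebe/near-affine estimate is legitimate. Without isolating the power map, your argument does not close. A secondary imprecision: in your Schwarz inequality the quantity you control a priori for $w\in D_\theta(I)$ is the hyperbolic distance from $w$ to $I$ in $\bbC|_I$, which is a \emph{lower} bound for the distance in the smaller domain $D_{\delta'}(I)$ appearing on your right-hand side; saying the latter is ``bounded by a universal function of $\theta$ and $\delta'$'' concedes the whole point, since the required bound is precisely the radius corresponding to $\theta(1-|I|^\kappa)$, and establishing that metric comparison (together with the distortion of the map, which is a separate error source) is the actual content of the lemma.
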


By \thmref{geometry bound}, the maximum length of an interval in the inverse orbit $\cJ'_n$ goes to $0$ as $n$ goes to $\infty$. Combining this fact with \lemref{quasi inv}, and then using induction, we obtain the following result (see Lemma 3.4 in \cite{dFdM}).

\begin{lem}\label{quasi inv iterate}
There exist $K_n > 1$ and $\delta_n \in (0, \pi)$ with $K_n \to 1$ and $\delta_n \to 0$ as $n \to \infty$ such that the following holds. Let $0 \leq j < i \leq q_{n+1}$ be such that $\hg^{-(i-j)}$ maps $J'_{-j}$ diffeomorphically to $J'_{-i}$. Then for $\theta \in (\delta_n, \pi)$ and $0<\tilde \theta \leq \theta/K_n$, the inverse iterate $\hg^{-(i-j)}|_{J'_{-j}}$ extends analytically to a conformal map on $D_\theta(J'_{-j})$, and $\hg^{-(i-j)}(D_\theta(J'_{-j})) \subset D_{\tilde \theta}(J'_{-i})$.
\end{lem}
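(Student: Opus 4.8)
The plan is to iterate the one-step estimate of \lemref{quasi inv} along the finite inverse orbit $J'_{-j}, J'_{-j-1}, \dots, J'_{-i}$, tracking how the external angle of the relevant Poincar\'e neighborhood shrinks at each step. First I would write
$$
\hg^{-(i-j)}\big|_{J'_{-j}} = h_{i-1}\circ h_{i-2}\circ\cdots\circ h_{j+1}\circ h_j ,\qquad h_m := \hg^{-1}\big|_{J'_{-m}}\colon J'_{-m}\to J'_{-m-1}.
$$
The assumption that $\hg^{-(i-j)}$ maps $J'_{-j}$ diffeomorphically onto $J'_{-i}$ means precisely that each factor $h_m$ ($j\le m\le i-1$) is a diffeomorphism, i.e. $\hg^{-1}$ carries $J'_{-m}$ diffeomorphically onto $J'_{-m-1}$; hence \lemref{quasi inv} is applicable at every step of the composition, with base interval $I=J'_{-m}$.

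Next I would set $\ell_n := \max_{0\le m\le q_{n+1}}|J'_{-m}|$. By \thmref{geometry bound} (as noted just before the statement) $\ell_n\to 0$ as $n\to\infty$, while \lemref{inverse orbit intersect} shows that the orbit arcs $\ixp(J'_{-m})$ cover $\pbbD$ with bounded intersection multiplicity, so that $\sum_{m=j}^{i-1}|J'_{-m}|\le C$ for a universal constant $C$. The decisive observation is that the exponent $\kappa$ produced by \lemref{quasi inv} satisfies $\kappa>1$, whence
$$
\sum_{m=j}^{i-1}|J'_{-m}|^{\kappa}\;\le\;\ell_n^{\,\kappa-1}\sum_{m=j}^{i-1}|J'_{-m}|\;\le\;C\,\ell_n^{\,\kappa-1}\;\to\;0\qquad(n\to\infty).
$$
Accordingly I would put $K_n := \big(1-C\ell_n^{\,\kappa-1}\big)^{-1}$ and $\delta_n := K_n\cdot\sup\{\delta(t):0<t\le\ell_n\}$, so that $K_n\to 1$ and $\delta_n\to 0$, using that the function $\delta$ furnished by \lemref{quasi inv} satisfies $\delta(t)\to 0$ as $t\to 0^{+}$.

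Given $\theta\in(\delta_n,\pi)$, I would then run the induction: set $\theta_j := \theta$ and $\theta_{m+1} := \theta_m\big(1-|J'_{-m}|^{\kappa}\big)$ for $j\le m\le i-1$. Telescoping and using $\prod_k(1-a_k)\ge 1-\sum_k a_k$ gives $\theta_m\ge\theta\big(1-C\ell_n^{\,\kappa-1}\big)=\theta/K_n>\delta_n/K_n\ge\delta(|J'_{-m}|)$ for every $m$ in range, so \lemref{quasi inv} applies at step $m$ and yields a conformal extension of $h_m$ to $D_{\theta_m}(J'_{-m})$ with $h_m\big(D_{\theta_m}(J'_{-m})\big)\subset D_{\theta_{m+1}}(J'_{-m-1})$. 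Composing these conformal extensions, $\hg^{-(i-j)}$ extends conformally to $D_{\theta}(J'_{-j})$ and maps it into $D_{\theta_i}(J'_{-i})$ with $\theta_i\ge\theta/K_n$. Since $D_\theta(I)$ increases as $\theta$ decreases (property (i)), for any $0<\tilde\theta\le\theta/K_n$ we have $\tilde\theta\le\theta_i$, hence $D_{\theta_i}(J'_{-i})\subset D_{\tilde\theta}(J'_{-i})$, which gives $\hg^{-(i-j)}\big(D_\theta(J'_{-j})\big)\subset D_{\tilde\theta}(J'_{-i})$ as required. This handles all large $n$; for the remaining finitely many $n$ the number of steps $q_{n+1}$ is bounded, so the conclusion (with $\delta_n$ taken close to $\pi$ and $K_n$ correspondingly large) follows from a bounded number of applications of \lemref{quasi inv}.

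I expect the main obstacle to be the second display rather than the bookkeeping around it: because $\sum_m|J'_{-m}|$ is only \emph{bounded}, not small, the product $\prod_m\big(1-|J'_{-m}|^{\kappa}\big)$ over the roughly $q_{n+1}$ steps would a priori not be close to $1$, and it is precisely the strict inequality $\kappa>1$ coming from \lemref{quasi inv}, combined with $\ell_n\to 0$ from the real \emph{a priori} bounds, that forces $\sum_m|J'_{-m}|^{\kappa}\to 0$ and lets the nested estimate close up. A secondary technical point is verifying at each step that the running angle $\theta_m$ still exceeds the threshold $\delta(|J'_{-m}|)$, which is what dictates the particular choice of $\delta_n$ above.
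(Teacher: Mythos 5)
Your proof is correct and follows essentially the same route the paper takes: the paper simply cites Lemma 3.4 of \cite{dFdM} and sketches ``max interval length tends to $0$, combine with \lemref{quasi inv}, induct,'' and your argument is precisely that induction carried out in detail, with the key point being $\kappa>1$ plus the bounded intersection multiplicity forcing $\sum_m|J'_{-m}|^{\kappa}\to 0$.
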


The last result we need for the proof of \thmref{complex extension} is the following observation (which follows immediately from the quasisymmetry of the power map, and the quasi-invariance of sufficiently small Poincare neighborhoods under a conformal map).

\begin{lem}\label{quasi inv power}
There exist $\tilde\delta_n \in (0, \pi)$ with $\tilde \delta_n \to 0$ as $n \to \infty$ such that the following holds. For $1 \leq k \leq l$ and $\tilde \theta \in (\tilde \delta_n, \pi)$, let $W := P_k^{-1}(D_{\tilde\theta}(J'_{-j_k}))$.
Then $\psi_k^{-1}$ is defined and conformal on $W$, and there exists a constant $C = C(\tilde\theta) >1$ such that if $0 < \theta \leq \tilde\theta/C$, then $\psi_k^{-1}(W) \subset D_\theta(J'_{-j_k-1})$.
\end{lem}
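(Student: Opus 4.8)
The plan is to establish the two assertions of the lemma — conformality of $\psi_k^{-1}$ on $W$, and the inclusion $\psi_k^{-1}(W)\subset D_\theta(J'_{-j_k-1})$ — separately, fixing the threshold $\tilde\delta_n$ at the end so that both go through. Throughout, write $d_k := \deg(c_k)$, which lies in the finite set $\deg(\Crit(g))$, and use \eqref{eq:power decomp} to write $P_k(z)=(z-a_k)^{d_k}+b_k$ with $a_k=\psi_k(c_k)$, $b_k=\hg(c_k)$, and $\psi_k$ conformal on the \emph{fixed} neighborhood $N_r(c_k)$. For $n$ large, $J'_{-j_k-1}\subset N_{r/2}(c_k)$ by \thmref{geometry bound}, so $\psi_k$ is defined there; set $\tilde J:=\psi_k(J'_{-j_k-1})$, which equals the real branch of $P_k^{-1}(J'_{-j_k})$ since $\hg|_{J'_{-j_k-1}}=P_k\circ\psi_k$, $\hg(J'_{-j_k-1})=J'_{-j_k}$, and the odd power map $P_k$ is a monotone homeomorphism of $\bbR$.

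\emph{Conformality.} Since $\psi_k(N_r(c_k))$ contains a fixed round neighborhood of $a_k$ (hence of $\tilde J$), it suffices to show $\diam(W)\to 0$ as $n\to\infty$, uniformly for $\tilde\theta\in(\tilde\delta_n,\pi)$. For $z\in W$ one has the crude bound $|z-a_k|^{d_k}=|P_k(z)-b_k|\le \diam D_{\tilde\theta}(J'_{-j_k})\lesssim |J'_{-j_k}|/\tilde\theta$, and $|J'_{-j_k}|\lesssim \mu_2^{\,n}$ by \thmref{geometry bound}; with $\tilde\theta\ge\tilde\delta_n$ this gives $\diam(W)\lesssim (\mu_2^{\,n}/\tilde\delta_n)^{1/d_k}\to 0$, provided $\tilde\delta_n\to 0$ more slowly than every power of $\mu_2^{\,n}$ (e.g. $\tilde\delta_n=\mu_2^{\,n/2}$).

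\emph{The inclusion.} The engine is a pure power-map estimate: \emph{if} $P(z)=(z-a)^{d}+b$ is a real odd power map of degree $d\in\deg(\Crit(g))$ whose critical value $b$ divides an interval $L$ into two $K$-commensurate subintervals, then $W_0:=P^{-1}(D_{\tilde\theta}(L))\subset D_{\theta_1}(P^{-1}(L))$, where $P^{-1}$ is the real branch and $\theta_1=\theta_1(\tilde\theta)\in(0,\tilde\theta)$. To see this, normalize by an affine change of coordinate to $a=b=0$ and $L=(-\alpha,1)$ with the ratio $\alpha$ confined to the fixed compact range $[K^{-1},K]$ (legitimate, since Poincaré neighborhoods transform conformally under affine maps). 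Then $W_0=\{z:z^d\in D_{\tilde\theta}(L)\}$ is a bounded Jordan domain, and $W_0\cap\bbR=P^{-1}(L)=(-\alpha^{1/d},1)=:\tilde L$ by monotonicity of $z\mapsto z^d$ on $\bbR$; hence $\overline{W_0}\setminus\partial\tilde L$ is a compact subset of $\bbC|_{\tilde L}$, while near the two endpoints of $\tilde L$ — which are nonzero, so $z\mapsto z^d$ is conformal there — $W_0$ is asymptotically a wedge of opening $2(\pi-\tilde\theta)$. By property (ii) of Poincaré neighborhoods, taking $\theta_1<\tilde\theta$ small enough forces $W_0\subset D_{\theta_1}(\tilde L)$; and since the family of such $W_0$ stays compact as $\alpha$ ranges over $[K^{-1},K]$ and $d$ over a finite set, $\theta_1$ may be chosen depending only on $\tilde\theta$. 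Granting the geometric input below — that $b_k$ divides $J'_{-j_k}$ into two $K$-commensurate subintervals — this yields $W\subset D_{\theta_1(\tilde\theta)}(\tilde J)$. Finally, since $\psi_k^{-1}$ is conformal on a fixed neighborhood of $\tilde J$ and $|\tilde J|\to 0$, the quasi-invariance of sufficiently small Poincaré neighborhoods under $\psi_k^{-1}$ (cf. \lemref{quasi inv}) gives $\psi_k^{-1}(D_{\theta_1}(\tilde J))\subset D_{\theta_1/2}(J'_{-j_k-1})$ for $n$ large. Chaining the two inclusions, $\psi_k^{-1}(W)\subset D_\theta(J'_{-j_k-1})$ whenever $0<\theta\le\theta_1(\tilde\theta)/2$, so the lemma holds with $C(\tilde\theta):=2\tilde\theta/\theta_1(\tilde\theta)>1$; enlarging $\tilde\delta_n$ if necessary (still $\to 0$) ensures both smallness conditions above hold for $\tilde\theta\in(\tilde\delta_n,\pi)$.

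\emph{Main obstacle.} The crux is the deferred geometric input: that the critical value $b_k=\hg(c_k)$ lies at distance comparable to $|J'_{-j_k}|$ from each endpoint of $J'_{-j_k}$, equivalently that $c_k$ divides $J'_{-j_k-1}$ into two $K$-commensurate subintervals. This is precisely where real \emph{a priori} bounds are essential — if $b_k$ were allowed to approach an endpoint of $J'_{-j_k}$ relative to $|J'_{-j_k}|$ as $n$ varies, the regions $W$ would balloon at intermediate scales and $\theta_1$, hence $C$, would fail to be independent of $n$. It should follow by combining \thmref{geometry bound}, the commensurability (established just above the statement) of the two components of $J'_{-j}\setminus J_{-j}$ with $J_{-j}$, and \lemref{inverse orbit intersect} to locate $c_k$ within the dynamical partition: one shows $c_k\in J_{-j_k-1}$, not merely in the enlargement $J'_{-j_k-1}$, whence it sits at definite depth in $J'_{-j_k-1}$. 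Everything else is a soft compactness/normalization argument or a routine Schwarz-lemma estimate.
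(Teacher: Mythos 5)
Your overall architecture is the one the paper has in mind (the paper offers no proof at all, only the parenthetical remark that the lemma follows from quasisymmetry of the power map plus quasi-invariance of small Poincar\'e neighborhoods under a conformal map, which is exactly your two-step scheme), and your treatment of conformality of $\psi_k^{-1}$ on $W$ and of the final step via a \lemref{quasi inv}-type estimate for $\psi_k^{-1}$ is fine. The problem is the step you yourself flag as the ``main obstacle'': your power-map estimate is only proved under the hypothesis that the critical value $b_k=\hg(c_k)$ splits $J'_{-j_k}$ into two $K$-commensurate pieces, and your proposed justification --- that $c_k$ lies in the core interval $J_{-j_k-1}$, hence at definite depth in $J'_{-j_k-1}$ --- is not available. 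The defining property of the indices $j_k$ (via \lemref{inverse orbit intersect}) is only that the \emph{enlarged} interval $J'_{-j_k-1}$ contains a critical point; the enlargements were introduced precisely to capture critical points that the orbit of $\hI_n$ merely passes close to. The critical points of $g$ are fixed while the endpoints of the intervals $J'_{-j}$ run along the orbit of $x_0$, so nothing in \thmref{geometry bound} or \lemref{inverse orbit intersect} prevents $c_k$ from lying at distance $o(|J'_{-j_k-1}|)$ from an endpoint of $J'_{-j_k-1}$ for infinitely many $n$. So the commensurability input is unproved, and the crux of your argument (the compact normalization with $\alpha\in[K^{-1},K]$, and the claim that near both endpoints of $\tilde L$ the set $W_0$ is asymptotically a wedge of opening $2(\pi-\tilde\theta)$) rests on it.

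Moreover, the hypothesis is not actually needed, and your diagnosis that without it ``$C$ would fail to be independent of $n$'' is incorrect. In the degenerate configuration (critical value at relative distance $\epsilon\to0$ from an endpoint of $L$), the preimage $W_0$ fails your wedge description at the short endpoint only at scales between $\epsilon^{1/d}$ and $|\tilde L|$, where the correct aperture is $\pi-\tilde\theta/d$ rather than $\pi-\tilde\theta$: the power map divides angles by $d$ at its critical point and is conformal elsewhere, so the whole of $W_0$ still lies within angle roughly $\tilde\theta/d$ of $\tilde L$ as seen from either endpoint, uniformly in $\epsilon$ (a short computation with $z\mapsto z^d$, splitting $L$ at the critical value if necessary, makes this precise; the curvature of the bounding circles of $D_{\theta_1}(\tilde L)$ at large scales only forces a further $\tilde\theta$-dependent loss). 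Since the lemma allows $C=C(\tilde\theta)$ and the degrees $d_k$ range over the finite set $\deg(\Crit(g))$, one obtains the containment with a uniform $C(\tilde\theta)$ without any assumption on the position of $b_k$ in $J'_{-j_k}$. To repair your write-up, replace the compactness/normalization argument by this direct power-map estimate (or prove your key estimate for all $\alpha\in(0,K]$, tracking the intermediate-scale aperture $\pi-\tilde\theta/d$); as it stands, the proof has a genuine gap at its central step.
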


\begin{proof}[Proof of \thmref{complex extension}]
Consider the constants $\delta_n, \tilde \delta_n \in (0, \pi)$, $K_n>1$, and $C(\tilde \theta)>1$ for $\tilde \theta \in (\tilde \delta_n, \pi)$ given in \lemref{quasi inv iterate} and \lemref{quasi inv power}. Let $\theta_0 := \pi/2$, and for $0\leq k \leq l$, let
$$
\tilde \theta_{k+1} := \theta_k/K_n
\matsp{and}
\theta_{k+1} := \frac{\tilde \theta_{k+1}}{C(\tilde \theta_{k+1})}.
$$
Then for $n$ sufficiently large, we have $\theta_k \in (\delta_n, \pi)$ and $\tilde \theta_k \in (\tilde \delta_n, \pi)$.

To prove the result, it suffices to show that there exist simply-connected neighborhoods $U_{k+1} \supset J'_{-j_{k+1}} \Supset J_{-j_{k+1}}$ for $0 \leq k \leq l$ such that $\phi_k$ extends to a conformal map on $U_{k+1}$, and the modulus of the annulus $A_{k+1} = U_{k+1} \setminus J_{-j_{k+1}}$ is uniformly bounded below. Choose
$U_0 := D_{\theta_0}(J'_0)$. The inverse $\phi_0^{-1}$ extends to a conformal map on $U_0$. Let
$$
U_1 := \phi_0^{-1}(U_0) \subset D_{\tilde \theta_1}(J'_{-j_1}).
$$
Proceeding inductively, assume that $U_k \subset D_{\tilde \theta_k}(J'_{-j_k})$ is defined for $1 \leq k \leq l$. Let $W_k := P_k^{-1}(U_k)$. The inverse $\psi_k^{-1}$ is defined and conformal on $W_k$, and $\psi_k^{-1}(W_k) \subset D_{\theta_{k+1}}(J_{-j_k-1})$. It follows that $\hg|_{J'_{-j_k-1}}^{-(j_{k+1} - j_k-1)}$ extends to a conformal map on $\psi_k^{-1}(W_k)$, and
$$
U_{k+1} := \phi_k^{-1}(W_k) = \hg^{-(j_{k+1} - j_k-1)} \circ \psi_k^{-1}(W_k) \subset D_{\tilde\theta_{k+1}}(J'_{-j_{k+1}}).
$$

It remains to check that the modulus of $U_k \setminus J_{-j_k}$ for $0 \leq k \leq l+1$ is uniformly bounded below. By \thmref{geometry bound}, this is true for $k = 0$. The case $k >0$ follows immediately from the fact that conformal modulus is quasi-invariant under an analytic map with uniformly bounded degree (see \lemref{pullback mod}).
\end{proof}

\subsection{Geometry near the real line}

Let $\gamma \subset \bbC$ be a simple smooth curve. We say that its slope is bounded absolutely from below by $\mu >0$ if $\gamma$ can be parameterized as $\gamma(x) = x + iy(x)$ for $x \in (a,b) \subset \bbR$ such that $\mu < |y'(x)| \leq +\infty$.

Let $I \Subset \bbR$ be an interval, and let $h : I \to h(I)$ be a real analytic map. Suppose that $h$ factors into
$$
h = \phi_0 \circ P_1 \circ \phi_1 \circ \ldots \circ P_l \circ \phi_l,
$$
where $P_k$ is a real odd power map of degree $d_k \leq D \in 2\bbN+1$, and $\phi_k$ is a real analytic diffeomorphism that has $\eta'$-complex extension for some $\eta' >0$. Denote $J_0 := h(I)$, $J_{-1} := \phi_0^{-1}(J_0)$, and
$$
\tiJ_{-k-1} := (P_k|_\bbR)^{-1}(J_{-k})
\matsp{and}
J_{-k-1} := \phi_k^{-1} (\tiJ_{-k-1})
\matsp{for}
1\leq k \leq l.
$$

\begin{prop}[Bounded geometry near the real line]\label{ext a priori}
There exist $\eta, \mu >0$ depending only on $l$, $D$ and $\eta'$ such that $h$ extends to an analytic map on $U := N_{\eta|I|}(I)$, we have $\Crit(h|_U) = \Crit(h|_I)$, and for each connected component $\gamma$ of $h^{-1}(J_0) \setminus I$, its slope is bounded absolutely from below by $\mu$.
\end{prop}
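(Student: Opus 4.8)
The plan is to track the extension and the slope bound through the factorization of $h$ one elementary piece at a time, using the fact that each $\phi_k$ has an $\eta'$-complex extension and each $P_k$ is a real odd power map of uniformly bounded degree. The geometric quantity I want to control at each stage is the \emph{external angle}: for an $\bbR$-symmetric Jordan domain that meets $\bbR$ in an interval, the worst (smallest) angle between the boundary and $\bbR$. A uniform lower bound on this angle for the domain $U$ and for every preimage $h^{-1}(J_0)$ is what yields the slope bound $\mu$, since a curve in $h^{-1}(J_0)\setminus I$ emanating from an endpoint with angle bounded below by $\mu$ automatically has $|y'(x)|$ bounded below near that endpoint (and the power maps only \emph{increase} the exterior angle, so the worst case is propagated in a controlled way).

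First I would set up the inductive scheme from the right end of the composition, mirroring the proof of \thmref{complex extension}: start with a Poincar\'e-type neighborhood $D_{\theta_0}(J_0)$ of $J_0$ with $\theta_0$ a fixed angle (say close to $\pi$, so the neighborhood is thin), pull back by $\phi_0^{-1}$ (which is conformal on an $\eta'$-neighborhood, hence distorts the exterior angle by a controlled amount depending only on $\eta'$ — this is the standard Koebe-type estimate, e.g.\ via \lemref{quasi inv} or its proof), then pull back by $P_k^{-1}$, where the exterior angle transforms explicitly: if $\tilde\theta$ is the exterior angle of the domain around $J_{-k}$, then $P_k^{-1}$ of it is contained in a Poincar\'e neighborhood of $\tilde J_{-k-1}$ with exterior angle at least $\tilde\theta/d_k \geq \tilde\theta/D$. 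Iterating $l$ times, each step losing at most a fixed multiplicative factor (from the conformal $\phi_k$'s) and at most a factor $D$ (from the power maps), produces a neighborhood $U_l$ of the innermost interval with exterior angle bounded below by a constant depending only on $l$, $D$, $\eta'$. Pushing this neighborhood \emph{forward} by $h$ — i.e.\ reversing the construction — gives that $h$ extends conformally to $U = N_{\eta|I|}(I)$ for some $\eta = \eta(l,D,\eta')>0$ (the passage from "exterior angle bounded below" to "$N_{\eta|I|}$" is just the elementary comparison between Poincar\'e neighborhoods and round neighborhoods, using \thmref{geometry bound}-type length comparisons among the $J_{-k}$ only insofar as they are already built into the hypotheses). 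That $\Crit(h|_U)=\Crit(h|_I)$ follows because each $\phi_k$ is a diffeomorphism on its complex extension and each $P_k$ has its only critical point on $\bbR$, so no new critical points are created off the line provided $\eta$ is chosen small enough relative to $\eta'$ and the geometry.

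For the slope statement, fix a component $\gamma$ of $h^{-1}(J_0)\setminus I$. It emanates from an endpoint of $I$ (a preimage of an endpoint of $J_0$), and by construction $\gamma$ lies inside the $\bbR$-symmetric neighborhood whose exterior angle we have just bounded below by some $\theta_{\min} = \theta_{\min}(l,D,\eta')$. I would then argue that a smooth arc lying in such a Poincar\'e neighborhood, with one endpoint on $\bbR$ at the "corner" where the bounding circles make angle $\geq \theta_{\min}$ with $\bbR$, and which is itself part of the preimage of a smooth interval (hence transverse to $\bbR$, since $h$ is a local diffeomorphism at interior points and has power-type behavior at the finitely many critical endpoints), must have slope bounded below: parametrize $\gamma(x) = x + iy(x)$, and near the endpoint the containment in the angle-$\theta_{\min}$ wedge forces $|y(x)|/|x - x_0| \geq c(\theta_{\min})$; combined with real analyticity (so $y$ has a convergent expansion, or $y$ is a power of a linearizing coordinate at a critical endpoint) this upgrades to $|y'(x)| \geq \mu$ on the whole of $\gamma$ for $\mu = \mu(\theta_{\min})>0$, after possibly shrinking $\eta$. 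The main obstacle I anticipate is precisely this last upgrade — turning a "cone containment" statement (which only controls the \emph{secant} slope from the endpoint) into a uniform \emph{derivative} lower bound along all of $\gamma$ — and handling it cleanly at the critical endpoints, where $h$ behaves like $z\mapsto z^{d_k}$ and the preimage curves $h^{-1}(\bbR)$ split into $d_k$ rays meeting at equal angles $\pi/d_k$; here one must make sure the claimed $\mu$ depends only on $D$ and not on the local linearizing coordinate, which is where the $\eta'$-complex-extension hypothesis on the $\phi_k$'s does its essential work by bounding the distortion of those local coordinates.
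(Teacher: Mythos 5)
Your treatment of the analytic extension and of $\Crit(h|_U)=\Crit(h|_I)$ --- propagating Poincar\'e neighborhoods through the factorization, losing a controlled amount of angle at each conformal $\phi_k^{-1}$ and an explicit factor at each $P_k^{-1}$ --- is sound and essentially reproduces the mechanism of \thmref{complex extension}. The problem is the slope bound, which is the actual content of the proposition, and there your argument has two defects. First, the information you extract from the Poincar\'e neighborhoods points the wrong way: knowing that the relevant preimages are \emph{contained} in a lens $D_{\theta_{\min}}(\cdot)$ confines the curves to a wedge of opening $\theta_{\min}$ near the corners, i.e.\ it gives an \emph{upper} bound on the secant slope there, not a lower one. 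The steepness of the components of $h^{-1}(J_0)\setminus I$ does not come from any containment statement; it comes from the local structure of the power maps (the off-axis branches of $P_k^{-1}(\bbR)$ leave the critical point at angles $\pi j/d_k$) and must then be carried through the remaining factors. Relatedly, these components emanate from the critical points of $h$ in $I$, not from the endpoints of $I$.

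Second, even granting a lower bound on the secant slope $|y(x)|/|x-x_0|$ from the base point, the ``upgrade'' to a lower bound on $|y'(x)|$ along all of $\gamma$ --- which you correctly flag as the main obstacle --- is not a consequence of real analyticity: a curve such as $y = x\bigl(1 + 0.9\sin\log x\bigr)$ is trapped in the cone $0.1\,x \le y \le 1.9\,x$ yet has $y'$ changing sign infinitely often. The paper avoids both issues by running the induction on the slope bound itself: assuming the components of $h_k^{-1}(J_0)\setminus J_{-k}$ have slope bounded absolutely below, their preimages under $P_k$ do too by quasisymmetry of the power map (the branches of $z\mapsto z^{1/d_k}$ carry steep curves to steep curves with a constant depending only on $D$), and then $\phi_k^{-1}$, restricted to an $\epsilon|\tiJ_{-k-1}|$-neighborhood of a point of $\tiJ_{-k-1}$, is by Koebe distortion close to a linear map with \emph{real} derivative, hence preserves absolute slope bounds up to a uniform factor. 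You would need to replace the cone argument with an induction of this kind.
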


\begin{proof}
For $1 \leq k < l$, let $h_k : J_{-k} \to J_0$ be the partial composition
$$
h_k = \phi_0 \circ P_1 \circ \phi_1 \circ \ldots \circ P_{k-1} \circ \phi_{k-1}.
$$
Clearly, $h_k$ extends to an analytic map on $U_k := N_{\eta|J_{-k}|}(J_{-k})$ for some $\eta>0$ such that $\Crit(h_k|_{U_k}) = \Crit(h_k|_{J_{-k}})$. Proceeding by induction, assume that the second assertion of the lemma is true for $h_k$. Denote $X_k := h_k^{-1}(J_0)$ and $W_{k+1} := \phi_k(U_{k+1})$. By quasisymmetry of the power map, we see that a connected component $\gamma$ of $(P_k^{-1}(X_k) \cap W_{k+1}) \setminus \tiJ_{-k-1}$ have slope bounded absolutely from below by some uniform constant. By decreasing $\eta$ if necessary, we can assume that there exists a uniform constant $\epsilon >0$ such that $\gamma$ is contained in $N := N_{\epsilon|\tiJ_{-k-1}|}(x)$ for some $x \in \tiJ_{-k-1}$. By Koebe distortion theorem, $\phi_k^{-1}|_N$ approaches a linear map with scaling factor $(\phi_k'(x))^{-1} \in \bbR$ as $\epsilon \to 0$. It follows that the slope of $\phi_k^{-1}(\gamma)$ is likewise bounded absolutely from below by some uniform constant.
\end{proof}

\section{Blaschke Product Model}\label{sec:blaschke}

A rational map $F : \hbbC \to \hbbC$ which maps the circle $\pbbD$ to itself is called a {\it Blaschke product}. Let $d \geq 2$, and let $\rho \in (\bbR \setminus \bbQ) /\bbZ$ be of bounded type. Define the {\it Herman Blaschke family} $\cH^d_\rho$ of degree $2d-1$ and rotation number $\rho$ as the class of all Blaschke products of the form
\begin{equation}\label{eq:herman}
F(z) = \lambda z^d \prod_{i=1}^{d-1}\frac{1-\overline{a_i}z}{z-a} 
\end{equation}
such that
\begin{enumerate}[i)]
\item $|a_i| < 1$ for all $1 \leq i \leq d-1$,
\item $|\lambda|=1$, and
\item $g := F|_{\partial \bbD} : \partial \bbD \to \partial \bbD$ is a circle homeomorphism with rotation number $\rho$.
\end{enumerate}
In \cite{He3}, Herman proved the following result about this family (see also the translation by Ch\'eritat \cite{Cheri2}).

\begin{thm}[Uniform quasisymmetry constant]\label{hermanthm}
There exists a uniform constant $K >1$ depending only on $d$ and $\rho$ such that for every $F \in \cH^d_\rho$, there exists a $K$-quasisymmetric homeomorphism $h : \partial \bbD \to \partial \bbD$ such that
$$
h \circ F \circ h^{-1}(z) = e^{2\pi i \rho} z
\matsp{for}
z \in \partial \bbD.
$$
\end{thm}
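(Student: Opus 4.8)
The plan is to deduce the uniform bound from \corref{hermancor} via a compactness observation that is not apparent from the definition: \emph{the parameter set of $\cH^d_\rho$ is compact.} To see this, fix $F \in \cH^d_\rho$ and put $g := F|_{\pbbD}$. Writing out the logarithmic derivative of each Blaschke factor and using $|1 - \overline{a_i}z| = |z - a_i|$ for $z \in \pbbD$, one finds that for $z \in \pbbD$ the quantity $z\,F'(z)/F(z)$ is real, coincides with the derivative of $g$ in the angular coordinate on $\pbbD$, and equals
$$
d - \sum_{i=1}^{d-1}\frac{1 - |a_i|^2}{|z - a_i|^2}.
$$
Condition (iii) forces $g$ to be orientation preserving, so this is $\geq 0$ for all $z \in \pbbD$; evaluating at $z = a_i/|a_i|$ and discarding the remaining (nonnegative) summands gives $\tfrac{1+|a_i|}{1-|a_i|} \leq d$, that is,
$$
|a_i| \leq \frac{d-1}{d+1} \qquad (1 \leq i \leq d-1).
$$
Hence the parameter set of $\cH^d_\rho$ is a bounded, closed---so compact---subset of $\pbbD \times \overline{B(0,\tfrac{d-1}{d+1})}^{\,d-1}$: the inequality $\sum_i (1-|a_i|^2)/|z-a_i|^2 \leq d$ and the condition $\rot(g)=\rho$ are closed (a locally uniform limit of such $F$ is again a nonconstant Blaschke product whose circle restriction is a real-analytic degree-one monotone map, hence a homeomorphism, and rotation number varies continuously). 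In particular every $F \in \cH^d_\rho$ extends holomorphically to the fixed annulus $A := \{\tfrac{d}{d+1} < |z| < \tfrac{d+1}{d}\}$---its poles lie in $\{|z| \leq \tfrac{d-1}{d+1}\} \cup \{\infty\}$---and $\{F|_A : F \in \cH^d_\rho\}$ is a compact, hence uniformly bounded, family of holomorphic functions on $A$.

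Granting this, the theorem follows by applying \corref{hermancor} to each $g = F|_{\pbbD}$ and checking that the quasisymmetry constant it returns is bounded uniformly over this compact family. Tracing the constants back through \thmref{distortion bound} $\Rightarrow$ \thmref{complex extension} $\Rightarrow$ \thmref{geometry bound} $\Rightarrow$ \corref{hermancor}, the quasisymmetry constant for $g$ is majorized by an explicit function of: (a) the number $\#\Crit(g)$ of critical points; (b) their local degrees $\deg(\Crit(g))$; (c) the modulus of the fundamental annulus of the complex extension of the first-return maps $g^{q_{n+1}}|_{I_n}$, together with the distortion of the diffeomorphisms $\phi_k$ in the factorization \eqref{eq:real factor}; and (d) the bounded-type constant $\tau = \tau(\rho)$. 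Item (d) is fixed by $\rho$; items (a) and (b) are bounded by $2d-1$ by Riemann--Hurwitz, since $g$ is the restriction of a Blaschke product of degree $2d-1$; and item (c) is uniform over $\cH^d_\rho$ precisely because every $g$ extends holomorphically to the fixed annulus $A$ with uniformly bounded geometry there, so the Poincar\'e-neighborhood pullbacks underlying \thmref{complex extension} (through the power decomposition of \thmref{distortion bound}) go through with constants independent of $n$ \emph{and} of $F$. Therefore the supremum over $F \in \cH^d_\rho$ of these quasisymmetry constants is finite, and it is the constant $K$ in the statement.

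The step requiring genuine care---which I expect to be the main obstacle---is the claim in (c) that the estimates of \thmref{distortion bound} and \thmref{complex extension} are quantitative in exactly the listed data, with no hidden dependence on the finer geometry of $g$. The delicate case is \emph{near-critical} $g$: as $\max_z \sum_i (1-|a_i|^2)/|z-a_i|^2 \uparrow d$, the map $g$ remains a diffeomorphism but develops a point of arbitrarily small derivative (this genuinely occurs inside $\cH^d_\rho$---e.g.\ for $d=2$ as $|a| \uparrow \tfrac13$, with $g$ limiting onto a model having a cubic critical point on $\pbbD$), and one must check that the \'Swi\c atek--Herman distortion estimates and the power decompositions of \thmref{distortion bound} degrade only controllably through this degeneration, so that all constants stay bounded as $g$ ranges over the compact family. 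This is exactly the content of the uniformity built into real \emph{a priori} bounds. A softer alternative that sidesteps the bookkeeping is a compactness-and-contradiction argument: if the quasisymmetry constants were unbounded along some $F_k \in \cH^d_\rho$, compactness of $\cH^d_\rho$ would supply a subsequential limit $F_\infty \in \cH^d_\rho$---still with circle restriction an analytic homeomorphism of rotation number $\rho$, hence with finite quasisymmetry constant by \corref{hermancor}---while $g_k \to g_\infty$ in the holomorphic topology on $A$ would force the finitely many geometric moduli feeding the \emph{a priori} bounds for $g_k$ to converge to those for $g_\infty$, a contradiction.
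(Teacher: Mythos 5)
The paper does not prove this theorem: it is quoted from Herman's manuscript \cite{He3} (see also Ch\'eritat's translation \cite{Cheri2}), with the remark that it rests on the compactness statement \propref{herman compact}, itself cited from \cite{Zh}. Your overall strategy --- compactness of $\cH^d_\rho$ plus uniformity of the \'Swi\c atek--Herman distortion over a compact family --- is exactly the strategy of the cited proof (Herman's title is literally ``uniformity of the \'Swi\c atek distortion for compact families of Blaschke products''). Your first half is correct and is in substance a proof of \propref{herman compact}: the computation $zF'(z)/F(z) = d - \sum_i (1-|a_i|^2)/|z-a_i|^2$ on $\pbbD$, the resulting bound $|a_i| \le (d-1)/(d+1)$, the closedness of the defining conditions, and the uniform annulus of holomorphy are all right, and this is a nice self-contained derivation of what the paper only cites.

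The second half, however, is where the actual content of the theorem lives, and it is asserted rather than proved. Tracing constants through \thmref{distortion bound} and \thmref{geometry bound} as stated in the paper yields a constant $K = K(g)$ for each \emph{fixed} $g$; upgrading this to a constant depending only on $(d,\rho)$ requires reworking the cross-ratio/distortion estimates so that they depend only on data that is uniform over the compact family, and that is precisely the nontrivial point of \cite{He3} --- you correctly flag it as ``the main obstacle'' but then defer to it. Moreover, the ``softer'' compactness-and-contradiction fallback does not close the gap as stated: if $K(g_k) \to \infty$, the failure of the bound for $g_k$ occurs at some scale $n_k$, and necessarily $n_k \to \infty$ (for each fixed $n$ the finitely many ratios $|I|/|J|$ over $\cI_n$ do vary continuously with $F$ on the compact family, so bounded scales cause no trouble). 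But locally uniform convergence $g_k \to g_\infty$ gives no control over the geometry of the $n_k$th dynamical partition of $g_k$, which involves $q_{n_k+1} \to \infty$ iterates; so the finiteness of $K(g_\infty)$ from \corref{hermancor} yields no contradiction. One genuinely needs the quantitative statement that the constants in Theorems~\ref{geometry bound}--\ref{complex extension} depend only on $\#\Crit(g)$, $\deg(\Crit(g))$, $\tau(\rho)$, and uniform complex-extension moduli of the factors in \eqref{eq:real factor} --- i.e.\ Herman's theorem itself.
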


\thmref{hermanthm} based on the following compactness result (see \cite{Zh} for the proof).

\begin{prop}\label{herman compact}
There exists a uniform constant $0 < r < 1$ depending only on $d$ and $\rho$ such that the following statements hold.
\begin{enumerate}[i)]
\item If $F \in \cH^d_\rho$, then $F$ is holomorphic on $U_r := \{|z| > r\}$.
\item For every sequence $\{F_n\}_{n=1}^\infty \subset \cH^d_\rho$, there exists a subsequence $\{F_{n_k}\}_{k=1}^\infty$ that converges compact uniformly on $U_r$ to some $F_\infty \in \cH^{d'}_\rho$ with $d' \leq d$.
\end{enumerate}
\end{prop}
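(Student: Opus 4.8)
The plan is to derive both parts from the compactness of the ambient parameter space together with an elementary winding-number obstruction forbidding the parameters $a_i$ from approaching $\partial\bbD$. I would parametrize $\cH^d_\rho$ by $F \leftrightarrow (\lambda, a_1, \dots, a_{d-1}) \in \partial\bbD \times \overline{\bbD}^{\,d-1}$, which is compact, and write $g_F := F|_{\partial\bbD}$. The facts I use are: on $\partial\bbD$ the map $z \mapsto z^d$ is an orientation-preserving self-covering of degree $d$, whereas each Blaschke factor $b_a(z) := \frac{1-\overline a z}{z - a}$ with $|a| < 1$ restricts to an orientation-reversing circle diffeomorphism of degree $-1$ (it is a Möbius map carrying $\bbD$ onto $\hbbC \setminus \overline{\bbD}$, since $b_a(0) = -1/a$ lies outside $\overline{\bbD}$). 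Hence $g_F$ has topological degree $d - (d-1) = 1$, consistent with condition (iii). Moreover, if $a \to \zeta \in \partial\bbD$, then the pole of $b_a$ at $a$ and its zero at $1/\overline a$ collide at $\zeta$, so $b_a \to -\overline\zeta$ (a unimodular constant) locally uniformly on $\hbbC \setminus \{\zeta\}$; thus one degree-$(-1)$ factor is destroyed and the degree of the boundary map jumps up by one.

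For (i), I would argue by contradiction: suppose there is a sequence $F_n \in \cH^d_\rho$ with $\max_i |a_i^{(n)}| \to 1$. Passing to a subsequence, assume $(\lambda_n, \mathbf a_n)$ converges, with exactly $k \geq 1$ of the coordinates $a_i^{(n)}$ tending to points $\zeta_1, \dots, \zeta_k \in \partial\bbD$ (repetitions allowed) and the remaining ones tending to points of $\bbD$. By the remark above, $F_n$ converges locally uniformly on $\hbbC$ minus the finitely many limiting poles to a Blaschke product $F_\infty$ whose boundary restriction has topological degree $1 + k \geq 2$. Fix a small $\delta > 0$ and set $A := \partial\bbD \setminus N_\delta(\{\zeta_1, \dots, \zeta_k\})$, a finite union of closed arcs disjoint from $\{\zeta_1, \dots, \zeta_k\}$; then $g_n \to F_\infty|_{\partial\bbD}$ uniformly on $A$, so the total winding of $g_n$ along $A$ (the sum of the increments of a continuous lift over the component arcs) converges to that of $F_\infty$ along $A$. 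The latter tends to $1 + k \geq 2$ as $\delta \to 0$, hence exceeds $1$ once $\delta$ is small; but for every $n$, the total winding of the orientation-preserving degree-$1$ homeomorphism $g_n$ along $A$ is strictly less than $1$, since $\partial\bbD \setminus A$ also carries strictly positive winding and the two sum to $1$. This contradiction gives $r := \sup\{\,\max_i |a_i(F)| : F \in \cH^d_\rho\,\} < 1$; as the poles of any $F \in \cH^d_\rho$ in $\bbC$ are precisely the points $a_i(F)$, each such $F$ is holomorphic on $U_r = \{|z| > r\}$, establishing (i).

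For (ii), given a sequence $F_n \in \cH^d_\rho$, I would pass to a subsequence with $(\lambda_n, \mathbf a_n) \to (\lambda_\infty, \mathbf a_\infty)$. By (i), every $|a_i^\infty| \leq r < 1$, so no pole reaches $\partial\bbD$ and $F_n \to F_\infty := \lambda_\infty z^d \prod_{i=1}^{d-1} b_{a_i^\infty}$ uniformly on compact subsets of $U_r$ (in the spherical metric). Collapsing any factors $b_{a_i^\infty}$ with $a_i^\infty = 0$ and relabeling, $F_\infty$ takes the form defining $\cH^{d'}_\rho$ for some $d' \leq d$, with all parameters of modulus $< 1$; it remains to check that $g_\infty := F_\infty|_{\partial\bbD}$ is a homeomorphism of rotation number $\rho$. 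Since $F_n \to F_\infty$ holomorphically near $\partial\bbD$, we have $g_n \to g_\infty$ in $C^1$ on $\partial\bbD$; as each $g_n' \geq 0$, the limit satisfies $g_\infty' \geq 0$, so $g_\infty$ is a monotone degree-$1$ circle map, and being real-analytic and nonconstant it cannot be constant on any arc — hence it is a homeomorphism. Finally, the rotation number is $C^0$-continuous on monotone degree-$1$ circle maps, so $\rot(g_\infty) = \lim_n \rot(g_n) = \rho$, and therefore $F_\infty \in \cH^{d'}_\rho$. I expect the main obstacle to be part (i): correctly identifying the limiting Blaschke product under parameter escape and extracting the degree jump; the remaining points — coincident $a_i$'s, the case $a_i^\infty = 0$, and the bookkeeping of winding numbers over the disconnected set $A$ — are routine, and once (i) holds, (ii) is a straightforward normal-families argument combined with classical facts about monotone circle maps.
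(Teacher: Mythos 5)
Your argument is correct. Note that the paper does not actually prove this proposition — it simply cites Zhang \cite{Zh} — so there is no in-text proof to compare against; but your route is the standard one in that literature (going back to Herman): parametrize the family by $(\lambda,a_1,\dots,a_{d-1})\in\partial\bbD\times\overline{\bbD}^{\,d-1}$, observe that each factor $b_{a}$ contributes degree $-1$ to the boundary map while degenerating to a unimodular constant as $a\to\partial\bbD$, and rule out such degeneration because the limiting boundary map would have topological degree $\geq 2$, incompatible with the $g_n$ being degree-$1$ homeomorphisms. Your winding-number bookkeeping over the truncated circle $A$ is the right way to make the contradiction precise (uniform convergence off the collision points gives convergence of the winding increments, while monotonicity of each $g_n$ caps its winding over $A$ by $1$), and the passage to the limit in (ii) — monotonicity and real-analyticity of $g_\infty$ forcing it to be a homeomorphism, plus $C^0$-continuity of the rotation number — correctly accounts for the possible degree drop $d'\leq d$ when some $a_i^\infty=0$. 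I see no gap.
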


Let $F \in \cH^d_\rho$. Since $F(\infty) = \infty$, and $F'(\infty) = 0$, the point $\infty$ is a superattracting fixed point of $F$. Let $A^\infty_F$ be the attracting basin of infinity. The immediate basin of infinity $\hA^\infty_F \subset \hbbC \setminus \overline{\bbD}$ is the connected component of $A^\infty_F$ containing $\infty$. The Julia set of $F$ is $J_F := \partial A^\infty_F$. Define the {\it modified Julia set} of $F$ as $\hJ_F := \partial \hA^\infty_F \subset J_F$.

The motivation for introducing the Herman Blaschke family is that it contains models of Siegel polynomials for which the dynamics on the Siegel boundaries are replaced by the dynamics of analytic circle homeomorphisms. The correspondence between Blaschke product models and Siegel polynomials are given by the quasiconformal surgery, known as the {\it Douady-Ghys surgery}, described below.

Let $h : \partial \bbD \to \partial \bbD$ be the homeomorphism given in \thmref{hermanthm}. Since $h$ is $K$-quasisymmetric, it can be extended to a $K$-quasiconformal homeomorphism on $\bbD$ such that
$$
h \circ F \circ h^{-1}(z) = e^{2\pi i \rho}z
\matsp{for}
z \in \bbD.
$$
Denote $\rot_\rho(z) := e^{2\pi i \rho}z$. Define a {\it modified Blaschke product} $\tiF : \hat\bbC \to \hat\bbC$ by
$$
\tiF(z) := \left\{
\begin{array}{cl}
h^{-1} \circ \rot_\rho \circ h(z) & : z \in \bbD \\
F(z) & : z \in \hat \bbC \setminus \bbD.
\end{array}
\right.
$$
Since $F^{-1}(\infty)\cap (\hbbC \setminus \bbD) = \{\infty\}$, we have $\tiF^{-1}(\infty) = \{\infty\}$. Moreover, the attracting basin  of $\infty$ for $\tiF$ is equal to the immediate basin $\hA^\infty_F$ of $\infty$ for $F$, and $\tiF|_{\overline{\hA^\infty_F}} \equiv F|_{\overline{\hA^\infty_F}}$. This implies that $\tiF$ is a topological polynomial of degree $d$. Define its Julia set as $J_{\tiF} := \hJ_F = \partial \hA^\infty_F \subset J_F$.

To turn $\tiF$ into an analytic polynomial, we need to find a complex structure $\sigma$ on $\bbC$ which is preserved by $\tiF$. On $\bbD$, let $\sigma$ be the pull back of the standard structure $\sigma_0$ by $h$. Since $\rot_\rho$ preserves $\sigma_0$, we see that $\tiF$ preserves $\sigma$ on $\bbD$. For $n \geq 1$, extend $\sigma$ to $\tiF^{-n}(\bbD)$ as the pull back of $\sigma|_\bbD$ by $\tiF^n$. Since $\tiF$ is holomorphic outside of $\bbD$, this does not increase the dilatation of $\sigma$. Finally, define $\sigma$ on the rest of $\bbC$ (which includes $\hA^\infty_F$) as the standard structure $\sigma_0$. It is clear from the construction that the dilatation of $\sigma$ is bounded by $K$, and that $\sigma$ is preserved under $\tiF$. By the Measurable Riemann Mapping Theorem, there exists a $K$-quasiconformal map $\eta : \bbC \to \bbC$ fixing $0$ such that $\eta^*(\sigma_0) = \sigma$.

Let
$$
f = \eta \circ \tiF \circ \eta^{-1}.
$$
Then $f$ preserves the standard complex structure $\sigma_0$. Hence it is an analytic polynomial of degree $d$. Moreover, $f$ has a Siegel disk $\Delta_f = \eta(\bbD)$ containing a Siegel fixed point $0$ of rotation number $\rho$. Observe that $\eta$ maps $(\hA^\infty_F, \infty)$ conformally onto $(A^\infty_f, \infty)$. Hence, the Julia set $J_f$ of $f$ is equal to $\eta(\hJ_F)$.

From the above discussion, we conclude that every Herman Blaschke product models a Siegel polynomial. The converse is given by the following theorem.

\begin{thm}[Existence of Blaschke product model]\label{blaschke model}
Let $f$ be a polynomial of degree $d$ that has a Siegel disk $\Delta_f$ containing a Siegel fixed point $0$ of bounded type rotation number $\rho$. Then there exist a Blaschke product model $F \in \cH^d_\rho$, the modified Blaschke product $\tiF$ obtained from $F$, and a $K$-quasiconformal map $\eta$ obtained via the Douady-Ghys surgery with $K$ given in \thmref{hermanthm} such that
$$
f = \eta \circ \tiF \circ \eta^{-1},
$$
and $\eta$ maps $(\bbD, 0)$ to $(\Delta_f, 0)$ and $(\hA^\infty_F, \infty)$ to $(A^\infty_f, \infty)$ (the latter conformally).
\end{thm}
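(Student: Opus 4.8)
\emph{Proof strategy.} The plan is to run the Douady--Ghy surgery described above in reverse, in the spirit of Zhang's approximation argument for \thmref{quasicircle}. I would first reduce to the case that $J_f$ is connected: by \cite{Ki} (Lemma 4.2) the restriction of $f$ to the component $K_f^0\ni 0$ of $K_f$ is quasiconformally conjugate to $g|_{K_g}$ for a degree-$d$ polynomial $g$ with connected filled Julia set, and $\Delta_f$, its rotation number, and all of the data in the statement survive this conjugacy. Then $A^\infty_f$ is simply connected with B\"ottcher uniformization $\phi^\infty_f$ conjugating $f$ to $z\mapsto z^d$, and by \thmref{quasicircle} the boundary $\partial\Delta_f$ is a quasicircle carrying a critical point of $f$. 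This is the essential input: because $\partial\Delta_f$ is a quasicircle, the linearizer $\phi := \phi^0_f : \Delta_f\to\bbD$ extends to a homeomorphism of $\overline{\Delta_f}$ onto $\overline{\bbD}$ that is quasisymmetric on $\partial\Delta_f$, and $\hbbC\setminus\overline{\Delta_f}$ is a quasidisk --- so the interface along which the surgery will cut is geometrically controlled; and the critical point on $\partial\Delta_f$, which is of odd local degree since $f|_{\partial\Delta_f}$ is a homeomorphism, will be carried over to $\pbbD$, placing $F$ in the relevant part of $\cH^d_\rho$.

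The surgery itself would proceed as follows. The family $\cH^d_\rho$ is nonempty: fixing the inner parameters $a_i$ in \eqref{eq:herman}, the rotation number of the circle homeomorphism $F|_{\pbbD}$ varies continuously and monotonically with $\arg\lambda$ and so realizes every value in $(\bbR\setminus\bbQ)/\bbZ$. Fix a reference $F_\ast\in\cH^d_\rho$; by \thmref{hermanthm} its boundary circle map $g_\ast := F_\ast|_{\pbbD}$ is $K$-quasisymmetrically conjugate to $\rot_\rho$, hence quasisymmetrically conjugate to $f|_{\partial\Delta_f}$ (which is conjugate to $\rot_\rho$ via $\phi$). I would then transport the dynamics of $f$ outside $\overline{\Delta_f}$ into the exterior of $\overline{\bbD}$ by a quasiconformal homeomorphism of the two quasidisks, keeping it conformal on $A^\infty_f$, and glue the result to $F_\ast|_{\overline{\bbD}}$ across $\pbbD$, absorbing the discrepancy between the two circle maps in a quasiconformal interpolation along a thin collar near $\pbbD$ --- possible precisely because those circle maps are quasisymmetrically conjugate. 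This produces a quasiregular map $\cG:\hbbC\to\hbbC$ of degree $2d-1$ that preserves $\pbbD$, is holomorphic off the collar, and has superattracting fixed points of local degree $d$ at $0$ and $\infty$. A standard Shishikura-type argument then yields a $\cG$-invariant Beltrami form $\mu$ with $\|\mu\|_\infty<1$, standard on $A^\infty_f$ and on the basin of $\infty$, with dilatation ultimately bounded by the quasisymmetry constant of \thmref{hermanthm}. Straightening $\mu$ by the Measurable Riemann Mapping Theorem gives a rational map of degree $2d-1$ with an invariant quasicircle on which it acts with rotation number $\rho$ and with a superattracting fixed point of local degree $d$ on each side; a further normalization --- requiring that the surgery be set up so that the invariant structure is standard and symmetric across the interface --- realizes this quasicircle as the round circle $\pbbD$. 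The resulting map $F$ preserves $\pbbD$, has a superattracting fixed point of local degree $d$ at $0$, and is therefore, by the Schwarz symmetry $F(1/\bar z)=1/\overline{F(z)}$ together with a count of zeros and poles ($d$-fold zero at $0$, $d$-fold pole at $\infty$, and $d-1$ simple poles inside $\bbD$ with reflected zeros outside), of the form \eqref{eq:herman} with $|a_i|<1$; that is, $F\in\cH^d_\rho$.

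Finally I would verify that $F$ models $f$ with the claimed conjugacy. Form the modified Blaschke product $\tiF$ from $F$, using on $\pbbD$ the conjugacy $h$ of \thmref{hermanthm} for $F$, which we may take equal to the one built into the surgery; unwinding the construction, one assembles a homeomorphism $\eta$ of $\hbbC$ by setting $\eta := \phi^{-1}\circ h$ on $\overline{\bbD}$ and letting $\eta$ be, on $\hbbC\setminus\bbD$, the inverse of (the quasidisk transport followed by the conformal part of the straightening) --- the two pieces agreeing on $\pbbD$ by construction. One checks, using the rigidity of the construction, that $\eta$ may be taken to be precisely the conjugacy furnished by the Douady--Ghy surgery applied to $F$, so that $\eta$ is $K$-quasiconformal with $K$ the uniform constant of \thmref{hermanthm}: indeed on $\bbD$ it is $\phi^{-1}$ (conformal) composed with $h$ ($K$-quasiconformal by \thmref{hermanthm}), and the straightening was arranged to be conformal on $A^\infty_f$; moreover $\eta\circ\tiF\circ\eta^{-1}=f$, $\eta$ sends $(\bbD,0)$ to $(\Delta_f,0)$, and $\eta$ maps $(\hA^\infty_F,\infty)$ conformally onto $(A^\infty_f,\infty)$. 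The step I expect to be the main obstacle is the interpolation in the construction of $\cG$ --- splicing the irrational rotation on $\partial\Delta_f$ to the analytic circle map $g_\ast$ without destroying quasiconformality --- together with the attendant normalization: one must keep the non-standard part of the invariant structure controlled along the dynamics, realize the straightened interface as a genuine round circle, and pin the dilatation of the final conjugacy to Herman's uniform constant. It is exactly here that \thmref{quasicircle} and Herman's uniform bound \thmref{hermanthm} are indispensable.
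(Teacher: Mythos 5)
Your proposal runs the surgery directly across $\partial\Delta_f$, using Zhang's theorem that it is a quasicircle, and glues in a reference product $F_\ast\in\cH^d_\rho$. The paper deliberately avoids doing surgery at the Siegel boundary itself: it performs the (comparatively easy) surgery across the analytic invariant curves $\partial\Delta_f^r$ for $r<1$, obtaining quasiregular maps $\tif_r$ that possess a genuine forward-invariant annulus $\tiH_r\supset A_b$ (a fattened Herman ring) on which the dynamics is conjugate to $\rot_\rho$, and then passes to the limit $r\to1$ using compactness of $\cH^d_\rho$ and the \emph{uniform} quasisymmetry constant of \thmref{hermanthm}. This difference is not cosmetic; it is exactly where your argument has a genuine gap.

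The gap is the ``standard Shishikura-type argument'' producing a $\cG$-invariant Beltrami form with $\|\mu\|_\infty<1$. That argument requires either that orbits meet the non-holomorphic set a uniformly bounded number of times, or that the non-holomorphic set sits inside a forward-invariant open set already carrying an invariant structure of bounded dilatation. Neither holds for your $\cG$: the interpolation collar around $\pbbD$ is not forward-invariant, there is no invariant annulus around $\pbbD$ on which $\cG$ is conjugate to a rotation (the critical point forced onto the circle rules out a Herman ring, and $\pbbD$ itself has measure zero so cannot carry $\mu$), and points arbitrarily close to $\pbbD$ shadow circle orbits and therefore pass through the collar arbitrarily many times, so the pullback construction degrades the dilatation without bound. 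The paper's $\Delta_f^r$-approximation is designed precisely to circumvent this: for $r<1$ the invariant annulus $X_r^{-1}(\Delta_f\setminus\Delta_f^r)\cup(X_r^{-1}(\Delta_f\setminus\Delta_f^r))^*$ absorbs the non-holomorphic part once and for all, at the cost of a dilatation $K_r$ that may blow up as $r\to1$, which is then repaired by the compactness/limit argument. Two further points would remain even if the straightening were granted: your $\cG$ is not symmetric under $z\mapsto1/\bar z$ (it is $F_\ast$ inside and transported $f$ outside), so the straightened invariant quasicircle is not automatically the round circle; and the assertion that the final conjugacy $\eta$ is $K$-quasiconformal with $K$ the uniform constant of \thmref{hermanthm} does not follow from your construction (your dilatation also depends on the quasicircle constant of $\partial\Delta_f$ and on the interpolation), whereas the paper obtains it as a limit of the uniformly $K$-quasiconformal maps $\eta_{r_n}$.
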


\begin{proof}
Let $\phi : \Delta_f \to \bbD$ be a conformal map conjugating $f|_{\Delta_f}$ to the rigid rotation $\rot_\rho$ by angle $\rho$. Denote
$$
\Delta_f^t := \phi^{-1}(\{|z| < t\}).
$$
Given $0 < r <1$, choose $0 < a < r < b < 1$ so that $0 \in \Delta_f^a  \Subset \Delta_f^r \Subset \Delta_f^b \Subset \Delta_f$.

Define $X_r : \bbC \to \bbC$ as follows. Let $X_r|_{\bbC \setminus \overline{\bbD}}$ be the Riemann map onto $\bbC \setminus \overline{\Delta_f^r}$. Denote
$$
\Gamma_b := X_r^{-1}(\partial \Delta_f^b).
$$
Let $\Gamma_b^*$ be the reflection of $\Gamma_b$ about $\partial \bbD$, and let $D_b^* \Subset D_b$ be the topological disks containing $0$ bounded by $\Gamma_b^*$ and $\Gamma_b$ respectively. Define $X_r|_{D_b^*}$ as the Riemann map onto $\Delta_f^a$. Lastly, extend $X_r$ to the annulus
$$
A_b := \overline{D_b} \setminus D_b^*
$$
as a smooth map. Then $X_r$ is $K_r$-quasiconformal for some $1< K_r < \infty$ (although we may have $K_r \to \infty$ as $r \to 1$).

Define
$$
\tif_r(z):= \left\{
\begin{array}{cl}
X_r^{-1} \circ f \circ X_r(z) & : z \in \hat \bbC \setminus \bbD \\
(X_r^{-1} \circ f \circ X_r(z^*))^* & : z \in \bbD.
\end{array}
\right.
$$
where $z^*$ denotes the reflection of $z$ about $\partial \bbD$. Observe that $\tif_r : \hbbC \to \hbbC$ is a degree $2d-1$ branched covering map which is symmetric about $\partial \bbD$. Moreover, restricted to the set
$$
\tiH_r := X_r^{-1}(\Delta_f \setminus \Delta_f^r) \cup (X_r^{-1}(\Delta_f \setminus \Delta_f^r))^* \supset A_b,
$$
the map $\tif_r$ is conformally conjugate to the rigid rotation $\rot_\rho$. 

In \cite{Zh}, it is shown that there exists a $K_r$-quasiconformal map $\xi_r : \bbC \to \bbC$ mapping $(\bbD, 0)$ to $(\bbD, 0)$ such that the map
$$
F_r := \xi_r \circ \tif_r \circ \xi_r^{-1}
$$
is a Blaschke product in $\cH^d_\rho$, and $H_r := \xi_r(\tiH_r)$ is a Herman ring for $F_r$. Furthermore, it is clear by construction that $\xi_r \circ X_r^{-1}$ maps $(A^\infty_f, \infty)$ conformally onto $(\hA^\infty_{F_r}, \infty)$.

By \thmref{hermanthm}, there exists a $K$-quasiconformal map $h_r : \bbD \to \bbD$ such that 
$$
h_r \circ F_r \circ h_r^{-1}(z) = \rot_\rho(z)
\matsp{for}
z \in \bbD.
$$
The modified Blaschke product $\tiF_r$ is given by
$$
\tiF_r(z) := \left\{
\begin{array}{cl}
h_r^{-1} \circ \rot_\rho \circ h_r(z) & : z \in \bbD \\
F_r(z) & : z \in \hat \bbC \setminus \bbD.
\end{array}
\right.
$$
In \cite{Zh}, it is shown that there exists a $K$-quasiconformal conjugacy $\eta_r : \bbC \to \bbC$ mapping $(\bbD, 0)$ to $(\Delta_f^r, 0)$ and $(\hA^\infty_{F_r}, \infty)$ to $(A^\infty_f, \infty)$ (the latter conformally) such that
$$
f = \eta_r \circ \tiF_r \circ \eta_r^{-1}.
$$

By compactness of $K$-quasiconformal maps and the space $\cH^d_\rho$ (see \propref{herman compact}), we can choose $r_n \to 1$ as $n \to \infty$ such that following holds:
\begin{itemize}
\item $F_{r_n}$ converges to a Blaschke product $F \in \cH^{d'}_\rho$ for some $d' \leq d$;
\item $h_{r_n}$ converges to a $K$-quasiconformal map $h : \bbD \to \bbD$ fixing $0$;
\item $\tiF_{r_n}$ converges to the modified Blaschke product
$$
\tiF(z) := \left\{
\begin{array}{cl}
h^{-1} \circ \rot_\rho \circ h(z) & : z \in \bbD \\
F(z) & : z \in \hat \bbC \setminus \bbD
\end{array}
\right.
\matsp{;}
\text{and}
$$
\item $\eta_r$ converges to a $K$-quasiconformal map $\eta : \bbC \to \bbC$ that maps $(\bbD, 0)$ to $(\Delta_f, 0)$ and $(\hA^\infty_F, \infty)$ to $(A^\infty_f, \infty)$ (the latter conformally).
\end{itemize}
Finally, since
$$
f = \eta \circ \tiF \circ \eta^{-1},
$$
we have $d' = d$.
\end{proof}

Since $\eta$ in \thmref{blaschke model} gives a homeomorphism between $\hJ_F = \partial \hA^\infty_F$ and $J_f = \partial A^\infty_f$, we have the following result.

\begin{cor}\label{blaschke instead}
Let $f$ be a Siegel polynomial that has a Blaschke product model $F \in \cH^d_\rho$. Then the Julia set $J_f$ of $f$ is locally connected at every point in the Siegel boundary $\partial \Delta_f$ if and only if the modified Julia set $\hJ_F = \hA^\infty_F$ of $F$ is locally connected at every point in $\pbbD$.
\end{cor}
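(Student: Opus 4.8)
The plan is to deduce \corref{blaschke instead} directly from \thmref{blaschke model}, using only that local connectivity at a point is a topological property and that the surgery map $\eta$ is a homeomorphism of the sphere respecting all of the relevant sets.

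First I would record that the map $\eta : \bbC \to \bbC$ furnished by \thmref{blaschke model} is $K$-quasiconformal, hence a homeomorphism; since a quasiconformal self-homeomorphism of $\bbC$ extends to a homeomorphism of $\hbbC$ fixing $\infty$, we may regard $\eta$ as a homeomorphism $\eta : \hbbC \to \hbbC$. \thmref{blaschke model} gives $\eta(\bbD) = \Delta_f$ and $\eta(\hA^\infty_F) = A^\infty_f$. Applying $\eta$ to the boundaries of these open sets, and using that a homeomorphism of $\hbbC$ maps the boundary of an open set onto the boundary of its image, we obtain
$$
\eta(\pbbD) = \partial \Delta_f
\matsp{and}
\eta(\hJ_F) = \eta(\partial \hA^\infty_F) = \partial A^\infty_f = J_f .
$$
Hence $\eta$ restricts to a homeomorphism from the subspace $\hJ_F \subset \hbbC$ onto the subspace $J_f \subset \hbbC$, carrying $\pbbD$ bijectively onto $\partial \Delta_f$.

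Second, I would invoke the elementary fact that if $\Phi : X \to Y$ is a homeomorphism of topological spaces, then $X$ is locally connected at $x$ if and only if $Y$ is locally connected at $\Phi(x)$. Taking $X = \hJ_F$, $Y = J_f$, and $\Phi = \eta|_{\hJ_F}$, this says that $\hJ_F$ is locally connected at $z \in \pbbD$ precisely when $J_f$ is locally connected at $\eta(z) \in \partial \Delta_f$. Since $\eta$ maps $\pbbD$ bijectively onto $\partial \Delta_f$, the statement that $\hJ_F$ is locally connected at every point of $\pbbD$ is equivalent to the statement that $J_f$ is locally connected at every point of $\partial \Delta_f$, which is the asserted equivalence. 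The hypothesis of \corref{blaschke instead} is exactly that such a model $F$ and surgery map $\eta$ exist, so there is nothing further to supply.

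There is essentially no serious obstacle here; the only point requiring a word of care is that $\eta$ should be viewed as a homeomorphism of $\hbbC$, not merely of $\bbC$, so that it genuinely identifies neighborhood bases in the sphere at points of $\pbbD$ with neighborhood bases at points of $\partial \Delta_f$ --- and this is automatic for quasiconformal homeomorphisms of $\bbC$.
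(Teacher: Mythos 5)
Your proposal is correct and is essentially the paper's own argument: the paper justifies the corollary in one line by noting that $\eta$ from \thmref{blaschke model} is a homeomorphism carrying $\hJ_F = \partial \hA^\infty_F$ onto $J_f = \partial A^\infty_f$ (and $\pbbD$ onto $\partial\Delta_f$), so local connectivity at corresponding points is equivalent. Your additional remarks about extending $\eta$ to $\hbbC$ and about boundaries mapping to boundaries are correct fillings-in of the same reasoning.
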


By \corref{blaschke instead}, it suffices to prove Theorem A stated in \secref{sec:intro} for the modified Julia set $\hJ_F$ of the Blaschke product $F \in \cH^d_\rho$ rather than for the Julia set $J_f$ of the Siegel polynomial $f$.

\section{Puzzle Partition}\label{sec:puzzle}

Let $\rho \in (\bbR\setminus \bbQ)/\bbZ$ be of bounded type, and let $F \in \cH^d_\rho$ be a Herman Blaschke product of the form \eqref{eq:herman} that has a critical point at $1$. Recall that the Julia set $J_F$ and the modified Julia set $\hJ_F$ of $F$ are equal to the boundary of the attracting basin of infinity $A^\infty_F$ and the immediate basin of infinity $\hA^\infty_F \subset \hbbC \setminus \overline{\bbD}$ respectively. Note that $1 \in \partial \bbD \subset \hJ_F \subset J_F$.

The restriction $g := F|_{\partial \bbD}$ is an analytic circle homeomorphism with rotation number $\rho$. Let $h : (\partial \bbD, 1) \to (\partial \bbD, 1)$ be the quasisymmetric homeomorphism given in \thmref{hermanthm} such that
$$
h \circ g \circ h^{-1}(z) = e^{2\pi \rho i}z
\matsp{for}
z \in \partial \bbD.
$$
For $s \in \bbR/\bbZ$, let
$$
\xi_s := h^{-1}(e^{2\pi s i}).
$$
For $k \in \bbZ$, denote
$$
c_k := g^k(1) = \xi_{k\rho}.
$$
Without loss of generality, we may assume that $c_k$ is not a critical point for $k \geq 1$.

Assume that $\infty$ is the only critical point in $\hA^\infty_F$, so that $J_F$ and $\hJ_F$ are connected. Then the B\"ottcher uniformization $\phi^\infty_F : \hA^\infty_F \to \hat \bbC \setminus \overline{\bbD}$ of $F$ is conformal.

The {\it external ray} of $F$ with {\it external angle} $t \in \bbR /\bbZ$ is defined as
$$
\cR^\infty_t := \{(\phi^\infty_F)^{-1}(re^{2\pi t i}) \; | \; 1 < r < \infty\}.
$$
An {\it equipotential curve} at {\it level} $l \in (1, \infty)$ of $F$ is defined as
$$
\cQ_l := \{(\phi^\infty_F)^{-1}(le^{2\pi t i}) \; | \; t \in \bbR/\bbZ\}.
$$
We have
$$
F(\cR^\infty_t) = \cR^\infty_{dt}
\matsp{and}
F(\cQ_l) = \cQ_{l^d}.
$$
We say that $\cR^\infty_t$ is {\it periodic} if $t = d^pt$ for some $p \geq 1$, or {\it rational} if $d^nt$ is periodic for some $n \geq 0$. It is easy to see that $\cR^\infty_t$ is rational if and only if $t\in \bbQ/\bbZ$. The accumulation set of $\cR^\infty_t$ is denoted $\omega(\cR^\infty_t)$. Note that $\omega(\cR^\infty_t) \subset \hJ_F$. If $\omega(\cR^\infty_t) = \{x\}$, then we say that $\cR^\infty_t$ {\it lands at $x$}.

\begin{prop}
Every periodic external ray of $F$ lands at a repelling or parabolic periodic point in $\hJ_F$. Conversely, every repelling or parabolic periodic point in $\hJ_F$ is the landing point of a periodic external ray.
\end{prop}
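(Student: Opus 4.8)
The plan is to adapt the classical theory of rational external rays due to Douady, Hubbard and Sullivan (\cite{DoHu}, \cite{Mi2}) to the present setting; the only input needed is that $\hA^\infty_F$ is simply connected and that $\phi^\infty_F$ conjugates $F|_{\hA^\infty_F}$ to $w \mapsto w^d$ on $\hbbC \setminus \overline{\bbD}$, exactly as for a polynomial with connected Julia set. It is convenient to pass to $g := F^p$ for the relevant period $p$: this map has the same immediate basin $\hA^\infty_F$ of $\infty$ and the same boundary $\hJ_F$, it acts on $\hA^\infty_F$ via $w \mapsto w^{d^p}$, and its critical points lying in $\hA^\infty_F$ are iterated $F$-preimages of $\infty$, hence bounded away from $\hJ_F$.

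For the direct implication, let $\cR^\infty_t$ be periodic, say $d^p t = t$, so that $g$ fixes it. Parametrize it by $\gamma(u) := (\phi^\infty_F)^{-1}(e^{u + 2\pi i t})$ for $u > 0$, so that $\gamma(u) \to \hJ_F$ as $u \to 0^+$ and $g(\gamma(u)) = \gamma(d^p u)$. The length--area (Gr\"otzsch) argument of Douady--Hubbard shows that $\gamma(u)$ converges as $u \to 0^+$, so $\cR^\infty_t$ lands at some $x \in \hJ_F$. Letting $u \to 0^+$ in $g(\gamma(u)) = \gamma(d^p u)$ gives $g(x) = x$, so $F^p(x) = x$ and $x$ is periodic under $F$ of some exact period $q \mid p$. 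The terminal arc of $\gamma$ at $x$, suitably reparametrized, verifies the hypotheses of Douady's snail lemma for $g$ at $x$, which forces $g'(x) = 1$ or $|g'(x)| > 1$; since $g'(x) = \big((F^q)'(x)\big)^{p/q}$, the multiplier of the $F$-cycle of $x$ is a root of unity or has modulus greater than $1$, i.e.\ $x$ is parabolic or repelling. (In particular $x$ is not irrationally indifferent; consistently, $x \notin \pbbD$, since the circle homeomorphism $F|_{\pbbD}$ has irrational rotation number, hence no periodic points.)

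For the converse, let $x \in \hJ_F$ be repelling or parabolic of exact period $q$, and put $g := F^q$, so $g(x) = x$. The crux is to show that $x$ is \emph{accessible} from $\hA^\infty_F$, i.e.\ that there is a path in $\hA^\infty_F$ terminating at $x$. In the repelling case one uses the local contracting inverse branch $\psi$ of $g$ at $x$: starting from a small connected piece of $\hA^\infty_F$ near $x$ (which exists since $x \in \partial \hA^\infty_F$), one applies $\psi$ repeatedly and concatenates, obtaining a path that shrinks to $x$ by the uniform contraction of $\psi$ and that stays in the component $\hA^\infty_F$ of $A^\infty_F$ because it is connected and meets $\hA^\infty_F$. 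The parabolic case is analogous, using a repelling petal of $g$ at $x$, which meets $\hA^\infty_F$ since $x \in \partial \hA^\infty_F$ while the attracting petals of $g$ lie in the Fatou set. Once $x$ is accessible, a standard prime-end/Lindel\"of argument (cf.\ \cite{Mi2}) shows that $\phi^\infty_F$ maps the accessing path to a curve in $\hbbC \setminus \overline{\bbD}$ landing at a single point $e^{2\pi i \theta} \in \pbbD$, and hence that the ray $\cR^\infty_\theta$ lands at $x$. Finally, the set $\Theta(x) \subset \bbR / \bbZ$ of angles of rays landing at $x$ is nonempty by the above, is finite because the landing rays divide a neighborhood of $x$ into finitely many sectors which $g$ permutes, and is invariant under $\theta \mapsto d^q \theta$ because $g(\cR^\infty_\theta)$ lands at $g(x) = x$; a finite $(\theta \mapsto d^q\theta)$-invariant subset of $\bbR/\bbZ$ consists of periodic, hence rational, angles, so $\cR^\infty_\theta$ is a periodic external ray landing at $x$.

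The main obstacle is the accessibility step in the converse: one must pick the initial arc so that all of its $\psi$-iterates are defined and remain in the \emph{immediate} basin $\hA^\infty_F$ rather than drifting into other components of $A^\infty_F$, and one must treat the parabolic case through repelling petals; the finiteness of $\Theta(x)$ also depends on the standard but slightly delicate count of sectors at a periodic point. By comparison, the direct implication is routine --- the length--area estimate and the snail lemma are classical --- once one notes, as above, that the Siegel-type dynamics carried by $\pbbD \subset \hJ_F$ cannot interfere, which it cannot, since $\pbbD$ contains no periodic points.
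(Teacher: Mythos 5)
Your route is genuinely different from the paper's. The paper disposes of this proposition in three lines: the Douady--Ghys surgery map $\eta$ of \secref{sec:blaschke} is \emph{conformal} on $\hA^\infty_F$ and conjugates $F$ near $\hJ_F$ to a genuine polynomial $f$, so it carries external rays of $F$ to external rays of $f$ and repelling/parabolic cycles on $\hJ_F$ to repelling/parabolic cycles on $J_f$; both implications then follow from the classical polynomial statements in \cite{Mi2}. You instead re-run the classical proofs directly inside $\hA^\infty_F$. That is viable in principle, and your observation that a connected pullback meeting $\hA^\infty_F$ must stay in $\hA^\infty_F$ (because the full basin $A^\infty_F$ is completely invariant while $\hA^\infty_F$ is one of its components) correctly handles the one feature that is genuinely new relative to the polynomial case, namely that the immediate basin itself is not completely invariant. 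The direct implication (length--area estimate plus the snail lemma) does go through verbatim, and your remark that the landing point cannot lie on $\pbbD$ is consistent with the irrationality of the rotation number.

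The weak point is the accessibility step in the converse, which is exactly the hard part of Douady's theorem and which your sketch does not actually carry out. ``Applying $\psi$ repeatedly to a small connected piece of $\hA^\infty_F$ and concatenating'' only produces a path if that piece is an arc joining some $z_0 \in A^\infty_F \cap V$ to $\psi(z_0)$ \emph{inside} $A^\infty_F \cap V$, where $V$ is the linearizing neighborhood; the existence of such an arc is not automatic --- $z_0$ and $\psi(z_0)$ may a priori lie in different components of $A^\infty_F \cap V$ --- and producing it is the real content of the classical proof (via the cyclic order of accesses in a linearizing fundamental annulus, or an equivalent device). Likewise the finiteness of $\Theta(x)$ requires the sector/rotation-number argument of \cite{Mi2} rather than the bare assertion that the landing rays cut out finitely many sectors. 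If your intention is to cite these classical facts rather than reprove them, the cleaner move is the paper's: push everything through $\eta$ and invoke the polynomial case outright.
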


\begin{proof}
Let $f$ be a polynomial of degree $d$ obtained from $F$ via the Douady-Ghys surgery. Then there exists a quasiconformal map $\eta$ that maps $(\hA^\infty_F, \infty)$ conformally onto $(A^\infty_f, \infty)$ (see \secref{sec:blaschke}). Under $\eta$, external rays for $F$ maps to external rays for $f$, and $\hJ_F = \partial \hA^\infty_F$ maps to $J_f = \partial A^\infty_f$. The claim now follows from the corresponding result for polynomials (see e.g. \cite{Mi2}).
\end{proof}

By symmetry, $0$ is a fixed critical point. The attracting basin $A^0_F$, the immediate basin $\hE^0_F$, an {\it internal ray} $\cR^0_{-t}$ with {\it internal angle} $-t \in \bbR/\bbZ$, and an equipotential $\cQ_{1/l}$ at level $1/l \in (0, 1)$ are reflections about $\partial \bbD$ of $A^\infty_F$, $\hA^\infty_F$, $\cR^\infty_t$, and $\cQ_l$ respectively.

An {\it external bubble} $B$ of generation $\gen(B) \geq 0$ is defined inductively as follows. The unique external bubble of generation $0$ is $\bbD$. Let $2m+1$ be the degree of the critical point $c_0$. Then there are $m$ connected components of $F^{-1}(\bbD) \cap (\bbC \setminus \bbD)$ whose boundaries have a common intersection point at $c_0$. These components are external bubbles of generation $1$. Let $B_1$ be any external bubble of generation $1$. For $k \geq 1$, a connected component $B_k$ of $F^{-k+1}(B_1)$ is an external bubble of generation $k$ if it is not contained in a bubble of smaller generation. A {\it root} of $B_k$ is a point in $F^{-k+1}(c_0) \cap \partial B_k$.

Let $\{B_i\}_{i=0}^\infty$ be a sequence of external bubbles such that
\begin{itemize}
\item $B_0 = \bbD$; and
\item$\partial B_{i-i} \cap \partial B_i = \{x_i\}$ is the root of $B_i$ for $i \geq 1$.
\end{itemize}
The union
$$
\cR^B = \bigcup_{i=0}^\infty \partial B_i \subset \bbC \setminus \bbD
$$
is an {\it external bubble ray}. See \figref{fig:bubble ray}. The point $x_1 \in \partial \bbD$ is called the {\it root} of $\cR^B$. The accumulation set $\omega(\cR^B)$ of $\cR^B$ is defined as the accumulation set of the sequence $\{B_i\}_{i=0}^\infty$. Note that $\omega(\cR^B) \subset \hJ_F$. If $\omega(\cR^B) = \{x_\infty\}$, then $x_\infty$ is called the {\it landing point} of $\cR^B$.

\begin{figure}[h]
\centering
\includegraphics[scale=0.35]{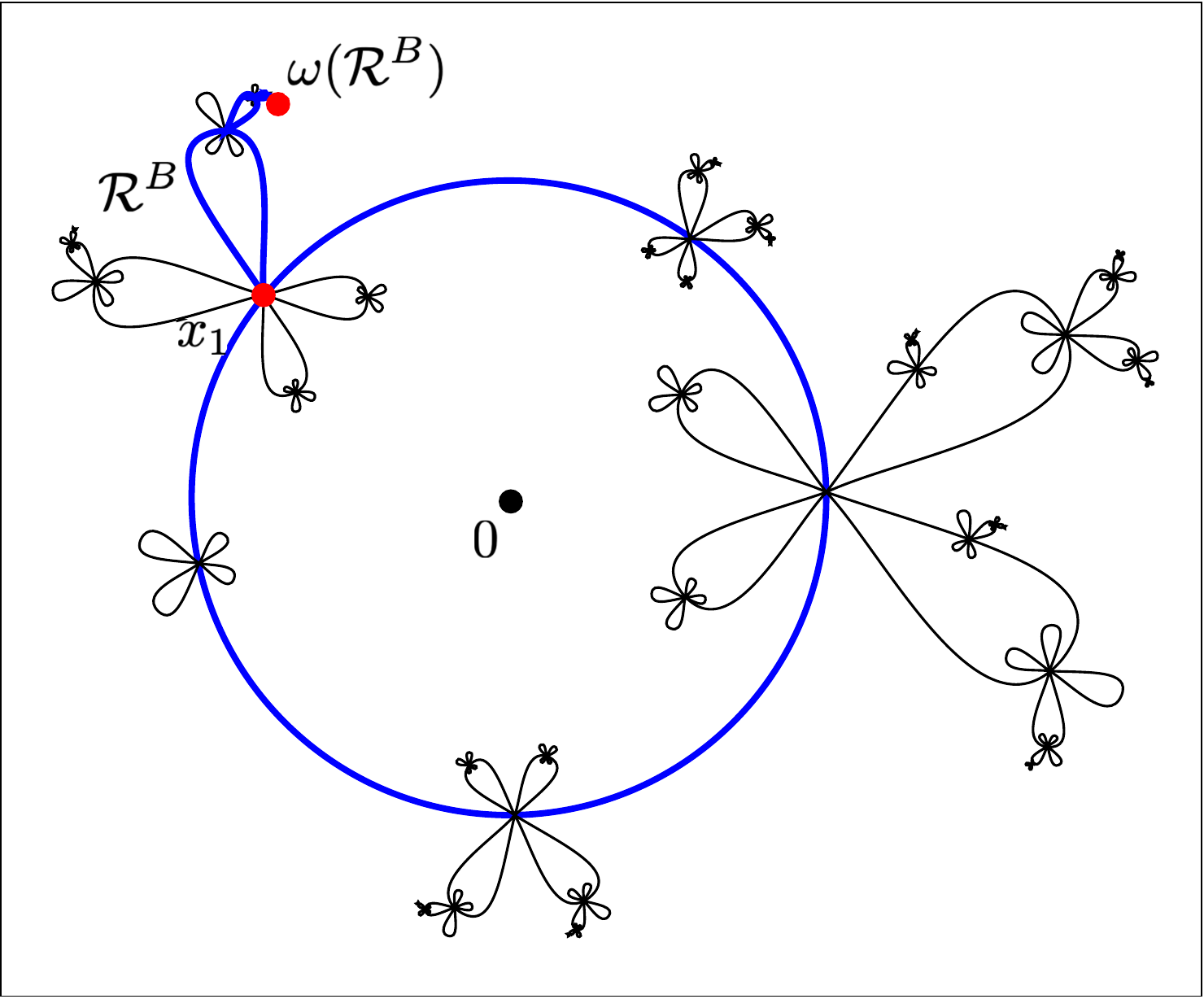}
\caption{An external bubble ray $\cR^B$, its root $x_1$ and limit set $\omega(\cR^B)$.}
\label{fig:bubble ray}
\end{figure}

Observe that the image of an external bubble ray is also an external bubble ray. An external bubble ray $\cR^B$ is {\it periodic} if $F^p(\cR^B) = \cR^B$ for some $p \geq 1$, or {\it rational} if $F^n(\cR^B)$ is periodic for some $n \geq 0$. Note that all fixed external bubble rays are rooted at $c_0$. An external bubble ray $\cR^B$ is said to be {\it $d$-adic of generation $k\geq 0$} if $F^k(\cR^B)$ is a fixed external bubble ray, and $k$ is the smallest number for which this is true.

The {\it impression $\Imp(t)$ at the external angle $t \in \bbR/\bbZ$} is the set of all points $z \in \hJ_F$ for which there exist sequences $t_n \in \bbR/\bbZ$ and $z_n \in \cR^\infty_{t_n}$ such that $t_n \to t$ and $z_n \to z$ as $n \to \infty$. Observe that
$$
F(\Imp(t)) = \Imp(dt).
$$

\begin{prop}[Rational bubble rays land]
Every $p$-periodic external bubble ray $\cR^B$ lands at a repelling or parabolic $p$-periodic point $x_\infty \in \hJ_F$.
\end{prop}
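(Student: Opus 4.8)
The strategy is the classical Douady-Hubbard landing argument for periodic rays, transported to bubble rays. Write $\cR^B=\bigcup_{i\ge0}\partial B_i$ with $B_0=\bbD$ and roots $x_i\in\partial B_{i-1}\cap\partial B_i$. Since $F$ drops the generation of a bubble by exactly one (for generations $\ge1$) while fixing $B_0$, the image $F(\cR^B)$ is, after reindexing, the bubble ray $\bbD,F(B_2),F(B_3),\dots$, and iterating, $F^k(\cR^B)$ has $j$-th bubble $F^k(B_{k+j})$. Thus the hypothesis $F^p(\cR^B)=\cR^B$ is equivalent to $F^p(B_{j+p})=B_j$ for all $j\ge1$ (with $F^p(B_j)=\bbD$ for $1\le j\le p$); in particular, for each residue $0\le r<p$, $\{B_{r+np}\}_{n\ge0}$ is a backward $F^p$-orbit of $B_r$, and $\omega(\cR^B)$ is a compact connected $F^p$-invariant subset of $\hJ_F$ (connected since it is the nested intersection of the connected sets $\overline{\bigcup_{i\ge N}B_i}$). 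As distinct bubbles of $\cR^B$ have disjoint interiors and consecutive ones meet in one point, each of the finitely many critical points of $F$ lies on $\overline{B_i}$ for at most two indices $i$; fix $J_0$ so that $\overline{B_j}$ is disjoint from a fixed neighborhood of $\Crit(F)$ for all $j\ge J_0$. Then for $j\ge J_0$ the restriction $F^p\colon\overline{B_{j+p}}\to\overline{B_j}$ is a homeomorphism, conformal on interiors, and these glue to a holomorphic inverse branch $\psi$ of $F^p$ defined on a neighborhood of $\overline{B_{J_0}}$ with $\psi^n(\overline{B_{J_0}})=\overline{B_{J_0+np}}$ (and likewise starting from $\overline{B_{J_0+r}}$ for each $r$).

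The heart of the matter is to show $\diam(B_j)\to0$, from which $\omega(\cR^B)=\{x_\infty\}$ is forced with $F^p(x_\infty)=x_\infty$. Away from $\pbbD$ this is the usual Schwarz-Pick contraction: one builds a simply connected neighborhood $V$ of a fundamental segment of the tail of $\cR^B$ on which $\psi$ is defined and with $\overline{\psi(V)}\subset V$, so that $\psi$ strictly contracts the hyperbolic metric of $V$ on $\overline{\psi(V)}$; the backward orbit $\psi^n(\overline{B_{J_0}})$ then shrinks geometrically to a point, and running this for every residue $r$ yields $\diam(B_j)\to0$. The subtle case is that the tail of $\cR^B$ might accumulate on $\pbbD$: since $F^p|_{\pbbD}=g^p$ has irrational rotation number, $\omega(\cR^B)\cap\pbbD$ is either empty or all of $\pbbD$ (density of irrational rotation orbits), and near $\pbbD$ the dynamics is indifferent, so no naive contraction is available there. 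This is where the real \emph{a priori} bounds of \secref{sec:a priori} come in: \thmref{geometry bound} together with the transverse control of preimages of $\bbD$ along $\pbbD$ supplied by the power-map factorizations of \thmref{distortion bound} and \propref{ext a priori} (which remain valid through the critical point $c_0$) forces $\diam(B_j)\to0$ even for bubbles approaching $\pbbD$, and in particular excludes $\omega(\cR^B)\supseteq\pbbD$. Equivalently, one transports the configuration to the polynomial $f$ via the Douady-Ghy conjugacy $\eta$ of \thmref{blaschke model}, where $\cR^B$ becomes a periodic chain of closures of preimages of the Siegel quasidisk $\Delta_f$ and the same dichotomy and estimates are applied along the quasicircle $\partial\Delta_f$.

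It remains to identify the type of $x_\infty$. We have $x_\infty\in\hJ_F\subset J_F$ with $F^p(x_\infty)=x_\infty$, so $x_\infty$ is neither attracting nor of Siegel type (those lie in the Fatou set). The bubble ray supplies a path $\gamma\colon[0,\infty)\to\hbbC$ landing at $x_\infty$ (run $\gamma$ successively through the roots $x_{J_0},x_{J_0+1},\dots$, joined by arcs inside consecutive bubbles) along which the inverse branch $\psi=(F^p)^{-1}$ satisfies $\psi\circ\gamma(t)=\gamma(t+p)$ near the landing end, dragging $\gamma$ toward $x_\infty$. The Snail Lemma then forces $\psi$ to be attracting or parabolic at $x_\infty$, i.e.\ $|(F^p)'(x_\infty)|>1$ or $(F^p)'(x_\infty)=1$, ruling out the Cremer possibility. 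Hence $x_\infty$ is a repelling or parabolic $p$-periodic point in $\hJ_F$, as claimed.

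The step I expect to be the main obstacle is the one above concerning bubbles that may approach $\pbbD$ — ruling out a periodic bubble ray that spirals into the unit circle. This genuinely needs the real \emph{a priori} bounds rather than mere hyperbolic contraction, and arranging a fundamental-segment neighborhood $V$ with $\overline{\psi(V)}\subset V$ that persists near $\pbbD$ takes care. Once the tail is known to stay a definite hyperbolic distance from $\pbbD$ (equivalently, from $\partial\Delta_f$ after the surgery), the remainder is the standard Douady-Hubbard/Snail-Lemma machinery.
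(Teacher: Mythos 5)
Your proposal is a genuinely different argument from the paper's, and it has a real gap at its central step. The paper does not attempt to show $\diam(B_j)\to 0$ by contraction at all. Instead it runs a purely combinatorial separation argument: for each root $x_i$ it considers the (finite, by Milnor's Lemma 18.8 on orbit portraits) set $A_i$ of angles of external rays accumulating at $x_i$, notes $dA_i=A_{i-1}$, and traps $\cR^B$ in a nested sequence of wedges bounded by the rays $\cR^\infty_{t^l_i},\cR^\infty_{t^r_i}$. The bounding angles converge to fixed angles of $t\mapsto dt$, the corresponding periodic external rays land at a repelling or parabolic fixed point $x_\infty$ (a known fact for polynomials, transported through the surgery), and $\omega(\cR^B)$ is then squeezed into the resulting sector at $x_\infty$; invariance plus the local dynamics at $x_\infty$ forces $\omega(\cR^B)=\{x_\infty\}$. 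No shrinking of bubble diameters and no \emph{a priori} bounds are needed.

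The step you flag as ``the main obstacle'' is in fact an unfilled hole, not a technicality. The real \emph{a priori} bounds of \secref{sec:a priori} control arclengths of dynamical arcs on $\pbbD$ and the complex extensions of return maps near $\pbbD$; they say nothing about the transverse extent of the external bubbles $B_j$ hanging off iterated preimages of $c_0$, and they do not ``force $\diam(B_j)\to 0$.'' Proving that nested preimages of $\bbD$ accumulating on $\pbbD$ shrink to points is essentially the statement that the fibers $X_s$ are trivial --- that is Theorems \ref{lc at crit} and \ref{spread}, the main content of the paper, and it requires the full puzzle-plus-Covering-Lemma machinery of Sections \ref{sec:lc at crit}--\ref{sec:lc on circ}. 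Worse, that machinery cannot be invoked here without circularity: the landing of the generation-$0$ bubble rays is a prerequisite for defining the initial puzzle partition $\cZ_0$ in \eqref{eq:0 puzzle}. Your Schwarz--Pick step also fails exactly where you need it: the postcritical set of $F$ contains the orbit of $c_0$, which is dense in $\pbbD$, so no hyperbolic domain admitting the inverse branch $\psi$ as a strict contraction exists near $\pbbD$, and the dichotomy ``$\omega(\cR^B)\cap\pbbD$ is empty or all of $\pbbD$'' does not by itself rule out the second alternative. The remainder of your argument (the Snail Lemma identification of $x_\infty$ as repelling or parabolic, granted landing) is fine, but the landing itself is not established by your method.
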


\begin{proof}
We assume for simplicity that $\cR^B$ is fixed. Let $\{B_i\}_{i=0}^\infty$ be the sequence of external bubbles such that
$$
\cR^B = \bigcup_{i=0}^\infty \partial B_i.
$$
Denote the root of $B_i$ by $x_i$. Since $\cR^B$ is fixed, we have $F(B_{i+1}) = B_i$ and $F(x_{i+1}) = x_i$.

For $x \in \hJ_F$, let $A(x) \subset \bbR/\bbZ$ be the set of all external angles $t$ such that $\Imp(t)$ contains $x$. Then we have $A(F(x)) = dA(x)$. In particular, if $A_i := A(x_i)$, then $dA_i = A_{i-1}$.

Let
$$
X_i := \bigcup_{t \in A_i}\left(\overline{\cR^\infty_t} \cup \Imp(t)\right),
$$
and let $U_i$ be the connected component of $\bbC \setminus X_i$ containing $\bbD$. Then $U_i \Subset U_{i+1}$. There exist $t^l_i, t^r_i \in A_i$ such that
$$
\partial U_i \subset \overline{\cR^\infty_{t^l_i}} \cup \Imp(t^l_i) \cup \overline{\cR^\infty_{t^r_i}} \cup \Imp(t^r_i) \ni x_i,
$$
Denote $\gamma_i := [t^l_i, t^r_i] \subset \bbR/\bbZ$. Then $\gamma_i \Supset \gamma_{i+1}$. Let
$$
\gamma_\infty = [t^l_\infty, t^r_\infty] := \bigcap_{i=0}^\infty \gamma_i.
$$
Then $dt^{l/r}_\infty = t^{l/r}_\infty$. Hence, $\cR^\infty_{t^{l/r}_\infty}$ must land at some repelling or parabolic fixed point $x^{l/r}_\infty \in \omega(\cR^B)$.

If $x^r_\infty$ is a repelling fixed point, let $V$ be a sufficiently small neighborhood of $x^r_\infty$. Otherwise, let $V$ be the repelling petal at $x^r_\infty$ which intersect $\cR^\infty_{t^r_\infty}$. Let $g : V \to g(V) \subset V$ be the local inverse branch of $F$. Clearly, $V$ contains a neighborhood of a point $x \in \omega(\cR^B)$. Hence, there exists $y \in B_{i_0} \cap V$ for some $i_0$ sufficiently large. Observe that $g^n(y) \in B_{i_0 +n}$ and $g^n(y) \to x^r_\infty$ as $n \to \infty$.

We claim that $\diam(B_i) \to 0$ as $i \to \infty$. The result then follows from the above observation.

Let
$$
Z_i := U_{i-1} \cup (\bbC \setminus U_{i+2}) \cup \cQ_{2^{1/d^i}},
$$
and let $W_i$ be the connected component of $\bbC\setminus \overline{Z_i}$ compactly containing $B_i$. Observe that for all $i$ sufficiently large, $W_i$ does not contain a critical point of $F$. Hence, $F$ maps $W_i$ conformally onto $W_{i-1}$. The fact that $\diam(B_i)$ goes to zero now follows from Koebe distortion theorem.
\end{proof}

An {\it internal bubble} $\chB \subset \bbD$ of {\it generation} $k$ and an {\it internal bubble ray} $\chR^B \subset \overline{\bbD}$ are the reflections about $\partial \bbD$ of an external bubble $B \subset \bbC \setminus \overline{\bbD}$ of generation $k$ and an external bubble ray $\cR^B \subset \bbC \setminus \bbD$ respectively.

Let $\bfR_0$ be the union of all external and internal bubble rays of generation $0$, all landing points of these bubble rays, and all external and internal rays that also land at these points. Define the {\it initial puzzle partition} as
\begin{equation}\label{eq:0 puzzle}
\cZ_0 := \bfR_0 \cup \cQ_2 \cup \cQ_{1/2}.
\end{equation}
See \figref{fig:puzzle partition}. The {\it puzzle partition of depth} $n \geq 0$ is given by
$$
\cZ_n := F^{-n}(\cF_0).
$$
Denote
\begin{equation}\label{eq:equipotential}
\cQ_+^n := \cQ_{\sqrt[d^n]{2}}.
\matsp{and}
\cQ_-^n := \cQ_{\sqrt[d^n]{1/2}}.
\end{equation}
Then $\cQ_\pm^n \subset \cZ_n$.

\begin{figure}[h]
\centering
\includegraphics[scale=0.35]{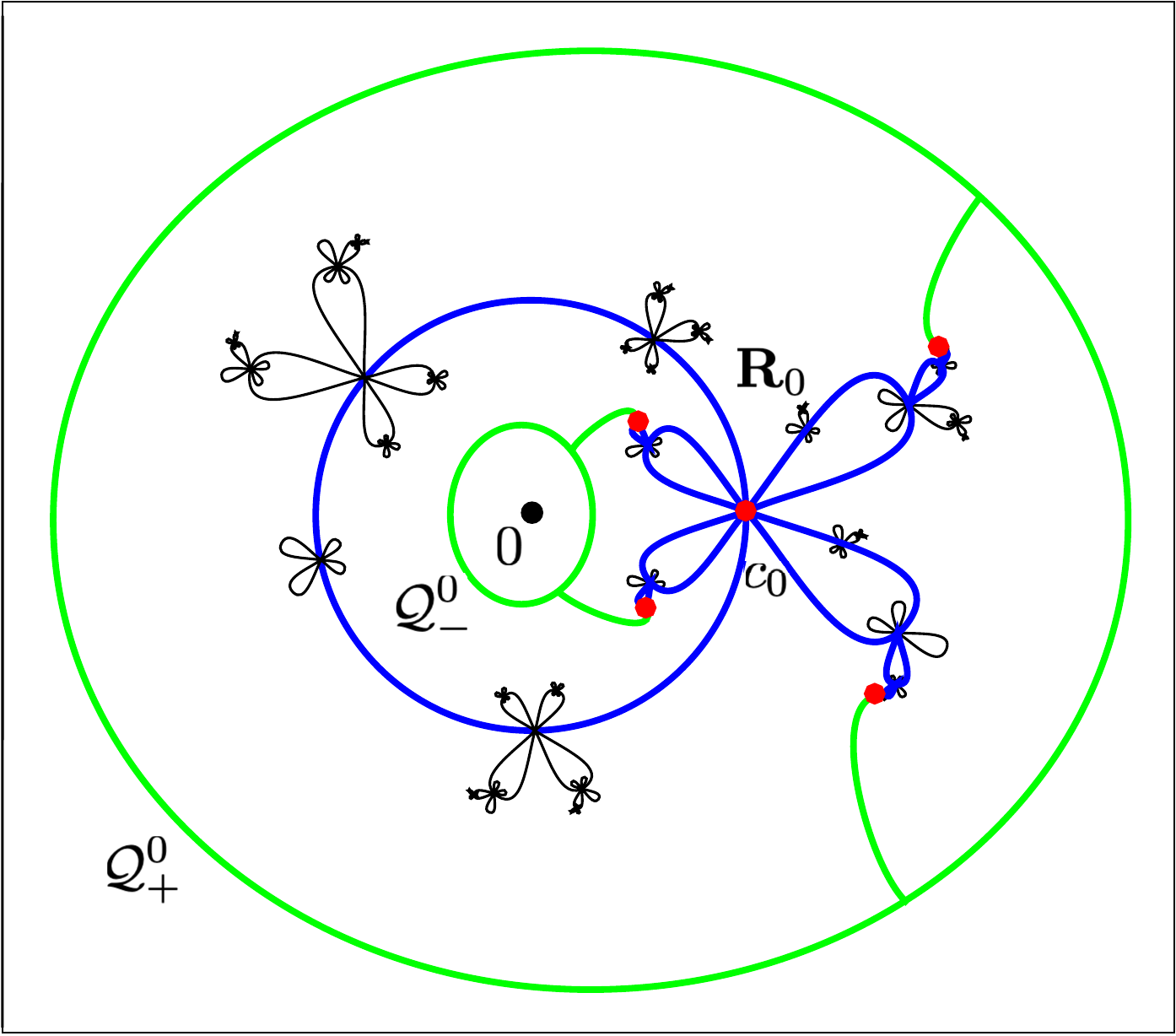}
\caption{The initial puzzle partition $\cZ_0$.}
\label{fig:puzzle partition}
\end{figure}

A connected component of $\bbC \setminus \cZ_n$ is called a {\it puzzle piece of depth $n$}. For $s \in \bbR/\bbZ$, the {\it puzzle neighborhood $P^n(s)$ at angle $s$ of depth $n$} is defined as the interior of the union of closures of puzzles pieces of depth $n$ that contain $\xi_s \in \partial \bbD$ in their boundaries. Define the {\it fiber (of height $0$) at angle $s$} as
$$
X_s := \bigcap_{n = 0}^\infty \overline{P^n(s)}.
$$
Since $h$ conjugates $F$ to a rigid irrational rotation on $\partial \bbD$, the inverse orbit of $1$ is dense in $\partial \bbD$. Thus
$$
X_s \cap \partial \bbD = \{\xi_s\}.
$$
Observe that we have
$$
F(X_s) = X_{s + \rho}.
$$
If $X_s$ contains a critical point, it is referred to as a {\it critical fiber}. In this case, $s$ is called a {\it critical angle}. Denote the set of all critical angles by
$$
\Ang_{\crit} := \{s \in \bbR/\bbZ \; | \; X_s \text{ is a critical fiber}\}.
$$

The {\it puzzle neighborhood of $\pbbD$ of depth $n$} is defined as
\begin{equation}\label{eq:puzzle nbh}
\bfP^n := \bigcup_{s \in \bbR/\bbZ} P^n(s).
\end{equation}

\begin{prop}\label{puzzle neigh inv}
For $n \geq 1$, we have $F(\bfP^n) = \bfP^{n-1}$.
\end{prop}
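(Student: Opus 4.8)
The plan is to deduce the statement from the forward invariance of the puzzle partitions together with the fact that $F$ conjugates to a rigid rotation on $\pbbD$. Since $\cZ_n = F^{-n}(\cZ_0)$, we have $\cZ_n = F^{-1}(\cZ_{n-1})$ for $n \ge 1$, and because $F:\hbbC \to \hbbC$ is a surjective, proper, open map this gives $F(\cZ_n) = \cZ_{n-1}$. Consequently $F$ carries every puzzle piece of depth $n$ (a connected component of $\hbbC \setminus \cZ_n$) \emph{onto} a puzzle piece of depth $n-1$, every depth-$(n-1)$ piece is obtained this way, and for each depth-$n$ piece $Q$ one has $F(\overline{Q}) = \overline{F(Q)}$ and $F(\partial Q) = \partial F(Q)$ — the last identity because $\partial Q \subset \cZ_n = F^{-1}(\cZ_{n-1})$ while $F(Q) \subset \hbbC \setminus \cZ_{n-1}$. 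Moreover, since $h$ conjugates $g = F|_{\pbbD}$ to $\rot_\rho$, we get $F(\xi_s) = g(\xi_s) = \xi_{s+\rho}$, and $s \mapsto s+\rho$ is a bijection of $\bbR/\bbZ$; hence $F(\bfP^n) = \bigcup_s F(P^n(s))$ and $\bfP^{n-1} = \bigcup_s P^{n-1}(s+\rho)$, so it suffices to understand $\bigcup_s F(P^n(s))$.

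The inclusion $F(\bfP^n) \subseteq \bfP^{n-1}$ is then immediate: if $\xi_s \in \partial Q$ then $\xi_{s+\rho} = F(\xi_s) \in \partial F(Q)$, so $F$ maps $\bigcup\{\overline{Q} : \xi_s \in \partial Q\}$ into $\bigcup\{\overline{Q'} : \xi_{s+\rho} \in \partial Q'\}$; since $F$ is open, $F(P^n(s))$ is an open subset of the latter set, hence lies in its interior, which is $P^{n-1}(s+\rho)$, and we take the union over $s$.

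For the reverse inclusion one must understand how the puzzle pieces clustered around $\xi_s$ cover those around $\xi_{s+\rho}$ under $F$. The key local input is that, since $g = F|_{\pbbD}$ is a homeomorphism, the point $\xi_s$ is of \emph{odd type}: in suitable local holomorphic coordinates $F$ is $z \mapsto z^{\kappa}$ with $\kappa = \deg \xi_s$ odd and $\pbbD$ corresponding to $\bbR$. One also uses the elementary fact that $\cZ_n$ is locally finite along $\pbbD$ — near every point of $\pbbD$ it is a finite union of analytic arcs, one of them an arc of $\pbbD$ — so that only finitely many depth-$n$ pieces touch $\xi_s$ and $P^n(s)$ is a genuine open neighbourhood of $\xi_s$; this is inherited from $\cZ_0$ (which near $\pbbD$ is $\pbbD$ together with the finitely many generation-$1$ bubbles rooted at $c_0$) by pulling back through $F$, together with the observation that the accumulation set of a bubble ray never lands on $\pbbD$ because $F|_{\pbbD}$ is a homeomorphism. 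From the $z \mapsto z^\kappa$ model one reads off the \emph{covering claim}: the sectors into which $\cZ_n$ cuts a neighbourhood of $\xi_{s'-\rho}$ map $\kappa$-to-$1$ \emph{onto} the sectors into which $\cZ_{n-1}$ cuts a neighbourhood of $\xi_{s'}$, so every depth-$(n-1)$ piece $Q'$ with $\xi_{s'} \in \partial Q'$ equals $F(Q)$ for some depth-$n$ piece $Q$ with $\xi_{s'-\rho} \in \partial Q$. Given this, any $y \in P^{n-1}(s')$ lying inside some puzzle piece $Q'$ satisfies $y \in Q' = F(Q)$, and $Q$, being open with $\xi_{s'-\rho} \in \partial Q$, lies in $P^n(s'-\rho) \subseteq \bfP^n$, so $y \in F(\bfP^n)$. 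The remaining points of $P^{n-1}(s')$ lie on the skeleton $\cZ_{n-1}$; these are handled by noting that $P^{n-1}(s')$ is connected and equals the $F$-image of the component $W$ of $F^{-1}(P^{n-1}(s'))$ containing $\xi_{s'-\rho}$ (by the proper–open–surjection principle applied to $W$), and then checking, again via the local model and the homeomorphism $g$, that $W \subseteq \bfP^n$. Taking the union over $s'$ yields $\bfP^{n-1} \subseteq F(\bfP^n)$.

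The main obstacle is precisely this last step: the combinatorial bookkeeping of how the finitely many pieces of $\cZ_n$ clustered around $\xi_s$ fit over the finitely many pieces of $\cZ_{n-1}$ clustered around $\xi_{s+\rho}$, especially when $\xi_s$ is a critical point of $F$ (so the local covering $z \mapsto z^\kappa$ is ramified) and along the parts of $P^{n-1}(s+\rho)$ meeting $\cZ_{n-1}$ away from $\pbbD$, where one must ensure no puzzle piece missing from the cluster is introduced prematurely. The two facts that keep this tractable are the local finiteness of $\cZ_n$ along $\pbbD$ and the fact that $F|_{\pbbD}$ is an orientation-preserving homeomorphism, which rigidifies the way arcs of $\cZ_n$ on $\pbbD$ transform under $F$ and $F^{-1}$.
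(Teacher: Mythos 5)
Your argument is correct and follows essentially the same route as the paper's proof: the forward inclusion comes from the fact that $F$ maps a depth-$n$ piece with $\xi_s$ on its boundary onto a depth-$(n-1)$ piece with $\xi_{s+\rho}$ on its boundary, and the reverse inclusion from choosing, for each depth-$(n-1)$ piece touching $\xi_t$, a preimage component touching $\xi_{t-\rho}$. The paper states this in a few lines and treats the local covering structure and the skeleton points as obvious; your extra detail (local finiteness of $\cZ_n$ along $\pbbD$, the $z\mapsto z^\kappa$ local model) just fills in what the paper leaves implicit.
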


\begin{proof}
Let $P^n$ be a puzzle piece of depth $n$ such that for some $s \in \bbR/\bbZ$, we have $\xi_s \in \partial P^n$. Then $F(P^n)$ is a puzzle piece of depth $n-1$, and $\xi_{s+\rho} \in \partial F(P^n)$. Hence, $F(\bfP^n) \subset \bfP^{n-1}$.

Conversely, consider a puzzle piece $P^{n-1}$ of depth $n-1$ such that for some $t \in \bbR/\bbZ$, we have $\xi_t \in \partial P^{n-1}$. Let $P^n$ be a component of the preimage of $P^{n-1}$ such that $\xi_{t-\rho} \in \partial P^n$. Then $P^n$ is a puzzle piece of depth $n$. Hence, $P^n \subset \bfP^n$.
\end{proof}

Observe that
$$
\bigcap_{n = 0}^\infty \bfP^n = \bigcup_{s \in \bbR/\bbZ} X_s.
$$
A point $x \in J_F$ is said to be {\it at height $0$} if $x \in X_s$ for some $s \in \bbR/\bbZ$. If $x$ is not at height $0$, then there exists $n \geq 0$ such that $x$ is not contained in $\bfP^n$. In particular, there exists $L \geq 0$ such that the only critical points contained in $\bfP^L$ are at height $0$.

\begin{prop}\label{fiber landing}
Let $\cR^\infty_t \subset \hA^\infty_F$ be an external ray. Suppose that $\omega(\cR^\infty_t)$ nontrivially intersects a fiber $X_s$. Then $\omega(\cR^\infty_t) \subset X_s$. Consequently, if $X_s = \{\xi_s\}$, then $\cR^\infty_t$ lands at $\xi_s$.
\end{prop}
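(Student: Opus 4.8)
The plan is to trap $\omega(\cR^\infty_t)$ inside the nested neighbourhoods $\overline{P^n(s)}$ whose intersection is $X_s$. First I would dispose of the case $t\in\bbQ/\bbZ$: then $\cR^\infty_t$ is rational and lands at a single point $x$, so $\omega(\cR^\infty_t)=\{x\}$ and the hypothesis that it meets $X_s$ gives $x\in X_s$ at once. So assume $t\notin\bbQ/\bbZ$: then $\cR^\infty_t$ is none of the (rational) external rays lying in any $\cZ_n$, and since $\cR^\infty_t\subset\hA^\infty_F$ with $\hA^\infty_F$ forward invariant, $\cR^\infty_t$ is disjoint from every ray and bubble ray other than itself and crosses each equipotential of $\cZ_n$ at most once. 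Hence for each $n$ the part of $\cR^\infty_t$ inside $\cQ^n_+$ lies in a single puzzle piece $P^n_t$ of depth $n$, with $\overline{P^{n+1}_t}\subset\overline{P^n_t}$ and $\omega(\cR^\infty_t)\subset\overline{P^n_t}$.

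The clean half of the argument is the observation that \emph{if $\xi_s\in\partial P^n_t$ for infinitely many $n$, we are done}: for such $n$ one has $\omega(\cR^\infty_t)\subset\overline{P^n_t}\subset\overline{P^n(s)}$, and since the $\overline{P^m(s)}$ are nested decreasing this forces $\omega(\cR^\infty_t)\subset\bigcap_m\overline{P^m(s)}=X_s$; and when $X_s=\{\xi_s\}$ this says exactly that $\cR^\infty_t$ lands at $\xi_s$. So it remains to rule out that $\xi_s\notin\partial P^n_t$ for all $n\ge N_0$. Suppose this holds and fix $y\in\omega(\cR^\infty_t)\cap X_s\subset\hJ_F$. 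Using $X_s\subset\overline{P^m(s)}$, the description of $\overline{P^m(s)}$ as a finite union of closures of depth-$m$ pieces touching $\xi_s$, the fact that each such piece lies inside a depth-$n$ piece touching $\xi_s$, and the fact that ``$\xi_s\notin\partial P^m_t$'' propagates to all larger $m$ (via $\xi_s\in\pbbD\subset\cZ_m$), one gets by induction that $y$ lies on the common boundary of $P^m_t$ and some depth-$m$ piece $\neq P^m_t$ touching $\xi_s$, for every $m\ge N_0$. In particular $y\in\cZ_m$ for all $m\ge N_0$, so the forward orbit of $y$ eventually stays in $\cZ_0\cap\hJ_F$.

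Now I would analyse $\cZ_0\cap\hJ_F$, which consists of $\pbbD$, (parts of) external bubble boundaries, and a countable set of repelling or parabolic (pre)periodic points. First, $y\notin\pbbD$, since $y\in\pbbD$ would give $y=\xi_s$ (as $X_s\cap\pbbD=\{\xi_s\}$) and hence $\xi_s\in\overline{P^n_t}$, against the assumption. Next, if the forward orbit of $y$ never meets $\pbbD$, then — since the orbit of a point on a bubble boundary enters $\pbbD$ in finitely many steps — $y$ must be (pre)periodic to a repelling or parabolic cycle; then the irrational ray $\cR^\infty_{d^{N_0}t}$ accumulates on such a point and therefore lands on it, so $\omega(\cR^\infty_t)$ is a single point, which lies in $X_s$, and we are done. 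Otherwise the orbit of $y$ first enters $\pbbD$ at some time $j\ge1$, necessarily at $F^j(y)=\xi_{s+j\rho}$ (because $F^j(y)\in F^j(X_s)\cap\pbbD=X_{s+j\rho}\cap\pbbD=\{\xi_{s+j\rho}\}$), and $\cR^\infty_{d^jt}$ accumulates on $\xi_{s+j\rho}\in\pbbD$. Here I would transport information back along $F^j$: accumulation of $\cR^\infty_{d^jt}$ on $\xi_{s+j\rho}$ gives $\xi_{s+j\rho}\in\partial P^n_{d^jt}$ for all $n$, and pulling $P^n_{d^jt}$ back along $F^j$ — using that $\xi_s$ is the unique $F^j$-preimage of $\xi_{s+j\rho}$ on $\pbbD$, and that $\omega(\cR^\infty_t)\subset\overline{P^{n+j}_t}$ — one aims to conclude $\xi_s\in\partial P^{n+j}_t$, hence $\xi_s\in\partial P^n_t$, for all $n$: the desired contradiction.

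The main obstacle is exactly this last case: controlling how the depth-$n$ piece $P^n_{d^jt}$ pulls back through the bubble structure along $F^j$, so as to guarantee that the component carrying the tail of $\cR^\infty_t$ is adjacent to $\xi_s$ and not merely to some other preimage of $\xi_{s+j\rho}$. The reduction, the construction of the tail pieces $P^n_t$, and the clean half of the argument are all routine bookkeeping with the combinatorics of the $\cZ_n$.
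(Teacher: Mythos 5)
Your setup (reduction to irrational $t$, the nested tail pieces $P^n_t$ with $\omega(\cR^\infty_t)\subset\overline{P^n_t}$, and the observation that $\xi_s\in\partial P^n_t$ for infinitely many $n$ immediately gives $\omega(\cR^\infty_t)\subset\bigcap_m\overline{P^m(s)}=X_s$) is correct, and it is essentially all that the paper itself says: the published proof is the single sentence that $\omega(\cR^\infty_t)$ ``cannot intersect two disjoint puzzle neighborhoods,'' which amounts to asserting that the tail piece is adjacent to $\xi_s$ at every depth. So you have correctly located the only point where this is not automatic, namely when $\omega(\cR^\infty_t)$ meets $X_s$ only in a point $y$ lying on $\partial P^n_t$ for every $n$ (equivalently $y\in X_s\cap\bigcap_n\cZ_n$ with $y\ne\xi_s$).

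The problem is that your proposal does not actually dispose of that case, and you say so yourself: the sub-case in which the orbit of $y$ first enters $\pbbD$ at time $j\ge 1$ is left as ``the main obstacle,'' with only a statement of what you would like the pullback of $P^n_{d^jt}$ along $F^j$ to give. This is a genuine gap rather than bookkeeping: $F^{-j}(\xi_{s+j\rho})$ contains many points besides $\xi_s$ (e.g.\ roots and boundary points of bubbles of generation $\le j$), and nothing you have written prevents the depth-$(n+j)$ piece carrying the tail of $\cR^\infty_t$ from being adjacent to one of those other preimages instead of to $\xi_s$; pinning down the correct preimage requires using that $y\in X_s$, so that the chain of bubbles through $y$ is attached to $\pbbD$ at $\xi_s$ itself, and this is exactly the argument that is missing. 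The complementary sub-case also rests on an unproved assertion: that an irrational external ray accumulating on a (pre-)repelling or (pre-)parabolic point must land there is not automatic for non-periodic rays and needs either a reference or a linearization argument. Until the boundary-touching case is closed (either by finishing your pullback argument, or by showing directly that a point of $X_s\cap\partial P^n_t$ with $\xi_s\notin\partial P^n_t$ cannot exist), the proposition is not proved.
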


\begin{proof}
Clearly $\omega(\cR^\infty_t)$ cannot intersect two disjoint puzzle neighborhoods. The claim immediately follows.
\end{proof}

Our main motivation for introducing the puzzle partition is the following result.

\begin{prop}[Triviality of fibers implies local connectivity]\label{fiber to lc}
The Julia set $J_F$ and the modified Julia set $\hJ_F$ are locally connected at every point in $\partial \bbD$ if $X_s = \{\xi_s\}$ for all $s \in \bbR /\bbZ$.
\end{prop}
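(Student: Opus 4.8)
The plan is to use the puzzle neighborhoods directly as the required neighborhood bases. Fix $s \in \bbR/\bbZ$; I want to show that, assuming all fibers are trivial, the sets $P^n(s)\cap J_F$ and $P^n(s)\cap \hJ_F$, $n\ge 0$, are connected open neighborhoods of $\xi_s$ in $J_F$ and $\hJ_F$ respectively that shrink to $\{\xi_s\}$. So there are two points to establish: (i) these form shrinking bases of \emph{open} neighborhoods of $\xi_s$, and (ii) each of them is \emph{connected}.

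Part (i) is routine. By construction $P^n(s)$ is open in $\bbC$ and contains $\xi_s$: since $g = F|_{\pbbD}$ has irrational rotation number and hence no periodic points, $\xi_s$ is never the landing point of a rational external or internal ray, nor of a rational bubble ray, so $\pbbD$ is the only curve of $\cZ_n$ through $\xi_s$ — except when $\xi_s$ lies on the orbit of $1$, in which case finitely many bubble boundaries are rooted at $\xi_s$ as well — and the puzzle pieces of depth $n$ abutting $\xi_s$ therefore tile a full neighborhood of it. By hypothesis $\bigcap_n \overline{P^n(s)} = X_s = \{\xi_s\}$, and since the $\overline{P^n(s)}$ form a nested sequence of compacta this forces $\diam \overline{P^n(s)} \to 0$; hence $\{P^n(s)\}_n$ is a neighborhood basis of $\xi_s$ in $\bbC$. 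As $\xi_s \in \pbbD \subset \hJ_F \subset J_F$, intersecting with $J_F$ and with $\hJ_F$ gives shrinking bases of neighborhoods of $\xi_s$ open in $J_F$, respectively in $\hJ_F$.

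Part (ii) is the substance. Write $P^n(s)$ as the interior of $\bigcup_i \overline{P^n_i}$, the union over the puzzle pieces $P^n_i$ of depth $n$ abutting $\xi_s$. The structural input I would use is that $\overline{P^n_i}\cap J_F$ and $\overline{P^n_i}\cap \hJ_F$ are connected for each $i$; this I would prove by the standard pullback induction along $F$, reducing to the pieces of $\cZ_0$, where one uses that the equipotentials $\cQ_2, \cQ_{1/2}$ and the external and internal rays lie in the Fatou set (so they do not meet $J_F$), that the only remaining cuts meeting $J_F$ are bubble boundaries, each of them a Jordan curve — a preimage under an iterate of $F$ of the quasicircle $\pbbD$ (cf.\ \thmref{quasicircle}) — and that $J_F$ and $\hJ_F$ are connected by assumption. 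Granting this, a routine assembly shows $P^n(s)\cap J_F$ is connected: it contains the connected set $\bigl(\bigcup_i (P^n_i\cap J_F)\bigr)\cup\{\xi_s\}$ and is contained in its closure (any extra point lies on a bubble-boundary arc of $\cZ_n$ interior to the fan $\bigcup_i\overline{P^n_i}$, hence is accumulated by $J_F$ from within one of the $P^n_i$), and a set sandwiched between a connected set and its closure is connected; the argument for $\hJ_F$ is identical.

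The hard part is item (ii), and the difficulty is genuinely new relative to the classical Yoccoz puzzle: here the cuts $\cZ_n$ contain bubble rays that lie \emph{inside} $J_F$, so one cannot simply invoke that puzzle pieces meet the Julia set connectedly. The point that must be checked carefully is that these internal cuts are ``one-sided'' with respect to $\xi_s$ — that deleting a bubble boundary $\partial B$ from $J_F$ (or from $\hJ_F$) detaches only the part of $J_F$ lying inside $B$ or past $B$ on the side away from $\xi_s$, which is in any case not part of $P^n(s)$ — so that nothing of $P^n(s)\cap J_F$ is stranded from $\xi_s$. I expect the quasicircle structure of $\pbbD$ and of its iterated preimages, together with the finiteness of the angle sets attached to the bubble rays (as in the proof that rational bubble rays land), to be what makes this work.
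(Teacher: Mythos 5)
There is a genuine gap, and it sits exactly where you flagged ``the hard part'': the connectivity of $P^n(s)\cap J_F$. The paper's own proof in fact opens by asserting the opposite of what you need --- that $P^n(s)\cap J_F$ is \emph{not} connected --- and this is not a technicality. The cuts $\cZ_n$ contain bubble boundaries, which are arcs of $J_F$ itself, and $J_F$ carries decorations attached to a bubble boundary $\partial B$ at iterated preimages of $c_0$, which are dense in $\partial B$ and occur on \emph{both} sides of it. A decoration attached at a point $y$ of a bounding bubble arc of $P^n(s)$, on the side facing $\xi_s$, sprouts into $P^n(s)$ but meets the rest of $J_F$ only through $y\in\partial P^n(s)$; it is therefore a stranded component of $P^n(s)\cap J_F$. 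This defeats your one-sidedness claim (the detached material is not all ``inside $B$ or past $B$ on the far side''), and it also undermines the intermediate step: the set $\bigl(\bigcup_i (P^n_i\cap J_F)\bigr)\cup\{\xi_s\}$ is not connected for the same reason, so the sandwich argument has nothing to stand on. Your part (i) is fine; part (ii) cannot be repaired for the unmodified puzzles.

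The paper's route is to change the puzzle rather than argue harder about the old one. It defines modified partitions $\ticZ_0 := \tilde\bfR_0\cup\{c_0\}\cup\cQ_2\cup\cQ_{1/2}$ and $\ticZ_n := \tilde\bfR_0\cup\{c_0\}\cup F^{-1}(\ticZ_{n-1})$, where $\tilde\bfR_0$ is the union of the external and internal rays landing at $c_0$ --- i.e.\ cuts that meet $J_F$ in only finitely many points and contain no bubble boundaries. The component $\tiP^n(s)$ of the complement containing $\xi_s$ then meets $J_F$ in a connected open set by the standard Yoccoz-type argument, and the shrinking is inherited from the unmodified fibers: $\tiP^n(s)$ cannot meet two disjoint unmodified puzzle neighborhoods, so $\bigcap_n\tiP^n(s)\subset X_s=\{\xi_s\}$. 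This substitution --- rays landing at $c_0$ in place of bubble rays as the generation-zero cuts, with the triviality of the \emph{unmodified} fibers imported as the shrinking mechanism --- is the missing idea in your proposal.
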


\begin{proof}
To show local connectivity of $J_F$ at $\xi_s \in \pbbD$, one must show that there are arbitrarily small connected open neighborhoods of $\xi_s$ in $J_F$. Unfortunately, if $P^n(s)$ is a puzzle neighborhood of $\xi_s$, then $P^n(s) \cap J_F$ is not connected. Hence, we must make the following modification to our construction of puzzles.

By \propref{fiber landing}, all external and internal rays that accumulate on $\xi_s \in \pbbD$ must land at $\xi_s$. Let $\tilde\bfR_0$ be the union of all external and internal rays that land at the critical point $c_0 = \xi_0 \in \pbbD$. Recall that $c_0$ is the root of all bubble rays of generation $0$. Define the initial modified puzzle partition as
$$
\ticZ_0 := \tilde\bfR_0 \cup \{c_0\} \cup \cQ_2 \cup \cQ_{1/2}.
$$
Proceeding inductively, define the modified puzzle partition of depth $n \geq 1$ by
$$
\ticZ_n := \tilde\bfR_0 \cup \{c_0\} \cup F^{-1}(\ticZ_{n-1}).
$$
Compare with the definition of (unmodified) puzzle partitions \eqref{eq:0 puzzle}. The connected component of $\bbC \setminus \ticZ_n$ containing $\xi_s\in \pbbD$ for $s \in (\bbR/\bbZ)\setminus \{0\}$ is called a modified puzzle neighborhood $\tiP^n(s)$ of depth $n$. It is easy to see that $\tiP^n(s) \cap J_F$ is connected and open in $J_F$. Define the modified fiber at angle $s$ as
$$
\tiX_s := \bigcap_{n = 0}^\infty \tiP^n(s).
$$
Clearly, $\tiX_s$ cannot intersect two disjoint unmodified puzzle neighborhoods. Thus, $\tiX_s \subset X_s = \{\xi_s\}$. Hence, $J_F$ is locally connected at $\xi_s$.

The proof of local connectivity of $\hJ_F$ at $\xi_s$ is identical.
\end{proof}

\section{Puzzle Disks}\label{sec:disks}

As the construction of puzzle neighborhoods in \secref{sec:puzzle} involves taking rather arbitrary unions of puzzle pieces, we have no reason to expect that they have nice transformation properties under iteration by $F$. In this section, we define new dynamically meaningful neighborhoods called {\it puzzle disks} that are much better integrated into the rotational combinatorial structure of $F$ on $\pbbD$.

\subsection{Combinatorics on the circle}

Recall that there is a quasisymmetric map $h : (\pbbD, 1) \to (\pbbD, 1)$ such that for $g := F|_{\pbbD}$, we have
$$
h \circ g \circ h^{-1}(z) = e^{2\pi \rho i} z
\matsp{for}
z\in \pbbD.
$$
Let $\xi_s := h^{-1}(e^{2\pi s i})$ for $s \in \bbR/\bbZ$. We assume that $g$ has a critical point at $c_0 := \xi_0 = 1$. Denote $c_k := g^k(c_0) = \xi_{k\rho}$ for $k \in \bbZ$. Without loss of generality, we may assume that $c_k$ is not a critical point for $k \geq 1$. Note that there exists $l_0 \geq 1$ such that for $l \geq l_0$, the fiber $X_{l\rho} \ni c_l$ is noncritical.

\begin{notn}
For $a, b \in \pbbD$ such that $a \neq \pm b$, let $(a,b)_\pbbD \subset \pbbD$ denote the unique open arc of arclength less than $\pi$ with endpoints $a$ and $b$. The notations $[a, b)_\pbbD$, $(a, b]_\pbbD$ and $[a,b]_\pbbD$ are self-explanatory. An (open) {\it combinatorial arc} is an arc in $\pbbD$ of the form
$$
(n,m)_c := (c_n, c_m)_\pbbD
$$
for some $n, m \in \pbbD$.
\end{notn}

For $n \geq 1$, let $a_n$ be the $n$th coefficient in the continued fraction expansion of $\rho$. Since $\rho$ is of bounded type, there exists a uniform bound $\tau \geq 1$ such that $a_n \leq \tau$. Let $q_n$ be the $n$th closest return time, and define
$$
I^\pm_n := (0, \pm q_n)_c.
$$
Observe that
$$
J^\pm_n := I^\pm_n \cup \{c_0\} \cup I^\pm_{n+1} = (\pm q_n, \pm q_{n+1})_c
$$
is an open neighborhood of $c_0$ in $\partial \bbD$. Moreover,
$$
\cI^\pm_n := \{g^{\pm i}(I^\pm_n) \, | \, 0 \leq i < q_{n+1}\} \cup  \{g^{\pm i}(I^\pm_{n+1}) \, | \, 0 \leq i < q_n\}
$$
is the $n$th dynamic partition of $\partial \bbD$ constructed in \eqref{eq:dyn part} with $g^{\pm 1}$ as the circle homeomorphism and $c_0$ as the initial point.

\begin{lem}\label{pullback interval}
Let $n, m \geq 0$ and $0 \leq k \leq q_{n+m}$. If $J$ is a subarc of $J^-_n$, then the intersection multiplicity of $\cJ := \{g^{-i}(J)\}_{i=0}^k$ is uniformly bounded by a constant depending only on $m$.
\end{lem}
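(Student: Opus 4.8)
The plan is to \emph{dualize}: the intersection multiplicity of the inverse orbit $\cJ=\{g^{-i}(J)\}_{i=0}^{k}$ will be rephrased as the maximal number of times one fixed \emph{forward} orbit of $g$ can visit a single arc, and the latter is controlled because that arc is combinatorially tiny compared with the orbit length. First, since $g^{-i}(J)\subseteq g^{-i}(J^-_n)$ for every $i$, the intersection multiplicity of $\cJ$ is at most that of $\{g^{-i}(J^-_n)\}_{i=0}^{k}$, so we may assume $J=J^-_n$. Second, because $g$ is a homeomorphism of $\pbbD$, a point $\zeta$ lies in the interior of $g^{-i}(J^-_n)$ exactly when $g^{i}(\zeta)\in J^-_n$; hence
\[
\big(\text{intersection multiplicity of }\{g^{-i}(J^-_n)\}_{i=0}^{k}\big)=\max_{\zeta\in\pbbD}\,\#\{\,0\le i\le k:\ g^{i}(\zeta)\in J^-_n\,\},
\]
the largest number of visits of a length-$(k+1)$ forward orbit segment of $g$ to the open arc $J^-_n$.

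Next I would cover $J^-_n$ by deep dynamic-partition arcs. Recall $J^-_n=\overline{I^-_n}\cup\overline{I^-_{n+1}}$ is the union of two adjacent elements of the $n$th dynamic partition $\cI^-_n$ (generated by $g^{-1}$ with base point $c_0$), and that refining $\cI^-_{j}$ to $\cI^-_{j+1}$ splits each element into at most $a_{j+1}+1\le\tau+1$ subarcs --- those of $\hat\cI^-_{j+1}$ --- or leaves it unchanged, where $\tau$ is the bounded-type bound of $\rho$. Iterating $m$ steps, $J^-_n$ is the union of the closures of at most $2(\tau+1)^{m}$ consecutive elements $W_1,\dots,W_r\in\cI^-_{n+m}$, and the open arc $J^-_n$ itself is the disjoint union of $W_1,\dots,W_r$ together with at most $r-1$ shared endpoints. (At $n=0$ one reads $J^-_0$ as $\overline{I^-_1}$, a single element of $\cI^-_1$, and refines $m-1$ further steps; the count is unaffected.)

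Thus for fixed $\zeta$ the number of $i\in\{0,\dots,k\}$ with $g^{i}(\zeta)\in J^-_n$ is at most $\sum_{j=1}^{r}\#\{i:\ g^{i}(\zeta)\in W_j\}$ plus the number of times the orbit meets the $\le r-1$ shared endpoints. The orbit of a homeomorphism hits any single point at most once, so the endpoints contribute at most $r-1$. For each $W_j$, note $W_j$ is a univalent $g^{-l}$-image of $I^-_{n+m}$ or of $I^-_{n+m+1}$ for some $l$, and the first return time of $g$ is unchanged under this conjugation; by the closest-return property, $g^{-t}(I^-_N)\cap(I^-_N\cup I^-_{N+1})=\varnothing$ for $1\le t<q_{N+1}$, which on applying $g^{t}$ says that $I^-_N$ is disjoint from its first $q_{N+1}-1$ forward iterates under $g$. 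Hence the same holds for $W_j$, and the first forward return time of $g$ to $W_j$ is at least $q_{n+m+1}>q_{n+m}\ge k$, so the orbit segment $\{\zeta,g(\zeta),\dots,g^{k}(\zeta)\}$ meets each $W_j$ at most once. Altogether the visit count is at most $2r-1\le 4(\tau+1)^{m}$, a constant depending only on $m$ (the bound $\tau$, and hence $g$, being fixed throughout). By the first paragraph this bounds the intersection multiplicity of $\cJ$, as claimed.

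The one place requiring care is the combinatorial bookkeeping in the refinement: checking that the splitting factor at each level is uniformly $\le\tau+1$, that every element of $\cI^-_{n+m}$ really is a univalent $g^{-l}$-image of $I^-_{n+m}$ or $I^-_{n+m+1}$ (so that the closest-return lower bound on first return times transfers to it), and disposing of the trivial small cases $n+m\le 1$, where $q_{n+m+1}>q_{n+m}$ may fail but $k$ is so small ($k\le 1$) that the multiplicity is bounded outright. No metric or analytic \emph{a priori} bounds are needed --- the whole argument is combinatorial, resting only on the structure of the dynamic partitions and the closest-return property.
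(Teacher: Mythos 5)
Your proof is correct. It rests on the same two ingredients as the paper's argument --- the closest-return combinatorics of the dynamic partitions and the bounded-type hypothesis $a_i \le \tau$ --- but the execution is genuinely different. The paper fixes a point common to a maximal overlapping subfamily (normalized so that $J=J^-_n$ itself belongs to it), passes to the deeper arc $J'=J^-_{n+m-1}$ and the shallower arc $J^-_{n-2}$, and bounds $\#\{0\le i\le q_{n+m} : g^{-i}(J')\subset J^-_{n-2}\}$ directly by bounded type; the key implication ``$g^{-i}(J)\cap J\ne\varnothing \Rightarrow g^{-i}(J')\cap J^-_{n-2}\ne\varnothing$'' is asserted without detail. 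You instead dualize to forward-orbit visits, tile $J^-_n$ by at most $2(\tau+1)^m$ consecutive elements of $\cI^-_{n+m}$ (tracking the splitting factor $a_{j+1}+1\le\tau+1$ at each refinement), and observe that each tile has first return time at least $q_{n+m+1}>k$, so it is visited at most once. Your route is longer but more self-contained: it makes explicit exactly where bounded type enters (the refinement count) and where the closest-return property enters (the return-time bound), and it cleanly handles the degenerate cases $n+m\le 1$ and $n=0$ that the paper's one-line argument glosses over. The explicit bound $4(\tau+1)^m$ is a bonus the paper does not state.
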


\begin{proof}
It suffices to prove the result for $J = J^-_n$. Consider a maximal subset of arcs in $\cJ$ whose interiors have a nonempty intersection. Without loss of generality, we may assume that one of these arcs is $J$. Let $J' := J^-_{n+m-1}$. Observe that if $g^{-i}(J') \cap J^-_{n-2} = \varnothing$, then $g^{-i}(J) \cap J = \varnothing$. Since $\rho$ is of bounded type, $\#\{0 \leq i \leq q_{n+m} \; | \; g^{-i}(J') \subset J^-_{n-2}\}$ has a uniform bound depending only on $m$.
\end{proof}

\begin{notn}\label{disk iterate}
For $n \geq 0$, denote
$$
r_n := q_n + q_{n+1}
\matsp{and}
\bfr_n := \sum_{i=1}^n r_i.
$$
\end{notn}

\begin{lem}\label{exp time growth}
For $n \geq 3$, we have
\begin{enumerate}[i)]
\item $q_n \geq r_{n-2}$, and equality holds if and only if $a_{n-1} = 1$;
\item $q_{n+1} \geq r_{n-2} + r_{n-3}$, and equality holds if and only if $a_n = a_{n-1} = a_{n-2} = 1$;
\item $\bfr_{n-2} \geq \bfr_n - r_{n+1}$, and equality holds if and only if $a_{n+1} = a_n = 1$; and
\item $r_n > \bfr_{n-2}$.
\end{enumerate}
\end{lem}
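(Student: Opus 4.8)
The plan is to reduce all four assertions to the defining recursion $q_{m}=a_{m-1}q_{m-1}+q_{m-2}$ (valid for every $m\ge 2$, with $q_0=0$ and $q_1=1$) together with the trivial fact that each coefficient satisfies $a_{m}\ge 1$. The engine is the single inequality
$$
q_m\;\ge\; q_{m-1}+q_{m-2}\qquad(m\ge 2),
$$
which is an equality exactly when $a_{m-1}=1$. Part (i) is immediate from this with $m=n$, since $q_{n-1}+q_{n-2}=r_{n-2}$. I would record this basic bound for all $m\ge 2$, not only in the range $n\ge 3$ where (i) is stated, because it will be invoked below at the index $n-1$, which equals $2$ when $n=3$.

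For (ii) I would chain the basic bound three times: $q_{n+1}=a_nq_n+q_{n-1}\ge q_n+q_{n-1}$; then replace $q_n$ by $q_{n-1}+q_{n-2}$ to get $q_{n+1}\ge 2q_{n-1}+q_{n-2}$; and finally replace one copy of $q_{n-1}$ by $q_{n-2}+q_{n-3}$, which yields $q_{n+1}\ge q_{n-1}+2q_{n-2}+q_{n-3}=r_{n-2}+r_{n-3}$. The three inequalities used become equalities precisely when $a_n=1$, $a_{n-1}=1$, $a_{n-2}=1$ respectively; since all quantities are positive integers, the composite inequality is tight iff all three hold, which is the stated equality condition. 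For (iii), observe that $\bfr_n-\bfr_{n-2}=r_{n-1}+r_n$, so the claim is equivalent to $r_{n+1}\ge r_{n-1}+r_n$. Expanding the definitions and using $q_{n+2}=a_{n+1}q_{n+1}+q_n$ gives
$$
r_{n+1}-(r_{n-1}+r_n)\;=\;a_{n+1}q_{n+1}-q_n-q_{n-1},
$$
and since $a_{n+1}q_{n+1}\ge q_{n+1}\ge q_n+q_{n-1}$ the right-hand side is nonnegative; it vanishes iff $a_{n+1}=1$ (tightness of the first step) and $q_{n+1}=q_n+q_{n-1}$, i.e. $a_n=1$ (tightness of the second), giving the equality condition $a_{n+1}=a_n=1$.

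Finally, (iv) follows by induction on $n\ge 3$, using (iii) in the form $r_m\ge r_{m-1}+r_{m-2}$ for $m\ge 4$. The base case is $r_3>\bfr_1=r_1$, which holds because $r_3=q_3+q_4\ge (q_1+q_2)+q_4=r_1+q_4>r_1$. For the inductive step with $n\ge 4$, the hypothesis $r_{n-1}>\bfr_{n-3}$ combined with (iii) gives $r_n\ge r_{n-1}+r_{n-2}>\bfr_{n-3}+r_{n-2}=\bfr_{n-2}$, completing the induction. There is no genuine obstacle here; the only points needing care are the bookkeeping of the equality cases in (ii) and (iii) and keeping the indices in the chained inequalities within their admissible range, which is exactly why I would establish the basic bound $q_m\ge q_{m-1}+q_{m-2}$ for all $m\ge 2$ at the outset.
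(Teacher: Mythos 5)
Your proof is correct and follows essentially the same elementary route as the paper: everything reduces to the recursion $q_m = a_{m-1}q_{m-1}+q_{m-2}$ together with $a_m\ge 1$, and part (iv) is then a short induction. The only (immaterial) difference is that the paper's induction for (iv) runs through (i) and (ii), writing $r_n = q_n + q_{n+1} > \bfr_{n-4} + r_{n-3} + r_{n-2} = \bfr_{n-2}$, whereas you run it through (iii) in the form $r_n \ge r_{n-1}+r_{n-2}$; both work, and your tracking of the equality cases and of the base indices is more careful than the paper's one-line ``obvious.''
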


\begin{proof}
The first, second and third claims are obvious. For the fourth claim, assume that $r_k > \bfr_{k-2}$ for $k < n$. Then by the first claim, we have $q_n \geq r_{n-2} > \bfr_{n-4}$. The result follows from the second claim.
\end{proof}

\begin{lem}\label{nested arcs}
For $n > 2$, we have
$$
J^+_n \Subset J^-_{n-1} \subset J^-_{n-2}
\matsp{and}
J^-_n = g^{-r_n}(J^+_n) \Subset g^{-r_n}(J^-_{n-1}) \Subset J^-_{n-2}.
$$
\end{lem}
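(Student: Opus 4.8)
The plan is to prove the whole statement combinatorially, reducing everything to the rigid rotation model. The key point is that the conjugacy $h$ is an orientation‑preserving homeomorphism of $\pbbD$ carrying $c_k$ to $e^{2\pi i k\rho}$, so the cyclic order of $\{c_k\}_{k\in\bbZ}$ on $\pbbD$ agrees with that of $\{e^{2\pi i k\rho}\}_{k\in\bbZ}$; hence every assertion of the form ``$c_a$ lies in the combinatorial arc bounded by $c_b,c_c$'' may be checked by replacing $c_m$ by its rotation‑model position $m\rho \bmod 1$. (For $n>2$ one should first invoke \thmref{geometry bound}, applied to both $g$ and $g^{-1}$, together with \lemref{pullback interval}, to see that all arcs occurring below are shorter than a half‑circle, so that the ``arclength $<\pi$'' convention defining $(\cdot,\cdot)_c$ really selects the order‑theoretic arc; this is where $n>2$ and bounded type enter.) I would then set $\beta_k:=q_k\rho-p_k$ and record that the $\beta_k$ alternate in sign, satisfy $|\beta_{k+1}|<|\beta_k|$, and obey $\beta_{k-1}=\beta_{k+1}-a_k\beta_k$, whence $|\beta_{k-1}|=|\beta_{k+1}|+a_k|\beta_k|\ge|\beta_{k+1}|+|\beta_k|$. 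In the rotation model $c_{\pm q_k}$ sits at $\pm\beta_k$, the collars $I^+_k,I^-_k$ lie on opposite sides of $c_0$ at distance $|\beta_k|$, and $c_{-r_k}=c_{-q_k-q_{k+1}}$ sits at $-(\beta_k+\beta_{k+1})$, i.e.\ on the $I^-_k$‑side at distance $|\beta_k|-|\beta_{k+1}|<|\beta_k|$, so $c_{-r_k}\in I^-_k$.

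For the first display I would compare collars. Each $J^\pm_k=I^\pm_k\cup\{c_0\}\cup I^\pm_{k+1}$ has its two collars on opposite sides of $c_0$, of lengths $|\beta_k|$ and $|\beta_{k+1}|$. On the side of $c_0$ carrying $I^+_n$ (equivalently $I^-_{n-1}$, since $\beta_n,\beta_{n-1}$ have opposite signs), $J^+_n$ reaches distance $|\beta_n|$ while $J^-_{n-1}$ reaches $|\beta_{n-1}|>|\beta_n|$; on the opposite side $J^+_n$ reaches $|\beta_{n+1}|$ while $J^-_{n-1}$ reaches $|\beta_n|>|\beta_{n+1}|$. Hence $\overline{J^+_n}\subset J^-_{n-1}$, i.e.\ $J^+_n\Subset J^-_{n-1}$. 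Since $J^-_{n-1}$ and $J^-_{n-2}$ share the collar $I^-_{n-1}$ and on the other side reach distances $|\beta_n|<|\beta_{n-2}|$, one gets $J^-_{n-1}\subset J^-_{n-2}$ (only $\subset$, as the two arcs share the endpoint $c_{-q_{n-1}}$). Next, for $J^-_n=g^{-r_n}(J^+_n)$: the homeomorphism $g^{-r_n}$ sends the endpoints $\{c_{q_n},c_{q_{n+1}}\}$ of $J^+_n$ to $\{c_{-q_{n+1}},c_{-q_n}\}$ (the endpoints of $J^-_n$) and sends $c_0$ to $c_{-r_n}\in I^-_n\subset J^-_n$, so $g^{-r_n}(J^+_n)$ is precisely the arc bounded by $c_{-q_n},c_{-q_{n+1}}$ containing $c_{-r_n}$, namely $J^-_n$. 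Applying $g^{-r_n}$ to $J^+_n\Subset J^-_{n-1}$ then gives $J^-_n=g^{-r_n}(J^+_n)\Subset g^{-r_n}(J^-_{n-1})$ for free.

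The main obstacle is the final inclusion $g^{-r_n}(J^-_{n-1})\Subset J^-_{n-2}$. I would first compute the two endpoints: $g^{-r_n}(c_{-q_{n-1}})=c_{-(q_{n-1}+q_n+q_{n+1})}$, sitting at $-(\beta_{n-1}+\beta_n+\beta_{n+1})$; since $\beta_{n-1},\beta_{n+1}$ share a sign opposite to that of $\beta_n$ and $|\beta_{n-1}|>|\beta_n|$, this has the sign of $-\beta_{n-1}$ and distance $|\beta_{n-1}|-|\beta_n|+|\beta_{n+1}|<|\beta_{n-1}|$ from $c_0$, hence lies strictly inside the collar $I^-_{n-1}$ of $J^-_{n-2}$. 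And $g^{-r_n}(c_{-q_n})=c_{-(2q_n+q_{n+1})}$ sits at $-(2\beta_n+\beta_{n+1})$, with the sign of $-\beta_n$ and distance $2|\beta_n|-|\beta_{n+1}|<2|\beta_n|\le|\beta_n|+a_{n-1}|\beta_{n-1}|=|\beta_{n-2}|$, hence strictly inside the collar $I^-_{n-2}$ of $J^-_{n-2}$. So both endpoints lie in the open arc $J^-_{n-2}$, and $g^{-r_n}(J^-_{n-1})$ also contains the interior point $c_{-r_n}\in J^-_n\subset J^-_{n-2}$.

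What remains is the delicate part: ruling out that the connected arc $g^{-r_n}(J^-_{n-1})$ wraps around $\pbbD$ and exits $J^-_{n-2}$. This can occur only if it contains one of the endpoints $c_{-q_{n-2}},c_{-q_{n-1}}$ of $J^-_{n-2}$, i.e.\ (applying $g^{r_n}$) only if $c_{r_n-q_{n-1}}$ or $c_{r_n-q_{n-2}}$ lies in $J^-_{n-1}$. Using $r_n-q_{n-1}=(a_n+1)q_n$ and $r_n-q_{n-2}=q_{n+1}+a_{n-1}q_{n-1}$, the rotation‑model positions of these points are $(a_n+1)\beta_n$, at distance $(a_n+1)|\beta_n|>|\beta_{n+1}|+a_n|\beta_n|=|\beta_{n-1}|$ on the side where $J^-_{n-1}$ reaches only $|\beta_{n-1}|$, and $\beta_{n+1}+a_{n-1}\beta_{n-1}$ (a same‑sign sum), at distance $|\beta_{n+1}|+a_{n-1}|\beta_{n-1}|>|\beta_{n-1}|>|\beta_n|$ on the side where $J^-_{n-1}$ reaches only $|\beta_n|$; so neither lies in $J^-_{n-1}$. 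Hence $g^{-r_n}(J^-_{n-1})$ contains no endpoint of $J^-_{n-2}$, so $\overline{g^{-r_n}(J^-_{n-1})}\subset J^-_{n-2}$, which finishes the proof. The bulk of the work, and the only place where slips are likely, is the sign‑and‑magnitude bookkeeping with the $\beta_k$ and keeping straight which side of $c_0$ each collar and each displaced endpoint sits on.
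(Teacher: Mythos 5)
Your proof is correct and is, at heart, the same argument as the paper's: both reduce every inclusion to the circular order of the orbit $\{c_k\}$, which via the conjugacy $h$ is that of the rigid rotation, and then do continued-fraction bookkeeping with the recursion $q_n=a_{n-1}q_{n-1}+q_{n-2}$. If anything you are more complete than the paper, which declares the first three inclusions obvious and, for the last one, decomposes $g^{-r_n}(J^-_{n-1})$ into $J^-_n$ plus two flanking arcs but writes out the containment of only one of them, whereas your endpoint-position computation together with the explicit no-wrap-around check covers both sides.
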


\begin{proof}
We show that
$$
g^{-r_n}(J^-_{n-1}) \Subset J^-_{n-2}.
$$
The other inclusions are obvious.

The arc $g^{-r_n}(J^-_{n-1})$ can be decomposed into three subarcs
$$
g^{-r_n}(J^-_{n-1}) = (-q_n-r_n, -q_n]_c \cup (-q_n, -q_{n+1})_c \cup [-q_{n+1}, -q_{n-1} - r_n)_c.
$$
Consider the arc
$$
J^-_{n-2} = (-q_{n-2}, -q_{n-1})_c \Supset J^-_n = (-q_n, -q_{n+1})_c.
$$
Note
$$
g^{q_n}((-q_{n-2}, -q_n]_c) = (a_{n-1}q_{n-1}, 0]_c \supset (q_{n-1}, 0]_c \supset [r_{n-1}, 0]_c \supset [-r_n, 0]_c.
$$
Hence,
$$
(-q_{n-2}, -q_n]_c \supset [-q_n-r_n, -q_n]_c.
$$
\end{proof}

\subsection{Dividers and puzzle silhouettes}

For $0 \leq n \leq k$, let $\cR^B$ be a dyadic external bubble ray rooted at $c_{-n}$ of generation $k$. Denote its landing point by $x$, and let $\cR^\infty_t$ be an external ray that lands at $x$. Let $\check \cR^B$, $y$ and $\cR^0_{-t}$ be the reflections of $\cR^B$, $x$ and $\cR^\infty_t$ respectively. The set
$$
\cV := ((\cR^B \cup \check \cR^B) \setminus \pbbD) \cup \{c_{-n}, x, y\} \cup \cR^\infty_t \cup \cR^0_{-t}
$$
is called a {\it divider of generation $k$ rooted at $c_{-n}$}. Let $\bfV_n^k$ be the union of all dividers of generation at most $k$ rooted at $c_{-n}$. When convenient, we will abuse notation and write $\cV \in \bfV_n^k$.

Let $I = (-n, -m)_c$ for some $n, m \geq 0$, and let $k \geq \max\{n, m\}$. Let
$$
Z_I^k := \bfV_n^k \cup \bfV_m^k \cup \cQ_+^k \cup \cQ_-^k \subset \cZ_k,
$$
where $\cQ_\pm^k$ are equipotential curves (see \eqref{eq:equipotential}), and $\cZ_k$ is the puzzle partition of depth $k$. A {\it puzzle silhouette $S^k_I$ of $I$ of depth $k$} is the connected component of $\bbC \setminus Z_I^k$ that contains $I$. It is easy to see that
$$
S^k_I \cap \partial \bbD = I.
$$
Moreover, $S^k_I$ is bounded between two dividers $\cV_-(S^k_I) \in \bfV_n^k$ and $\cV_+(S^k_I) \in \bfV_m^k$ which we refer to as the {\it bounding dividers of $S^k_I$}.

\begin{prop}\label{silhouette in puzzle}
Let $l_0\geq 1$ be a number such that for $l \geq l_0$, the fiber $X_{l\rho} \ni c_l$ is noncritical. Given $L \geq 0$, consider the puzzle neighborhood $P^L$ of $c_{l_0}$. Then there exists $N \geq 1$ such that for $n \geq N$ and $k \geq q_{n+1}-l_0$, we have
$$
S^k_{g^{l_0}(J^-_n)} \subset P^L.
$$
\end{prop}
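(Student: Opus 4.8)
The plan is to fix $L$, put $I_n := g^{l_0}(J^-_n) = (l_0-q_n,\, l_0-q_{n+1})_c$ (the combinatorial arc about $c_{l_0}$ with endpoints $c_{l_0-q_n}$ and $c_{l_0-q_{n+1}}$), and first extract from \thmref{geometry bound} that $|J^-_n|\to 0$, hence $\overline{I_n}=[c_{l_0-q_n},c_{l_0-q_{n+1}}]_\pbbD$ shrinks to $\{c_{l_0}\}$. Since $l_0$ and $L$ are fixed, we may choose $N$ so that for all $n\ge N$ one has $q_n-l_0>L$, $q_{n+1}-l_0\ge L$, $\overline{I_n}\subset P^L\cap\pbbD$, and $\overline{I_n}$ is disjoint from the ($n$-independent) finite set $\{c_0,c_{-1},\dots,c_{-L}\}$ of roots on $\pbbD$ of the bubble rays occurring in $\cZ_L$. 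I would then record a monotonicity in depth: for $k\ge k'\ge q_{n+1}-l_0$ we have $S^k_{I_n}\subseteq S^{k'}_{I_n}$. Indeed $S^k_{I_n}$ lies in the component $\Omega_k$ of $\bbC\setminus(\cQ_+^k\cup\cQ_-^k)$ containing $\pbbD$, and $\Omega_k\subseteq\Omega_{k'}$ because $\cQ_\pm^k$ are equipotentials at smaller level; inside $\Omega_k$ the defining set $Z_{I_n}^{k}$ contains $Z_{I_n}^{k'}$ (the collections of admissible dividers only grow with the depth), so $S^k_{I_n}$ is disjoint from $Z_{I_n}^{k'}$ and therefore contained in $S^{k'}_{I_n}$. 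Thus it suffices to treat the minimal depth $k := q_{n+1}-l_0$; write $S := S^{q_{n+1}-l_0}_{I_n}$, and note its bounding dividers $\cV_\pm(S)$, rooted at the two endpoints of $I_n$, are built from $d$-adic bubble rays all of whose bubbles have generation $>L$. The same $\Omega$-argument gives $S\subseteq\Omega_L$, so $S$ is disjoint from $\cQ_\pm^L$.

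Let $Q^+$ (resp.\ $Q^-$) be the unique depth-$L$ puzzle piece lying outside $\overline\bbD$ (resp.\ inside $\bbD$) whose closure contains $c_{l_0}$; since $g$ has irrational rotation number the point $c_{l_0}$ is not pre-periodic, so no bubble ray or ray of $\cZ_L$ lands at $c_{l_0}$, whence $Q^\pm$ are the only depth-$L$ pieces touching $c_{l_0}$ and $P^L\supseteq Q^+\cup(P^L\cap\pbbD)\cup Q^-$. The goal is to prove $S\cap(\bbC\setminus\overline\bbD)\subseteq Q^+$ and, by reflection in $\pbbD$, $S\cap\bbD\subseteq Q^-$; since $S\cap\pbbD=I_n\subseteq P^L\cap\pbbD$, this gives $S\subseteq P^L$, and then the depth-monotonicity finishes the proof. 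To get the first inclusion it is enough to check that $S\cap(\bbC\setminus\overline\bbD)$ is disjoint from $\cZ_L$; it already misses $\pbbD$ and $\cQ_+^L$, so the content is that $S\cap(\bbC\setminus\overline\bbD)$ meets no external bubble ray of $\cZ_L$ (away from $\pbbD$) and no external ray of $\cZ_L$. Such a curve $\gamma$ issues from a base point that is either a root $c_{-j}$ ($j\le L$) lying in $\pbbD\setminus\overline{I_n}$, or the landing point of a rational bubble ray/ray, a point which does not lie in $\overline S$; in either case this base point lies on the side of one of the bounding dividers $\cV_\pm(S)$ opposite to $I_n$. Since $\gamma$ cannot cross $\cV_\pm(S)$ — distinct bubbles are disjoint as open sets, so two bubble rays never cross transversally, and bubble rays and external rays are disjoint from the Fatou rays and from $J_F$ except at isolated landing points — the curve $\gamma$ stays on the side of $\cV_\pm(S)$ opposite to $I_n$, hence $\gamma\cap S=\varnothing$. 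Consequently $S\cap(\bbC\setminus\overline\bbD)$ is a connected subset of $\bbC\setminus\cZ_L$, so it lies in a single depth-$L$ piece; that piece contains the depth-$k$ piece abutting $\pbbD$ at $c_{l_0}$ from outside (which lies in $S$, because both endpoints of $I_n$ are themselves depth-$k$ roots, so the $\pbbD$-footprint of that piece is a sub-arc of $I_n$), and therefore it is $Q^+$.

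The step I expect to be the main obstacle is the non-crossing property of the combinatorial walls used above — that distinct bubbles are disjoint and that bubble rays meet each other, and meet Fatou rays, only along shared bubbles or at isolated points, never transversally — and arranging the separation argument so that it is unaffected by the (generically vacuous) degenerate incidences, which only enlarge $P^L$ and can be absorbed by enlarging $N$. Along the way one also has to pin down a few routine topological facts that I used without comment: that $S$ straddles $\pbbD$, so that $S\setminus\pbbD$ splits into connected ``outside'' and ``inside'' parts; that $\overline S\cap\pbbD=\overline{I_n}$; and that $P^L$ genuinely contains $Q^+\cup(P^L\cap\pbbD)\cup Q^-$.
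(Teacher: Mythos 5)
Your proposal follows essentially the same route as the paper's proof: both reduce the statement to showing that the region cut out by the two bounding dividers (together with a depth-$L$ equipotential pair) meets the depth-$L$ partition $\cZ_L$ only along the base arc, so that the silhouette is absorbed by the two depth-$L$ pieces adjacent to $c_{l_0}$. The paper packages this as the auxiliary region $\tiS^k_I$ and dismisses the separation step with ``it is easy to see that $\tiS^k_I\cap\cZ_L=I$''; you work with $S$ directly and try to supply that separation argument. Two concrete points where your details fall short of what is actually being guarded against are worth flagging.

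First, your description of where $\cZ_L$ attaches to $\pbbD$ is incomplete. Since $\pbbD\subset\bfR_0\subset\cZ_0$, we have $\cZ_L\supset F^{-L}(\pbbD)$, and this set sprouts extra arcs from $\pbbD$ at \emph{every} point of $\bigcup_{j=0}^{L-1}g^{-j}(\Crit(g))$ — not only at $c_0,\dots,c_{-L}$ — because $g$ may have several critical points on $\pbbD$ (the paper only normalizes so that $c_k$ is noncritical for $k\ge 1$). This is precisely why the paper chooses $N$ via the arc $I^L$ of angles $s$ for which all fibers $X_{s+l\rho}$, $0\le l<L$, are noncritical: that condition excludes every $g^{-l}$-preimage of every critical angle. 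Your requirement $\overline{I_n}\subset P^L\cap\pbbD$ happens to subsume this (that arc is delimited by the nearest attachment points), but the case analysis in your separation step (``a root $c_{-j}$, $j\le L$'') must be widened accordingly, and the parenthetical justification you give for it is wrong as stated.

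Second, the assertion that the landing point of a ray or rational bubble ray of $\cZ_L$ ``does not lie in $\overline S$'' is doing real work and is left unjustified; it is arguably the crux of the paper's ``easy to see.'' At this stage fibers are not yet known to be trivial, and $\overline S$ contains every fiber $X_s$ with $\xi_s\in\overline{I_n}$, so one must actually rule out that one of the finitely many landing points of $\cZ_L$ (the $F^{-L}$-preimages of the fixed landing points $x_\infty$) sits at height $0$ inside $\overline S$ — e.g.\ by observing that a fixed point at height $0$ would lie in $X_{s+k\rho}$ for all $k$, which is impossible, so these points eventually escape every $\bfP^{L'}$ — and then that $S$ is eventually contained in such a puzzle neighborhood. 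Without this, an external ray of $\cZ_L$ whose angle falls in the gap $(t_-,t_+)$ between the dividers' rays is not excluded. So: same strategy as the paper, a correct skeleton, but the two places where you wave your hands are exactly the places where the paper's hypotheses (the fiber condition defining $I^L$) are earning their keep.
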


\begin{proof}
Let $I^L \subset P^L \cap \pbbD$ be the maximal open arc containing $c_{l_0}$ such that for all $\xi_s \in I^L$ and $0\leq l < L$, the fiber $X_{s + l\rho}$ is noncritical. Choose $N \geq 1$ such that $q_{N+1}-l_0 > L$, and $g^{l_0}(J^-_N) \subset I^L$. For $n \geq N$, we have $I := g^{l_0}(J^-_n) \subset g^{l_0}(J^-_N)$. Given $k \geq q_{n+1}-l_0$, let $\cV_\pm(S^k_I)$ be the bounding dividers of $S^k_I$. Define
$$
\tiZ^k_I := \cV_+(S^k_I) \cup \cV_-(S^k_I) \cup \cQ_+^L \cup \cQ_-^L,
$$
and let $\tiS^k_I$ be the connected component of $\bbC \setminus \tiZ^k_I$ containing $I$. Then it is easy to see that $\tiS^k_I \cap \cZ_L = I$. Thus, $S^k_I \subset \tiS^k_I \subset P^L$.
\end{proof}

\subsection{Construction of puzzle disks}

Let $U \subset \bbC$ be a connected set whose intersection with $\pbbD$ is an arc $I$. For $k \geq 0$, define the {\it $k$th pullback of $U$ along $\partial \bbD$} to be the connected component $V$ of $F^{-k}(U)$ whose intersection with $\pbbD$ is the arc $g^{-k}(I)$.

\begin{notn}
Let $U \subset \bbC$, and let $I \subset \partial \bbD$ be an arc. Denote
$$
U|_I := U \cap ((\bbC \setminus \partial \bbD) \cup I).
$$
\end{notn}

Recall that there exists $L \geq 0$ such that if $c$ is a critical point contained in the puzzle neighborhood $\bfP^L$ of $\pbbD$ of depth $L$, then $c$ is contained in a fiber $X_s$ for some critical angle $s \in \Ang_{\crit} \subset \bbR/\bbZ$. For this value of $L$, let $N\geq 1$ be the number given in \propref{silhouette in puzzle}.

\begin{lem}\label{initial scale}
There exists $n_0 > N$ such that the following holds. Let 
$$
\fJ := g^{l_0}(J_{n_0}^-)
\matsp{and}
\fr := \bfr_{n_0}-l_0,
$$
where $\bfr_{n_0}$ is given in \notref{disk iterate}. Then
\begin{enumerate}[i)]
\item The boundary of the puzzle silhouette $S^\fr_\fJ$ does not intersect the postcritical set of $F$.
\item If $J = g^i(\fJ)$ for some $i \in \bbZ$, then $J$ contains at most one critical angle $s \in \Ang_{\crit}$.
\end{enumerate}
\end{lem}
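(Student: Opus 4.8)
The plan is to take $n_0$ sufficiently large. Clause~(ii) dictates how large $n_0$ must be from the circle side, and clause~(i) from the dynamical--plane side; since $\fr=\bfr_{n_0}-l_0\to\infty$ with $n_0$, both demands only loosen as $n_0$ grows, so they are compatible and one may treat them separately.

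\textit{Clause (ii).} First, $\Ang_{\crit}$ is finite: for $s\neq s'$ the puzzle neighborhoods $P^n(s)$ and $P^n(s')$ become disjoint for $n$ large, so distinct fibers are disjoint, whence each of the finitely many critical points of $F$ lies in at most one critical fiber. If $\#\Ang_{\crit}\le1$ there is nothing to prove; otherwise put $\delta_0:=\min\{|\xi_s-\xi_{s'}|:s\neq s'\in\Ang_{\crit}\}>0$. Any translate $J=g^i(\fJ)$ equals $g^{j}(J^-_{n_0})$ for some $j\in\bbZ$, and by \lemref{pullback interval} (which bounds the intersection multiplicity of orbits of subarcs of $J^-_{n_0}$) together with \thmref{geometry bound}, every arc in the orbit $\{g^{j}(J^-_{n_0}):j\in\bbZ\}$ has length at most $\varepsilon(n_0)$ with $\varepsilon(n_0)\to0$. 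Choosing $n_0>N$ with $\varepsilon(n_0)<\delta_0$ forces every such $J$ to be shorter than $\delta_0$, hence to contain at most one point $\xi_s$ with $s\in\Ang_{\crit}$.

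\textit{Clause (i).} The boundary $\partial S^\fr_\fJ$ is assembled from arcs of the equipotentials $\cQ_\pm^\fr$ and of the two bounding dividers $\cV_\pm(S^\fr_\fJ)$, each of which consists of truncated pieces of a dyadic external bubble ray of generation $\le\fr$, its reflection, an external and an internal ray landing at its landing point, and the root and landing points. The pieces lying in the basins --- the arcs of $\cQ_\pm^\fr$ and the truncated rays --- sit in $\hA^\infty_F\cup\hE^0_F$ at potential within $2^{\pm1/d^\fr}$ of $1$; since $\infty$ (resp.\ $0$) is the only critical point of $F$ in $\hA^\infty_F$ (resp.\ $\hE^0_F$), a critical orbit can enter $\hA^\infty_F$ only at some fixed potential $\ell_0>1$ and thereafter escapes monotonically, so these pieces miss the postcritical set once $\fr=\bfr_{n_0}-l_0$ is large enough that $2^{1/d^\fr}<\ell_0$. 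The pieces lying in $J_F$ --- the bubble-ray boundaries off $\partial\bbD$ and the (preperiodic, repelling or parabolic) landing points --- lie in the grand orbit of $\partial\bbD$; a forward critical orbit can meet such a point only if it eventually lands on $\partial\bbD$, and the finitely many critical points with this property contribute only finitely many postcritical points off $\partial\bbD$. Invoking $S^\fr_\fJ\subset P^L$ (\propref{silhouette in puzzle}, applicable since $n_0>N$ and $\fr>q_{n_0+1}-l_0$), so that the critical points near $\partial S^\fr_\fJ$ all lie in height-$0$ fibers, and then enlarging $n_0$ so that the finitely many bubbles, landing points, and the two roots $c_{l_0-q_{n_0}}$, $c_{l_0-q_{n_0+1}}$ of $\fJ$ all avoid that finite exceptional set, one concludes that $\partial S^\fr_\fJ$ meets the postcritical set only on $\partial\bbD$, and there only at the two non-critical endpoints of $\fJ$.

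\textit{Main obstacle.} The delicate step is the uniform length bound $\sup_{i\in\bbZ}|g^i(\fJ)|\to0$ underlying clause~(ii): one must rule out that a backward iterate is magnified after passing through the critical point $c_0$. This is precisely what real \emph{a priori} bounds handle: over each return-time block the relevant iterate of $g$ factors (by \thmref{distortion bound}) through bounded-distortion diffeomorphisms and a uniformly bounded number of power maps, so no block enlarges an arc beyond the scale of a level-$n_0$ dynamic partition element, \lemref{pullback interval} organizes the bookkeeping, and \thmref{geometry bound} yields the decay. A secondary subtlety is that ``the postcritical set'' here must be read off $\partial\bbD$ (equivalently, only the free critical orbits are a threat), because $\partial S^\fr_\fJ$ is unavoidably anchored to $\partial\bbD$ at the two roots of $\fJ$, where the circle dynamics is tame.
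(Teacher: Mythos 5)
Your clause (ii) is sound and is essentially the paper's one\mbox{-}line argument (the paper just invokes \corref{hermancor}: in the linearizing coordinate every arc $g^i(\fJ)$ has the same, small, angular length, while the finitely many critical angles are separated); your detour through \thmref{geometry bound} and \lemref{pullback interval} is heavier than necessary but harmless. Clause (i), however, has two genuine gaps. The first concerns the off-circle part of the boundary. You correctly reduce to a finite exceptional set $E$ of postcritical points lying in the grand orbit of $\pbbD$ or at (pre)periodic landing points, but you then dispose of it by ``enlarging $n_0$ so that the finitely many bubbles, landing points, and the two roots all avoid that finite exceptional set.'' As $n_0$ grows the boundary acquires \emph{more} bubbles (the dividers contain all generations up to $\fr=\bfr_{n_0}-l_0\to\infty$), so avoidance of a fixed finite set by a growing family is not automatic. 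The mechanism that actually works --- and is the content of the paper's displayed inclusion $F^{-i}((\partial S^\fr_\fJ\cap J_F)\setminus\pbbD)\subset\cZ_{\fr+i}\setminus\cZ_{q_{n_0}-l_0+i}$ --- is a generation lower bound: every point of $(\partial S^\fr_\fJ\cap J_F)\setminus\pbbD$ reaches the fixed configuration $\bfR_0$ only after more than $q_{n_0}-l_0$ iterates, whereas each point of $E$ does so within a time bounded independently of $n_0$. Some version of this comparison is needed for your ``enlarge $n_0$'' step to be legitimate.

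The second gap is that you never prove the claim at the two points $\partial S^\fr_\fJ\cap\pbbD=\{c_{l_0-q_{n_0}},\,c_{l_0-q_{n_0+1}}\}$; instead you assert that ``the postcritical set here must be read off $\pbbD$,'' i.e.\ you prove a weaker statement than the one in the lemma. The weakening is unnecessary. The postcritical set meets the bi-infinite orbit $\{c_m\}_{m\in\bbZ}$ precisely in a forward-invariant tail $\{c_k : k\ge k_0\}$, where $k_0$ is determined by the finitely many critical points that lie on this orbit (which, after the normalization $c_k\notin\Crit(g)$ for $k\ge1$, have non-positive indices) together with the finitely many off-circle critical points whose forward orbits enter $\pbbD$ along this orbit; $k_0$ does not depend on $n_0$. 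Since $l_0-q_{n_0}\to-\infty$, for $n_0$ large both endpoints lie outside this tail (and, by irrationality of $\rho$, cannot be hit by any critical orbit entering $\pbbD$ off the orbit of $c_0$), hence are not postcritical. Without this step your argument does not establish statement (i) as written.
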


\begin{proof}
For $i \geq 0$, we have 
$$
F^{-i}((\partial S^\fr_\fJ \cap J_F) \setminus \pbbD) \subset \cZ_{\fr+i} \setminus \cZ_{q_{n_0} - l_0+i}.
$$
The first claim follows. The second claim is an immediate consequence of \corref{hermancor}. 
\end{proof}

We refer to $S^\fr_\fJ$ in \lemref{initial scale} as the {\it initial puzzle silhouette}. For $n \geq n_0$, we define the {\it puzzle disk $D^n$} of {\it scale $n$} as follows.

First, let $D^{n_0}$ be the $l_0$th pullback of $S^\fr_\fJ$ along $\partial \bbD$. Proceeding inductively, suppose $D^{n-1}$ is defined so that
$$
D^{n-1} \cap \partial \bbD = J^-_{n-1}.
$$
By \lemref{nested arcs}, we have
$$
g^{r_n}(J^-_n) = J^+_n \Subset J^-_{n-1}.
$$
For $0 \leq i \leq r_n$, let $W^n_{-i}$ be the $i$th pullback of the slitted domain $W^n_0 := D^{n-1}|_{J^+_n}$ along $\partial \bbD$. Then define $D^n := W^n_{-r_n}$. See \figref{fig:puzzle disks}. The {\it depth} of the puzzle disk $D^n$ of scale $n$ is defined as $\bfr_n$.

\begin{figure}[h]
\centering
\includegraphics[scale=0.35]{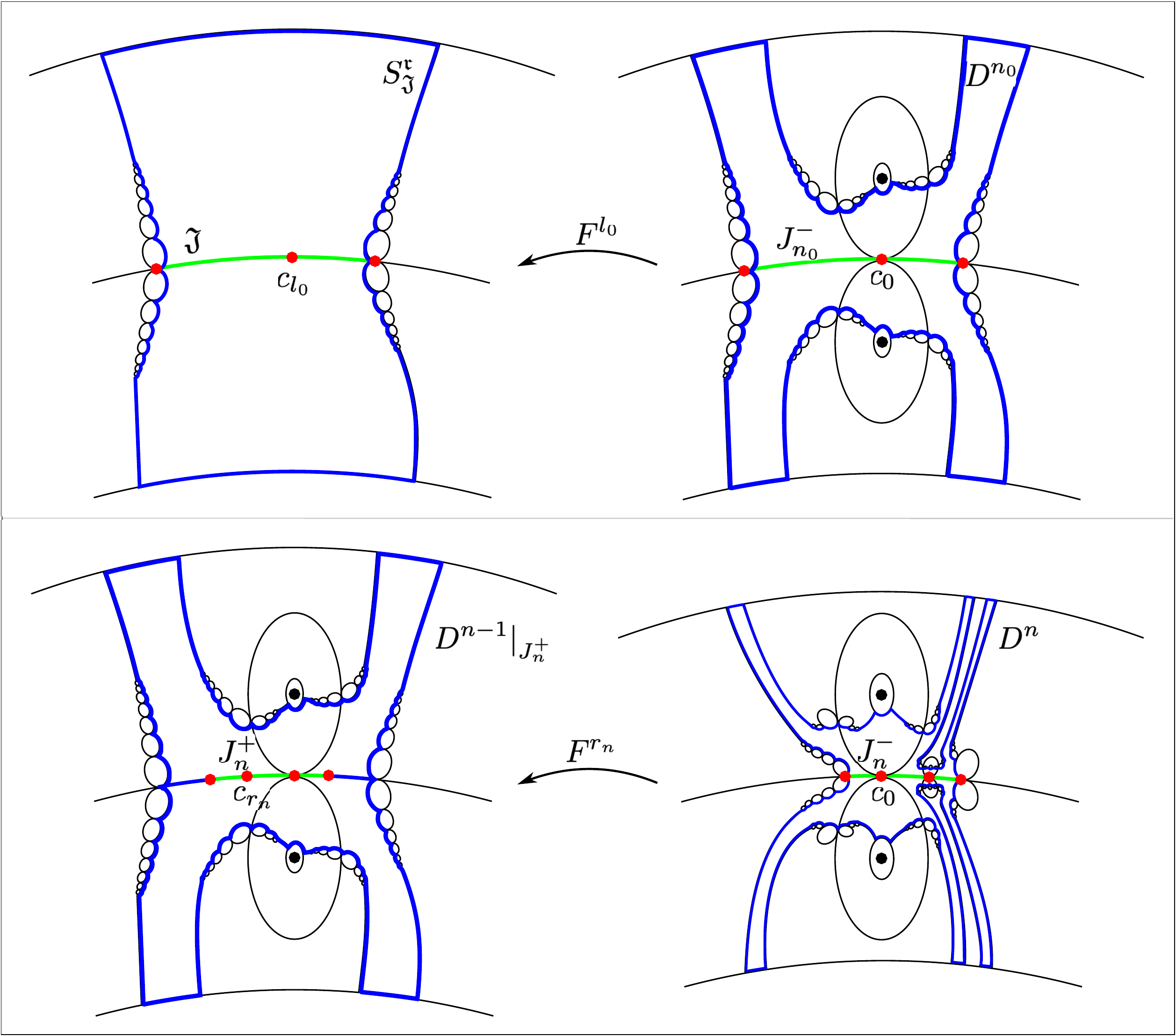}
\caption{Top: the puzzle disk $D^{n_0}$ defined as the $l_0$th pullback of the initial puzzle silhouette $S^\fr_\fJ$ along $\pbbD$. Bottom: the puzzle disk $D^n$ defined as the $r_n$th pullback of the slitted puzzle disk $D^{n-1}|_{J^+_n}$ along $\pbbD$.}
\label{fig:puzzle disks}
\end{figure}

\subsection{Pulling back puzzle disks along the circle}

Let $\fU$ be either the initial puzzle silhouette $S^\fr_\fJ$ or the puzzle disk $D^n$ of scale $n \geq n_0$. Given $K \geq 0$ and an open combinatorial arc $\gamma_0 \subset \fU \cap \pbbD$, consider the $k$th pullback $U_{-k}$ of $U_0 := \fU|_{\gamma_0}$ along $\pbbD$ for $0 \leq k \leq K$. Denote
$$
\gamma_{-k} = (c_{m^-_{-k}}, c_{m^+_{-k}})_\pbbD := g^{-k}(\gamma_0),
$$
$$
\bar\Gamma_{-k} := [e^-_{-k}, e^+_{-k}]_\pbbD := \overline{U_{-k}} \cap \pbbD,
$$
$$
\Gamma_{-k} := (e^-_{-k}, e^+_{-k})_\pbbD
\matsp{,}
\gamma_{-k}^- = (e^-_{-k}, c_{m^-_{-k}}]_\pbbD
\matsp{and}
\gamma_{-k}^+ = [c_{m^+_{-k}}, e^+_{-k})_\pbbD
$$
where $m^\pm_{-k} \in \bbZ$ and $e^\pm_{-k} \in \pbbD$. Then
$$
\Gamma_{-k} = \gamma_{-k}^- \sqcup \gamma_{-k} \sqcup \gamma_{-k}^+.
$$
The set $U_{-k}$ is called a {\it puzzle disk pullback}. The arcs $\gamma_{-k}$ and $\Gamma_{-k}$ are referred to as the {\it base} and the {\it full base of $U_{-k}$} respectively. Let $\fd \geq 0$ be the depth of $\fU$. Then the {\it depth} of $U_{-k}$ is defined to be $d_{-k} := \fd + k$. 

The boundary of $\overline{U_{-k}}$ is a Jordan loop contained in $F^{-d_{-k}}(\bfV_0^0) \cup \cQ_+^{d_{-k}} \cup \cQ_-^{d_{-k}}$, where $\bfV_0^0$ denotes the union of all dividers of generation $0$ rooted at $c_0$, and $\cQ_{\pm}^{d_{-k}}$ are equipotential curves. Let $\ticE_\pm(U_{-k})$ be the connected component of $\partial \overline{U_{-k}} \setminus (\cQ_+^{d_{-k}} \cup \cQ_-^{d_{-k}})$ containing $e^\pm_{-k}$. The following observation is obvious.

\begin{prop}[Transformation of edges]\label{edge}
For $0\leq k < K$, let $\tilde \gamma^\pm_{-k}$ be either
\begin{itemize}
\item $\{e^\pm_{-k}\}$ if $\gamma^\pm_{-k}$ does not contain a critical value; or
\item the maximal closed subarc of $\gamma^\pm_{-k}$ whose endpoints are $c_{M^\pm_{-k}}$ and a critical value $v^\pm_{-k} \in \gamma^\pm_{-k}$.
\end{itemize}
Then
$$
F(\gamma^\pm_{-k-1}) = (\gamma^\pm_{-k} \setminus \tilde \gamma^\pm_{-k}) \cup \{v^\pm_{-k}\}
\matsp{and}
F(\ticE_\pm(U_{-k-1})) = \ticE_\pm(U_{-k}) \cup \tilde\gamma^\pm_{-k}.
$$
Consequently, the following holds.
\begin{enumerate}[i)]
\item There exist unique symmetric external and internal ray $\cR^\infty_{t_\pm}$ and $\cR^0_{-t_\pm}$ that intersect $\ticE_\pm(U_{-k})$.
\item $e^\pm_{-k} = c_{m^\pm_{-k}}$ if $0< m^\pm_0 \leq k$.
\end{enumerate}
\end{prop}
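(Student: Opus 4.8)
The plan is to chase the boundary Jordan curve through $F$. Because $U_{-k-1}$ is by construction the component of $F^{-1}(U_{-k})$ whose intersection with $\pbbD$ is $\gamma_{-k-1}=g^{-1}(\gamma_{-k})$, the map $F$ restricts to a proper branched covering of the closed Jordan domain $\overline{U_{-k-1}}$ onto $\overline{U_{-k}}$, so $F(\partial\overline{U_{-k-1}})=\partial\overline{U_{-k}}$. I would first record how each curve $\partial\overline{U_{-j}}$ splits: two equipotential sub-arcs on $\cQ_+^{d_{-j}}$ and $\cQ_-^{d_{-j}}$; two arcs of $\pbbD$, namely the (doubled) overhangs $\overline{\gamma^\pm_{-j}}$; and the two remaining components $\ticE_\pm(U_{-j})$, each consisting of a segment of an external ray or external bubble ray in $\hA^\infty_F$, its reflection in $\hE^0_F$, the point $e^\pm_{-j}$, and $\overline{\gamma^\pm_{-j}}$.

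Next I would use the three elementary facts that $F|_{\pbbD}=g$ is an orientation-preserving homeomorphism, that $F(\cQ^{d_{-k-1}}_\pm)=\cQ^{d_{-k}}_\pm$ (since $d_{-k-1}=d_{-k}+1$ and $F(\cQ_l)=\cQ_{l^d}$), and that $F$ sends external/internal rays and bubble rays to external/internal rays and bubble rays. Together these show $F$ carries the equipotential sub-arcs of $\partial\overline{U_{-k-1}}$ to those of $\partial\overline{U_{-k}}$, hence carries each edge $\ticE_\pm(U_{-k-1})$ onto $\ticE_\pm(U_{-k})$ together with an arc of $\pbbD$ abutting $e^\pm_{-k}$. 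Identifying that arc is the only place critical values enter. When $\gamma^\pm_{-k}$ carries no critical value of $F$, a single inverse branch of $F$ is defined along $\ticE_\pm(U_{-k})\cup\overline{\gamma^\pm_{-k}}$ and lands in $\partial\overline{U_{-k-1}}$; so $\tilde\gamma^\pm_{-k}=\{e^\pm_{-k}\}$ and the two identities are trivial. When $\gamma^\pm_{-k}$ does carry a critical value, let $v^\pm_{-k}$ be chosen so that the sub-arc $\tilde\gamma^\pm_{-k}\subset\gamma^\pm_{-k}$ from $c_{M^\pm_{-k}}$ to $v^\pm_{-k}$ is maximal; pushing the inverse branch past the critical point over $v^\pm_{-k}$ would leave the component $U_{-k-1}$, so $\ticE_\pm(U_{-k-1})$ stops on $\pbbD$ at that critical point. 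Reading this off produces exactly $F(\gamma^\pm_{-k-1})=(\gamma^\pm_{-k}\setminus\tilde\gamma^\pm_{-k})\cup\{v^\pm_{-k}\}$ and $F(\ticE_\pm(U_{-k-1}))=\ticE_\pm(U_{-k})\cup\tilde\gamma^\pm_{-k}$.

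The two consequences follow at once. For (i), $\ticE_\pm(U_{-k})$ is a connected piece of $F^{-d_{-k}}(\bfV_0^0)\setminus(\cQ_+^{d_{-k}}\cup\cQ_-^{d_{-k}})$, hence meets the external rays only in the preimages under $F^{d_{-k}}$ of finitely many rays landing at $c_0$; by the $\pbbD$-symmetry of the whole construction it contains exactly one external ray $\cR^\infty_{t_\pm}$ and its reflection $\cR^0_{-t_\pm}$. For (ii), induct on $k$ via the first identity: each step either transports $\gamma^\pm$ faithfully or absorbs the sub-arc $\tilde\gamma^\pm$ of the overhang touching $c_{M^\pm}$ into the edge, and once $0<m^\pm_0\le k$ the base endpoint has run past all critical values originally present in the overhang, leaving it empty, i.e. $e^\pm_{-k}=c_{m^\pm_{-k}}$.

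The only step requiring genuine (as opposed to purely formal) verification is the claim that the inverse branch of $F$ defining an edge cannot be continued along $\pbbD$ past a critical point of $g$ contained in the overhang --- equivalently, that such a critical point is an endpoint of $\overline{U_{-k-1}}\cap\pbbD$. This is precisely where the definition of the $k$th pullback along $\pbbD$ (as the component singled out by its circle intersection) does the work, and it is the heart of the ``obvious'' proof.
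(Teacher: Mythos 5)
The paper itself gives no argument here (the proposition is introduced with ``The following observation is obvious''), so your write-up has to stand on its own, and as written it does not. First, your topological setup is wrong in a way that matters. The overhangs $\overline{\gamma^\pm_{-j}}$ are \emph{two-sided} slits: $U_{-j}$ accumulates on them from both sides of $\pbbD$, so they lie in the interior of the closed Jordan domain $\overline{U_{-j}}$ and are \emph{not} part of $\partial\overline{U_{-j}}$; accordingly $\ticE_\pm(U_{-j})$, being a component of $\partial\overline{U_{-j}}\setminus(\cQ^{d_{-j}}_+\cup\cQ^{d_{-j}}_-)$ with $\partial\overline{U_{-j}}$ contained in preimages of the dividers and the equipotentials, meets $\pbbD$ only in a finite set and does not contain $\overline{\gamma^\pm_{-j}}$. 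For the same reason your opening claim that $F$ is a branched covering $\overline{U_{-k-1}}\to\overline{U_{-k}}$ of closed Jordan domains with $F(\partial\overline{U_{-k-1}})=\partial\overline{U_{-k}}$ is false precisely in the nontrivial case: the second displayed identity asserts that $F(\ticE_\pm(U_{-k-1}))$ contains $\tilde\gamma^\pm_{-k}$, an arc interior to $\overline{U_{-k}}$. The failure of that equality \emph{is} the content of the proposition, so it cannot be its starting point. (Your final formulas are the correct ones, but they contradict your own description of the boundary.)

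Second, and more importantly, you explicitly decline to prove the one step you correctly identify as the heart of the matter, asserting that ``the definition of the $k$th pullback along $\pbbD$ does the work.'' It does not: that definition only fixes $U_{-k-1}\cap\pbbD=\gamma_{-k-1}$, which is automatic (no point of $g^{-1}(\gamma^\pm_{-k})$ can lie in $F^{-1}(U_{-k})$, since it maps into the deleted slit), and it says nothing about which points of $\pbbD\setminus\gamma_{-k-1}$ are \emph{accumulated} by $U_{-k-1}$, which is what determines $e^\pm_{-k-1}$. What actually needs proof is: (a) along $g^{-1}$ of the portion of the overhang preceding the first critical value, both half-disks adjacent to $\pbbD$ belong to the component $U_{-k-1}$, so the full base reaches $g^{-1}(v^\pm_{-k})$; and (b) at the critical point $c$ of $g$ over $v^\pm_{-k}$, the arcs of $F^{-1}(\pbbD)$ (bubble boundaries) attached at $c$, continued by the pullback of $\ticE_\pm(U_{-k})$ out to the equipotential, form cross-cuts separating the sectors adjacent to the outgoing $\pbbD$-arc from $U_{-k-1}$, so the full base stops at $c$. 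Neither is carried out. A smaller point: consequence (i) does not follow from $\pbbD$-symmetry alone (symmetry gives the pairing, not uniqueness); uniqueness follows by induction from the base case $\ticE_\pm(U_0)\subset\partial\fU$ (one ray pair from the bounding divider) together with the second identity, since $\tilde\gamma^\pm_{-k}\subset\pbbD$ carries no rays. Your reduction of (ii) to the first identity is fine.
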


Define the {\it bounding edges} of $U_{-k}$ as
$$
\cE_\pm(U_{-k}) := \ticE_\pm(U_{-k}) \cup \cR^\infty_{t_\pm} \cup \cR^0_{-t_\pm},
$$
where $\cR^\infty_{t_\pm}$ and $\cR^0_{-t_\pm}$ are given in \propref{edge}. The angle $t_\pm$ is referred to as the {\it external angle of $\cE_\pm(U_{-k})$}.

\begin{prop}[Simple connectivity of pullbacks of puzzle disks]
For $0 \leq k \leq K$, the puzzle disk pullback $U_{-k}$ is also simply connected. In particular, for $n \geq n_0$, the puzzle disk $D^n$ is simply connected.
\end{prop}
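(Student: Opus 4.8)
The plan is to reduce everything to the elementary fact that a connected open set $U\subseteq\hbbC$ is simply connected if and only if its complement $\hbbC\setminus U$ is connected, and then to run an induction on $k$ (with a parallel induction on the scale $n$ for the puzzle disks $D^n$).

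\emph{Base case $k=0$.} Here $U_0=\fU|_{\gamma_0}$. When $\fU=S^\fr_\fJ$ is the initial puzzle silhouette, it is simply connected: it is the component of $\hbbC\setminus Z^\fr_\fJ$ containing $\fJ$, and $Z^\fr_\fJ$ consists of the two disjoint equipotential circles $\cQ^\fr_\pm$ together with the two bounding dividers $\cV_\pm(S^\fr_\fJ)$, each a connected non-separating continuum running between the two equipotentials, so that $S^\fr_\fJ$ is cut out by a Jordan loop (this is the standard fact that puzzle pieces are Jordan domains, combined with the fact that dividers do not enclose anything relative to the silhouette); the case $\fU=D^n$ is handled by the induction on $n$ below. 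Write $I:=\fU\cap\pbbD$, an open combinatorial arc with endpoints on $\partial\fU$. Then $U_0=\fU\setminus(I\setminus\gamma_0)$, and $I\setminus\gamma_0$ is the disjoint union of two arcs $\gamma_0^\pm$ whose closures each meet $\partial\fU\subseteq\hbbC\setminus\fU$. Since $\hbbC\setminus\fU$ is connected and each $\overline{\gamma_0^\pm}$ is connected and meets it, the set $\hbbC\setminus U_0=(\hbbC\setminus\fU)\cup\gamma_0^-\cup\gamma_0^+$ is connected, so $U_0$ is simply connected.

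\emph{Inductive step.} Assume $U_{-k}$ is simply connected; we prove the same for $U_{-k-1}$. By construction $U_{-k-1}$ is a connected component of $F^{-1}(U_{-k})$, so $F$ restricts to a proper holomorphic map $U_{-k-1}\to U_{-k}$ of some finite degree $\delta\geq 1$. Since $\chi(U_{-k})=1$, the Riemann--Hurwitz formula gives $\chi(U_{-k-1})=\delta-r$, where $r\geq 0$ is the total local multiplicity of critical points of $F$ inside $U_{-k-1}$; as $U_{-k-1}$ is a connected planar domain we have $\chi(U_{-k-1})\leq 1$, so $U_{-k-1}$ is either a topological disk ($r=\delta-1$) or it has nonzero first homology (the ``annulus'' scenario, which genuinely can occur for pullbacks of disks and must be excluded). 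To rule this out it suffices to show that $\partial\overline{U_{-k-1}}$ is a single Jordan curve: then $U_{-k-1}$, being a connected open set disjoint from that curve and having it as full boundary, must coincide with one of its two complementary Jordan domains. This is exactly the structural claim recorded just before the proposition: $\partial\overline{U_{-k-1}}\subseteq F^{-d_{-k-1}}(\bfV_0^0)\cup\cQ^{d_{-k-1}}_+\cup\cQ^{d_{-k-1}}_-$, and using \propref{edge} one checks that the two bounding edges $\ticE_\pm(U_{-k-1})$ are disjoint simple arcs (the $\tilde\gamma^\pm$ bookkeeping there handles the passage through critical points), that distinct preimage-components of a generation-$0$ divider meet each other and the two equipotential circles only at the prescribed vertices $e^\pm_{-k-1}$, and hence that $\ticE_+(U_{-k-1})$, an arc of $\cQ^{d_{-k-1}}_+$, $\ticE_-(U_{-k-1})$, and an arc of $\cQ^{d_{-k-1}}_-$ assemble into one simple closed curve. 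Thus $U_{-k-1}$ is a Jordan domain, hence simply connected.

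\emph{Puzzle disks.} The statement for $D^n$ follows by induction on $n\geq n_0$. For $n=n_0$, $D^{n_0}$ is the $l_0$th pullback of $S^\fr_\fJ$ along $\pbbD$, so it is simply connected by the base case and the inductive step above. For $n>n_0$, $D^n=W^n_{-r_n}$ is the $r_n$th pullback along $\pbbD$ of $W^n_0=D^{n-1}|_{J^+_n}$; since $D^{n-1}$ is simply connected by the inductive hypothesis, the slitting argument of the base case shows $W^n_0$ is simply connected, and then the inductive step on $k$ applies. The main obstacle is the inductive step on $k$: ensuring that the circle-pullback does not wrap around to produce an annulus, which is precisely where the careful tracking of the bounding edges via \propref{edge} --- guaranteeing that $\partial\overline{U_{-k-1}}$ is non-self-intersecting --- is indispensable.
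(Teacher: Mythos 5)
Your base case and the Riemann--Hurwitz reduction are fine, but the heart of the inductive step is missing. Having reduced to excluding the multiply connected scenario, you declare that it suffices to show $\partial\overline{U_{-k-1}}$ is a single Jordan curve, and then assert that this follows from \propref{edge} and the structural remark preceding the proposition, with the details left to ``one checks.'' But that remark (the boundary being a Jordan loop) is not an independently established fact you may quote --- it is essentially equivalent to the proposition itself --- and \propref{edge} only describes how the two distinguished bounding edges $\ticE_\pm$ transform under $F$; it says nothing that rules out additional components of $\partial U_{-k-1}$ (for instance an extra closed loop of $F^{-d_{-k-1}}(\bfV_0^0)$ in the boundary that never meets the equipotentials, which is exactly the configuration that would make $U_{-k-1}$ an annulus). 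So the scenario that, as you rightly note, ``genuinely can occur for pullbacks of disks and must be excluded'' is in the end not excluded.

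The paper's proof supplies the missing idea, and it bypasses any attempt to show the boundary is a Jordan curve. Suppose $U_{-k-1}$ is not simply connected; then $\partial U_{-k-1}$ has two components $\Delta_{\ext}$ and $\Delta_{\Int}$ with $\Delta_{\Int}$ contained in a bounded complementary component of $\Delta_{\ext}$. Since $F : U_{-k-1} \to U_{-k}$ is proper and $\partial U_{-k}$ is connected (induction hypothesis), each of $\Delta_{\ext}$ and $\Delta_{\Int}$ covers all of $\partial U_{-k}$, so each must contain an arc of $\cQ_+^{d_{-k-1}}$ and an arc of $\cQ_-^{d_{-k-1}}$. These are two fixed disjoint Jordan curves, one strictly inside $\bbD$ and one strictly outside $\overline{\bbD}$; two disjoint continua each joining the one to the other cannot be nested. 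This is the short contradiction your argument needs in place of the Jordan-curve claim. (A smaller point: even granting that $\partial\overline{U_{-k-1}}$ is a Jordan curve, your phrase ``having it as full boundary'' is inaccurate --- $\partial U_{-k-1}$ also contains the slit arcs $\gamma^\pm_{-k-1}\subset\pbbD$, which lie in the interior of $\overline{U_{-k-1}}$; one must still note that removing these boundary-touching slits from the Jordan domain preserves simple connectivity, exactly as in your base case.)
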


\begin{proof}
If $\fU$ is simply connected, then certainly $U_0 := \fU|_{\gamma_0}$ is simply connected. Assume that $U_{-k+1}$ is simply connected for $0< k \leq K$. Suppose towards a contradiction that $U_{-k}$ is not simply connected. Then $\partial U_{-k}$ has at least two components $\Delta_{\ext}$ and $\Delta_{\Int}$ such that $\Delta_{\Int}$ is contained in the bounded component of $\bbC \setminus \Delta_{\ext}$. Moreover, $\Delta_{\ext}$ and $\Delta_{\Int}$ both cover $\partial U_{-k+1}$ under $F$. Thus, $\Delta_{\ext}$ and $\Delta_{\Int}$ both contain an arc in $Q_\pm^{d_{-k}}$. This is impossible.
\end{proof}

\begin{prop}\label{fiber in disks}
If $\xi_s \in \gamma_{-k}$, then $X_s \subset U_{-k}$. If instead, $\xi_s \notin \bar\Gamma_{-k}$, then $X_s \cap U_{-k} = \varnothing$. In particular, if $\xi_s \in J^-_n$, then $X_s \subset D^n$, and if $\xi_s \notin \bar J^-_n$, then $X_s \cap D^n = \varnothing$. 
\end{prop}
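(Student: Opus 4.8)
The plan is to prove the two assertions for a general puzzle disk pullback $U_{-k}$ and then specialize to $D^n$. Write $\nu := d_{-k}$ for the depth of $U_{-k}$. Two structural facts do all the work. First, since $\pbbD=\partial B_0$ already lies in $\bfR_0\subset\cZ_0$ and $F(\pbbD)=\pbbD$, we have $\pbbD\subset\cZ_\nu$; combined with the stated description of $\partial\overline{U_{-k}}$ as a Jordan loop inside $F^{-\nu}(\bfV_0^0)\cup\cQ^\nu_+\cup\cQ^\nu_-\subset\cZ_\nu$, and with the fact that the remaining part of the topological boundary consists of the slit arcs $\overline{\gamma^\pm_{-k}}\subset\pbbD$, we get $\partial U_{-k}\subset\cZ_\nu$. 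Hence every depth-$\nu$ puzzle piece $P$, being a connected subset of $\bbC\setminus\cZ_\nu$, is disjoint from $\partial U_{-k}$, so $P$ lies entirely in $U_{-k}$ or entirely in $\hbbC\setminus\overline{U_{-k}}$. Second, $X_s\subseteq\overline{P^\nu(s)}$, and $\overline{P^\nu(s)}$ is a finite union of closures of depth-$\nu$ puzzle pieces, each containing $\xi_s$ in its boundary.

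The disjointness statement is the easy one. Suppose $\xi_s\notin\bar\Gamma_{-k}$, i.e. $\xi_s\notin\overline{U_{-k}}$; since $\overline{U_{-k}}$ is closed, some neighbourhood of $\xi_s$ misses it. Every depth-$\nu$ piece with $\xi_s$ in its boundary meets this neighbourhood, hence meets $\hbbC\setminus\overline{U_{-k}}$, hence (by the dichotomy) lies in $\hbbC\setminus\overline{U_{-k}}$. Therefore $\overline{P^\nu(s)}\subset\hbbC\setminus U_{-k}$, and so $X_s\cap U_{-k}=\varnothing$.

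For the containment statement, suppose $\xi_s\in\gamma_{-k}=U_{-k}\cap\pbbD\subset U_{-k}$. By the same dichotomy every depth-$\nu$ piece with $\xi_s$ in its boundary now lies in $U_{-k}$, so $\overline{P^\nu(s)}\subset\overline{U_{-k}}$ and hence $X_s\subset\overline{U_{-k}}$. The remaining point — and the one I expect to be the main obstacle — is upgrading this to $X_s\subset U_{-k}$, i.e. showing $X_s$ avoids $\partial\overline{U_{-k}}$ away from $\pbbD$ (on $\pbbD$ it cannot meet $\partial U_{-k}$, since $X_s\cap\pbbD=\{\xi_s\}$ and $\xi_s\in\gamma_{-k}$ avoids $\overline{\gamma^\pm_{-k}}$ and $\{e^\pm_{-k}\}$). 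The two equipotential arcs of $\partial\overline{U_{-k}}$ lie in $A^\infty_F\cup A^0_F$ and so miss $X_s\subset J_F$; for the two bounding edges $\ticE_\pm(U_{-k})$ one uses \propref{edge} and the inductive description of $\ticE_\pm$: a point $x\in X_s\cap\ticE_\pm(U_{-k})$ would satisfy $F^\nu(x)\in\bfV_0^0\cap J_F$, so $F^\nu(x)$ lies on the boundary of a bubble $B\neq\bbD$ of a fixed bubble ray rooted at $c_0$, or at the (repelling or parabolic) landing point of such a ray. But $F^\nu(X_s)=X_{s+\nu\rho}$ with $\xi_{s+\nu\rho}=g^\nu(\xi_s)$ lying strictly inside the combinatorial arc $g^\nu(\gamma_{-k})$, away from the roots and landing points of those rays; so the connected set $X_{s+\nu\rho}$ is confined to one side of the Jordan curve $\partial B\subset\cZ_0$ (respectively, it cannot contain that landing point, by \propref{fiber landing}) — a contradiction. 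This gives $X_s\subset U_{-k}$.

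Finally, $D^n$ is by construction the $r_n$-th pullback along $\pbbD$ of $D^{n-1}|_{J^+_n}$, hence a puzzle disk pullback with base $\gamma_{-r_n}=J^-_n$; since $m^-_0=q_n$ and $m^+_0=q_{n+1}$ both lie in $(0,r_n]$, \propref{edge}(ii) forces $e^\pm_{-r_n}=c_{m^\pm_{-r_n}}$, so the slit arcs are trivial and $\bar\Gamma_{-r_n}=\overline{J^-_n}$. Feeding this into the two assertions above — with the base case being the $l_0$-th pullback of the initial silhouette $S^\fr_\fJ$, and inducting on $n$ — yields exactly $\xi_s\in J^-_n\Rightarrow X_s\subset D^n$ and $\xi_s\notin\overline{J^-_n}\Rightarrow X_s\cap D^n=\varnothing$.
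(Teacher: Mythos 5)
Your core argument --- $\partial U_{-k}\subset\cZ_{d_{-k}}$, hence every puzzle piece of depth at least $d_{-k}$ is connected and disjoint from $\partial U_{-k}$, so lies either in $U_{-k}$ or in $\hbbC\setminus\overline{U_{-k}}$, and one then tests the pieces adjacent to $\xi_s$ against $U_{-k}$ --- is precisely the paper's proof. Your treatment of the disjointness claim is in fact a little cleaner than the paper's (which instead argues that a piece disjoint from $U_{-k}$ has $\overline{P^i}\cap\bar\Gamma_{-k}\subset\{e^\pm_{-k}\}$), and your reduction of the ``in particular'' statement to the general one via \propref{edge}(ii) (so that the slits of $D^n$ are trivial and $\bar\Gamma_{-r_n}=\overline{J^-_n}$) supplies a detail the paper leaves implicit.

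The step you yourself flag as the main obstacle --- upgrading $X_s\subset\overline{U_{-k}}$ to $X_s\subset U_{-k}$ --- is also the one step that does not close, and it is the step the paper silently skips: its proof, like yours, really only yields $X_s\subset\overline{U_{-k}}$ together with $X_s\cap\pbbD=\{\xi_s\}$ and the easy exclusion of the equipotential arcs. Your exclusion of the bounding edges breaks at ``confined to one side of the Jordan curve $\partial B$'': a connected set lying weakly on one side of a curve in $\cZ_n$ can perfectly well \emph{touch} that curve, and a priori a fiber $X_t$ with $\xi_t\notin\overline{B}$ may contain points of $\partial B$ --- this is exactly the hairy configuration whose exclusion is the content of Theorems A and B, so it cannot be ruled out by a soft separation argument at this point in the paper. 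Likewise \propref{fiber landing} does not forbid a fiber from containing the landing point $x_\infty$ of a fixed bubble ray; applied to a ray landing at $x_\infty$ it merely returns the hypothesis $\{x_\infty\}\subset X_t$. So the strict openness remains unproved in your write-up (as in the paper's). Fortunately only the closure version is ever used: the degree bounds in \propref{crit in disk} rest on the second claim, and in the Gr\"otzsch arguments $X_s\subset\overline{D^n}$ already forces $X_s$ to be a singleton once the moduli diverge. The honest fix is to state the conclusion as $X_s\subset\overline{U_{-k}}$ rather than to rest the open inclusion on the one-side argument.
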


\begin{proof}
Since $\partial \fU \subset \cZ_{d_0}$, we have $\partial U_{-k} \subset \cZ_{d_{-k}}$. Thus, any puzzle piece of depth $d_{-k}$ or greater must either be contained in $U_{-k}$ or be disjoint from it. The first claim follows. Let $P^i$ be a puzzle piece of depth $i \geq d_{-k}$ such that $P^i \cap U_{-k} = \varnothing$. If $\overline{P^i} \cap \bar\Gamma_{-k}$ is non-empty, then it must consist of either $e^+_{-k}$ or $e^-_{-k}$. The second claim follows.
\end{proof}

\begin{prop}[Degree bound on pullbacks of puzzle disks]\label{crit in disk}
If $k \leq q_{n+m}$ for some $m \geq 0$, then the degree of $F^k|_{U_{-k}}$ is uniformly bounded by a constant depending only on $m$. In particular, the degree of $F^{r_{n+1}}|_{D^{n+1}} : D^{n+1} \to D^n$ is uniformly bounded.
\end{prop}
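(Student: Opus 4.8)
The plan is to bound $\deg(F^k|_{U_{-k}})$ by counting the critical points of $F$ encountered along the inverse orbit $U_0 \Supset U_{-1} \Supset \cdots$ and showing that only boundedly many of the sets $U_{-i}$ can contain one. By the preceding proposition each $U_{-i}$ (and $U_0$) is simply connected, so every restriction $F|_{U_{-i}} : U_{-i} \to U_{-i+1}$ is a proper branched covering of some degree $\delta_i \ge 1$ and $\deg(F^k|_{U_{-k}}) = \prod_{i=1}^k \delta_i$. Applying the Riemann--Hurwitz formula to the proper map $F|_{U_{-i}}$ between disks gives $\delta_i - 1 = \sum_{c \in \Crit(F) \cap U_{-i}} (\deg_c F - 1)$; hence each $\delta_i$ is bounded by a constant $C_0$ depending only on $d$, and $\delta_i = 1$ unless $U_{-i}$ contains a critical point of $F$. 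So it suffices to show $\#\{\,1 \le i \le k : U_{-i} \cap \Crit(F) \neq \varnothing\,\} \le C(m)$, the factor $C_0$ then being absorbed into a final bound of the shape $C_0^{C(m)}$.

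Fix a critical point $w \in \Crit(F)$ lying in some $U_{-i}$. By the choice of $L$ (every critical point in $\bfP^L$ is at height $0$) together with the construction of the initial puzzle silhouette in \lemref{initial scale}, every critical point of $F$ that meets a puzzle disk pullback lies at height $0$, so $w \in X_\sigma$ for a unique critical angle $\sigma \in \Ang_{\crit}$. Since $X_\sigma \cap U_{-i} \neq \varnothing$, \propref{fiber in disks} forces $\xi_\sigma \in \bar\Gamma_{-i}$, the full base of $U_{-i}$. The key geometric point is that the full bases cannot drift: because $F^i$ maps $\overline{U_{-i}}$ onto $\overline{U_0}$ and preserves $\pbbD$, one has $g^i(\bar\Gamma_{-i}) \subseteq \overline{U_0} \cap \pbbD = \bar\Gamma_0$, a fixed compact arc determined by $\fU$ alone. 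For $\fU = D^n$ one has $\bar\Gamma_0 = \overline{J^-_n}$, which by \lemref{nested arcs} is compactly contained in $J^-_{n-2}$ (for $\fU = S^\fr_\fJ$ one replaces \lemref{pullback interval} below by its evident analogue for arcs of a dynamic partition). Thus, whenever $w \in U_{-i}$ we have $\xi_\sigma \in g^{-i}(\bar\Gamma_0)$, so the number of such $i \in [0,k]$ is at most the intersection multiplicity of $\{g^{-i}(\bar\Gamma_0)\}_{i=0}^k$. Since $\bar\Gamma_0$ is a subarc of $J^-_{n-2}$ and $k \le q_{n+m} \le q_{(n-2)+(m+2)}$, \lemref{pullback interval} bounds this intersection multiplicity by a constant depending only on $m$.

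Summing over the finitely many critical fibers $X_\sigma$ (at most a constant depending on $d$) gives $\#\{\,1 \le i \le k : U_{-i} \cap \Crit(F) \neq \varnothing\,\} \le C(m)$, and therefore $\deg(F^k|_{U_{-k}}) \le C_0^{\,C(m)}$, independent of $n$, of the choice of $\gamma_0$, and of $F \in \cH^d_\rho$. For the final assertion, recall that $D^{n+1} = W^{n+1}_{-r_{n+1}}$ is the $r_{n+1}$th pullback along $\pbbD$ of $D^n|_{J^+_{n+1}}$, so here $\fU = D^n$, $\gamma_0 = J^+_{n+1}$, and $k = r_{n+1} = q_{n+1} + q_{n+2} \le q_{n+3}$; applying the above with $m = 3$ yields a uniform bound on $\deg(F^{r_{n+1}}|_{D^{n+1}})$.

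The step I expect to be the main obstacle is the containment $g^i(\bar\Gamma_{-i}) \subseteq \bar\Gamma_0$ and its exploitation: controlling the full base $\bar\Gamma_{-i}$ --- rather than the combinatorial base $\gamma_{-i} = g^{-i}(\gamma_0)$, for which the count would be immediate --- is exactly what the slits developing along $\pbbD$ threaten, and what must be tamed so that the intersection-multiplicity estimate \lemref{pullback interval} applies. Verifying in addition that $\bar\Gamma_0$ is a subarc of a dynamic partition interval close to $J^-_n$, and that every critical point met is at height $0$, is where the careful set-up of the puzzle disks (the constants $L$, $N$, $n_0$ and \propref{edge}) is used; the remainder is Riemann--Hurwitz bookkeeping.
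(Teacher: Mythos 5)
Your proof is correct and follows the same route as the paper: reduce the degree bound to counting how many of the pullbacks $U_{-i}$ meet a critical point, note that all such critical points are at height $0$ so that \propref{fiber in disks} forces the corresponding critical angle into the full base $\bar\Gamma_{-i}\subseteq g^{-i}(\bar\Gamma_0)$, and then invoke the intersection-multiplicity bound of \lemref{pullback interval}. The paper's proof is just a terser version of this (it leaves the Riemann--Hurwitz bookkeeping and the containment $g^i(\bar\Gamma_{-i})\subseteq\bar\Gamma_0$ --- which is immediate from $F(\overline{U_{-i}})=\overline{U_{-i+1}}$ and $F(\pbbD)=\pbbD$ --- implicit), so no further comment is needed.
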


\begin{proof}
Recall that $L \geq 0$ is chosen so that the puzzle neighborhood $\bfP^L$ of $\pbbD$ only contains critical points of height $0$. By \propref{puzzle neigh inv}, we have $U_{-k} \subset \bfP^L$ for all $0 \leq k \leq K$. The result is now an immediate consequence of \lemref{pullback interval} and \propref{fiber in disks}.
\end{proof}

\begin{prop}\label{disk pull in sil}
Suppose that $\fU = S^\fr_\fJ$, $k \geq r_{n_0+1}$, and $\gamma_{-k} \subset \fJ$. Then $\Gamma_{-k} \subset \fJ$ and $U_{-k} \subset S^\fr_\fJ$.
\end{prop}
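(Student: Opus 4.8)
The plan is to prove the two assertions in order: first the combinatorial statement $\Gamma_{-k}\subseteq\fJ$ about the full base, and then $U_{-k}\subseteq S^\fr_\fJ$, which will follow from it together with the non‑crossing properties of bubble rays and external/internal rays. For the reduction, note that $\fU=S^\fr_\fJ$ has depth $\fr$, so $d_{-k}=\fr+k$, and $\partial\overline{U_{-k}}$ is a Jordan loop built from the two bounding edges $\cE_\pm(U_{-k})$ and two arcs of the equipotentials $\cQ^{\fr+k}_\pm$, while $\partial S^\fr_\fJ$ is built from the two bounding dividers $\cV_\pm(S^\fr_\fJ)$, rooted at the endpoints of $\fJ$, and two arcs of $\cQ^\fr_\pm$. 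Since $\fr+k>\fr$, the curves $\cQ^{\fr+k}_\pm$ lie in the component of $\hbbC\setminus(\cQ^\fr_+\cup\cQ^\fr_-)$ containing $\pbbD$, so $U_{-k}$ is disjoint from $\cQ^\fr_\pm$. Granting $\Gamma_{-k}\subseteq\fJ$, the endpoints $e^\pm_{-k}$ of $\bar\Gamma_{-k}$ lie in $\bar\fJ$, so the pulled‑back bubble rays and symmetric external/internal rays comprising $\cE_\pm(U_{-k})$ issue from the closed arc $\bar\fJ$; as distinct bubble rays never cross, and distinct external (resp.\ internal) rays never cross, $\cE_\pm(U_{-k})$ cannot cross any divider rooted at an endpoint of $\fJ$, in particular not $\cV_\pm(S^\fr_\fJ)$. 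Since $U_{-k}$ is connected and meets $S^\fr_\fJ$ along $\gamma_{-k}\subseteq\fJ$, it must lie on the $\fJ$–side of both bounding dividers and of $\cQ^\fr_\pm$, i.e.\ $U_{-k}\subseteq S^\fr_\fJ$.

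\emph{Proof that $\Gamma_{-k}\subseteq\fJ$.} By hypothesis $\gamma_{-k}=g^{-k}(\gamma_0)\subseteq\fJ$ and $\Gamma_{-k}=\gamma^-_{-k}\sqcup\gamma_{-k}\sqcup\gamma^+_{-k}$, so I must show each end‑arc $\gamma^\pm_{-k}$ stays in $\fJ$. Since $F^k$ carries $\overline{U_{-k}}$ onto $\overline{U_0}=\overline{S^\fr_\fJ}$ and $F^{-k}(\pbbD)=\pbbD$, one gets $g^k(\bar\Gamma_{-k})\subseteq\overline{S^\fr_\fJ}\cap\pbbD=\bar\fJ$, so $\bar\Gamma_{-k}$ is contained in the short arc $g^{-k}(\bar\fJ)$, and the task is to show this is trimmed back to $\bar\fJ$. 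On a side where the relevant endpoint $c_{m^\pm_0}$ of $\gamma_0$ has positive index and $k\geq m^\pm_0$, \propref{edge}(ii) already forces $\gamma^\pm_{-k}$ to be degenerate; in the remaining case I track $\gamma^\pm_{-j}$ for $j=0,\dots,k$ via the transformation rule $F(\gamma^\pm_{-j-1})=(\gamma^\pm_{-j}\setminus\tilde\gamma^\pm_{-j})\cup\{v^\pm_{-j}\}$ of \propref{edge}, so that each $\gamma^\pm_{-j}$ is a subarc of the $j$‑th pullback along $\pbbD$ of $\gamma^\pm_0\subseteq\fJ$. Writing $\fJ=g^{l_0}(J^-_{n_0})$ with $J^-_{n_0}=I^-_{n_0}\cup\{c_0\}\cup I^-_{n_0+1}$ the $n_0$‑th closest‑return neighborhood of $c_0$, I use the bounded intersection multiplicity of $\{g^i(\fJ)\}_{0\le i<q_{n_0+1}}$ together with \lemref{nested arcs} (applied with $n=n_0+1$, giving $g^{-r_{n_0+1}}(J^-_{n_0})\Subset J^-_{n_0-1}$) and the hypothesis $k\geq r_{n_0+1}=q_{n_0+1}+q_{n_0+2}$ to conclude that $g^{-k}(\fJ)\cap\fJ$ is a single subarc containing $\gamma_{-k}$ whose complement in $g^{-k}(\fJ)$ is separated from $\gamma_{-k}$, inside the component $U_{-k}$, by a pulled‑back bubble ray. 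Thus any $\xi_s$ in $g^{-k}(\fJ)\setminus\fJ$ close to an endpoint of $g^{-k}(\fJ)$ lies in a fiber $X_s$ disjoint from $U_{-k}$, so $\xi_s\notin\bar\Gamma_{-k}$ by \propref{fiber in disks}; hence $e^\pm_{-k}\in\bar\fJ$, which is $\Gamma_{-k}\subseteq\fJ$.

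\emph{Main obstacle.} The delicate point is the last one: pulling back the slit of $U_0=S^\fr_\fJ|_{\gamma_0}$ propagates it not only along $\pbbD$ but also along preimages of $\pbbD$, i.e.\ along bubble rays, and one must show that these bubble rays trim the naive full base $g^{-k}(\bar\fJ)$ exactly back down to $\bar\fJ$. This is precisely where the closest‑return structure of $\fJ$ and the lower bound $k\geq r_{n_0+1}$ are needed — without this bound the portion of $g^{-k}(\fJ)$ sticking out of $\fJ$ need not be cut off — whereas the remaining ingredients (position of the equipotentials, non‑crossing of rays, simple connectivity of $U_{-k}$) are routine.
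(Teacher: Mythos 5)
Your decomposition into the two claims is right, and you have correctly located where the difficulty lives, but in both halves the decisive step is asserted rather than proved.

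For $\Gamma_{-k}\subset\fJ$: you reduce to showing that the overhang $g^{-k}(\bar\fJ)\setminus\bar\fJ$ is ``separated from $\gamma_{-k}$, inside $U_{-k}$, by a pulled-back bubble ray,'' and you yourself flag this as the main obstacle — but you never establish it; bounded intersection multiplicity and \lemref{nested arcs} do not by themselves produce the separating preimage of $\pbbD$ in the right position. The appeal to \propref{fiber in disks} is also in the wrong logical direction: that proposition gives $X_s\cap U_{-k}=\varnothing$ \emph{from} $\xi_s\notin\bar\Gamma_{-k}$, whereas you need (and do not have) the converse — a point $\xi_s\in\partial U_{-k}\cap\pbbD$ can perfectly well have its fiber disjoint from the open set $U_{-k}$. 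The paper's argument here is different and much shorter: the endpoints of $\fJ$ are $c_{-m}$ with $0<m\leq q_{n_0+1}-l_0<k$, so if one of them were interior to $\Gamma_{-k}$, then pushing forward $m$ times would put $c_0$ in the interior of the full base $\Gamma_{-(k-m)}$ of a genuine pullback, which \propref{edge} forbids.

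For $U_{-k}\subset S^\fr_\fJ$: ``bubble rays and rays do not cross'' does not suffice, because the failure mode is not a transversal crossing. When $e^+_{-k}$ coincides with an endpoint of $\fJ$, the edge $\cE_+(U_{-k})$ and the bounding divider $\cV_+(S^\fr_\fJ)$ can share a Jordan arc $E_+$ of common bubbles and then diverge, with the external angle $t_+$ of $\cE_+(U_{-k})$ lying outside the angular span of $S^\fr_\fJ$; nothing crosses, yet $U_{-k}$ escapes. This is exactly the case the paper's proof is devoted to excluding: one takes the first bubble $B_j$ of the divider's bubble ray that meets $U_{-k}$, observes that its root $x_j$ is an endpoint of $E_+\subset\tiE_+=(F^k|_{\cE_+(U_{-k})})^{-1}(\gamma_0^+)$, so $F^k(x_j)=c_{-j'}$ lies in $\fJ$, whence $j'\geq r_{n_0}-l_0>q_{n_0+1}$ by first-return combinatorics; on the other hand the generation of $F^k(\cR^B_+)$ is at most $\fr-k\leq\bfr_{n_0}-l_0-r_{n_0+1}\leq\bfr_{n_0-2}\leq q_{n_0}$, so $j'\leq\fr-k$, a contradiction. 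This generation count is the heart of the proposition and is entirely absent from your proposal; note that it is here — not in the first claim — that the hypothesis $k\geq r_{n_0+1}$ and the precise choice $\fr=\bfr_{n_0}-l_0$ are actually consumed.
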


\begin{proof}
Suppose towards a contradiction that $\Gamma_{-k}$ is not contained in $\fJ = (-q_{n_0} + l_0, -q_{n_0+1}+l_0)_c$. For concreteness, assume $\Gamma_{-k}$ contains $c_{-q_{n_0+1}+l_0}$ in its interior. Then $\Gamma_{-k+q_{n_0}-l_0}$ contains $c_0$ in its interior. Since $k > q_{n_0}-l_0$, this contradicts \propref{edge}.

Consider the bounding edges $\cE_\pm(S^\fr_\fJ)$ and $\cE_\pm(U_{-k})$ of $S^\fr_\fJ$ and $U_{-k}$ respectively. Additionally, let $s_\pm$ and $t_\pm$ be the external angles of $\cE_\pm(S^\fr_\fJ)$ and $\cE_\pm(U_{-k})$ respectively. Then $\cR^\infty_{s_\pm} \subset \cE_\pm(S^\fr_\fJ)$ and $\cR^\infty_{t_\pm} \subset \cE_\pm(U_{-k})$. Clearly, if $(t_-, t_+) \Subset (s_-, s_+)$, then $U_{-k} \subset S^\fr_\fJ$.

Suppose towards a contradiction that this is not the case. For concreteness, assume that $t_+ > s_+$. Since the immediate attracting basin $\hA^\infty_F$ is connected, we see that
$$
E_+ := \cE_+(S^\fr_\fJ) \cap \cE_+(U_{-k})
$$
is either a Jordan arc (if the endpoint $e^+_{-k}$ of $\Gamma_{-k}$ is also an endpoint of $\fJ$) or an empty set (if $e^+_{-k}$ is contained in the interior of $\fJ$). Since $t_+ > s_+$, the latter case is impossible. Thus, $E_+$ is a Jordan arc with an endpoint at $e^+_{-k}$.

Recall that
$$
F^k(U_{-k}) =  U_0 := (S^\fr_\fJ)|_{\gamma_0},
$$
and we have
$$
\fJ = \Gamma_0 = \gamma^-_0 \sqcup \gamma_0 \sqcup \gamma^+_0.
$$
By \propref{edge}, the set
$$
\tiE_+ := (F^k|_{\cE_+(U_{-k})})^{-1}(\gamma^+_0)
$$
is a Jordan subarc of $\cE_+(U_{-k})$ with an endpoint at $e^+_{-k}$. Moreover, we have
$$
F^k(\cE_+(U_{-k}) \setminus \tiE_+) = \cE_+(S^\fr_\fJ).
$$
It follows that
$$
(\cE_+(U_{-k}) \setminus \tiE_+) \cap \cE_+(S^\fr_\fJ) = \varnothing.
$$
Thus, $E_+ \subset \tiE_+$.

Let $\cR^B_+$ be the external bubble ray of generation at most $\fr$ that contains $\cE_+(S^\fr_\fJ)$. Then the external ray $\cR^\infty_{s_+}$ lands at the same point as $\cR^B_+$. Let $\{B_i\}_{i=0}^\infty$ be the sequence of external bubbles of increasing generation such that the union of their boundaries forms $\cR^B_+$. Denote the root of $B_i$ by $x_i \in \partial B_i$. Since $t_+ > s_+$, there exists $j \geq 1$ such that $B_j \cap U_{-k} \neq \varnothing$. Let $j$ be the smallest such number. Then it is not hard to see that $x_i$ is an endpoint of $E_+$.

If $\gen(B_j) < k$, then $\Gamma_{-k+\gen(B_j)}$ contains $c_0 = F^{\gen(B_j)}(x_j)$ in its interior, which contradicts \propref{edge}. Thus, $F^k(\cR^B_+)$ is an external bubble ray rooted at
$$
c_{-j'} := F^k(x_j) \in \gamma^+_0 \subset \fJ = (-q_{n_0} + l_0, -q_{n_0+1}+l_0)_c.
$$
By the combinatorics of first return moments, it follows that
$$
j' \geq r_{n_0} - l_0 > q_{n_0+1}.
$$
However, we have the following bound on the generation of $F^k(\cR^B_+)$:
$$
\fr - k \leq \bfr_{n_0}-l_0 - r_{n_0+1}  \leq \bfr_{n_0-2} \leq q_{n_0},
$$
where the last two inequalities are given by \lemref{exp time growth} iii) and i) respectively. Thus $j' > \fr-k$, which is a contradiction.
\end{proof}

\begin{prop}[Pulling back into puzzle disks]\label{disk pull in}
Suppose that $\fU = D^n$, $k \geq r_n$, and $\gamma_{-k} \subset J^-_n$. Then $\Gamma_{-k} \subset J^-_n$ and $U_{-k} \subset D^n$. In particular, we have $D^{n+1} \subset D^n$.
\end{prop}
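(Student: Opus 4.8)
The plan is to run the proof of \propref{disk pull in sil} essentially verbatim, with the puzzle disk $D^n$ in the role of the initial silhouette $S^\fr_\fJ$, the arc $J^-_n$ in the role of $\fJ$, the depth $\bfr_n$ in the role of $\fr$, and the hypothesis $k \geq r_n$ in the role of $k \geq r_{n_0+1}$. Granting the displayed statement, the final assertion $D^{n+1} \subset D^n$ is then the special case $\fU = D^n$, $\gamma_0 := J^+_{n+1}$, $k := r_{n+1}$: by construction the $r_{n+1}$th pullback of $D^n|_{J^+_{n+1}}$ along $\pbbD$ is precisely $D^{n+1}$; by \lemref{nested arcs}, $J^+_{n+1} \Subset J^-_n$ and $g^{-r_{n+1}}(J^+_{n+1}) = J^-_{n+1} \subset J^-_n$; and $r_{n+1} > r_n$ (since $q_{n+2} > q_n$), so all the hypotheses hold.

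First I would show $\Gamma_{-k} \subset J^-_n$. When the endpoints of $\gamma_0$ have strictly positive index and $k$ dominates them---which is the situation in the application above, where the endpoints of $J^+_{n+1}$ are $c_{q_{n+1}}$ and $c_{q_{n+2}}$ and $k = r_{n+1} > q_{n+2}$---part (ii) of \propref{edge} immediately gives $e^\pm_{-k} = c_{m^\pm_{-k}}$, so the collars are trivial and $\Gamma_{-k} = \gamma_{-k} \subset J^-_n$. In general one argues by contradiction as in \propref{disk pull in sil}: if $\bar\Gamma_{-k}$ contained an endpoint $c_{-q_n}$ or $c_{-q_{n+1}}$ of $J^-_n$ in its interior, then, since $k \geq r_n = q_n + q_{n+1} \geq \max\{q_n, q_{n+1}\}$, pushing forward by the corresponding closest return time forces $\bar\Gamma_{-k+q_n}$ (respectively $\bar\Gamma_{-k+q_{n+1}}$) to contain $c_0$ in its interior, which contradicts \propref{edge}.

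Then I would prove $U_{-k} \subset D^n$, again following \propref{disk pull in sil}. Let $s_\pm$ and $t_\pm$ be the external angles of the bounding edges $\cE_\pm(D^n)$ and $\cE_\pm(U_{-k})$. If $(t_-, t_+) \Subset (s_-, s_+)$ we are done; otherwise, by symmetry, assume $t_+ > s_+$. Then $E_+ := \cE_+(D^n) \cap \cE_+(U_{-k})$ is a Jordan arc ending at $e^+_{-k}$, contained in a truncation subarc of $\cE_+(U_{-k})$ of the kind furnished by \propref{edge}. Writing $\cR^B_+ = \bigcup_i \partial B_i$ for the external bubble ray of generation at most $\bfr_n$ containing $\cE_+(D^n)$, with $x_i$ the root of $B_i$, the inequality $t_+ > s_+$ forces $B_j \cap U_{-k} \neq \varnothing$ for some smallest $j \geq 1$, and $x_j$ is an endpoint of $E_+$. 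If $\gen(B_j) < k$, then $\Gamma_{-k+\gen(B_j)}$ contains $c_0 = F^{\gen(B_j)}(x_j)$ in its interior, contradicting \propref{edge}. Otherwise $F^k(\cR^B_+)$ is an external bubble ray rooted at $c_{-j'} := F^k(x_j) \in \gamma^+_0 \subset J^-_n$; since $\gamma^+_0$ runs to an endpoint of $J^-_n$, the combinatorics of first return moments forces $j'$ to be large, while the root index of a bubble ray never exceeds its generation, so $j' \leq \bfr_n - k$. These two bounds are contradictory by the return-time estimates of \lemref{exp time growth}, exactly as the chain $j' \geq r_{n_0} - l_0 > \bfr_{n_0-2} \geq \fr - k$ produces the contradiction in \propref{disk pull in sil}.

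The main obstacle is making this last bookkeeping precise. One must verify that, with the depth of $D^n$ equal to $\bfr_n$ and only the hypothesis $k \geq r_n$ at one's disposal, the combinatorial lower bound on $j'$---which depends on which of the two halves $I^-_n$, $I^-_{n+1}$ of $J^-_n$ the collar $\gamma^+_0$ meets---strictly exceeds the generation bound $\bfr_n - k$; this is precisely the point where the hypothesis $k \geq r_n$ and parts (i), (iii), (iv) of \lemref{exp time growth} (notably $r_{n+1} > \bfr_{n-1}$) are essential, and it is essentially the only place where the transfer from \propref{disk pull in sil} requires genuine care. The remaining ingredients---the transformation rules for collars and bounding edges, simple connectivity of puzzle disk pullbacks, and the uniform degree bound on $F^k|_{U_{-k}}$ from \propref{crit in disk}---are already in place and carry over with no essential change.
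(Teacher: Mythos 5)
Your route is genuinely different from the paper's, and it has a gap at exactly the step you flag as ``the main obstacle.'' The paper does \emph{not} rerun the bubble-ray argument of \propref{disk pull in sil} at scale $n$; it argues by induction on $n$. The base case $n = n_0$ is reduced to \propref{disk pull in sil} (since $D^{n_0}$ is the $l_0$th pullback of $S^\fr_\fJ$). For the inductive step one supposes $\Delta := U_{-k} \cap \partial D^n \neq \varnothing$, pushes forward by $F^{r_n}$ (using $F^{r_n}(D^n) = D^{n-1}|_{J^+_n}$) to conclude $F^{r_n}(\Delta) \subset \partial D^{n-1}$, and observes that $F^{r_n}(U_{-k})$ is itself the $k$th pullback of $D^{n-1}|_{g^{r_n}(\gamma_0)}$ along $\pbbD$, with base $g^{r_n}(\gamma_{-k}) \subset J^-_{n-1}$ and $k \geq r_n \geq r_{n-1}$; the induction hypothesis then forces $F^{r_n}(U_{-k}) \subset D^{n-1}$, contradicting $F^{r_n}(\Delta) \subset \partial D^{n-1}$. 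The generation-versus-return-time bookkeeping is therefore performed only once, at the fixed initial scale, inside \propref{disk pull in sil}.

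The reason the paper is forced into this structure is that your direct transfer fails numerically. At scale $n$ the depth of $D^n$ is $\bfr_n$, so $F^k(\cR^B_+)$ is a bubble ray of generation at most $\bfr_n - k$, while the first-return combinatorics of $J^-_n = (-q_n, -q_{n+1})_c$ gives only $j' \geq r_n$ (there is no $l_0$-shift at scale $n$). Under the stated hypothesis $k \geq r_n$ one gets $\bfr_n - k \leq \bfr_{n-1}$, and $\bfr_{n-1} > r_n$ in general: for the golden mean, $r_n = q_n + q_{n+1} = q_{n+2}$ while $\bfr_{n-1} = q_{n+3} - 3$, so $\bfr_{n-1}/r_n \to \varphi > 1$. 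Hence no contradiction arises. The inequality you invoke, $r_{n+1} > \bfr_{n-1}$ (\lemref{exp time growth} iv)), would close the gap only under the stronger hypothesis $k \geq r_{n+1}$ --- which does hold in the special case $k = r_{n+1}$ used to prove $D^{n+1} \subset D^n$, but not for the general $k \geq r_n$ required by the later applications of this proposition (e.g.\ in the proofs of \propref{disk pull in small} and \propref{collar pullback}). Note that the hypothesis of \propref{disk pull in sil} is $k \geq r_{n_0+1}$, one index \emph{above} the scale $n_0$ of the arc $\fJ$; your transfer silently weakens this to $k \geq r_n$ at scale $n$, and that is precisely where the argument breaks. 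I recommend replacing the direct transfer by the inductive reduction to scale $n-1$.
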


\begin{proof}
First, consider the case $n = n_0$. We have $g^{l_0}(\gamma_0), \gamma_{-k+l_0} \subset \fJ$. It is easy to see that $U_{-k+l_0}$ is equal to the $k$th pullback of $(S^\fr_\fJ)|_{g^{l_0}(\gamma_0)}$. By \propref{disk pull in sil}, we have $\Gamma_{-k+l_0} \subset \fJ$ and $U_{-k+l_0} \subset S^\fr_\fJ$. The result immediately follows.

Proceeding by induction, assume that the statement is true for $n-1 \geq n_0$. Suppose towards a contradiction that
$$
\Delta:= U_{-k} \cap \partial D^n \neq \varnothing.
$$
Recall that we have
$$
F^{r_n}(D^n) = D^{n-1}|_{J^+_n}.
$$
Since
$$
F^{r_n}(U_{-k}) \cap \pbbD = g^{r_n}(\gamma_{-k}) = \gamma_{-k+r_n},
$$
we have
$$
F^{r_n}(\Delta) \cap \pbbD =\varnothing.
$$
Hence,
$$
F^{r_n}(\Delta) \cap D^{n-1} \subset (F^{r_n}(\partial D^n) \cap D^{n-1}) \setminus \pbbD = \varnothing,
$$
since otherwise, $\Delta$ would have a non-trivial intersection with $D^n$ which is impossible. We conclude that $F^{r_n}(\Delta) \subset \partial D^{n-1}$.

Let $U^{n-1}_{-k}$ be the $k$th pullback of $D^{n-1}|_{g^{r_n}(\gamma_0)}$ along $\partial \bbD$. Then
$$
U^{n-1}_{-k} = F^{r_n}(U_{-k}) = U_{-k+r_n}.
$$
Since ${g^{r_n}(\gamma_{-k})} \subset g^{r_n}(J^-_n) \subset J^-_{n-1}$, we have $U^{n-1}_{-k} \subset D^{n-1}$ by the induction hypothesis. However,
$$
F^{r_n}(\Delta) = U^{n-1}_{-k} \cap \partial D^{n-1} \neq \varnothing.
$$
This is a contradiction.
\end{proof}

\begin{prop}[Pulling back into puzzle disks of deeper scale]\label{disk pull in small}
Suppose that $\fU = D^n$, $k \geq \bfr_{n+i} - \bfr_{n-1}$ with $i \geq 1$, and $\gamma_{-k} \subset J^-_{n+i}$. Then $\Gamma_{-k} \subset J^-_{n+i}$ and $U_{-k} \subset D^{n+i}$.
\end{prop}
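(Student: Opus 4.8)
The plan is to prove the statement by induction on $i\ge 1$, with \propref{disk pull in} serving as the base case "$i=0$": indeed, its hypothesis $k\ge r_n$ reads $k\ge\bfr_n-\bfr_{n-1}$ and its conclusion $U_{-k}\subset D^n$ reads $U_{-k}\subset D^{n+0}$. Throughout I write $V_j:=U_{-j}$ for the $j$th pullback of $D^n|_{\gamma_0}$ along $\pbbD$. Two elementary facts are used repeatedly: since $F$ is a branched covering, $F^m$ carries a connected component of $F^{-j}(X)$ onto a connected component of $F^{-(j-m)}(X)$, so that $F^{r_{n+i}}(V_j)=V_{j-r_{n+i}}$ whenever $j\ge r_{n+i}$; and, by the very construction of the puzzle disks, $D^{n+i}$ is the $r_{n+i}$th pullback of $D^{n+i-1}|_{J^+_{n+i}}$ along $\pbbD$, so that $F^{r_{n+i}}(D^{n+i})=D^{n+i-1}|_{J^+_{n+i}}$ and $D^{n+i}\cap\pbbD=g^{-r_{n+i}}(J^+_{n+i})=J^-_{n+i}$.

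For the inductive step I assume the proposition at level $i-1$ (for the given scale $n$) and prove it at level $i\ge 1$. Given $\fU=D^n$, $k\ge\bfr_{n+i}-\bfr_{n-1}$, and $\gamma_{-k}\subset J^-_{n+i}$, set $k':=k-r_{n+i}$, so that $k'\ge\bfr_{n+i-1}-\bfr_{n-1}$ and $V_{k'}=F^{r_{n+i}}(V_k)$ has base $\gamma_{-k'}=g^{r_{n+i}}(\gamma_{-k})\subset g^{r_{n+i}}(J^-_{n+i})=J^+_{n+i}$, with $J^+_{n+i}\Subset J^-_{n+i-1}$ by \lemref{nested arcs}. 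The induction hypothesis applied to $V_{k'}$ then gives $V_{k'}\subset D^{n+i-1}$ and $\Gamma_{-k'}\subset J^-_{n+i-1}$, and the crux of the argument will be the sharper containment $\Gamma_{-k'}\subset J^+_{n+i}$. Granting this, $V_{k'}\subset D^{n+i-1}|_{J^+_{n+i}}=F^{r_{n+i}}(D^{n+i})$; since $V_k$ is a connected component of $F^{-r_{n+i}}(V_{k'})$ whose trace on $\pbbD$ meets $J^-_{n+i}$, it is contained in the corresponding component $D^{n+i}$ of $F^{-r_{n+i}}(D^{n+i-1}|_{J^+_{n+i}})$. Hence $V_k\subset D^{n+i}$ and, in particular, $\Gamma_{-k}\subset J^-_{n+i}$, which closes the induction. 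The base case $i=1$ is the same argument with \propref{disk pull in} used in place of the induction hypothesis.

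The main obstacle is thus the combinatorial estimate $\Gamma_{-k'}\subset J^+_{n+i}$, which I expect to establish by the method of \propref{disk pull in sil}. Since $\Gamma_{-k'}$ is a connected arc containing $\gamma_{-k'}\subset J^+_{n+i}$ and contained in $J^-_{n+i-1}\supset J^+_{n+i}$, failure of the estimate forces $\Gamma_{-k'}$ to contain an endpoint of $J^+_{n+i}=(q_{n+i},q_{n+i+1})_c$, namely $c_{q_{n+i}}$ or $c_{q_{n+i+1}}$, in its interior. Pulling $V_{k'}$ back by the additional $q_{n+i}$ or $q_{n+i+1}$ iterates that transport this point to the critical point $c_0$ then produces a legitimate puzzle disk pullback (legitimate because $k-q_{n+i}$ and $k-q_{n+i+1}$ are both $<k$) whose full base contains $c_0$ in its interior, in contradiction with \propref{edge}. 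Checking that all intermediate pullback counts stay positive is exactly where the hypothesis $k\ge\bfr_{n+i}-\bfr_{n-1}$ is used, i.e.\ the extra summand $r_n$ beyond the bare transit time $\bfr_{n+i}-\bfr_n$ needed merely to descend from $D^n$ to $D^{n+i}$; this amounts to the first-return-time inequalities of \lemref{exp time growth} i) and iii), applied as in the closing estimate of the proof of \propref{disk pull in sil}. I anticipate this bookkeeping of return times and generations, rather than any new conceptual ingredient, to be the only real difficulty.
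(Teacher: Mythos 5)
Your overall architecture is sound and genuinely different from the paper's: you induct on $i$ and descend one scale at a time via $F^{r_{n+i}}$, whereas the paper applies $G=F^{R}$ with $R=\bfr_{n+i}-\bfr_n$ in a single jump, lands at scale $n$ where \propref{disk pull in} applies (since $k-R\ge r_n$), and then pulls back by $R$ using $D^{n+i}=F^{-R}\bigl(D^n|_{g^R(J^-_{n+i})}\bigr)$. Both routes hinge on the same point, which you correctly isolate as the crux: the full base of the intermediate pullback must lie in the \emph{proper} subarc of the larger disk's base ($J^+_{n+i}\Subset J^-_{n+i-1}$ in your version, $g^R(J^-_{n+i})\Subset J^-_n$ in the paper's) so that the further pullback selects the slit component $D^{n+i}$ rather than a larger component of $F^{-R}(D^n)$. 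Your bookkeeping of the hypothesis $k\ge\bfr_{n+i}-\bfr_{n-1}$ and the final pullback step are fine.

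The gap is in your proof of the crux, $\Gamma_{-k'}\subset J^+_{n+i}$. The endpoints of $J^+_{n+i}=(q_{n+i},q_{n+i+1})_c$ are $c_{q_{n+i}}$ and $c_{q_{n+i+1}}$, i.e.\ \emph{forward} iterates of the critical point, and your plan to ``pull back by the additional $q_{n+i}$ iterates that transport this point to $c_0$'' runs the dynamics the wrong way. By \propref{edge}, full bases contract under pullback: $F(\Gamma_{-j-1})\subset\Gamma_{-j}$, i.e.\ $\Gamma_{-j-1}\subset g^{-1}(\Gamma_{-j})$ with possible truncation at critical points. Hence $c_{q_{n+i}}\in\Gamma_{-k'}$ only yields $c_0\in g^{-q_{n+i}}(\Gamma_{-k'})\supset\Gamma_{-k'-q_{n+i}}$, which says nothing about whether $c_0$ lies in the smaller set $\Gamma_{-k'-q_{n+i}}$. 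Worse, \propref{edge} predicts exactly the opposite of a contradiction: at the stage where $c_1$ enters a wing, the next wing is truncated so that $c_0$ becomes the \emph{endpoint} $e^\pm$ of the full base (this is assertion ii) of \propref{edge}), which is perfectly consistent. The argument of \propref{disk pull in sil} that you are imitating works only because there the endpoints of $\fJ$ are \emph{backward} iterates $c_{-j}$ with $0<j<k$, so the \emph{forward} iteration $g^{j}$ --- under which full bases do map into full bases --- carries the offending point to $c_0$ at a legitimate intermediate stage; for forward endpoints no such forward time exists. So the inductive step does not close as written, and you need a different mechanism for the crux --- for instance the paper's reduction to scale $n$, or an argument combining the containments $\Gamma_{-k'}\subset J^-_{n+i-1}$ and $\Gamma_{-k'}\subset g^{-k'}(J^-_n)$ with the first-return combinatorics of \lemref{exp time growth}.
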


\begin{proof}
Denote $R := \bfr_{n+i} - \bfr_n$. Then $k - R \geq r_n$, and 
$$
F^R(D^{n+i}) = D^n|_{g^R(J^-_{n+i})}.
$$
Moreover,  $\gamma_{-k+R} \subset g^R(J^-_{n+i}) \subset J^-_n$. By \propref{disk pull in}, we have $U_{-k+R} \subset D^n|_{g^R(J^-_{n+i})}$. Hence, $U_{-k}$ is contained in the $R$th pull back of $D^n|_{g^R(J^-_{n+i})}$ which is equal to $D^{n+i}$.
\end{proof}

\begin{prop}[Puzzle disks are nested]\label{nested disks}
We have $D^{n+2} \Subset D^n$.
\end{prop}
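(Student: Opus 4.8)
The plan is to promote the inclusion $D^{n+2}\subset D^n$ to a compact one by showing that $\overline{D^{n+2}}$ misses every piece of $\partial D^n$. First, \propref{disk pull in} gives $D^{n+1}\subset D^n$, and the same proposition applied with $n+1$ in place of $n$ gives $D^{n+2}\subset D^{n+1}$; hence $D^{n+2}\subset D^n$, and since $D^{n+2}$ is open it suffices to show $\partial D^{n+2}\cap\partial D^n=\varnothing$. Recall that $\partial D^n\subset\cZ_{\bfr_n}$ is contained in $F^{-\bfr_n}(\bfV_0^0)\cup\cQ_+^{\bfr_n}\cup\cQ_-^{\bfr_n}$, and that deleting the equipotential arcs from the Jordan curve $\partial D^n$ leaves precisely the two arcs $\ticE_+(D^n)$ and $\ticE_-(D^n)$ of the bounding edges, each of which meets $\partial\bbD$ only at one endpoint of $\overline{J^-_n}$, namely $c_{-q_n}$, resp.\ $c_{-q_{n+1}}$ (see \propref{edge}).

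The equipotential arcs are disposed of by a modulus count: a point of $\partial D^{n+2}$ lying in $\hA^\infty_F$ lies on $\cQ_+^{\bfr_{n+2}}$ or on a segment of an external ray running inward from it, so its B\"ottcher modulus is at most $\sqrt[d^{\bfr_{n+2}}]{2}$, which is strictly smaller than $\sqrt[d^{\bfr_n}]{2}$ because $\bfr_{n+2}>\bfr_n$ (\lemref{exp time growth}); hence $\partial D^{n+2}\cap\cQ_+^{\bfr_n}=\varnothing$, and by symmetry about $\partial\bbD$ also $\partial D^{n+2}\cap\cQ_-^{\bfr_n}=\varnothing$. For the two circle endpoints of $\overline{J^-_n}$: since $D^{n+2}$ is a fully pulled-back puzzle disk it has no collars (by \propref{edge}), so $\overline{D^{n+2}}\cap\partial\bbD=\overline{J^-_{n+2}}$; and \lemref{nested arcs}, applied with $n+2$ in place of $n$, gives $J^-_{n+2}=g^{-r_{n+2}}(J^+_{n+2})\Subset g^{-r_{n+2}}(J^-_{n+1})\Subset J^-_n$, so $\overline{J^-_{n+2}}$ lies in the interior of the arc $J^-_n$ and contains neither $c_{-q_n}$ nor $c_{-q_{n+1}}$. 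Therefore $\partial D^{n+2}$ can meet $\partial D^n$ only along $\ticE_+(D^n)\cup\ticE_-(D^n)$.

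The remaining, and main, step is to show that the bounding edges of $D^{n+2}$ avoid those of $D^n$, i.e.\ $\ticE_a(D^{n+2})\cap\ticE_b(D^n)=\varnothing$ for all $a,b\in\{+,-\}$. These four arcs are anchored to $\partial\bbD$ at the four distinct points $c_{-q_{n+2}},c_{-q_{n+3}},c_{-q_n},c_{-q_{n+1}}$, so no edge of $D^{n+2}$ coincides with an edge of $D^n$. I would then argue by contradiction: suppose $\ticE_a(D^{n+2})$ meets $\ticE_b(D^n)$ at a point $w$; then $w$ lies on the bounding edge of $D^n$ and in $\cZ_{\bfr_{n+2}}$. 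Writing $D^{n+2}$ as the $(\bfr_{n+2}-\bfr_n)$-fold pullback along $\partial\bbD$ of $D^n$ restricted to the combinatorial arc $\mathfrak{K}:=g^{\bfr_{n+2}-\bfr_n}(J^-_{n+2})$, which by \lemref{nested arcs} satisfies $\overline{\mathfrak{K}}\subset J^+_{n+1}$, one pushes the orbit of $w$ forward through the intermediate pullbacks and invokes the transformation rule for edges in \propref{edge}. Because $\overline{J^-_{n+2}}$ sits two closest-return scales inside $J^-_n$, the roots of the dividers making up $\ticE_a(D^{n+2})$ and $\ticE_b(D^n)$ end up in incompatible combinatorial positions along the intermediate base arcs (with the needed inequalities supplied by \lemref{exp time growth}, as in the proof of \propref{disk pull in sil}), which forces one of the intermediate full bases $\Gamma_{-j}$ to contain $c_0$ in its interior sooner than \propref{edge} allows --- the contradiction. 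Hence $\overline{D^{n+2}}\cap\partial D^n=\varnothing$, that is, $D^{n+2}\Subset D^n$.

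I expect this last step to be the main obstacle. The equipotential part of $\partial D^n$ is separated from $\overline{D^{n+2}}$ by a trivial depth/modulus estimate, but the side edges of $D^n$ and of $D^{n+2}$ are both preimages of generation-$0$ dividers and hence sit at comparable combinatorial depth, so their disjointness is invisible to a depth count; establishing it genuinely relies on the two-closest-return-scale gap between $J^-_{n+2}$ and $J^-_n$ together with the precise control \propref{edge} gives on how bounding edges transform under $F$.
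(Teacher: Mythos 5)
Your reductions are sound and consistent with the paper's setup: $D^{n+2}\subset D^n$ follows from two applications of \propref{disk pull in}, the equipotential part of $\partial D^n$ is at strictly shallower depth than anything in $\partial D^{n+2}$, and the circle endpoints are handled by \lemref{nested arcs} and \propref{edge}. But the step you yourself flag as the main obstacle --- disjointness of the bounding edges --- is where the entire content of the proposition lies, and your sketch of it does not work as stated. The contradiction you propose (``some intermediate full base $\Gamma_{-j}$ contains $c_0$ in its interior sooner than \propref{edge} allows'') is the mechanism used to control the \emph{circle traces} of pullbacks (see the first paragraph of the proof of \propref{disk pull in sil}); it gives no information about an intersection of two edges away from $\pbbD$. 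Where an edge-versus-edge intersection genuinely has to be excluded in this paper (the second half of the proof of \propref{disk pull in sil}), the argument is of a different nature: one compares external angles, locates the first bubble of the ray $\cR^B_+$ met by the pullback, and reaches a contradiction through a generation count ($j'>\fr-k$). You would need an argument of that kind, and you have not supplied one; as written, the key step is a gap.

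The paper's proof avoids redoing that computation by pushing everything forward to the level of the initial silhouette. Setting $R=\bfr_n-\bfr_{n_0}$, the iterate $F^{R+l_0}$ sends $D^n$ to the slit silhouette $(S^\fr_\fJ)|_{g^{R+l_0}(J^-_n)}$, so $F^{R+l_0}(\Delta)\subset\partial (S^\fr_\fJ)|_{g^{R+l_0}(J^-_n)}$. The timing inequality $r_{n+2}>R+l_0$ from \lemref{exp time growth} iv), together with $\hJ^0_{n+2}\Subset J^-_n$, guarantees that $F^{R+l_0}(\partial D^{n+2})$ meets $\pbbD$ only in an arc compactly contained in $\fJ$, so the image of an intersection point cannot lie on the circle slits and must lie on $\partial S^\fr_\fJ$ itself. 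Since $F^{R+l_0}(\partial D^{n+2})$ lies at strictly greater depth than $\fr$, this would force distinct external rays, or bubbles of different generations, to overlap in the connected basin $\hA^\infty_F$, which is impossible. If you want to complete your write-up, this push-forward to the initial silhouette is the clean route; the root-position bookkeeping you outline would at best reproduce, arc by arc, the harder computation already done in \propref{disk pull in sil}.
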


\begin{proof}
Suppose towards a contradiction that
$$
\Delta := \partial D^{n+2} \cap \partial D^n \neq \varnothing.
$$
Denote $R := \bfr_n-\bfr_{n_0}$. We have $F^R(D_n) = D^{n_0}|_{g^R(J^-_n)}$ and $F^{R+l_0}(\Delta) \subset \partial (S^\fr_\fJ)|_{g^{R+l_0}(J^-_n)}$. Note that $r_{n+2} > R+l_0$ by \lemref{exp time growth} iv).

We have $F^{r_{n+2}}(D^{n+2}) = D^{n+1}|_{J^+_{n+2}}$, where
$$
J^+_{n+2} = g^{r_{n+2}}(J^-_{n+2}) \Subset J^-_{n+1}. 
$$
Consider
$$
\hJ^0_{n+2} := g^{-r_{n+2}}(J^-_{n+1}) = (-q_{n+2}-r_{n+2}, -q_{n+1}-r_{n+2})_c \Supset J^-_{n+2}.
$$
Then $\hJ^0_{n+2} \Subset J^-_n$. Thus, we have
$$
F^{R+l_0}(\partial D^{n+2}) \cap \pbbD \subset \hJ^1_{n+2}: =g^{R+l_0}(\hJ^0_{n+2}) \Subset g^{R+l_0}(J^-_n) \subset \fJ.
$$
It follows that $F^{R+l_0}(\Delta) \subset \partial S^\fr_\fJ$. This contradicts the fact that the immediate attracting basin $\hA^\infty_F$ is connected.
\end{proof}

\section{Conformal Geometry Near the Circle}\label{sec:geometry}

In this section, we use {\it a priori} bounds for analytic circle maps (discussed in \secref{sec:a priori}) to control the conformal geometry of pullbacks of puzzle disks (constructed in \secref{sec:disks}) near $\pbbD$.

\subsection{Basic properties of extremal Lengths}

Given a path family $\Gamma$ in $\bbC$, denote its extremal length by $\cL(\Gamma)$. The {\it extremal width} of $\Gamma$ is defined as $\cW(\Gamma) := \cL(\Gamma)^{-1}$. Below we briefly review some basic properties of extremal lengths and widths. See e.g. \cite{Ly} for the proofs of these results.

Let $\Gamma_0$, $\Gamma_1$ and $\Gamma_2$ be path families in $\bbC$. We say that $\Gamma_0$ {\it overflows} $\Gamma_1$ if each path in $\Gamma_0$ contains a path in $\Gamma_1$. We say that $\Gamma_0$ {\it disjointly overflows} $\Gamma_1$ and $\Gamma_2$ if any path $\gamma_0 \in \Gamma_0$ contains a pair of disjoint paths $\gamma_1 \in \Gamma_1$ and $\gamma_2 \in \Gamma_2$.

\begin{lem}\label{overflow}
If $\Gamma_0$ overflows $\Gamma_1$, then $\cL(\Gamma_0) \geq \cL(\Gamma_1)$. If $\Gamma_0$ disjointly overflows $\Gamma_1$ and $\Gamma_2$, then $\cL(\Gamma_0) \geq \cL(\Gamma_1) + \cL(\Gamma_2)$.
\end{lem}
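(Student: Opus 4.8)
The plan is to argue directly from the metric definition of extremal length: for a path family $\Gamma$ in $\bbC$,
$$
\cL(\Gamma) \;=\; \sup_{\rho}\, \frac{L_\rho(\Gamma)^2}{A_\rho}, \qquad L_\rho(\Gamma) := \inf_{\gamma \in \Gamma}\int_\gamma \rho\,|dz|, \quad A_\rho := \iint_{\bbC} \rho^2\,dx\,dy,
$$
the supremum being taken over all Borel conformal metrics $\rho \geq 0$ with $0 < A_\rho < \infty$; recall also $\cW(\Gamma) = \cL(\Gamma)^{-1}$. The first assertion is just monotonicity under overflow. I would fix an admissible $\rho$ and $\gamma_0 \in \Gamma_0$ and let $\gamma_1 \subseteq \gamma_0$ be a subpath lying in $\Gamma_1$; then $\int_{\gamma_0}\rho\,|dz| \geq \int_{\gamma_1}\rho\,|dz| \geq L_\rho(\Gamma_1)$. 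Taking the infimum over $\gamma_0 \in \Gamma_0$ gives $L_\rho(\Gamma_0) \geq L_\rho(\Gamma_1)$, and dividing by $A_\rho$ and passing to the supremum over $\rho$ yields $\cL(\Gamma_0) \geq \cL(\Gamma_1)$.

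For the serial rule I would build a near-optimal metric for $\Gamma_0$ out of near-optimal metrics for $\Gamma_1$ and $\Gamma_2$. Fix $\epsilon > 0$ and choose $\rho_1, \rho_2$ with $\rho_i$ admissible for $\Gamma_i$ (normalized so that $L_{\rho_i}(\Gamma_i) = 1$) and $A_{\rho_i} \leq \cW(\Gamma_i) + \epsilon$; using that each $\gamma_0 \in \Gamma_0$ splits off \emph{disjoint} subpaths from $\Gamma_1$ and from $\Gamma_2$, I would arrange that $\rho_1$ and $\rho_2$ have disjoint supports (they are carried by the disjoint regions hosting $\Gamma_1$ and $\Gamma_2$). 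For $a, b > 0$ set $\rho := a\rho_1 + b\rho_2$. Disjointness of supports gives $A_\rho = a^2 A_{\rho_1} + b^2 A_{\rho_2}$, while for any $\gamma_0 \in \Gamma_0$ containing disjoint $\gamma_1 \in \Gamma_1$ and $\gamma_2 \in \Gamma_2$,
$$
\int_{\gamma_0}\rho\,|dz| \;\geq\; a\int_{\gamma_1}\rho_1\,|dz| + b\int_{\gamma_2}\rho_2\,|dz| \;\geq\; a + b,
$$
so $L_\rho(\Gamma_0) \geq a+b$. Hence $\cL(\Gamma_0) \geq (a+b)^2 \big/ (a^2 A_{\rho_1} + b^2 A_{\rho_2})$; optimizing this elementary expression over $a, b$ (the maximum is attained at $a/b = A_{\rho_2}/A_{\rho_1}$ and equals $A_{\rho_1}^{-1} + A_{\rho_2}^{-1}$, by Cauchy--Schwarz) gives $\cL(\Gamma_0) \geq A_{\rho_1}^{-1} + A_{\rho_2}^{-1} \geq (\cW(\Gamma_1)+\epsilon)^{-1} + (\cW(\Gamma_2)+\epsilon)^{-1}$. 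Letting $\epsilon \to 0$ and using $\cW(\Gamma_i)^{-1} = \cL(\Gamma_i)$ finishes the argument.

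The one delicate point is the step where I take $\rho_1, \rho_2$ with disjoint supports: this is exactly where the hypothesis that the subpaths are \emph{disjoint} (and not merely that $\Gamma_0$ overflows each of $\Gamma_1$, $\Gamma_2$ separately) is used. Without it the cross term $2ab\iint \rho_1\rho_2$ reappears in $A_\rho$, and the Cauchy--Schwarz optimization then only recovers the weaker bound $\cL(\Gamma_0) \geq \max\{\cL(\Gamma_1),\cL(\Gamma_2)\}$ already supplied by the first part. In every application of this lemma in the sequel the families $\Gamma_1$ and $\Gamma_2$ sit in disjoint puzzle regions of $\bbC$, so the disjoint-support choice is automatic and no further care is needed.
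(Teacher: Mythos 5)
Your proof of the first assertion is correct and is the standard monotonicity argument. The issue is in the second assertion, at exactly the point you flag as delicate. The paper's hypothesis ``$\Gamma_0$ disjointly overflows $\Gamma_1$ and $\Gamma_2$'' says only that each $\gamma_0\in\Gamma_0$ contains a pair of \emph{disjoint subpaths} $\gamma_1\in\Gamma_1$, $\gamma_2\in\Gamma_2$; it does \emph{not} say that $\Gamma_1$ and $\Gamma_2$ are carried by disjoint measurable sets (that stronger notion is the ``disjoint'' of \lemref{union width}, defined separately). So there need be no ``disjoint regions hosting $\Gamma_1$ and $\Gamma_2$'': take, say, $\Gamma_1=\Gamma_2=$ all crossings of a square and $\Gamma_0=$ paths containing two disjoint crossings. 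In such a situation you cannot choose $\rho_1,\rho_2$ with disjoint supports, the cross term $2ab\iint\rho_1\rho_2$ survives in $A_{a\rho_1+b\rho_2}$, and, as you yourself observe, the optimization then only yields $\cL(\Gamma_0)\ge\max\{\cL(\Gamma_1),\cL(\Gamma_2)\}$. As written, your argument therefore proves a weaker lemma than the one stated (and the one actually invoked later, e.g.\ in \propref{cover length decomp}, where several families are strung along a single path).

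The repair is small but essential: replace the sum by the pointwise maximum. With $\rho_i$ normalized so that $L_{\rho_i}(\Gamma_i)\ge 1$ and $A_{\rho_i}\le \cW(\Gamma_i)+\epsilon$, set $\rho:=\max(a\rho_1,b\rho_2)$. Since $\max(u,v)^2\le u^2+v^2$ for $u,v\ge 0$, one gets $A_\rho\le a^2A_{\rho_1}+b^2A_{\rho_2}$ with no cross term and no disjointness of supports needed; and since $\rho\ge a\rho_1$ and $\rho\ge b\rho_2$ pointwise, the disjointness of the subpaths $\gamma_1,\gamma_2\subset\gamma_0$ still gives
$$
\int_{\gamma_0}\rho\,|dz|\;\ge\;\int_{\gamma_1}\rho\,|dz|+\int_{\gamma_2}\rho\,|dz|\;\ge\;a+b .
$$
Your Cauchy--Schwarz optimization then goes through verbatim and yields $\cL(\Gamma_0)\ge\cL(\Gamma_1)+\cL(\Gamma_2)$ under the stated hypothesis. (The paper itself does not prove the lemma but refers to \cite{Ly}; the max-metric argument above is the standard one for this form of the series law.)
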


We say that $\Gamma_1$ and $\Gamma_2$ are {\it disjoint} if they are in disjoint measurable subsets of $\bbC$.

\begin{lem}\label{union width}
If $\Gamma_0 = \Gamma_1 \cup \Gamma_2$, then $\cW(\Gamma_0) \leq \cW(\Gamma_1) + \cW(\Gamma_2)$. Equality holds if $\Gamma_1$ and $\Gamma_2$ are disjoint.
\end{lem}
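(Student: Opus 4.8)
The plan is to pass to the variational (``metric'') description of extremal width, which makes both inequalities transparent. Recall that for a path family $\Gamma$ in $\bbC$ a Borel function $\rho \geq 0$ is \emph{admissible} for $\Gamma$ if $\int_\gamma \rho\,|dz| \geq 1$ for every locally rectifiable $\gamma \in \Gamma$, and that directly from the definition of $\cL$ (normalize the admissible--length functional to $1$, cf.\ \cite{Ly}) one has
$$
\cW(\Gamma) = \cL(\Gamma)^{-1} = \inf\Big\{ \textstyle\int_\bbC \rho^2\, dx\,dy \;:\; \rho \text{ admissible for } \Gamma \Big\}.
$$
So the first step is simply to record this reformulation; everything else is a comparison of competing metrics.

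For the subadditivity $\cW(\Gamma_0)\le\cW(\Gamma_1)+\cW(\Gamma_2)$, I would fix $\varepsilon>0$, choose $\rho_i$ admissible for $\Gamma_i$ with $\int\rho_i^2\le\cW(\Gamma_i)+\varepsilon$, and set $\rho:=\max(\rho_1,\rho_2)$. Any $\gamma\in\Gamma_0=\Gamma_1\cup\Gamma_2$ lies in $\Gamma_1$ or in $\Gamma_2$, so $\int_\gamma\rho\ge\int_\gamma\rho_i\ge 1$ for the relevant $i$; hence $\rho$ is admissible for $\Gamma_0$. Since $\max(\rho_1,\rho_2)^2\le\rho_1^2+\rho_2^2$ pointwise, $\int\rho^2\le\cW(\Gamma_1)+\cW(\Gamma_2)+2\varepsilon$, and letting $\varepsilon\to 0$ gives the bound.

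For the equality case, assume $\Gamma_1$ and $\Gamma_2$ are supported on disjoint measurable sets $E_1,E_2\subset\bbC$ (each path of $\Gamma_i$ has image in $E_i$ and $E_1\cap E_2=\varnothing$). It suffices to prove the reverse inequality $\cW(\Gamma_0)\ge\cW(\Gamma_1)+\cW(\Gamma_2)$. Take any $\rho$ admissible for $\Gamma_0$; then $\rho\cdot\mathbf 1_{E_i}$ is admissible for $\Gamma_i$, because a path $\gamma\in\Gamma_i$ stays in $E_i$ and so $\int_\gamma\rho\,\mathbf 1_{E_i}=\int_\gamma\rho\ge 1$. Thus $\cW(\Gamma_i)\le\int_{E_i}\rho^2$, and by disjointness $\cW(\Gamma_1)+\cW(\Gamma_2)\le\int_{E_1}\rho^2+\int_{E_2}\rho^2\le\int_\bbC\rho^2$. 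Taking the infimum over admissible $\rho$ yields $\cW(\Gamma_1)+\cW(\Gamma_2)\le\cW(\Gamma_0)$, which together with the first part gives equality.

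I do not expect a genuine obstacle: the argument uses nothing about the specific geometry of $\bbC$. The only point requiring care is the bookkeeping in the equality case, namely that replacing an admissible metric $\rho$ by $\rho\cdot\mathbf 1_{E_i}$ does not shorten any path of $\Gamma_i$ — this is exactly where the disjointness of the supports $E_1,E_2$ is used, and it is the step one should state explicitly rather than gloss over.
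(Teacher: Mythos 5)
Your proof is correct and is exactly the standard admissible-metric argument for the modulus/extremal-width of a union of path families; the paper itself omits the proof and defers to \cite{Ly}, where this is precisely the argument given (take $\max(\rho_1,\rho_2)$ for subadditivity, restrict an admissible metric to the disjoint supports for the equality case). No gaps: the one delicate point — that $\rho\cdot\mathbf 1_{E_i}$ remains admissible for $\Gamma_i$ — is exactly where you correctly invoke the paper's definition of disjointness of path families.
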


\begin{notn}\label{not:path family}
Let $Q \subset \bbC$ be a domain, and let $I, J \subset \overline{Q}$. Denote by $\Gamma_Q(I, J)$ the path family in $Q$ consisting of paths with one endpoint in $I$ and the other in $J$.
\end{notn}

Let $U, V$ be domains such that $U \Subset V$. The modulus of the annulus $A := V \setminus \overline{U}$ is given by
$$
\mod(A) := \cL(\Gamma_A(\partial U, \partial V)).
$$
We refer to $U$ as the {\it inner component} of $A$. For any set $X \subset U$, we say that $A$ {\it surrounds} $X$. By \lemref{overflow}, if $A \subset A'$, and the inner component of $A$ contains that of $A'$, then
$$
\mod(A) \leq \mod(A').
$$
A sequence of disjoint annuli $\{A^n\}_{n=1}^\infty$ in $\bbC$ are said to be {\it nested} if $A^{n+1}$ is contained in the inner component of $A^n$.

\begin{lem}[Gr\"otzsch inequality]\label{grot}
Let $x \in X \subset \bbC$, and let $\{A^n\}_{n=1}^\infty$ be a sequence of nested annuli surrounding $X$. If
$$
\sum_{n=1}^\infty \mod(A^n) = \infty,
$$
then $X = \{x\}$.
\end{lem}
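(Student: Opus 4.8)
The plan is to argue by contradiction and reduce everything to the classical fact that a ring (doubly connected) domain of infinite modulus must have a degenerate complementary component. Concretely, I would assume $X$ contains a point $b\neq x$ and then manufacture a single ring domain of infinite modulus whose \emph{two} complementary continua are both nondegenerate, which is impossible.

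First I would organize the nested annuli. For $n\ge 1$ let $\Omega_n$ denote the inner component of $A^n$, so $X\subset\Omega_n$ for all $n$, and nestedness gives $A^{n+1}\subset\Omega_n$; a routine planar-topology argument then upgrades this to $\overline{\Omega_{n+1}}\subset\Omega_n$, so $\overline{\Omega_n}$ decreases to a continuum $K:=\bigcap_n\overline{\Omega_n}\supseteq X$ with $\diam K\ge|x-b|>0$. I would also record the one genuinely load-bearing point: $\hbbC\setminus\Omega_1$ is a continuum containing $\overline{A^1}$, hence nondegenerate. Keeping this fixed nondegenerate set on the ``outside'' is exactly why one should peel off $A^1$ and telescope only $A^2,A^3,\dots$ — without an outer continuum, infinite modulus alone would not force $K$ to be a point (witness a punctured plane).

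Next comes the telescoping. For each $N$ set $B_N:=\Omega_1\setminus\overline{\Omega_{N+1}}$, a ring domain separating $\overline{\Omega_{N+1}}$ from $\hbbC\setminus\Omega_1$ that contains the pairwise disjoint subannuli $A^2,\dots,A^{N+1}$, each essential in $B_N$. Any path crossing $B_N$ from $\partial\Omega_{N+1}$ to $\partial\Omega_1$ therefore contains pairwise disjoint subpaths crossing $A^2,\dots,A^{N+1}$, so $\Gamma_{B_N}(\partial\Omega_{N+1},\partial\Omega_1)$ disjointly overflows the corresponding crossing families; iterating \lemref{overflow} yields $\mod(B_N)\ge\sum_{n=2}^{N+1}\mod(A^n)$, which tends to $\infty$ by hypothesis. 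The $B_N$ increase to the ring domain $B_\infty=\Omega_1\setminus K$ (with the same outer complementary component $\hbbC\setminus\Omega_1$), and since every path in $B_\infty$ from $K$ to $\partial\Omega_1$ contains a subpath crossing $B_N$, another application of \lemref{overflow} gives $\mod(B_\infty)\ge\sup_N\mod(B_N)=\infty$. But $B_\infty$ is a ring domain whose two complementary components $K$ and $\hbbC\setminus\Omega_1$ are both nondegenerate continua, so its modulus is finite (a standard consequence of the Teichm\"uller modulus estimate, after a M\"obius normalization placing two distinct points of $K$ and two distinct points of $\hbbC\setminus\Omega_1$ at finite positions). This contradiction forces $X=\{x\}$.

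The part I expect to be routine but fiddly is the planar topology — verifying $\overline{\Omega_{n+1}}\subset\Omega_1$, that $B_N$ and $B_\infty$ are genuinely ring domains, and that each $A^k$ sits essentially inside $B_N$. The only real idea, worth stating cleanly, is the one above: telescope all but the first annulus so that $\overline{A^1}$ survives as a fixed nondegenerate outer continuum, and then close the argument with ``infinite modulus $\Rightarrow$ a degenerate complementary component.''
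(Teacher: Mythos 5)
The paper does not actually prove this lemma --- it is the classical Gr\"otzsch inequality, stated among the ``basic properties of extremal lengths'' and deferred to \cite{Ly} --- and your argument is exactly the standard one that the citation points to: telescope the disjoint essential subannuli $A^2,A^3,\dots$ inside the ring domain $\Omega_1\setminus K$ via \lemref{overflow} to get infinite modulus, then invoke the fact that a ring domain of infinite modulus must have a degenerate complementary component. Your one substantive observation --- peeling off $A^1$ so that $\overline{A^1}$ survives as a fixed nondegenerate outer continuum --- is precisely the point that makes the final step valid, so the proposal is correct and matches the intended proof.
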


\begin{lem}\label{dist to mod}
Let $Q \subset \bbC$ be a domain, and let $I, J \subset \overline{Q}$. Suppose for some $\mu > 0$, we have
$$
\dist(I, J) > \mu \diam(I).
$$
Then there exists $C = C(\mu) > 0$ such that $\cL(\Gamma_Q(I,J)) > C$.
\end{lem}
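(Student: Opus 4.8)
The plan is to produce a single admissible test metric that already realizes the desired bound. Recall $\cL(\Gamma_Q(I,J)) = \sup_\rho L_\rho(\Gamma)^2/A_\rho$, where $\rho\geq 0$ ranges over Borel metrics on $\bbC$, $L_\rho(\Gamma) = \inf_{\gamma\in\Gamma}\int_\gamma \rho\,|dz|$, and $A_\rho = \int \rho^2\,dA$. If $\Gamma_Q(I,J)=\varnothing$ then $\cL = +\infty$ and there is nothing to prove, so we may assume the family is nonempty. Set $\delta := \dist(I,J)$ and $\fd := \diam(I)$, so that $\delta > \mu\fd$ by hypothesis. Fix a point $x_0\in I$ and let $N := N_\delta(I)$ be the open $\delta$-neighborhood of $I$.

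First I would record two elementary facts. Since $I\subset B(x_0,\fd)$ we get $N\subset B(x_0,\fd+\delta)$, so $\operatorname{Area}(N)\leq \pi(\fd+\delta)^2$; and since every point of $J$ lies at distance $\geq\delta$ from $I$, we have $J\cap N=\varnothing$. Now take $\rho := \mathbf 1_N$, so that $A_\rho \leq \operatorname{Area}(N)\leq \pi(\fd+\delta)^2$. To bound $L_\rho$ from below, let $\gamma\in\Gamma_Q(I,J)$ have an endpoint $a\in I$; since $a\in N$ and the other endpoint lies in $J$, which is disjoint from $N$, the initial subarc of $\gamma$ up to its first exit from $N$ lies in $N$ and joins $a$ to a point $z$ with $\dist(z,I)=\delta$, hence with $|z-a|\geq \dist(z,I)=\delta$. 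Therefore $\int_\gamma\rho\,|dz| \geq \operatorname{length}(\gamma\cap N)\geq |z-a|\geq\delta$ for every $\gamma$, i.e. $L_\rho(\Gamma)\geq\delta$. Combining the two estimates with $\fd < \delta/\mu$ gives
$$
\cL(\Gamma_Q(I,J)) \;\geq\; \frac{L_\rho(\Gamma)^2}{A_\rho} \;\geq\; \frac{\delta^2}{\pi(\fd+\delta)^2} \;>\; \frac{\delta^2}{\pi\,\delta^2(1+1/\mu)^2} \;=\; \frac{\mu^2}{\pi(\mu+1)^2},
$$
so one may take $C(\mu) := \mu^2/\bigl(\pi(\mu+1)^2\bigr) > 0$.

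There is essentially no obstacle here; the only subtlety worth flagging is that the hypothesis controls $\diam(I)$ and not $\diam(J)$, so one must base the neighborhood at a point of $I$ and run the one-sided ``first exit'' argument, rather than attempting to separate $I$ from $J$ by a genuine round annulus centered at $x_0$ (that approach would need $\diam(I) < \dist(I,J)$, which fails when $\mu<1$). This is the standard characteristic-function-of-a-ball estimate for extremal length, and the neighborhood formulation above has the advantage of being uniform in $\mu$ with no case distinction.
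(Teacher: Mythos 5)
Your proof is correct and complete: the test metric $\rho=\mathbf 1_{N_\delta(I)}$ gives the area bound $A_\rho\le\pi(\diam(I)+\delta)^2$ and the length bound $L_\rho\ge\delta$ via the first-exit argument, and the hypothesis $\diam(I)<\delta/\mu$ converts this into the uniform constant $C(\mu)=\mu^2/\bigl(\pi(\mu+1)^2\bigr)$. The paper does not prove this lemma (it is listed among standard extremal-length facts with a reference to \cite{Ly}), and your argument is exactly the standard one, including the correct observation that the neighborhood must be based on $I$ rather than on a round annulus separating $I$ from $J$.
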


\begin{lem}\label{pullback mod}
Let $U' \Subset U$ and $V' \Subset V$ be a pair of nested topological disks, and let $f : (U, U') \to (V, V')$ be a holomorphic branched covering of respective topological disks. Then
$$
\mod (U \setminus \overline{U'}) \leq \mod(V \setminus \overline{V'}) \leq \deg(f)\mod(U \setminus \overline{U'}).
$$
\end{lem}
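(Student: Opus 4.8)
The plan is to pass to the two annuli $A := U \setminus \overline{U'}$ and $A' := V \setminus \overline{V'}$ and reduce the statement to the elementary fact that a degree-$d$ covering of annuli scales moduli by $d$. Write $d := \deg(f)$. Since $f$ is proper and $f(\overline{U'}) = \overline{V'}$, we have $f^{-1}(\overline{V'}) \supseteq \overline{U'}$, hence $f^{-1}(A') \subseteq U \setminus \overline{U'} = A$; as $f$ is a branched covering of the pair $(U, U')$ onto $(V, V')$ we moreover have $f^{-1}(V') = U'$, so $f$ restricts to a proper holomorphic map $f : A \to A'$ of degree $d$. By Riemann--Hurwitz the total ramification of this restriction equals $d\cdot\chi(A') - \chi(A) = 0$, since the Euler characteristic of an annulus vanishes; hence $f|_A$ has no critical points and is an unbranched degree-$d$ covering.

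I would then prove the two halves by pulling back, under $f$, the two dual extremal metrics of $A'$. For the right inequality $\mod(V\setminus\overline{V'}) \le \deg(f)\,\mod(U\setminus\overline{U'})$: let $\rho$ be the extremal metric on $A'$ for the family $\Gamma'$ of crosscuts joining the two boundary components, normalized so that every such crosscut has $\rho$-length $\ge 1$ and $\operatorname{area}_\rho(A') = 1/\mod(A')$. Its pull-back $f^{*}\rho$ on $A$ still makes every crosscut $\delta$ of $A$ joining $\partial U'$ to $\partial U$ have length $\ge 1$ --- because $f\circ\delta$ joins $\partial V'$ to $\partial V$ and therefore contains a crosscut of $A'$ --- while the degree-$d$ change of variables gives $\operatorname{area}_{f^{*}\rho}(A) = d\,\operatorname{area}_\rho(A') = d/\mod(A')$. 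Extremality then yields $\mod(A) = \cL(\Gamma_A(\partial U', \partial U)) \ge \mod(A')/d$, as desired. For the left inequality $\mod(U\setminus\overline{U'}) \le \mod(V\setminus\overline{V'})$: take instead the extremal metric $\sigma$ on $A'$ for the family of essential loops, normalized so that essential loops have $\sigma$-length $\ge 1$ and $\operatorname{area}_\sigma(A') = \mod(A')$. Since $f_{*}\colon\pi_1(A)\to\pi_1(A')$ is multiplication by $d$, an essential loop of $A$ is mapped by $f$ to a curve wrapping $d$ times around $A'$, so $f^{*}\sigma$ gives it length $\ge d$; rescaling $f^{*}\sigma$ by $1/d$ and again using that the area multiplies by $d$, extremality gives $1/\mod(A) = \cL(\text{essential loops of } A) \ge d/\mod(A')$, i.e.\ $\mod(A) \le \mod(A')/d \le \mod(A')$.

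Both inequalities then follow (in fact $\mod(A') = d\,\mod(A)$ on the nose). The step I expect to require the most care is the reduction in the first paragraph --- ensuring that the hypothesis genuinely forces $f^{-1}(V') = U'$, so that $f|_A$ is proper onto $A'$ and Riemann--Hurwitz applies. If one only assumes $f(U') = V'$, then $f^{-1}(A')$ is in general a proper, possibly disconnected, essential sub-annular region of $A$; the argument above applies to each essential component $A_i$, with $f\colon A_i \to A'$ an unbranched covering of degree $d_i$ and $\sum_i d_i = d$, and the right inequality is recovered by combining $\mod(A') = d_i\mod(A_i)$ with the series inequality $\sum_i\mod(A_i) \le \mod(A)$ (a crosscut of $A$ disjointly overflows crosscuts of the $A_i$, cf.\ \lemref{overflow}) together with $\sum_i d_i^{-1} \ge 1/d$; the left inequality in that generality needs in addition a Koebe/Schwarz-type distortion estimate on how $\overline{U'}$ sits inside $U$. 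For the uses made of this lemma in the paper the clean covering case suffices, so the two short extremal-length computations above complete the proof.
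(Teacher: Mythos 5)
The paper does not actually prove this lemma---it is quoted from \cite{Ly} as a standard fact---so there is no in-house argument to compare against. Your proof of both inequalities in the ``clean'' case $f^{-1}(V')=U'$ is correct and is the standard one: pull back the extremal crosscut metric for the right inequality, and the extremal core metric (rescaled by $1/\deg f$, using that $f_*$ on $\pi_1$ is multiplication by $\deg f$) for the left one; the Riemann--Hurwitz observation that $f|_A$ is an unbranched covering is also right. You are also right to worry about whether the hypothesis forces $U'=f^{-1}(V')$: it does not (it only forces $U'$ to be a \emph{component} of $f^{-1}(V')$), and the general case is the one the paper actually needs, since every ``pullback along $\pbbD$'' (puzzle disks $D^n$, the domains $U_{-k}$, $U^i_k$, etc.) is by construction a single distinguished component of a full preimage.

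Two concrete points on your final paragraph. First, your fallback for the right inequality is more complicated than necessary and has a small flaw: a component of $f^{-1}(A')$ containing critical points need not be an annulus (Riemann--Hurwitz only gives $\chi\le 0$, so it can be a pair of pants), so ``$\mod(A')=d_i\mod(A_i)$'' does not literally apply componentwise. But no decomposition is needed: your crosscut argument already works verbatim in the general case, because admissibility of $f^*\rho$ on $A=U\setminus\overline{U'}$ only uses the overflow property of $f\circ\delta$, and the area bound becomes $\operatorname{area}_{f^*\rho}(A)\le\operatorname{area}_{f^*\rho}(U)=d\,\operatorname{area}_\rho(A')$, which is all you use. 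Second, and this is the genuine gap: your claim that ``for the uses made of this lemma in the paper the clean covering case suffices'' is not accurate. The left inequality is invoked in the component setting, e.g.\ to verify the collar condition \eqref{eq:collar} in the proof of \thmref{lc at crit} (one needs $\mod(V'\setminus\overline{V''})\ge\mod(U'\setminus\overline{U''})$ for $G:(U',U'')\to(V',V'')$ with $U''=D^n$ only a component of $G^{-1}(V'')$). In that generality the core-curve argument breaks because $f(A)$ is not contained in $A'$ (essential loops of $A$ may pass through the other components of $f^{-1}(\overline{V'})$), and the repair is not a Koebe estimate but a topological one: replace $\overline{U'}$ by the ``filled'' component of $f^{-1}(\overline{V'})$ containing it (or restrict to the outermost essential component of $f^{-1}(A')$ and fill its holes) so that the argument-principle count shows the image of every core curve winds at least once around $V'$ inside $A'$. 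That step should be supplied before the lemma can be used as the paper uses it.
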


\subsection{Extremal lengths between pullbacks of puzzle disks and the circle}

Let $D^n$ be the puzzle disk of scale $n \geq n_0$ constructed in \secref{sec:disks}. Recall that the base of $D^n$ is given by
$$
D^n \cap \pbbD = J^-_n := (-q_n, -q_{n+1})_c.
$$
Moreover, for $n_0 \leq n' \leq n$, we have
$$
F^{\bfr_n - \bfr_{n'}}(D^n) = D^{n'}|_{g^{\bfr_n - \bfr_{n'}}(J^-_n)}.
$$
Lastly, there exists $l_0 \in \bbN$ such that $S^\fr_\fJ = F^{l_0}(D^{n_0})$ is the initial puzzle silhouette.

Given $k \geq 1$ and a combinatorial arc $I \subset J^-_n$, consider the $k$th pullback $U$ of $D^n|_I$ along $\pbbD$. Let $e_\pm \in \pbbD$ such that the full base of $U$ is given by
$$
J := (e_-, e_+)_\pbbD.
$$
Then
$$
\overline{J} = \overline{U} \cap \pbbD.
$$
Let $\cE_\pm(U)$ be the bounding edge of $U$ that contains $e_\pm$.

\begin{notn}
Let $\gamma \subset \pbbD$ be an arc. For $\lambda >0$, let $\gamma[\lambda] \subset \pbbD$ be an arc compactly containing $\gamma$ such that for the two components $\gamma[\lambda]_-$ and $\gamma[\lambda]_+$ of $\gamma[\lambda] \setminus \gamma$, we have 
$$
|\gamma[\lambda]_-| = |\gamma[\lambda]_+| = \lambda|\gamma|.
$$
\end{notn}

For $\lambda >0$, let $J[\lambda]_\pm$ be the component of $J[\lambda]\setminus J$ containing $e_\pm$.

\begin{lem}\label{big lambda}
There exists $\Lambda(n) > 0$ with $\Lambda(n) \to \infty$ as $n \to \infty$ such that for $\lambda < 2\Lambda(n)$, we have
$$
g^{\bfr_n + k - \fr}(J[\lambda]) \subset \fJ \subset \pbbD.
$$
\end{lem}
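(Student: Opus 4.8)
\emph{Proof proposal.} The plan is to track the forward image of the base of $U$ under the full iterate $g^{\bfr_n+k-\fr}$, show it lands in a tiny arc sitting deep inside $\fJ$, and then pull the available room back to $J$ by quasisymmetry. Set $A_m:=g^{\bfr_m-\fr}(J^-_m)$ for $m\geq n_0$. Using $\bfr_m-\fr=\bfr_m-\bfr_{n_0}+l_0$, the identity $F^{\bfr_m-\bfr_{n_0}}(D^m)=D^{n_0}|_{g^{\bfr_m-\bfr_{n_0}}(J^-_m)}$ followed by $F^{l_0}(D^{n_0})=S^\fr_\fJ$ gives $g^{\bfr_m-\bfr_{n_0}}(J^-_m)\subset J^-_{n_0}$, so $A_m\subset g^{l_0}(J^-_{n_0})=\fJ$; moreover $A_{n_0}=\fJ$, and from $J^-_{m+1}=g^{-r_{m+1}}(J^+_{m+1})$, $\bfr_{m+1}=\bfr_m+r_{m+1}$, and $J^+_{m+1}\Subset J^-_m$ (\lemref{nested arcs}) one gets $A_{m+1}=g^{\bfr_m-\fr}(J^+_{m+1})\subset A_m$. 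The argument then has four steps: (i) $g^{\bfr_n+k-\fr}(J)\subset A_n$; (ii) $|A_n|\to0$; (iii) $\dist(A_n,\partial\fJ)$ is bounded below by a fixed positive constant; (iv) conclude via quasisymmetry of the iterate.

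For (i): since $U$ is the $k$th pullback of $D^n|_I$ along $\pbbD$, the map $F^k$ sends $\overline U$ onto $\overline{D^n|_I}$, so $g^k(\overline J)\subset\overline{D^n|_I}\cap\pbbD\subset\overline{J^-_n}$ because $D^n\cap\pbbD=J^-_n$; applying $g^{\bfr_n-\fr}$ yields $g^{\bfr_n+k-\fr}(J)\subset\overline{A_n}$. For (iii): by the computation above, $A_{n_0+1}=g^{l_0}(J^+_{n_0+1})$, and $J^+_{n_0+1}\Subset J^-_{n_0}$ by \lemref{nested arcs}, so $A_{n_0+1}\Subset g^{l_0}(J^-_{n_0})=\fJ$; hence $c_\ast:=\dist(A_{n_0+1},\partial\fJ)>0$ is a fixed constant (it depends only on $n_0$, which was fixed in \lemref{initial scale}), and since the $A_m$ are nested, $\dist(A_n,\partial\fJ)\geq c_\ast$ for every $n>n_0$.

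Step (ii) is the substance. From $A_{m+1}=g^{\bfr_m-\fr}(J^+_{m+1})\subsetneq g^{\bfr_m-\fr}(J^-_m)=A_m$ with $J^+_{m+1}\Subset J^-_m$, \thmref{geometry bound} supplies a component $W_m$ of $J^-_m\setminus\overline{J^+_{m+1}}$ with $|W_m|\geq c_0|J^-_m|\geq c_0|J^+_{m+1}|$ for a uniform $c_0>0$ (one verifies this from the bounded real geometry by comparing $J^-_m$ and $J^+_{m+1}$ in a common dynamical partition, the worst case being $a_{m+1}=1$). I would then show that $g^{\bfr_m-\fr}|_{J^-_m}$ is $\eta_0$-quasisymmetric for a uniform $\eta_0$: since $\bfr_m-\fr=O(q_{m+1})$, the forward orbit $\{g^i(J^-_m)\}_{i=0}^{\bfr_m-\fr}$ has uniformly bounded intersection multiplicity (from the disjointness of the closest return arcs, equivalently via \lemref{pullback interval}), so only boundedly many of these arcs meet $\Crit(g)$, and \thmref{distortion bound} then factors $g^{\bfr_m-\fr}|_{J^-_m}$ into a uniformly bounded composition of real odd power maps and diffeomorphisms of uniformly bounded distortion, each of which is a uniform quasisymmetry. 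Applying this to the adjacent arcs $W_m$ and $J^+_{m+1}$ gives $|g^{\bfr_m-\fr}(W_m)|\geq c_0'|A_{m+1}|$ for a uniform $c_0'>0$, and since $g^{\bfr_m-\fr}(W_m)$ and $A_{m+1}$ are disjoint subarcs of $A_m$ we conclude $|A_{m+1}|\leq(1+c_0')^{-1}|A_m|$, whence $|A_n|\leq\theta^{\,n-n_0}|\fJ|\to0$ with $\theta:=(1+c_0')^{-1}<1$.

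Finally, for (iv): let $\hat J$ be the connected component of $g^{-(\bfr_n+k-\fr)}(\fJ)$ containing $J$, which exists by (i); then $g^{\bfr_n+k-\fr}(\hat J)=\fJ$, and it suffices to show $J[\lambda]\subset\hat J$ for all $\lambda<2\Lambda(n)$ with $\Lambda(n)\to\infty$. By the a priori bounds argument once more, $g^{\bfr_n+k-\fr}|_{\hat J}$ is $\eta$-quasisymmetric for a uniform $\eta$ (here $\hat J$ maps onto the fixed arc $\fJ$, so the relevant orbit has bounded intersection multiplicity, with the critical passages controlled through \propref{crit in disk}). Since $g^{\bfr_n+k-\fr}(J)\subset A_n$ and the two components of $\fJ\setminus\overline{A_n}$ have length $\geq c_\ast$ while $|A_n|\leq\theta^{\,n-n_0}|\fJ|$, quasisymmetry forces the two components of $\hat J\setminus J$ to have length $\geq\eta^{-1}(c_\ast/|A_n|)\,|J|$. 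Taking $2\Lambda(n):=\eta^{-1}(c_\ast/|A_n|)$ then gives $J[\lambda]\subset\hat J$ — hence $g^{\bfr_n+k-\fr}(J[\lambda])\subset\fJ$ — for all $\lambda<2\Lambda(n)$, and $\Lambda(n)\to\infty$ because $|A_n|\to0$. The hard part is the uniform quasisymmetry of the long iterates in (ii) and (iv): \thmref{distortion bound} as stated controls $\hat g^i|_{\hat I_n}$ only for $i\leq q_{n+1}$, so one must re-run its proof with the orbit's bounded intersection multiplicity (from \lemref{pullback interval}) in place of the closest-return disjointness, keeping careful track of iterate counts against arc scales — most delicately in (iv), where $k$ is a priori unbounded and the control must be routed through \propref{crit in disk}.
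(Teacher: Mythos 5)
Your overall architecture is the paper's: push $J$ forward to the fixed scale $n_0$, observe that its image is a vanishingly small arc lying a definite distance inside $\fJ$, and pull the surrounding room back to $J$ by quasisymmetry. Steps (i) and (iii) are fine and match the paper's observations. But the step you yourself flag as ``the hard part'' --- uniform quasisymmetric control of the long iterates in (ii) and (iv) --- is a genuine gap, and the route you propose for filling it cannot work. In (iv) the iterate is $g^{\bfr_n+k-\fr}$ on an arc mapping onto the fixed arc $\fJ$, and $k$ is completely unconstrained in the statement of the lemma (the hypothesis $k\le q_{n+m}$ only enters later, in \propref{a priori geometry} and \propref{length to circle}, which consume this lemma). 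For such orbits the intersection multiplicity of $\{g^{-j}(\fJ)\}_{j=0}^{\bfr_n+k-\fr}$ is \emph{not} uniformly bounded: each $g^{-j}(\fJ)$ has length bounded below by a constant depending only on $n_0$, so these arcs cover $\pbbD$ with multiplicity growing linearly in the number of iterates, and each critical point is met unboundedly often. Hence the factorization of \thmref{distortion bound} into boundedly many power maps and bounded-distortion diffeomorphisms is simply false in this regime; \lemref{pullback interval} gives the multiplicity bound only for $k\le q_{n+m}$ with a constant depending on $m$, and \propref{crit in disk} carries the same restriction, so neither can be ``routed through'' to rescue the argument.

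The missing ingredient is \corref{hermancor}: since $\rho$ is of bounded type, $g$ is globally $K$-quasisymmetrically conjugate to the rigid rotation, so \emph{every} iterate $g^m=h^{-1}\circ\rot_{m\rho}\circ h$ is quasisymmetric with a constant depending only on $K$, for any $m$, with no counting of critical passages. With this in hand your steps (ii) and (iv) collapse: $|A_n|=|g^{\bfr_n-\fr}(J^-_n)|\to 0$ is immediate because $h(A_n)$ is a rotated copy of $h(J^-_n)$ and $|J^-_n|\to 0$ by \thmref{geometry bound} (your geometric-decay induction is unnecessary), and the pullback of the length ratio in (iv) is controlled by the uniform quasisymmetry modulus. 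This is exactly the paper's three-line proof: $g^{\bfr_n-\bfr_{n_0}}(J^-_n)\subset J^+_{n_0+1}\Subset J^-_{n_0}$, the ratio of $|g^{\bfr_n-\bfr_{n_0}}(J^-_n)|$ to the two complementary components $\gamma^\pm_n$ of $J^-_{n_0}$ tends to $0$, and \corref{hermancor} transports this to the pulled-back ratio $|g^{-k}(J^-_n)|/|g^{-\bfr_n+\bfr_{n_0}-k}(\gamma^\pm_n)|\to 0$; since $J\subset g^{-k}(J^-_n)$, this ratio defines $\Lambda(n)$.
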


\begin{proof}
By \thmref{geometry bound}, we have $|J^-_n| \to 0$ as $n \to \infty$. Moreover, $g^{\bfr_n- \bfr_{n_0}}(J^-_n) \subset J^+_{n_0+1} \Subset J^-_{n_0}$. Denote the two components of $J^-_{n_0} \setminus g^{\bfr_n- \bfr_{n_0}}(J^-_n)$ by $\gamma^-_n$ and $\gamma^+_n$. Then
$$
\frac{|g^{\bfr_n- \bfr_{n_0}}(J^-_n)|}{|\gamma^\pm_n|} \to 0
\matsp{as}
n \to \infty.
$$
We conclude
$$
\frac{|g^{-k}(J^-_n)|}{|g^{-\bfr_n+ \bfr_{n_0} -k}(\gamma^\pm_n)|} \to 0
\matsp{as}
n \to \infty
$$
by \corref{hermancor}.
\end{proof}

Let $C \subset \bbC \setminus \{0\}$ be a smooth simple curve. Consider a lift $\hC$ of $C$ via the map $\ixp(z) := e^{2\pi i z}$ such that $\ixp(\hC) = C$. Then we say that the slope of $C$ is bounded absolutely from below by $\mu >0$ if this is true for $\hC$.

\begin{prop}[Bounded geometry of edges near $\pbbD$]\label{a priori geometry}
Suppose $k \leq q_{n+m}$ for some $m \geq 0$. For $1< \lambda < \Lambda(n)$, there exist uniform constants $\eta = \eta(m, \lambda) > 0$ independent of $n$, and $\mu >0$ independent of $n$, $m$ and $\lambda$ such that the following holds. The intersection of $\cE_\pm(U)$ with the $\eta|J|$-neighborhood $N_{\eta|J|}(J[\lambda])$ of $J[\lambda]$ consists of a single piecewise smooth curve $E_\pm$. Moreover, $E_\pm$ is equal to the union of two smooth curves $E^\infty_\pm \subset \bbC \setminus \bbD$ and $E^0_\pm \subset \overline{\bbD}$ that are symmetric about $\pbbD$, share an endpoint at $e_\pm \subset \pbbD$, and have slopes that are bounded absolutely from below by $\mu$.
\end{prop}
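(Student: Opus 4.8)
Proof proposal.

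\medskip

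The plan is to transport the whole picture to the real-line setting of \secref{sec:a priori} via the covering map $\ixp(z)=e^{2\pi iz}$, to identify $\cE_\pm(U)$ near $\pbbD$ with a single branch of a preimage of $\pbbD$ under an iterate of $F$ whose local factorization is governed by real and complex \emph{a priori} bounds, and then to quote \propref{ext a priori}. First I would fix the real model: lifting through $\ixp$, the full base $J$ becomes an interval $\hat J\Subset\bbR$, the circle $\pbbD$ becomes $\bbR$, the lift $\hat F$ of $F$ is analytic on a horizontal neighborhood of $\bbR$ with $\hat F|_\bbR=\hat g$, and $N_{\eta|J|}(J[\lambda])$ becomes a Euclidean neighborhood of the enlarged interval $\hat J[\lambda]$. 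By the choice of $L$ together with \propref{puzzle neigh inv} and \propref{crit in disk}, $U\subset\bfP^L$ and the only critical points of $F$ relevant to the pullbacks defining $U$ lie on $\pbbD$. Since $F$ is symmetric about $\pbbD$, it suffices to produce the outer arc $E^\infty_\pm\subset\bbC\setminus\bbD$; the inner arc $E^0_\pm$ is then its reflection, and the two share the endpoint $e_\pm$, assembling into the required piecewise smooth $E_\pm$.

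Next I would pin down $\cE_\pm(U)$ near $e_\pm$. Every attachment point of a bubble ray to $\pbbD$ is a $\pbbD$-preimage of the critical point $c_0$, so $e_\pm=c_{-j}$ for a unique $j\geq 0$; by \propref{edge}(ii) and the combinatorics of closest returns one checks $j\leq q_{n+m'}$ for some $m'=m'(m,\tau)$, and that the component of $\ticE_\pm(U)$ emanating from $e_\pm$ is the boundary of the generation-$(j{+}1)$ bubble rooted at $e_\pm$, i.e. the branch of $F^{-(j+1)}(\pbbD)$ through $e_\pm$ other than $\pbbD$ itself. Lifting, $E^\infty_\pm$ is a piece of the off-$\bbR$ branch of $(\hat F^{j+1})^{-1}(\bbR)$ through $e_\pm$. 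Now the forward orbit $e_\pm=c_{-j},c_{-j+1},\ldots,c_{-1},c_0$ meets $\Crit(g)$ at most $\#\Crit(g)$ times (each critical point of $g$ equals some $c_{-i}$ for at most one $i$, since $g$ has no periodic orbit) plus the terminal passage through $c_0$; hence $\hat F^{j+1}$, restricted to a complex neighborhood of $\hat J[\lambda]$, factors as
$$
\hat F^{j+1}|_{N}=\phi_0\circ P_1\circ\phi_1\circ\cdots\circ P_l\circ\phi_l,
$$
with $l\leq\#\Crit(g)+1$, each $P_i$ a real odd power map of degree at most $D:=\max\deg(\Crit(g))$, and each $\phi_i$ a real analytic diffeomorphism with a uniform $\eta'$-complex extension. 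Here I would invoke \thmref{distortion bound} and \thmref{complex extension} in the slightly more general pull-back-along-$\pbbD$ form (legitimate since $j\leq q_{n+m'}$ and, by \lemref{pullback interval}, the relevant inverse orbit has intersection multiplicity bounded in terms of $m$), and use \lemref{big lambda}/$\lambda<\Lambda(n)$ to guarantee that $\hat J[\lambda]$ still lies in the appropriate pullback of $\fJ$, providing the room the complex-extension estimates need. The essential points are that the number of power-map factors is bounded by the fixed constant $\#\Crit(g)+1$, independent of $m$, $n$, $\lambda$, and that the connecting diffeomorphisms extend to definite (though $m$-dependent) complex neighborhoods.

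With this factorization, \propref{ext a priori} applies verbatim: it produces $\eta=\eta(m,\lambda)>0$ and $\mu>0$ depending only on $l\leq\#\Crit(g)+1$, $D$ and $\eta'$, such that $\hat F^{j+1}$ extends analytically to the $\eta|\hat J[\lambda]|$-neighborhood of $\hat J[\lambda]$ with no new critical points, and the off-$\bbR$ branch of $(\hat F^{j+1})^{-1}(\bbR)$ is a smooth curve of slope bounded absolutely from below by $\mu$. The slope bound is created by the (at most $\#\Crit(g)+1$) power-map factors of degree $\leq D$ and is governed only by $\#\Crit(g)$ and $D$; the radius $\eta$ degrades with $m$ through the complex-extension constants of the connecting diffeomorphisms, but $\mu$ does not — this gives exactly the claimed dependencies. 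It remains to check that $\cE_\pm(U)\cap N_{\eta|J|}(J[\lambda])$ is precisely the single curve $E_\pm$: by \thmref{geometry bound} the next bubble of the bubble ray $\ticE_\pm(U)$ is attached to $\partial B^\pm$ at a point at distance comparable to $|J|$ from $e_\pm$, and the external and internal rays contained in $\cE_\pm(U)$ stay a definite fraction of $|J|$ away from $\pbbD$; shrinking $\eta$ by a factor depending only on the real \emph{a priori} constants (hence still only on $m$ and $\lambda$) excludes them all. Undoing $\ixp$ yields $E^\infty_\pm$ and, by symmetry, $E^0_\pm$.

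I expect the main obstacle to be the uniformity of the factorization step. The iterate $j+1$ is not bounded as $n\to\infty$, so \thmref{distortion bound} and \thmref{complex extension} — stated for forward iterates of length $\leq q_{n+1}$ on closest-return intervals — must be re-derived in the present pull-back-along-$\pbbD$ framework; the enabling fact is \lemref{pullback interval}, which bounds the multiplicity of the inverse orbit and thereby controls both the number of power-map factors and the complex-extension radii of the connecting diffeomorphisms. A secondary technical point, handled by the choices of $L$ and $l_0$ and by \propref{crit in disk}, is that no critical point of $F$ lying off $\pbbD$ enters the relevant neighborhoods.
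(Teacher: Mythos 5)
Your overall strategy coincides with the paper's: lift to $\bbR$ via $\ixp$, recognize that near $J[\lambda]$ the edge $\cE_\pm(U)$ is an off-circle branch of the preimage of $\pbbD$ under an iterate of $F$ of length $\lesssim q_{n+m+4}$, factor that iterate using \thmref{distortion bound} and \thmref{complex extension}, and conclude via \propref{ext a priori}. However, there are two genuine gaps.

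First, your count $l\leq\#\Crit(g)+1$ of power-map factors is wrong, and the justification reveals why: you count how often the forward orbit of the \emph{point} $e_\pm$ meets $\Crit(g)$, but the factorization of $\hg^R$ on a neighborhood of the \emph{interval} $\hat J[\lambda]$ acquires a power-map factor each time some forward image $g^i(J[\lambda])$, $0\leq i< R$, \emph{covers} a critical point of $g$. Since $R$ can be of order $q_{n+m+4}$ while $J[\lambda]$ has combinatorial length of order $q_{n+1}$, the orbit of the interval sweeps over each critical point many times; the correct bound is a constant $L(m,\lambda)$, obtained (as in the paper) by covering $J[\lambda]$ by $M(m,\lambda)$ arcs of combinatorial length $q_{n+m+3}$, applying \thmref{distortion bound}/\thmref{complex extension} to each, and concatenating. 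Your incorrect bound on $l$ is then what you use to argue that $\mu$ is independent of $m$ and $\lambda$, so the error propagates into the dependency structure of the constants rather than being cosmetic.

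Second, your identification of $\cE_\pm(U)$ near $e_\pm$ is not justified. By \propref{edge}, the full-base endpoint $e_\pm$ is in general an iterated $g$-preimage of a critical \emph{point} of $g$ created when the shrinking flap $\gamma^\pm$ passes a critical value; such a point need not lie in the backward orbit of $c_0$ at all, so ``$e_\pm=c_{-j}$, edge $=$ boundary of the generation-$(j{+}1)$ bubble rooted at $e_\pm$'' is false in general, and the iterate you would need to map the edge onto $\pbbD$ is not determined by $e_\pm$ alone. The paper sidesteps this entirely by using the forward identity $F^R(\cE_\pm(U))=\cE_\pm(S^\fr_\fJ)\cup(\fJ\setminus g^R(J))$ with $R=\bfr_n-\fr+k$, together with $g^R(J[2\lambda])\subset\fJ$ from \lemref{big lambda}, which shows that over $N_{\eta|J|}(J[\lambda])$ the edge is exactly a component of $(F^R|_W)^{-1}$ of an arc of $\pbbD$; you should replace your bubble-generation argument with this forward-image identity.
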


\begin{proof}
Denote
$$
R:= \bfr_n - \fr + k.
$$
Then
$$
g^R(J[2\lambda]) \subset \fJ
$$
by \lemref{big lambda}, and
\begin{equation}\label{eq:a priori geometry 1}
F^R(\cE_\pm(U)) = \cE_\pm(S^\fr_\fJ) \cup (\fJ \setminus g^R(J)),
\end{equation}
by \propref{edge}.

Using \lemref{exp time growth} i) and iv), we see that
$$
R < q_{n+4} + q_{n+m} < q_{n+m+4}.
$$
For $i \in \bbZ$, denote
$$
\gamma_i := [-k+iq_{n+m+3}, -k+(i+1)q_{n+m+3}]_c.
$$
\corref{hermancor} implies that there exist uniform constants $M = M(m, \lambda) \in \bbN$ and $\kappa = \kappa(m, \lambda)$ independent of $n$ such that
\begin{equation}\label{eq:a priori geometry 2}
J[\lambda] \subset \bigcup_{|i| < M} \gamma_i
\matsp{and}
|\gamma_i| > \kappa|J|
\matsp{for}
|i|<M.
\end{equation}

Let $\hg :\bbR \to \bbR$ be a lift of $g : \pbbD \to \pbbD$ via the map $\ixp(x) := e^{2\pi i x}$. Additionally, let $\hJ, \hJ[\lambda], \hgamma_i \subset \bbR$ be lifts of $J, J[\lambda], \gamma_i \subset \pbbD$ respectively such that
$$
\hJ \subset \hJ[\lambda] \subset \bigcup_{|i| < M} \hgamma_i.
$$
By \thmref{distortion bound} and \thmref{complex extension}, the map $\hg^R|_{\hgamma_i}$ with $|i| < M$ factors into a composition of power maps and diffeomorphisms such that
\begin{itemize}
\item the length of the composition is uniformly bounded;
\item the degrees of the power maps are uniformly bounded; and
\item the diffeomorphisms have uniform complex extensions.
\end{itemize}
\lemref{initial scale} ii) and \eqref{eq:a priori geometry 2} imply that $\hg^R|_{\hJ[\lambda]}$ also factors in a similar way, except the length of the composition is bounded by some constant $L = L(m, \lambda) \geq 1$, and the diffeormophisms in the composition have $\eta'$-complex extensions for some $\eta' = \eta'(m, \lambda) >0$, where $L$ and $\eta'$ are both independent of $n$. 

Consequently, there exist a uniform constant $\eta = \eta(m, \lambda)>0$ such that for 
$$
W := N_{\eta|J|}(J[\lambda])
$$
we have $\Crit(F^R|_W) = \Crit(g^R|_{J[\lambda]})$. From \lemref{big lambda}, we see that there exists a uniform constant $C>0$ such that
$$
V := N_C(g^R(J[\lambda])) \Subset S^\fr_\fJ.
$$
By decreasing $\eta$ if necessary (but still keeping it larger than some uniform lower bound independent of $n$), we have $F^R(W) \subset V$. Hence, by \eqref{eq:a priori geometry 1}, we have
$$
F^R(W \cap \cE_\pm(U)) \subset V \cap \fJ \subset \pbbD.
$$

Let
$$
X := (F^R|_W)^{-1}(g^R(J[\lambda]))\setminus J[\lambda].
$$
It follows from \propref{ext a priori} that each component of $X$ has slope that is bounded absolutely from below by some uniform constant $\mu >0$ independent of $n$. Moreover, we must have
$$
(W \cap \cE_\pm(U)) \cap (\bbC \setminus \pbbD) \subset X.
$$
The result follows.
\end{proof}

For $x_0 \subset \bbR$ and $h, w > 0$, denote
$$
Q_\bbR(x_0, h, w) := \{z = x + yi \; | \; |x-x_0| < w \text{ and } |y| < h\}.
$$
Additionally, let
$$
Q_{\pbbD}(\ixp(x_0), h, w) := \ixp(Q_\bbR(x_0, h, w)).
$$

Given $1 < \lambda < \Lambda(n)$, let $\eta$ and $\mu$ be the constants in \propref{a priori geometry}. Consider the set
$$
Q_\pm := Q_\pbbD(e_\pm, \eta|J|, 2\eta|J|/\mu) \cap (\bbC \setminus \overline{U}).
$$
Then $Q_\pm$ is a quadrilateral whose boundary consists of three smooth arcs $\partial_{\tp} Q_\pm$, $\partial_{\bt} Q_\pm$ and $\partial_{\ot} Q_\pm$, and one piecewise smooth arc $\partial_{\In}Q_\pm$ such that
\begin{itemize}
\item $\partial_{\tp}Q_\pm$ and $\partial_{\bt}Q_\pm$ are contained in circles centered at $0$ of radii $e^{2\pi h}$ and $e^{-2\pi h}$ respectively;
\item $\partial_{\ot}Q_\pm$ is contained in a radial line; and
\item $\partial_{\In}Q_\pm \ni e_\pm$ is contained in $\cE_\pm$.
\end{itemize}

Let $W$ be the component of the complement of $\cQ_2 \cup \cQ_{1/2} \cup \cE_+ \cup \cE_-$ that contains $\pbbD \setminus \overline{J}$. Let $\hGamma^\pbbD_\pm(U, \lambda)$ be a path family in $W$ such that any path $\gamma\in \hGamma^\pbbD_\pm(U, \lambda)$ has one endpoint in $\cE_\pm \setminus \partial_{\In}Q_\pm$ and the other endpoint in $J[\lambda]_\pm$. Additionally, let $\chGamma^\pbbD_\pm(U, \lambda)$ be a path family in $W$ such that any path $\gamma\in \chGamma^\pbbD_\pm(U, \lambda)$ has one endpoint in $\partial_{\In}Q_\pm$ and the other endpoint in $J[\lambda]_\pm$, and $\gamma$ contains a path in $\Gamma_{Q_\pm}(\partial_{\In}Q_\pm, \partial Q_\pm \setminus \partial_{\In}Q_\pm)$. See \figref{fig:path to circle}. Define
\begin{equation}\label{eq:path to circle}
\Gamma^\pbbD(U, \lambda) := \hGamma^\pbbD_-(U, \lambda)\cup \chGamma^\pbbD_-(U, \lambda) \cup \hGamma^\pbbD_+(U, \lambda) \cup \chGamma^\pbbD_+(U, \lambda).
\end{equation}

\begin{figure}[h]
\centering
\includegraphics[scale=0.35]{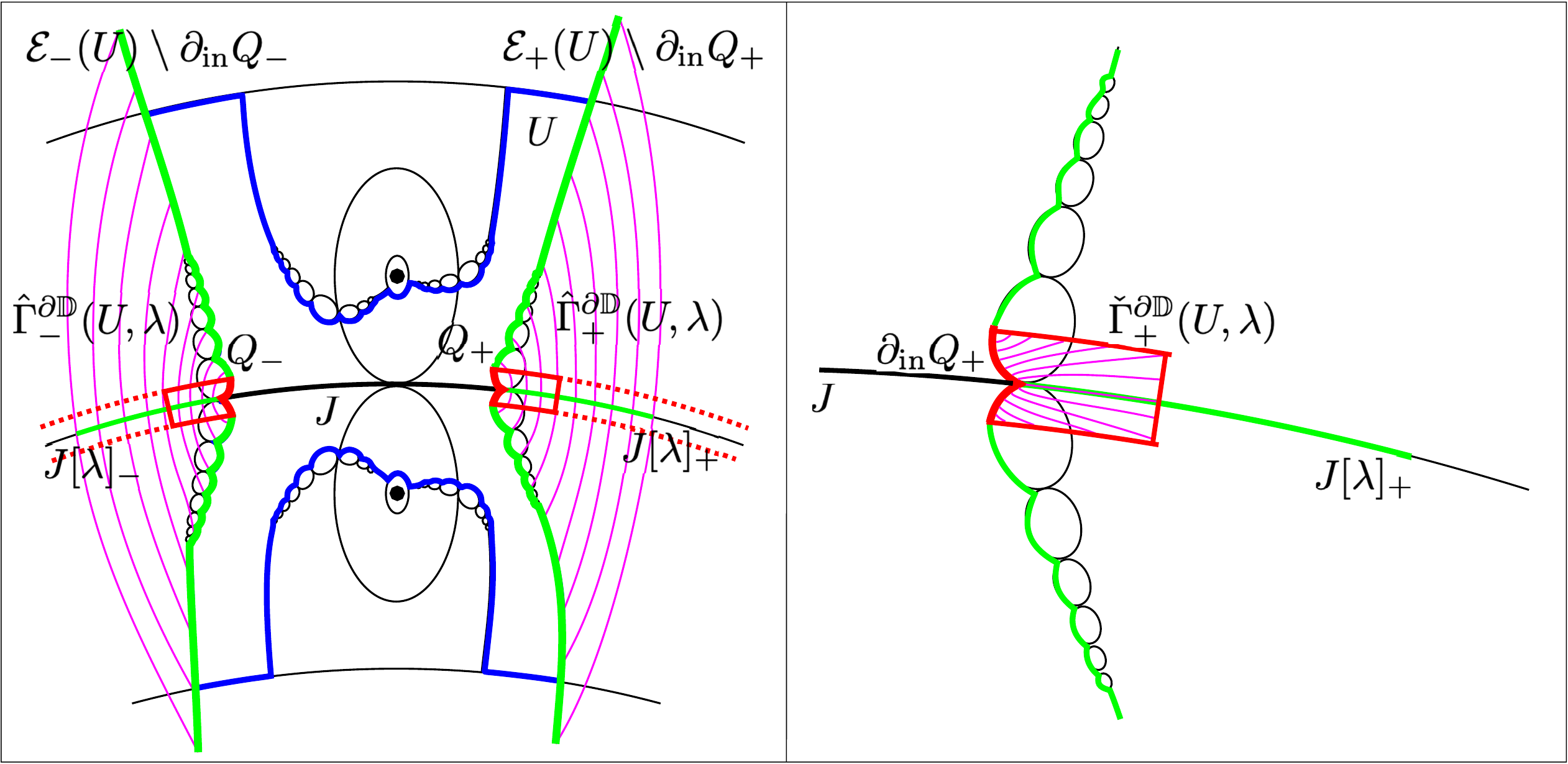}
\caption{Left: the path families $\hGamma^\pbbD_\pm(U, \lambda)$ consisting of paths from $\cE_\pm(U) \setminus \partial_{\In} Q_\pm$ to $J[\lambda]_\pm$. Right: the path family $\chGamma^\pbbD_+(U, \lambda)$ consisting of paths from $\partial_{\In} Q_+$ to $J[\lambda]_+$ overflowing paths from $\partial_{\In} Q_+$ to $\partial Q_+ \setminus \partial_{\In} Q_+$.}
\label{fig:path to circle}
\end{figure}

\begin{prop}[Lower bound on extremal lengths from edges to $\pbbD$]\label{length to circle}
Suppose $k \leq q_{n+m}$ for some $m \geq 1$. For $1< \lambda < \Lambda(n)$, there exists a uniform constant $C = C(m, \lambda) > 0$ independent of $n$ such that
$$
\cL(\Gamma^\pbbD(U, \lambda)) > C.
$$
\end{prop}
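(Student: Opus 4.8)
The plan is to bound the extremal \emph{width} $\cW(\Gamma^\pbbD(U,\lambda))=\cL(\Gamma^\pbbD(U,\lambda))^{-1}$ from above. By \eqref{eq:path to circle} together with (iterated use of) \lemref{union width},
$$
\cW(\Gamma^\pbbD(U,\lambda))\ \le\ \cW(\hGamma^\pbbD_-(U,\lambda))+\cW(\chGamma^\pbbD_-(U,\lambda))+\cW(\hGamma^\pbbD_+(U,\lambda))+\cW(\chGamma^\pbbD_+(U,\lambda)),
$$
so it suffices to bound the extremal length of each of these four families below by a constant depending only on $m$ and $\lambda$.

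\textbf{The families $\hGamma^\pbbD_\pm$.} Here I would invoke \lemref{dist to mod}. Let $\eta=\eta(m,\lambda)$ and $\mu$ be the constants of \propref{a priori geometry}. By that proposition, $\cE_\pm(U)\cap N_{\eta|J|}(J[\lambda])$ is the single curve $E_\pm$, which passes through $e_\pm\in J[\lambda]$ with slope at least $\mu$; a curve of slope $\ge\mu$ issuing from $e_\pm$ cannot leave the box $Q_\pbbD(e_\pm,\eta|J|,2\eta|J|/\mu)$, so $E_\pm\subseteq\partial_{\In}Q_\pm$ and hence $\cE_\pm(U)\setminus\partial_{\In}Q_\pm$ is disjoint from $N_{\eta|J|}(J[\lambda])$. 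Therefore
$$
\dist\big(\cE_\pm(U)\setminus\partial_{\In}Q_\pm,\ J[\lambda]_\pm\big)\ \ge\ \eta|J|\ =\ \frac{\eta}{\lambda}\,|J[\lambda]_\pm|\ \ge\ \frac{\eta}{\lambda}\,\diam(J[\lambda]_\pm),
$$
and since $\hGamma^\pbbD_\pm(U,\lambda)$ is a subfamily of $\Gamma_W(\cE_\pm(U)\setminus\partial_{\In}Q_\pm,\ J[\lambda]_\pm)$, \lemref{dist to mod} applied with the constant $\eta/\lambda$ gives $\cL(\hGamma^\pbbD_\pm(U,\lambda))\ge C(\eta/\lambda)=C(m,\lambda)$.

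\textbf{The families $\chGamma^\pbbD_\pm$.} The relevant feature here is the built-in requirement that every such path contains a crossing of the quadrilateral $Q_\pm$, i.e. a subpath joining $\partial_{\In}Q_\pm$ to $\partial Q_\pm\setminus\partial_{\In}Q_\pm$ inside $Q_\pm$. Since $\partial_{\In}Q_\pm\subseteq E_\pm$ has slope $\ge\mu$ and meets $\pbbD$ only at $e_\pm$, while $\partial_{\tp}Q_\pm,\partial_{\bt}Q_\pm$ lie at Euclidean distance $\asymp\eta|J|$ from $\pbbD$ and $\partial_{\ot}Q_\pm$ at angular distance $2\eta|J|/\mu$ from $e_\pm$, one checks that a path $\gamma\in\chGamma^\pbbD_\pm(U,\lambda)$—whose endpoints sit on $\partial_{\In}Q_\pm$ (within $\eta|J|$ of $\pbbD$) and on $J[\lambda]_\pm\subset\pbbD$, and which must reach $\partial Q_\pm\setminus\partial_{\In}Q_\pm$—necessarily has Euclidean length $\ge c(\mu)\,\eta|J|$, unless it leaves the $|J|$–neighborhood of the arc $J[2\lambda]$, in which case it has Euclidean length $\ge\tfrac12|J|$ (taking $\eta$ small). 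Testing with the conformal metric equal to $|J|^{-1}$ on the intersection of $W$ with the $|J|$–neighborhood of $J[2\lambda]$—a genuine collar of a genuine sub-arc of $\pbbD$, since $\lambda<\Lambda(n)$, cf. \lemref{big lambda}, with area integral $O(\lambda)$—and $0$ elsewhere, then yields
$$
\cL(\chGamma^\pbbD_\pm(U,\lambda))\ \ge\ \frac{\big(\min(c(\mu)\eta,\,\tfrac12)\big)^2}{O(\lambda)}\ =\ C(m,\lambda).
$$
(One may also phrase this lower bound through \lemref{overflow}: $\chGamma^\pbbD_\pm(U,\lambda)$ overflows $\Gamma_{Q_\pm}(\partial_{\In}Q_\pm,\partial Q_\pm\setminus\partial_{\In}Q_\pm)$, and once the far endpoint is remembered to lie on $\pbbD$, the crossing of $Q_\pm$ carries a width bounded below by the scale-$\eta|J|$ geometry of $Q_\pm$.) Feeding the four bounds back into \lemref{union width} gives $\cW(\Gamma^\pbbD(U,\lambda))\le C(m,\lambda)$, which is the assertion.

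\textbf{Main obstacle.} I expect the step for $\chGamma^\pbbD_\pm$ to be the hard part: unlike for $\hGamma^\pbbD_\pm$, the two endpoint sets $\partial_{\In}Q_\pm$ and $J[\lambda]_\pm$ both cluster at $e_\pm$, so there is no separation and \lemref{dist to mod} is not directly applicable. One has to extract the estimate purely from the combinatorial ``$Q_\pm$–crossing'' clause—which forces a detour at the definite scale $\eta|J|$—and this in turn hinges on verifying, via \propref{a priori geometry}, that every boundary arc of the $n$–dependent quadrilateral $Q_\pm$ genuinely sits at distance $\asymp\eta|J|$ from $e_\pm$; the bound is then converted into extremal length by the collar metric above, the only delicate point being that the area of the collar grows merely like $\lambda|J|^2$, so that the constant it contributes depends only on $m$ and $\lambda$.
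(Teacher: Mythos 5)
Your proposal is correct, and its overall architecture coincides with the paper's: decompose $\Gamma^\pbbD(U,\lambda)$ into the four families via \lemref{union width}, and bound $\cL(\hGamma^\pbbD_\pm(U,\lambda))$ from below by combining \propref{a priori geometry} with \lemref{dist to mod} exactly as you do (your verification that $\cE_\pm(U)\setminus\partial_{\In}Q_\pm$ stays $\eta|J|$ away from $J[\lambda]$, because the slope bound traps $E_\pm$ inside the box, is the content the paper leaves implicit). Where you diverge is the treatment of $\chGamma^\pbbD_\pm(U,\lambda)$: the paper observes that the mandatory $Q_\pm$-crossing must exit through $\partial_{\ot}Q_\pm$, $\partial_{\tp}Q_\pm$ or $\partial_{\bt}Q_\pm$, so that $\chGamma^\pbbD_\pm$ overflows the union $\Gamma_{Q_\pm}(\partial_{\In}Q_\pm,\partial_{\ot}Q_\pm)\cup\Gamma_W(\partial_{\tp}Q_\pm,J[\lambda]_\pm)\cup\Gamma_W(\partial_{\bt}Q_\pm,J[\lambda]_\pm)$, each piece being controlled by \lemref{dist to mod} (note the top/bottom families are needed precisely because $\partial_{\In}Q_\pm$ touches $\partial_{\tp}Q_\pm$ and $\partial_{\bt}Q_\pm$, so the pure crossing family has no separation — the issue you flag as the "main obstacle"); one then concludes with \lemref{overflow} and \lemref{union width}. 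You instead exhibit an explicit admissible metric ($|J|^{-1}$ on $W\cap N_{|J|}(J[2\lambda])$, zero elsewhere) and run the length--area inequality directly, using the same case analysis on where the crossing exits to get the uniform lower bound $\min(c(\mu)\eta,\tfrac12)$ on lengths, and the bound $O(\lambda|J|^2)$ on the area of the support. The two are equivalent in substance; your version is more self-contained (it does not rely on the unproved \lemref{dist to mod} for the degenerate configurations) at the cost of a hands-on computation, while the paper's version is shorter once the correct overflow target is identified. Your parenthetical reformulation via \lemref{overflow} is essentially the paper's proof.
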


\begin{proof}
The fact that $\cL(\hGamma^\pbbD_\pm(U, \lambda))$ has a uniform lower bound independent of $n$ follows immediately from \propref{a priori geometry} and \lemref{dist to mod}.

Let
$$
\Gamma_\pm := \Gamma_{Q_\pm}(\partial_{\In} Q_\pm, \partial_{\ot}Q_\pm) \cup \Gamma_W(\partial_{\tp}Q_\pm, J[\lambda]_\pm) \cup \Gamma_W(\partial_{\tp}Q_\pm, J[\lambda]_\pm).
$$
It is easy to see that each path family on the right-hand side has a uniform lower bound on its extremal length independent of $n$ by \lemref{dist to mod}, and hence, so does $\Gamma_\pm$ by \lemref{union width}. Observe that $\chGamma^\pbbD_\pm(U, \lambda)$ overflows $\Gamma_\pm$. Hence, by \lemref{overflow}, $\cL(\chGamma^\pbbD_\pm(U, \lambda))$ also has a uniform lower bound independent of $n$.

The lower bound on $\cL(\Gamma^\pbbD(U, \lambda))$ now follows from another application of \lemref{union width}.
\end{proof}

\section{Local Connectivity at a Critical Point}\label{sec:lc at crit}

Consider the puzzle disks $D^n$ for $n \geq n_0$ constructed in \secref{sec:disks}. For concreteness, we assume that $n_0$ is even, so that $n_0 = 2\bn_0$ for some $\bn_0 \geq 0$. For $n \geq n_0 +2$, define the {\it puzzle annulus of level $n$} as
$$
A^n := D^{n-2} \setminus \overline{D^n}.
$$
By \propref{nested disks}, $A^n$ is non-degenerate (i.e. $\mod A^n >0$). Moreover, by \propref{fiber in disks}, the sequence of nested annuli $\{A^n\}_{n=n_0}^\infty$ surrounds the fiber $X_0 \subset J_F$ rooted at the critical point $c_0 = 1$.

In this section we prove the following theorem.

\begin{thm}[Triviality of $X_0$]\label{lc at crit}
There exists a uniform constant $\epsilon >0$ such that
$$
\mod A^n > \epsilon
\matsp{for}
n \geq n_0.
$$
Consequently, $X_0 = \{c_0\}$.
\end{thm}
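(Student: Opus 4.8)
\emph{Plan.} The plan is to reduce the theorem to the uniform lower bound $\mod A^n>\epsilon$ with $\epsilon$ independent of $n$, and then to prove that bound by combining the real and complex \emph{a priori} bounds near $\pbbD$ with a dynamical transport argument that is controlled, away from $\pbbD$, by the Kahn--Lyubich Covering Lemma (\thmref{cover lem}).

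\emph{Reduction.} Since $c_0=\xi_0\in J^-_n=D^n\cap\pbbD$ for every $n$, \propref{fiber in disks} gives $X_0\subseteq D^n$ for all $n\ge n_0$, hence $X_0\subseteq\bigcap_n\overline{D^n}$; moreover $c_0\in X_0$ because $X_0\cap\pbbD=\{\xi_0\}$. By \propref{nested disks} each $A^n$ is non-degenerate, and the even-index subsequence $\{A^{n_0+2k}\}_{k\ge1}$ consists of nested annuli, the $k$th of which surrounds $\overline{D^{n_0+2k}}\supseteq\bigcap_j\overline{D^{n_0+2j}}\supseteq X_0$. Thus if $\mod A^n>\epsilon$ for all $n\ge n_0$, then $\sum_k\mod A^{n_0+2k}=\infty$, so Gr\"otzsch's inequality (\lemref{grot}) forces $\bigcap_j\overline{D^{n_0+2j}}=\{c_0\}$ and hence $X_0=\{c_0\}$. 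Since $\mod A^n>0$ for each fixed $n$, it remains only to bound $\mod A^n$ below for $n$ large, uniformly in $n$.

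\emph{The uniform bound.} I would estimate $\mod A^n=\cL\big(\Gamma_{A^n}(\partial D^n,\partial D^{n-2})\big)$ by splitting the crossing family into the curves that stay in a fixed-scale neighborhood of $\pbbD$ and those that escape it, bounding the extremal length of each part from below and recombining via \lemref{union width} and \lemref{overflow}. For the curves near $\pbbD$, Propositions \ref{a priori geometry} and \ref{length to circle} show that the bounding edges $\cE_\pm(D^n)$ and $\cE_\pm(D^{n-2})$ meet the circle with slope bounded below and are separated from each other and from $\pbbD$ at definite extremal distance; consequently the portion of $A^n$ adjacent to $\pbbD$ carries a modulus bounded below \emph{independently of how thin $A^n$ might otherwise be}, i.e.\ the slits that $D^n$ and $D^{n-2}$ cut into $\pbbD$ do not destroy it. For the curves that escape toward the fat part of $A^n$, I would transport $A^n$ forward by a suitable iterate $F^{t_n}$ of $F$ (built from the transformation rules $F^{r_n}\colon D^n\to D^{n-1}|_{J^+_n}$ and $F^{\bfr_n-\bfr_{n'}}(D^n)=D^{n'}|_{g^{\bfr_n-\bfr_{n'}}(J^-_n)}$) onto a configuration that lives in the \emph{fixed} initial puzzle silhouette $S^\fr_\fJ$. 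The degree of $F^{t_n}$ grows with $n$, so \lemref{pullback mod} alone is useless; instead one feeds the nested pair $F^{t_n}(D^n)\Subset F^{t_n}(D^{n-2})$, together with the definite collar around $D^{n_0}$ inside $S^\fr_\fJ$ at the reference scale, into the Covering Lemma. Its hypothesis of bounded degree on the inner piece is supplied by \propref{crit in disk} and the combinatorial estimate \lemref{pullback interval} — this is where bounded type is essential, as it caps the number of critical fibers met along the relevant part of the orbit — while the definiteness of the reference-scale collar again invokes the \emph{a priori} bounds (\propref{a priori geometry}) to control the slitted pieces $D^{n_0}|_I$ that occur there. The Covering Lemma then returns a lower bound on the modulus of the escaping family that is uniform in $n$ in spite of the large global degree.

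\emph{Main obstacle.} The hard part is the ``escaping'' estimate. The external rays bounding the puzzle disks obey angle $d$-tupling combinatorics while the bubble rays obey the rotation combinatorics of $\rho$; the two are incompatible, so the $D^n$ do not transform cleanly under $F$ but develop slits along $\pbbD$. Bookkeeping these slits while transporting $A^n$ to bounded scale — and in particular verifying that $F^{t_n}$ restricts to a genuine proper branched covering of the relevant disk pair with bounded \emph{inner} degree and that the target carries a definite collar — is the delicate point, and it is precisely here that the Covering Lemma (which tolerates unbounded outer degree) and the principle that slitting an already nearly degenerate annulus costs only a bounded amount of modulus have to be used together.
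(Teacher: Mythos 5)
Your reduction (uniform lower bound on $\mod A^n$ plus Gr\"otzsch, via \propref{fiber in disks} and \propref{nested disks}) matches the paper, and you have correctly located the two key tools: the fact that slitting a nearly degenerate annulus along $\pbbD$ costs only a bounded amount of extremal width, and the Kahn--Lyubich Covering Lemma. However, the way you deploy them contains two genuine gaps.

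First, the claim that ``the portion of $A^n$ adjacent to $\pbbD$ carries a modulus bounded below independently of how thin $A^n$ might otherwise be'' is not what \propref{length to circle} provides, and cannot be true: if a definite sub-modulus of $A^n$ lived near the circle, the theorem would follow immediately with no further work. What the a priori bounds actually give is an \emph{upper} bound $\omega$ on the extremal \emph{width} of the family of crossing curves that escape through the slits, so that $\cW(\Gamma)\le \cW(\Gamma_{\mathrm{far}})+\omega$ by \lemref{union width}; this is useful only \emph{relative} to a main term that is already assumed nearly degenerate. This is exactly how \propref{thick in thin}, \propref{cover length decomp} and Propositions \ref{slit 0} and \ref{slit 1} are used, and it is why the whole proof must be run as a contradiction argument starting from a degeneracy assumption.

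Second, and more seriously, you transport $A^n$ all the way to the fixed reference scale $S^\fr_\fJ$. Over the time $\bfr_n-\bfr_{n_0}$, the degree of the restriction to the inner disk $D^{n-2}$ is controlled by \propref{crit in disk} only through a constant depending on $m\approx n-n_0$, so $d_{\sm}$ is \emph{not} uniformly bounded; the Covering Lemma tolerates unbounded $d_{\bg}$ but $d_{\sm}$ enters conclusion \eqref{eq:cover conc 2} quadratically, so this route does not close. Moreover, the images of $D^{n}\Subset D^{n-2}$ at the reference scale are tiny slitted pieces of $D^{n_0}$, not the reference pair $D^{n_0}\Subset S^\fr_\fJ$, so there is no ``definite collar'' available for \eqref{eq:collar}; the collar must instead be produced from \propref{thick in thin} under the minimality assumption. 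The paper's actual mechanism is: assume $\mod \bfA^n=\min_{n_0\le k\le n}\mod\bfA^k<\epsilon$ as in \eqref{eq:min assu}, transport only a large but \emph{fixed} number $N$ of scales (so $d_{\sm}$ is bounded independently of both $n$ and $N$ while $d_{\bg}$ may depend on $N$, \lemref{cover degrees}), and derive the contradiction from the fact that $V\setminus\overline{V''}$ contains roughly $N/2$ disjoint pullbacks of the annuli $A^k$ with $n-N<k<n$, each of modulus $\gtrsim M$ by minimality (\propref{image contains pullbacks}, \propref{cover length decomp}), so that $\mod(V\setminus\overline{V''})\gtrsim NM$ eventually beats $\tfrac{2d_{\sm}^2}{\kappa}M$. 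This pigeonhole amplification over $N$ intermediate scales is the engine of the proof and is absent from your proposal.
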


\subsection{Doubled puzzle annuli}

Before proving \thmref{lc at crit}, we need a preliminary result relating the moduli of successive puzzle annuli.

For $n \geq n_0 + 4$, define the {\it doubled puzzle annulus of level $n$} as
$$
\bfA^n := D^{n-4} \setminus \overline{D^n}.
$$
We show that $A^n$ contains a pullback of $\bfA^{n-2}$ along $\pbbD$ under a map with uniformly bounded degree.

Recall that we have
$$
F^{r_n}(D^n) = D^{n-1}|_{J^+_n} \subset D^{n-2}.
$$

\begin{lem}\label{collar base}
There exist uniform constants $\delta >0$ and $\lambda >0$ independent of $n$ such that the following holds. Let 
$$
\hJ_n := g^{r_n}(J^-_{n-2}).
$$
Then $J^-_{n-1}[\delta] \subset \hJ_n \subset J^-_{n-3}$ and $J^-_{n-6} \subset \hJ_n[\lambda]$.
\end{lem}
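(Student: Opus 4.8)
The plan is to separate the statement into a purely combinatorial part --- the nesting $J^-_{n-1}\subset\hJ_n\subset J^-_{n-3}$ of arcs in $\pbbD$ --- and a metric part, in which the enlargements $[\delta]$ and $[\lambda]$ are controlled by the real \emph{a priori} bounds. I would first establish everything in the rigid rotation model, where all relevant lengths are explicit, and then transport the conclusions to $\pbbD$ via the quasisymmetric conjugacy $h$ of \corref{hermancor}.

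First, the combinatorial location of $\hJ_n$. Since $g$ has combinatorial rotation number $\rho$, the cyclic order of the points $c_k=\xi_{k\rho}$ on $\pbbD$ agrees with that of the points $k\rho\in\bbR/\bbZ$. The set $\hJ_n=g^{r_n}(J^-_{n-2})$ is the image of the arc $J^-_{n-2}=(c_{-q_{n-2}},c_{-q_{n-1}})_\pbbD$, which contains $c_0$, under the homeomorphism $g^{r_n}$; hence it is the arc through $c_0$ whose endpoints are $g^{r_n}(c_{-q_{n-2}})=c_{r_n-q_{n-2}}$ and $g^{r_n}(c_{-q_{n-1}})=c_{r_n-q_{n-1}}$. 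Writing $r_n=q_n+q_{n+1}$ and using the recursions $q_n-q_{n-2}=a_{n-1}q_{n-1}$ and $q_{n+1}-q_{n-1}=a_nq_n$, these endpoints become $c_{q_{n+1}+a_{n-1}q_{n-1}}$ and $c_{(a_n+1)q_n}$. In the rotation model, writing $\beta_m:=\|q_m\rho\|$ and using $\beta_{m-2}=a_{m-1}\beta_{m-1}+\beta_m$, one computes the signed positions of these endpoints and of the endpoints of $J^-_{n-1},J^-_{n-3},J^-_{n-6}$, and checks on each side of $c_0$ that $\hJ_n$ reaches strictly past the corresponding endpoint of $J^-_{n-1}$ and strictly short of that of $J^-_{n-3}$. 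This gives $J^-_{n-1}\subsetneq\hJ_n\subsetneq J^-_{n-3}$; note also $\hJ_n\supseteq g^{r_n}(J^-_n)=J^+_n$ by \lemref{nested arcs}. The same computation, invoking the bound $a_i\le\tau$, shows that each of the two collars of $\hJ_n\setminus J^-_{n-1}$ has rotation-length at least a definite multiple of $\beta_{n-1}$, with the multiple depending only on $\tau$, and that the rotation-lengths of $\hJ_n,\ J^-_{n-1}$ and $J^-_{n-6}$ are all commensurate to $\beta_{n-1}$, again with a constant depending only on $\tau$.

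Next, transport to $\pbbD$. By \corref{hermancor}, the homeomorphism $h:\pbbD\to\pbbD$ with $h(c_k)=e^{2\pi ik\rho}$ is $K$-quasisymmetric with $K$ depending only on $g$, and a $K$-quasisymmetric circle homeomorphism distorts the ratio of lengths of two adjacent arcs by a factor bounded in terms of $K$ and the original ratio. Since, in the rotation model, the endpoints of $J^-_{n-1},\hJ_n,J^-_{n-3},J^-_{n-6}$ divide the arc $J^-_{n-6}$ into a bounded number of subarcs of pairwise commensurate length, the same holds for their $h$-images on $\pbbD$, with constants depending only on $K$ and $\tau$. Consequently, on $\pbbD$ we still have $J^-_{n-1}\subset\hJ_n\subset J^-_{n-3}$; each of the two collars of $\hJ_n\setminus J^-_{n-1}$ has length at least $\delta\,|J^-_{n-1}|$ for some $\delta>0$; and, since $\hJ_n\subseteq J^-_{n-3}\subseteq J^-_{n-6}$, each of the two collars of $J^-_{n-6}\setminus\hJ_n$ has length at most $\lambda\,|\hJ_n|$ for some $\lambda>0$ --- with $\delta$ and $\lambda$ depending only on $K$ and $\tau$, hence only on $g$, and in particular independent of $n$. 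By the definition of $\gamma[\lambda]$, this is precisely $J^-_{n-1}[\delta]\subset\hJ_n\subset J^-_{n-3}$ and $J^-_{n-6}\subset\hJ_n[\lambda]$, as claimed.

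The main difficulty is the combinatorial step: correctly placing $g^{r_n}(J^-_{n-2})$ among the nested arcs $J^-_m$, and verifying the strict inclusions with definite collars, requires a careful sign-aware bookkeeping with the continued-fraction recursions for $q_m$ and for $\beta_m=\|q_m\rho\|$ together with the hypothesis $a_i\le\tau$; for instance, the lower bound on the collars of $\hJ_n\setminus J^-_{n-1}$ reduces to showing that $\beta_n-\beta_{n+1}$ and $\beta_{n+1}+\beta_{n-2}-2\beta_n$ are definite multiples of $\beta_{n-1}$, which uses the bound $a_i\le\tau$ in an essential way. Once the configuration is pinned down in the rotation model, the passage back to $\pbbD$ and the extraction of the uniform constants $\delta$ and $\lambda$ are routine quasisymmetry estimates of the kind already used in \secref{sec:a priori}.
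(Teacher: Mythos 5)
Your proposal is correct and follows essentially the same route as the paper: a combinatorial placement of the endpoints $c_{r_n-q_{n-2}}$ and $c_{r_n-q_{n-1}}$ of $\hJ_n$ relative to $J^-_{n-1}$, $J^-_{n-3}$ and $J^-_{n-6}$ using the continued-fraction recursions, followed by an appeal to \corref{hermancor} to convert the definite combinatorial collars into the uniform constants $\delta$ and $\lambda$. Your explicit bookkeeping in the rigid rotation model with $\beta_m=\|q_m\rho\|$ is just a more quantitative version of the paper's arc-inclusion argument in the $(n,m)_c$ notation, and your collar expressions $\beta_n-\beta_{n+1}$ and $\beta_{n-2}-2\beta_n+\beta_{n+1}$ check out.
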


\begin{proof}
We have
$$
(0, q_{n-2}-q_n+2q_{n+1})_c \supset (0, -q_{n-1}+2q_{n+1})_c \supset (0, q_n+q_{n+1})_c = (0, r_n)_c. 
$$
Hence,
$$
(-q_{n-2}, -q_n+2q_{n+1})_c \supset (-q_{n-2}, -q_{n-2} +r_n)_c.
$$
Thus,
$$
\hJ_n = (-q_{n-2}+r_n, -q_{n-1}+r_n)_c \supset (-q_n+2q_{n+1}, -q_{n-1}+r_n)_c \Supset J^-_{n-1}.
$$
By \corref{hermancor}, there exists a uniform constant $\delta>0$ such that the first containment holds.

Observe that
$$
c_{r_n} \in (0, q_n)_c.
$$
Hence,
$$
c_{-q_{n-2}+r_n} \in (-q_{n-2}, 0)_c.
$$
Moreover,
$$
c_{-q_{n-1}+r_n} \in (0, -q_{n-1}+q_n)_c \subset (0, q_{n-2})_c \subset (0, -q_{n-3})_c.
$$
Thus, the second containment holds.

Since $\hJ_n \supset J^-_{n-1}$,  \corref{hermancor} implies the last containment holds for some uniform constant $\lambda >0$.
\end{proof}

\begin{prop}\label{collar pullback}
Let $\hU$ be the $r_n$th pullback of $D^{n-6}|_{\hJ_n}$ along $\pbbD$. Then $D^n \Subset \hU \subset D^{n-2}$. Moreover, $F^{r_n}|_{\hU}$ has a uniformly bounded degree $d'$.
\end{prop}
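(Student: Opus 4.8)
The plan is to recognize $\hU$ as one of the puzzle-disk pullbacks along $\pbbD$ analyzed in \secref{sec:disks}, after which the three assertions follow from \propref{disk pull in small}, \propref{crit in disk} and \lemref{collar base}. First I would record that $\hJ_n = g^{r_n}(J^-_{n-2}) = (r_n-q_{n-2},\,r_n-q_{n-1})_c$ is a combinatorial arc with $g^{-r_n}(\hJ_n) = J^-_{n-2}$, and that \lemref{collar base} gives $J^-_{n-1}[\delta] \subset \hJ_n \subset J^-_{n-3} \subset J^-_{n-6} = D^{n-6}\cap\pbbD$. Hence $\hU$ really is the $r_n$th pullback along $\pbbD$ of the slitted puzzle disk $D^{n-6}|_{\hJ_n}$, with base $\gamma_{-r_n} = J^-_{n-2}$, and in particular it is a topological disk.

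To obtain $\hU \subset D^{n-2}$ I would apply \propref{disk pull in small} with $\fU = D^{n-6}$, $i = 4$, $k = r_n$ and $\gamma_{-k} = J^-_{n-2}$; its hypothesis $r_n \ge \bfr_{n-2} - \bfr_{n-7}$ holds because $r_n > \bfr_{n-2}$ by \lemref{exp time growth} iv). The conclusion yields $\hU \subset D^{n-2}$ together with $\Gamma_{-r_n} \subset J^-_{n-2}$, and since the base $\gamma_{-r_n} = J^-_{n-2}$ lies inside the full base $\Gamma_{-r_n}$, we get $\hU \cap \pbbD = J^-_{n-2} \supset J^-_n$. For the degree bound, note $q_{n+2} = a_{n+1}q_{n+1} + q_n \ge q_{n+1} + q_n = r_n$, so $r_n \le q_{(n-6)+8}$ and \propref{crit in disk}, applied to the pullback $\hU$ of $D^{n-6}$ with $m = 8$, bounds $\deg(F^{r_n}|_{\hU}) = d'$ by a uniform constant.

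The remaining point, $D^n \Subset \hU$, is the one that needs care. Since $D^n = W^n_{-r_n}$ is the $r_n$th pullback of $W^n_0 = D^{n-1}|_{J^+_n}$, and since $\overline{J^+_n} \subset \overline{J^-_{n-1}} \subset J^-_{n-1}[\delta] \subset \hJ_n$ by \lemref{nested arcs} and \lemref{collar base} with $\overline{J^-_{n-1}}$ strictly interior to $\hJ_n$, while $D^{n-1} \subset D^{n-2} \Subset D^{n-4} \Subset D^{n-6}$ by \propref{disk pull in} and \propref{nested disks}, one has $W^n_0 \subset D^{n-6}|_{\hJ_n}$; as $D^n$ meets $\pbbD$ in $J^-_n \subset J^-_{n-2}$, this forces $D^n$ to lie in the component $\hU$ of $F^{-r_n}(D^{n-6}|_{\hJ_n})$. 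To upgrade to compact containment I would push forward along $F^{r_n}$, using that $\partial\hU \subset F^{-r_n}(\partial(D^{n-6}|_{\hJ_n}))$: a point of $\overline{D^n}\cap\partial\hU$ would map into $\overline{W^n_0} \cap \partial(D^{n-6}|_{\hJ_n})$, and this set is empty because $\overline{D^{n-1}}$ misses $\partial D^{n-6}$, while the circle-slits of $D^{n-6}|_{\hJ_n}$ lie in $\overline{J^-_{n-6}}\setminus\hJ_n$, which is disjoint from $\overline{W^n_0}\cap\pbbD \subset \overline{J^-_{n-1}} \subset \Int(\hJ_n)$.

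I expect the genuine obstacle to be exactly this last slit bookkeeping: one must verify that slitting $D^{n-6}$ down to $\hJ_n$ and slitting $D^{n-1}$ down to $J^+_n$ yield domains whose boundaries remain separated, and this is where the strict containment $\overline{J^-_{n-1}} \subset \hJ_n$ furnished by \lemref{collar base} (as opposed to a merely open one) is used essentially. Everything else is routine matching of the combinatorial hypotheses of the pullback lemmas already established.
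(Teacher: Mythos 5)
Your proof is correct and follows essentially the same route as the paper: the compact containment $D^n \Subset \hU$ comes from $F^{r_n}(D^n) = D^{n-1}|_{J^+_n} \Subset D^{n-6}|_{\hJ_n}$ via \lemref{collar base}, the inclusion $\hU \subset D^{n-2}$ from \propref{disk pull in small}, and the degree bound from \lemref{exp time growth} i) together with \propref{crit in disk}. Your extra bookkeeping of the slit boundaries and full bases is just a more explicit rendering of the same argument.
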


\begin{proof}
By \lemref{collar base}, we have
$$
F^{r_n}(D^n) = D^{n-1}|_{J^+_n} \Subset D^{n-6}|_{\hJ_n},
$$
Hence, $D^n \Subset \hU$. The second containment, $\hU \subset D^{n-2}$, follows from \propref{disk pull in small}. The last claim follows from \lemref{exp time growth} i) and \propref{crit in disk}.
\end{proof}

\begin{prop}\label{thick in thin}
There exist uniform constants $\epsilon_0, C>0$ such that for $n \geq n_0 +4$, we have
$$
\mod A^n > \min\{\epsilon_0, \; C\mod \bfA^{n-2}\}.
$$
\end{prop}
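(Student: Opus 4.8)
The plan is to transfer the problem to a forward image of $A^n$, where the puzzle disks obey the bounded-degree pullback relation of \propref{collar pullback}, and then to control the slits that appear there by the \emph{a priori} bounds of \secref{sec:geometry}. By \propref{collar pullback}, the $r_n$th pullback $\hU$ of $D^{n-6}|_{\hJ_n}$ along $\pbbD$ satisfies $D^n \Subset \hU \subset D^{n-2}$, and $F^{r_n}|_{\hU}$ is a branched covering onto $D^{n-6}|_{\hJ_n}$ of uniformly bounded degree $d'$. Since $\hU\subset D^{n-2}$ while the annuli $\hU\setminus\overline{D^n}$ and $A^n=D^{n-2}\setminus\overline{D^n}$ have the common inner component $D^n$, monotonicity of the modulus gives $\mod A^n\geq\mod(\hU\setminus\overline{D^n})$. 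Recalling from the construction of $D^n$ that $F^{r_n}(D^n)=D^{n-1}|_{J^+_n}$, \lemref{pullback mod} applied to $F^{r_n}|_{\hU}$ yields
$$
\mod A^n \;\geq\; \mod\bigl(\hU\setminus\overline{D^n}\bigr)\;\geq\; \frac{1}{d'}\,\mod\hV,
\qquad
\hV := D^{n-6}|_{\hJ_n}\setminus\overline{D^{n-1}|_{J^+_n}} .
$$
So it suffices to prove $\mod\hV\geq\min\{\epsilon_0',\,C'\mod\bfA^{n-2}\}$ for uniform $\epsilon_0',C'>0$; then the proposition holds with $\epsilon_0=\epsilon_0'/d'$ and $C=C'/d'$. (For the finitely many $n$ with $n-2<n_0+4$ the asserted bound is vacuous, or follows from non-degeneracy of $A^n$ in \propref{nested disks}.)

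Next I would compare $\hV$ with the ``clean'' annulus $\bfA^{n-2}=D^{n-6}\setminus\overline{D^{n-2}}$. Since $D^{n-1}|_{J^+_n}\subset D^{n-1}\subset D^{n-2}$ and $D^{n-6}|_{\hJ_n}\subset D^{n-6}$, the two annuli differ only near $\pbbD$: in place of the arc $J^-_{n-2}$ of $\pbbD$, the annulus $\hV$ carries the two \emph{collar arcs} making up $\hJ_n\setminus\overline{J^-_{n-1}}$, which by \lemref{collar base} ($J^-_{n-1}[\delta]\subset\hJ_n$ and $J^-_{n-6}\subset\hJ_n[\lambda]$) have lengths commensurate with $|J^-_{n-1}|$. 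The crucial geometric point is that $\hV$ contains a uniformly wide channel running along these collar arcs. This I expect to extract from \propref{a priori geometry} — applied to the pullback $\hU$ and to the puzzle disk $D^{n-1}$, it bounds the slopes of the bounding edges of $\hV$ near $\pbbD$ from below and exhibits definite collar neighborhoods of those edges — together with \propref{length to circle}, which gives a uniform lower bound on the extremal length of the path family joining the bounding edges of $\hV$ to $\pbbD$ across the collar region.

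Then I would assemble the estimate. Write $\mod\hV=\cL(\Gamma)$, where $\Gamma$ joins the inner boundary $\partial(D^{n-1}|_{J^+_n})$ to the outer boundary $\partial(D^{n-6}|_{\hJ_n})$ inside $\hV$, and split $\Gamma$ according to whether a path enters a fixed neighborhood of the collar arcs. Paths that avoid it overflow the crossing family of a sub-annulus of $\bfA^{n-2}$ separated from the slits, so their extremal length is $\gtrsim\mod\bfA^{n-2}$ up to a universal factor (via \lemref{overflow} and \lemref{dist to mod}); paths that enter the collar region must traverse the uniformly wide channel along $\pbbD$, so their extremal length is bounded below by a universal constant (via \propref{a priori geometry} and \propref{length to circle}). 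Combining the two contributions by \lemref{union width} gives $\mod\hV\geq\min\{\epsilon_0',\,C'\mod\bfA^{n-2}\}$, as required.

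The main obstacle is this last step, and within it the passage of paths across $\pbbD$ through the slit (collar) region: one has to show that cutting the slits into an annulus that may already be nearly degenerate costs only a bounded amount of modulus. This is precisely what the \emph{a priori} bounds of \secref{sec:geometry} — uniform complex extension, bounded slopes of edges near $\pbbD$, and the lower bound on extremal length to $\pbbD$ — are designed to provide. The $\min$ with $\epsilon_0$ is genuinely necessary: when $\mod\bfA^{n-2}$ is large, its entire ``mass'' may sit near the slits, so that only the universal collar contribution survives the cutting. The index bookkeeping ($D^{n-1}\subset\cdots\subset D^{n-6}$ from \propref{disk pull in} and \propref{nested disks}, and the arc inclusions in \lemref{collar base}) is routine but must be tracked carefully.
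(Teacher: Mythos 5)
Your plan reproduces the paper's proof almost exactly, just in the opposite order: the paper first bounds the modulus of the slitted image annulus $A:=(D^{n-6}\setminus\overline{D^{n-1}})|_{\hJ_n}$ (which coincides with your $\hV$, since $\overline{D^{n-1}|_{J^+_n}}=\overline{D^{n-1}}$) and only then transfers the bound to $A^n$ via \propref{collar pullback}, monotonicity and \lemref{pullback mod}; and the lower bound on $\mod A$ is obtained, as you propose, by splitting the crossing family and adding a $\mod\bfA^{n-2}$ contribution to a uniform contribution from \propref{length to circle} via \lemref{overflow} and \lemref{union width}. The one step you should redo is the splitting criterion. Splitting by whether a path enters a fixed neighborhood $N$ of the collar arcs does not give a clean dichotomy: if $N$ is small, a path may terminate on a far portion of a slit $J^-_{n-6}\setminus\hJ_n$ without approaching the collar arcs, and such a path ends \emph{inside} $\bfA^{n-2}$, so it does not overflow the crossing family of $\bfA^{n-2}$; if $N$ is enlarged to contain the slits, then $N$ abuts $\overline{D^{n-1}}$, and a path can graze $N$ immediately upon leaving $\partial D^{n-1}$ and then run out to $\partial D^{n-6}$, so the asserted uniform lower bound on the extremal length of the paths meeting $N$ fails. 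The paper's splitting is by terminal point: paths ending on $\partial D^{n-6}$ overflow crossings of $\bfA^{n-2}$ (no sub-annulus and no lost factor needed, so \lemref{dist to mod} plays no role here), while \emph{every} path ending on a slit, wherever it lands, must pass the gates $J^-_{n-1}[\lambda]_\pm$ and hence overflows $\Gamma^\pbbD(D^{n-1},\lambda)$, whose width is uniformly bounded by \propref{length to circle}. With that adjustment your argument is the paper's.
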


\begin{proof}
By \lemref{collar base}, the slitted annulus
$$
A := (D^{n-6}\setminus \overline{D^{n-1}})|_{\hJ_n}
$$
is non-degenerate. Its modulus is equal to the extremal length of the following path family:
$$
\Gamma := \Gamma_A(\partial D^{n-1}, \partial D^{n-6} \cup (J^-_{n-6} \setminus \hJ_n)).
$$

Let $\Gamma^\pbbD \subset \Gamma$ be the path family such that $\gamma \in \Gamma^\pbbD$ has one endpoint in $\partial D^{n-1}$ and the other endpoint in $J^-_{n-6} \setminus \hJ_n$. Then
$$
\Gamma = \Gamma_A(\partial D^{n-1}, \partial D^{n-6}) \cup \Gamma^\pbbD.
$$

Let $\delta, \lambda>0$ be the uniform constants given in \lemref{collar base}. Since $J^-_{n-1}[\delta] \subset \hJ_n$, we see that $\Gamma^\pbbD$ overflows the path family $\Gamma^\pbbD(D^{n-1}, \lambda)$ defined in \eqref{eq:path to circle}. By \lemref{overflow} and \propref{length to circle}, there exists a uniform constant $\omega >0$ such that
$$
\cW(\Gamma^\pbbD) \leq \omega.
$$

Clearly,
$$
\cL(\Gamma_A(\partial D^{n-1}, \partial D^{n-6})) \geq \mod \bfA^{n-2}.
$$
Thus, by \lemref{union width}, we have
$$
\cW(\Gamma) \leq \frac{1}{\mod \bfA^{n-2}} + \omega.
$$
This implies that
$$
\mod A > \min\{\epsilon', C' \mod \bfA^{n-2}\}
$$
for some uniform constants $\epsilon', C'>0$. The result now follows from \lemref{pullback mod}.
\end{proof}

\begin{cor}\label{thick if thin}
The sequence $\{\mod \bfA^n\}_{n=n_0 + 4}^\infty$ has a uniform positive lower bound if and only if $\{\mod A^n\}_{n=n_0 + 2}^\infty$ does.
\end{cor}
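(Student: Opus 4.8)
The plan is to read this corollary off directly from \propref{thick in thin} together with the super-additivity of moduli for nested annuli, so essentially no new analytic work is needed.

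First I would handle the easy implication: assuming $\inf_{n}\mod A^n>0$, I want $\inf_{n}\mod\bfA^n>0$. By \propref{nested disks} we have $D^{n}\Subset D^{n-2}\Subset D^{n-4}$, so the Jordan curve $\partial D^{n-2}$ lies in the doubled annulus $\bfA^n=D^{n-4}\setminus\overline{D^n}$ and separates $\overline{D^n}$ from $\partial D^{n-4}$ there. Hence every path joining $\partial D^n$ to $\partial D^{n-4}$ inside $\bfA^n$ contains two disjoint subpaths, one lying in $A^n=D^{n-2}\setminus\overline{D^n}$ and running from $\partial D^n$ to $\partial D^{n-2}$, the other lying in $A^{n-2}=D^{n-4}\setminus\overline{D^{n-2}}$ and running from $\partial D^{n-2}$ to $\partial D^{n-4}$. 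Applying the disjoint-overflow half of \lemref{overflow} gives $\mod\bfA^n\ge\mod A^n+\mod A^{n-2}$, whence $\inf_n\mod\bfA^n\ge 2\inf_n\mod A^n>0$.

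For the converse, suppose $\inf_{n}\mod\bfA^n=\delta>0$. Then \propref{thick in thin} yields, for every $n$ large enough that $\bfA^{n-2}$ is defined,
\[
\mod A^n > \min\{\epsilon_0,\; C\,\mod\bfA^{n-2}\} \;\ge\; \min\{\epsilon_0,\; C\delta\} \;>\; 0,
\]
where $\epsilon_0, C$ are the uniform constants of \propref{thick in thin}. The finitely many remaining terms of $\{\mod A^n\}_{n\ge n_0+2}$ are strictly positive since the corresponding puzzle annuli are non-degenerate by \propref{nested disks}, and finitely many positive numbers cannot spoil a uniform lower bound; therefore $\inf_n\mod A^n>0$.

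I do not anticipate a genuine obstacle: all the hard estimates have already been packaged into \propref{thick in thin} (which rests on \propref{length to circle} and \propref{collar pullback}) and into \propref{nested disks}. The only points that call for a little care are matching up the index ranges $n\ge n_0+2$ and $n\ge n_0+4$ in the two directions, and justifying the decomposition of $\bfA^n$ into the two nested annuli $A^n$ and $A^{n-2}$ honestly through the disjoint overflow of path families as in \lemref{overflow}, rather than through a naive additivity of moduli.
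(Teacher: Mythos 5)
Your proof is correct and matches the argument the paper leaves implicit: the substantive direction is exactly the application of \propref{thick in thin} plus the non-degeneracy of the finitely many remaining annuli, and the index bookkeeping you flag is handled properly. For the easy direction you could get away with plain monotonicity of modulus --- $A^n\subset\bfA^n$ with the same inner component $D^n$, as noted right after \lemref{overflow} --- but your Gr\"otzsch superadditivity argument via disjoint overflow is equally valid and just slightly stronger than needed.
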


\subsection{Applying the covering lemma}

To prove \thmref{lc at crit}, we need the following crucial analytic estimate obtained by Kahn and Lyubich in \cite{KaLy2} (compare with \lemref{pullback mod}).

\begin{thm}[Covering lemma]\label{cover lem}
Let $U'' \Subset U' \Subset U$ and $V'' \Subset V' \Subset V$ be topological disks, and let $G : (U, U', U'') \to (V, V', V'')$ be a holomorphic branched covering between respective disks. Denote $d_{\bg} = \deg G \geq d_{\sm} = \deg(G|_{U'})$. Suppose for some $\kappa > 0$, we have the following \emph{collar condition}:
\begin{equation}\label{eq:collar}
\mod(V' \setminus \overline{V''}) > \kappa \mod (U \setminus \overline{U''}).
\end{equation}
Then there exists a uniform constant $\epsilon_1 = \epsilon_1(\kappa, d_{\bg})>0$ such that either
\begin{equation}\label{eq:cover conc 1}
\mod(U \setminus \overline{U''}) > \epsilon_1
\end{equation}
or
\begin{equation}\label{eq:cover conc 2}
\mod(U \setminus \overline{U''}) > \frac{\kappa}{2d_{\sm}^2}\mod(V \setminus \overline{V''}).
\end{equation}
\end{thm}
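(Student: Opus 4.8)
The plan is to follow Kahn--Lyubich \cite{KaLy2}; I do not attempt to reconstruct every estimate here, only to lay out the architecture of the argument and isolate the hard point. The strategy is to pass everything to the language of extremal width $\cW = \cL^{-1}$ and to deduce \thmref{cover lem} from two conformal-geometric principles: a \emph{transfer estimate} refining \lemref{pullback mod} --- describing how the extremal width of a curve family joining the outer boundary of a disk to an inner disk changes under a branched covering of bounded degree --- and the \emph{Quasi-Additivity Law}, which asserts that the extremal width of the union of the curve families joining the outer boundary of a topological disk to several disjoint, uniformly well-insulated inner disks is either bounded by a constant depending only on the insulation, or comparable to the sum of the individual widths.

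First I would pull the extremal problem on the $V$-side back through $G$. Let $\Gamma$ be the family of curves in $V \setminus \overline{V''}$ joining $\partial V$ to $\partial V''$, so $\cW(\Gamma) = 1/\mod(V \setminus \overline{V''})$, and take its full preimage $\widehat\Gamma := G^{-1}(\Gamma)$, which lives in $U \setminus G^{-1}(\overline{V''})$. The set $G^{-1}(V'')$ decomposes as $U'' \sqcup E_1 \sqcup \cdots \sqcup E_m$, with the ``side'' components $E_j$ appearing exactly because $d_{\sm}$ may be strictly smaller than $d_{\bg}$; sorting $\widehat\Gamma$ by which of these components a curve ends on gives subfamilies $\Gamma_0$ (ending on $\partial U''$) and $\Gamma_1, \dots, \Gamma_m$. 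Transporting a near-extremal metric of $\Gamma$ to $U$ via $G$ (total area multiplies by $d_{\bg}$, pointwise lengths of lifted curves are preserved) shows $\cW(\widehat\Gamma) \leq d_{\bg}\,\cW(\Gamma)$, hence $\cW(\Gamma_0) \leq d_{\bg}/\mod(V \setminus \overline{V''})$. The collar hypothesis \eqref{eq:collar} now does its job: it forces the buffer $V' \setminus \overline{V''}$ on the target side to be thick relative to $\mod(U \setminus \overline{U''})$, so any curve in $U$ joining $\partial U$ to $\partial U''$ must cross, with local multiplicity at most $d_{\sm}$, the $G|_{U'}$-preimage of that buffer; by \lemref{pullback mod} such a crossing loses at most a factor $d_{\sm}$ in modulus, which is where the constant $\kappa/(2d_{\sm}^2)$ in \eqref{eq:cover conc 2} is spent.

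Next I would invoke the Quasi-Additivity Law on the $U$-side, applied to $U$ with the disjoint inner disks $U'', E_1, \dots, E_m$ (all uniformly insulated inside $U$ once the collar is in force). It converts the inequality $\cW(\Gamma_0) \leq d_{\bg}/\mod(V \setminus \overline{V''})$ into the required dichotomy: either the width of the combined family from $\partial U$ into $U'' \cup \bigcup_j E_j$ does not exceed a universal constant $\delta_0(\kappa)^{-1}$ --- which, together with the crossing estimate above, yields \eqref{eq:cover conc 1} with $\epsilon_1 = \epsilon_1(\kappa, d_{\bg})$ --- or this width is comparable to the sum of the individual widths, in which case the bound on $\cW(\Gamma_0)$ and a count of the covering multiplicities $d_{\bg}, d_{\sm}$ deliver \eqref{eq:cover conc 2}.

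The hard part is the Quasi-Additivity Law itself, the genuine analytic core of \cite{KaLy2}. Its proof takes a near-extremal metric $\rho$ for the combined family, restricts $\rho$ to each thick insulating collar $U \setminus \overline{D_i}$, and exploits the fact that a conformal annulus of large modulus is, throughout the bulk of its core, close to a round cylinder; this near-roundness makes the curve families crossing distinct collars essentially independent, so that the errors coming from where they do interact can be absorbed by a self-improving (renormalization-type) induction on the hypothetical value of the combined width. Carrying that estimate out rigorously is long and delicate, and since all of our later applications (\secref{sec:lc at crit} and the analysis of puzzle annuli straddling $\pbbD$) use only the clean statement above, I would simply cite \cite{KaLy2} for the details and treat \thmref{cover lem} as a black box downstream.
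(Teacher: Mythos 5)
The paper gives no proof of this statement at all: it is quoted verbatim as Kahn--Lyubich's Covering Lemma and cited from \cite{KaLy2}, which is exactly what you do after your (broadly accurate) sketch of the quasi-additivity architecture. So your proposal is consistent with the paper's treatment --- treating \thmref{cover lem} as an external black box --- and no further comparison is needed.
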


To apply \thmref{cover lem}, we use the following setup. Choose a large even number $N = 2\bN >> 1$ to be specified later. Given an even number $n = 2\bn \geq n_0+N$, let
\begin{equation}\label{eq:setup 1}
U := D^{n-4}
\matsp{,}
U' := D^{n-2}
\matsp{,}
U'' := D^n
\matsp{and}
G := F^{\bfr_{n-4} - \bfr_{n-N}}|_{D^{n-4}}.
\end{equation}
Then
\begin{equation}\label{eq:setup 2}
V = G(D^{n-4}) = D^{n-N}|_{g^{\bfr_{n-4} - \bfr_{n-N}}(J^-_{n-4})}
\hspace{2mm} , \hspace{2mm}
V' = G(D^{n-2})
\hspace{2mm} \text{and} \hspace{2mm}
V'' = G(D^n).
\end{equation}
Observe that we have
$$
\bfA^n = U \setminus \overline{U''}
\matsp{and}
A^n = U' \setminus \overline{U''}.
$$

\begin{lem}\label{cover degrees}
Let $d_{\bg} = \deg(G)$ and $d_{\sm} = \deg(G|_{D^{n-2}})$. Then there exist $C_{\bg} = C_{\bg}(N) \geq 3$ independent of $n$, and $C_{\sm} \geq 3$ independent of $n$ and $N$ such that $d_{\bg} < C_{\bg}$ and $d_{\sm}<C_{\sm}$.
\end{lem}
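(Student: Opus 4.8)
The plan is to read off both degree bounds from \propref{crit in disk}, but applied at two different scales. Write $k := \bfr_{n-4} - \bfr_{n-N}$, so that $G = F^{k}|_{D^{n-4}}$, and recall from \eqref{eq:setup 1}--\eqref{eq:setup 2} (together with the construction in \secref{sec:disks}) that $D^{n-4}$ is the $k$th pullback of $D^{n-N}|_{g^{k}(J^-_{n-4})}$ along $\pbbD$. The bound on $d_{\bg}$ is permitted to depend on $N$ and will follow from a direct estimate; the bound on $d_{\sm}$ must \emph{not} depend on $N$, and the point is that $k$ is dominated by a single return time.

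For $d_{\sm}$, I would first invoke \lemref{exp time growth}~iv) (with index $n-2$) to get $r_{n-2} > \bfr_{n-4} \geq k$. Factoring $F^{r_{n-2}} = F^{\,r_{n-2}-k} \circ F^{k}$ on $D^{n-2}$ and using multiplicativity of degree under composition of branched coverings of disks (all domains in sight being simply connected, as puzzle-disk pullbacks are), one obtains
$$
d_{\sm} \;=\; \deg\!\big(F^{k}|_{D^{n-2}}\big) \;\leq\; \deg\!\big(F^{r_{n-2}}|_{D^{n-2}}\big).
$$
By the recursive construction of puzzle disks, $D^{n-2}$ is the $r_{n-2}$th pullback of $D^{n-3}|_{J^+_{n-2}}$ along $\pbbD$, so $F^{r_{n-2}}|_{D^{n-2}}$ is precisely the map $D^{n-2} \to D^{n-3}$ whose degree is declared uniformly bounded in the ``in particular'' clause of \propref{crit in disk} (with the $n$ there equal to $n-3$ here). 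Hence $d_{\sm} \leq C_{\sm}$ with $C_{\sm}$ depending only on $F$, and in particular independent of $n$ and $N$.

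For $d_{\bg}$, note that $g^{k}(J^-_{n-4})$ is a combinatorial subarc of $D^{n-N}\cap\pbbD = J^-_{n-N}$, so \propref{crit in disk} applies with $\fU = D^{n-N}$, and it remains to check $k \leq q_{(n-N)+m}$ for some $m = m(N)$. Since $\rho$ is of bounded type, its convergents grow at least geometrically; combined with $r_i \leq q_{i+2}$ this gives $k \leq \bfr_{n-4} = \sum_{i \leq n-4} r_i \leq C_\tau\, q_{n-4} = C_\tau\, q_{(n-N)+(N-4)}$, with $C_\tau$ depending only on the coefficient bound $\tau$. Absorbing $C_\tau$ into a bounded upward shift of the index yields $k \leq q_{(n-N)+m}$ with $m := (N-4) + O_\tau(1)$, so \propref{crit in disk} gives $d_{\bg} < C_{\bg}$ for a constant depending only on $m$, hence only on $N$ (and $F$). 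Replacing $C_{\bg}$ and $C_{\sm}$ by their maxima with $3$ then completes the proof.

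The step I expect to require the most care is the $N$-independence of $d_{\sm}$: applying \propref{crit in disk} directly to $G|_{D^{n-2}}$ would, exactly as for $d_{\bg}$, yield a bound that blows up as $N \to \infty$, which is unacceptable because $d_{\sm}$ reappears in the factor $\kappa/(2 d_{\sm}^{2})$ of the Covering Lemma conclusion \eqref{eq:cover conc 2} and that factor is iterated. The resolution is the inequality $\bfr_{n-4} < r_{n-2}$ from \lemref{exp time growth}~iv): it confines the entire orbit $D^{n-2}, F(D^{n-2}), \ldots, F^{k-1}(D^{n-2})$ to the single $r_{n-2}$-step pullback nest that produces $D^{n-2}$ from $D^{n-3}$, so the degree is governed by that one fixed nest rather than by the long, $N$-dependent composition defining $G$.
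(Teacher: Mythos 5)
Your proof is correct and follows the paper's own route: the paper's (two-line) proof bounds $d_{\bg}$ by applying \propref{crit in disk} to $D^{n-4}$ viewed as the $(\bfr_{n-4}-\bfr_{n-N})$th pullback of a slit copy of $D^{n-N}$, and bounds $d_{\sm}$ by using \lemref{exp time growth}~iv) to dominate the iteration time by the single return time $r_{n-2}$, exactly as you do. Your closing remark correctly identifies why the $N$-independence of $d_{\sm}$ is the essential point.
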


\begin{proof}
The bound on $d_{\bg}$ is an immediate consequence of \propref{crit in disk}. The bound on $d_{\sm}$ follows from \lemref{exp time growth} iv) and \propref{crit in disk}.
\end{proof}

Let us outline the proof of \thmref{lc at crit}. Suppose towards a contradiction that $\mod A^n$ has no uniform lower bound. Then by \corref{thick if thin}, neither does $\mod \bfA^n$. Hence, we may assume, without loss of generality, that the following degeneracy condition holds for some arbitrarily small number $\epsilon > 0$:
\begin{equation}\label{eq:min assu}
\mod \bfA^n = \min_{n_0 \leq k \leq n}\mod \bfA^k < \epsilon.
\end{equation}
If $\epsilon$ is sufficiently small, then \eqref{eq:min assu} together with \propref{thick in thin} and \lemref{pullback mod} imply that the collar condition \eqref{eq:collar} holds for some uniform constant $\kappa >0$ independent of $n$ and $N$.

Applying \thmref{cover lem}, we conclude that either \eqref{eq:cover conc 1} or \eqref{eq:cover conc 2} must hold. However, \eqref{eq:cover conc 1} directly contradicts \eqref{eq:min assu}. Moreover, we can show that for any even number $k$ such that $n - K + 4 \leq k \leq n - 4$, the annuli $V \setminus \overline{V''}$ contains a pullback of $A^k$ along $\pbbD$ under a map with uniformly bounded degree (\propref{image contains pullbacks}). This implies that if $N$ is sufficiently large, then \eqref{eq:cover conc 2} also contradicts \eqref{eq:min assu}. Therefore, \eqref{eq:min assu} cannot be true, and $\mod A^n$ must have a uniform lower bound.

The principle difficulty is that slits have to be cut into puzzle disks before they can be pulled back along $\pbbD$. This procedure decreases the moduli of the puzzle annuli involved, potentially ruining the argument outlined above. However, using \propref{length to circle}, we can show that if a puzzle annulus is already nearly degenerate (as assumed in \eqref{eq:min assu}), then cutting slits into it does not significantly impact its moduli.

\subsection{Pulling back puzzle annuli to $V \setminus \overline{V''}$}

Let $k = 2\bk$ be an even number such that $n - K + 4 \leq k \leq n - 4$.

\begin{lem}\label{R pullback times}
Define
$$
R^0_k := \bfr_k - \bfr_{n-N},
$$
$$
R^1_k := q_{k+2} - \bfr_{k-2} + \bfr_{n-N}
$$
and
$$
R^2_k := q_{k+4} - r_k - r_{k-1} - q_{k+2}.
$$
Then we have:
\begin{enumerate}[i)]
\item $q_{k+4} = R^0_k + R^1_k + R^2_k$;
\item either $a_{k +i} = 1$ for $0 \leq i \leq 3$ and $R^2_k = 0$, or $R^2_k \geq q_k > r_{k-4}$; and
\item
$$
R^1_{k+2} = R^1_k + R^2_k = R^1_{n-N+4} + \sum_{i=\bn-\bN+2}^{\bk} R^2_{2i}.
$$
\end{enumerate}
\end{lem}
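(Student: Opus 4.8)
The plan is to reduce all three parts to elementary arithmetic of the continued-fraction data of $\rho$, using just two facts: the recursion $q_{j+1} = a_j q_j + q_{j-1}$ with every $a_j \geq 1$, and the telescoping identity $\bfr_j - \bfr_{j-2} = r_{j-1} + r_j$, immediate from $\bfr_j = \sum_{i=1}^{j} r_i$ (\notref{disk iterate}). Part (i) then follows by simply adding the three definitions: the $\pm\bfr_{n-N}$ and $\pm q_{k+2}$ terms cancel, leaving $R^0_k + R^1_k + R^2_k = (\bfr_k - \bfr_{k-2}) - (r_{k-1} + r_k) + q_{k+4}$, and the telescoping identity collapses the first two groups to give $q_{k+4}$. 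The first equality in part (iii), namely $R^1_{k+2} = R^1_k + R^2_k$, is the same kind of computation: $R^1_k + R^2_k = q_{k+4} - \bfr_{k-2} - r_{k-1} - r_k + \bfr_{n-N} = q_{k+4} - \bfr_k + \bfr_{n-N} = R^1_{k+2}$.

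For part (ii) the first step is to unfold the definition and then expand $q_{k+4}$ via the recursion, which gives
\[
R^2_k = q_{k+4} - q_{k+2} - q_{k+1} - 2q_k - q_{k-1} = a_{k+3} a_{k+2} q_{k+2} + (a_{k+3} - 1) q_{k+1} - 2 q_k - q_{k-1}.
\]
When $a_k = a_{k+1} = a_{k+2} = a_{k+3} = 1$, one has $q_{k+1} = q_k + q_{k-1}$ and $q_{k+2} = 2 q_k + q_{k-1}$, and substituting shows $R^2_k = 0$. Otherwise at least one of those four coefficients is $\geq 2$, and I would bound $R^2_k$ from below by splitting into the cases (a) $a_{k+2} a_{k+3} \geq 2$, (b) $a_{k+2} = a_{k+3} = 1$ and $a_{k+1} \geq 2$, and (c) $a_{k+1} = a_{k+2} = a_{k+3} = 1$ and $a_k \geq 2$. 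Feeding the elementary bounds $q_{k+1} = a_k q_k + q_{k-1} \geq q_k + q_{k-1}$ and $q_{k+2} \geq q_{k+1} + q_k$ into the displayed formula, each case yields $R^2_k \geq q_k$; for instance in case (c) the formula simplifies exactly to $R^2_k = (a_k - 1) q_k \geq q_k$. The last inequality $q_k > r_{k-4}$ then comes straight from \lemref{exp time growth}: $q_k \geq r_{k-2}$ by part (i) of that lemma, and $r_{k-2} > \bfr_{k-4} \geq r_{k-4}$ by part (iv).

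For the remaining identity in part (iii), I would iterate the first equality in the form $R^1_{2(i+1)} = R^1_{2i} + R^2_{2i}$ from $i = \bk$ down to $i = \bn - \bN + 2$ (noting $2(\bn - \bN + 2) = n - N + 4$), producing $R^1_{k+2} = R^1_{n-N+4} + \sum_{i=\bn-\bN+2}^{\bk} R^2_{2i}$. I expect the only part that needs genuine care, as opposed to bookkeeping, to be the dichotomy in (ii): one has to check that the ``all coefficients equal $1$'' configuration is exactly the boundary between $R^2_k = 0$ and $R^2_k \geq q_k$, so the case analysis must be arranged to yield a single uniform lower bound $R^2_k \geq q_k$ across all non-trivial configurations rather than treating each coefficient separately.
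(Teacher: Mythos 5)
Your proposal is correct and follows essentially the same route as the paper: parts (i) and (iii) are the same telescoping arithmetic with $\bfr_k - \bfr_{k-2} = r_{k-1} + r_k$, and part (ii) is the same continued-fraction computation, with your three-case split simply making explicit the step the paper dismisses as ``easy to check'' (the paper phrases the dichotomy via \lemref{exp time growth} i) and ii) rather than expanding $q_{k+4}$ directly, but the content is identical, e.g.\ both reduce the all-but-$a_k$-equal-to-one case to $R^2_k = (a_k-1)q_k$).
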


\begin{proof}
Claim i) is obvious.

By \lemref{exp time growth} i) and ii), we have
$$
q_{k+4} \geq r_{k+1} + r_k = q_{k+1} + q_{k+2} + r_k \geq r_{k-1} + q_{k+2} + r_k,
$$
where the equality holds if and only if $a_{k +i} = 1$ for $0 \leq i \leq 3$. Furthermore, it is easy to check that if $a_{k+i} \geq 2$ for some $1\leq i \leq 3$, then $R^2_k \geq q_{k+i}$. Claim ii) follows.

In claim iii), the first equality is obvious, and the second equality can be checked by a straightforward induction.
\end{proof}

Consider the orbit of $J^-_k$ under $g^{q_{k+4}}$. We decompose
$$
g^{q_{k+4}} = g^{R^2_k} \circ g^{R^1_k} \circ g^{R^0_k},
$$
and denote
$$
J^i_k := g^{R^i_k}(J^{i-1}_k)
$$
for $i \in \{0, 1, 2\}$ (letting $J^{-1}_k = J^-_k$). Also define
$$
\hJ^2_k := g^{R^2_k}(J^-_{k-4}) \cap J^-_{k-4}.
$$

\begin{lem}\label{stage bases}
We have
\begin{enumerate}[i)]
\item $J^1_k \Subset g^{q_{k+2}}(J^+_{k-1}) \Subset J^-_{k-2}$,
\item $J^2_k \Subset J^-_{k-1}$, and
\item $J^-_{k-2} \Subset \hJ^2_k$. Consequently, $\hJ^2_k \neq \varnothing$.
\end{enumerate}
\end{lem}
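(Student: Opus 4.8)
The plan is to rewrite the three base intervals as explicit forward images of $J^-_k$ under $g$ and then verify each containment by an elementary comparison of combinatorial arcs on $\pbbD$.

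\emph{Reduction.} By \lemref{R pullback times}~i) together with $\bfr_k-\bfr_{k-2}=r_{k-1}+r_k$ one has $R^0_k+R^1_k+R^2_k=q_{k+4}$ and $R^0_k+R^1_k=r_{k-1}+r_k+q_{k+2}$, so
$$
J^0_k=g^{R^0_k}(J^-_k),\qquad J^1_k=g^{\,r_{k-1}+r_k+q_{k+2}}(J^-_k),\qquad J^2_k=g^{q_{k+4}}(J^-_k).
$$
Every arc occurring below --- the various $J^\pm_j$ and their images under powers of $g$ --- is determined combinatorially by $\rho$ (it is a union of closest return arcs, and $g$ carries $c_i$ to $c_{i+1}$), so each containment to be proved is a statement about the cyclic order of finitely many orbit points $c_i$ and may be verified in the rigid rotation model. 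Put $\delta_m:=|q_m\rho-p_m|$, so $\delta_m\downarrow 0$ and $\delta_{m-1}=a_m\delta_m+\delta_{m+1}$; since $k=2\bk$ is even, in the rotation model (up to a global reflection) the point $c_{q_m}$ sits at signed position $(-1)^m\delta_m$ relative to $c_0$, the point $c_{-q_m}$ at $(-1)^{m+1}\delta_m$, and $g^{q_m}$ acts near $c_0$ as translation by $(-1)^m\delta_m$. Thus $J^+_{k-1}=(-\delta_{k-1},\delta_k)$, $J^-_k=(-\delta_k,\delta_{k+1})$, $J^-_{k-1}=(-\delta_k,\delta_{k-1})$, $J^-_{k-2}=(-\delta_{k-2},\delta_{k-1})$ and $J^-_{k-4}=(-\delta_{k-4},\delta_{k-3})$ (all indices $k-4,\dots,k+4$ being valid since $k$ is large).

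\emph{Parts i) and ii).} For i) I would argue: \lemref{nested arcs} gives $g^{r_k}(J^-_k)=J^+_k\Subset J^-_{k-1}$ and $g^{r_{k-1}}(J^-_{k-1})=J^+_{k-1}$, so applying $g^{r_{k-1}}$ and then $g^{q_{k+2}}$ yields $J^1_k\Subset g^{q_{k+2}}(J^+_{k-1})$; and
$$
g^{q_{k+2}}(J^+_{k-1})=(-\delta_{k-1}+\delta_{k+2},\ \delta_k+\delta_{k+2})\ \Subset\ (-\delta_{k-2},\delta_{k-1})=J^-_{k-2},
$$
since $\delta_{k-1}<\delta_{k-2}$ and $\delta_k+\delta_{k+2}<\delta_k+\delta_{k+1}\le\delta_{k-1}$. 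For ii): $J^2_k=g^{q_{k+4}}(J^-_k)=(-\delta_k+\delta_{k+4},\ \delta_{k+1}+\delta_{k+4})\Subset(-\delta_k,\delta_{k-1})=J^-_{k-1}$, again because $\delta_{k+1}+\delta_{k+4}<\delta_k+\delta_{k+1}\le\delta_{k-1}$ (the lower endpoint moving inward is automatic).

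\emph{Part iii).} The inclusion $J^-_{k-2}\Subset J^-_{k-4}$ is part of \lemref{nested arcs}, so it remains to prove $g^{-R^2_k}(J^-_{k-2})\Subset J^-_{k-4}$. If $R^2_k=0$ --- which by \lemref{R pullback times}~ii) happens exactly when $a_k=a_{k+1}=a_{k+2}=a_{k+3}=1$ --- this is the inclusion just cited. Otherwise, computing $-R^2_k\rho\pmod 1$ from the definition of $R^2_k$ shows that $g^{-R^2_k}$ translates near $c_0$ by $S:=2\delta_k-\delta_{k-1}-\delta_{k+1}+\delta_{k+2}-\delta_{k+4}$, so $g^{-R^2_k}(J^-_{k-2})=(-\delta_{k-2}+S,\ \delta_{k-1}+S)$; as $\delta_{k-3}-\delta_{k-1}=a_{k-2}\delta_{k-2}\ge\delta_{k-2}$ and $\delta_{k-4}-\delta_{k-2}=a_{k-3}\delta_{k-3}>\delta_{k-2}$, it suffices to show $|S|<\delta_{k-2}$. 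Eliminating $\delta_{k-1}=a_k\delta_k+\delta_{k+1}$ turns this into $S=(2-a_k)\delta_k-2\delta_{k+1}+\delta_{k+2}-\delta_{k+4}$, and a short case split on $a_k$ --- and, when $a_k=1$, successively on $a_{k+1},a_{k+2},a_{k+3}$ --- using $\delta_m=a_{m+1}\delta_{m+1}+\delta_{m+2}$ each time to cancel the leading term, produces $|S|<\delta_{k-1}<\delta_{k-2}$ in every remaining configuration (with $S=0$ coming out precisely in the excluded one). This last bookkeeping is the only step that is not a one-line arc comparison --- the crude bound $|S|\le(4+\tau)\delta_k$ just fails to beat $\delta_{k-2}$ --- and it is the part I expect to require the most care.
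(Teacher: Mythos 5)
Your proposal is correct. Parts i) and ii) are essentially identical to the paper's argument: the same reduction of $J^1_k$ and $J^2_k$ to explicit forward images of $J^-_k$, the same use of \lemref{nested arcs} for the first containment in i), and the same endpoint comparisons (the paper phrases them as containments of combinatorial arcs $(\cdot,\cdot)_c$ rather than via the quantities $\delta_m$, but the content is the same). Your reduction to the rigid rotation model is legitimate, since all arcs involved are combinatorial and $h$ is an orientation-preserving conjugacy to $\rot_\rho$.

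For part iii) the two arguments differ in bookkeeping, though both are elementary continued-fraction combinatorics. The paper locates the point $c_{r_k+r_{k-1}+q_{k+2}}=g^{R^0_k+R^1_k}(c_0)$ inside $J^1_k\Subset g^{q_{k+2}}(J^+_{k-1})$ using part i), splits into two cases according to which side of $c_0$ it lies on, and in each case reads off directly that $g^{-R^2_k}$ displaces $c_0$ by less than a $q_{k-1}$- or $q_k$-return, whence $J^-_{k-2}\Subset(-q_{k-4}-q_{k-1},-q_{k-3}-q_k)_c\subset\hJ^2_k$. You instead compute the exact displacement $S\equiv -R^2_k\rho$ in the rotation model and bound $|S|<\delta_{k-2}$ by a case analysis on $a_k,\dots,a_{k+3}$. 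I checked your case analysis: after the substitution $\delta_{k-1}=a_k\delta_k+\delta_{k+1}$ and the successive eliminations you describe, one gets $|S|<\delta_{k-1}$ in every configuration with $R^2_k\neq 0$ (and $S=0$ exactly when $a_k=\cdots=a_{k+3}=1$, consistent with \lemref{R pullback times}~ii)), so the argument closes. The paper's version is shorter because it reuses part i) to avoid any explicit arithmetic with the $\delta_m$; yours is more mechanical and self-verifying but pays for it with the case split. Both are sound.
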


\begin{proof}
For i), observe that $g^{r_k}(J^-_k) = J^+_k \Subset J^-_{k-1}$ and
$$
g^{r_{k-1}}(J^+_k) \Subset g^{r_{k-1}}(J^-_{k-1}) = J^+_{k-1} = (q_{k-1}, q_k)_c \Subset J^-_{k-2} = (-q_{k-2}, -q_{k-1})_c.
$$
Since
$$
c_{q_{k+2}} \in(0, -q_k-q_{k-1})_c
$$
we have
$$
J^1_k \Subset g^{q_{k+2}}(J^+_{k-1}) \Subset J^-_{k-2}.
$$

For ii), we have
$$
g^{q_{k+4}}(J^-_k) = (-q_k+q_{k+4}, -q_{k+1} + q_{k+4})_c \Subset (-q_k, -q_{k-1})_c.
$$

For iii), we have $g^{R^2_k}$ mapping $c_{r_k+r_{k-1}+q_{k+2}}$ to $c_{q_{k+4}}$. Since
$$
c_{r_k+r_{k-1}+q_{k+2}} \in g^{r_k+r_{k-1}+q_{k+2}}(-q_k, -q_{k+1})_c \Subset (q_{k-1}+q_{k+2}, q_k+q_{k+2})_c,
$$
we see that
$$
c_{r_k+r_{k-1}+q_{k+2}} \in (q_{k-1} + q_k, q_k)_c.
$$
We have either
$$
c_{r_k + r_{k-1}+q_{k+2}} \in (q_{k-1}+q_k, 0)_c
\matsp{or}
c_{r_k + r_{k-1}+q_{k+2}} \in (q_{k+4}, q_k)_c.
$$
Hence, either
$$
(r_k + r_{k-1}+q_{k+2} - q_{k+4}, 0)_c \subset (q_{k-1}, 0)_c
$$
or
$$
(0, r_k + r_{k-1}+q_{k+2} - q_{k+4})_c \subset (0, q_k)_c.
$$
In either case, the claim follows from the fact that
$$
J^-_{k-2} = (-q_{k-2}, -q_{k-1})_c \Subset (-q_{k-4} - q_{k-1}, -q_{k-3}-q_k)_c \subset \hJ^2_k.
$$
\end{proof}

Let $U^2_k := D^{k-4}|_{\hJ^2_k}$. Define $U^1_k$ as the $R^2_k$th pullback of $U^2_k$ along $\pbbD$. By \propref{edge},
$$
\hJ^1_k := g^{-R^2_k}(\hJ^2_k) = J^-_{k-4} \cap g^{-R^2_k}(J^-_{k-4})
$$
is both the base and the full base of $U^1_k$, so that
$$
U^1_k \cap \pbbD = \hJ^1_k.
\matsp{and}
\overline{U^1_k} \cap \pbbD = \overline{\hJ^1_k}.
$$

\begin{prop}\label{stage 1 pullback}
We have
$$
U^1_k \subset D^{k-4}
\matsp{and}
U^1_k \Subset U^2_{k-2}.
$$
\end{prop}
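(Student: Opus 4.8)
The plan is to prove the two containments separately, both by following the base arc $\hJ^1_k$ of $U^1_k$ through the pullback construction of \secref{sec:disks}. Recall that $U^1_k$ is by definition the $R^2_k$-th pullback along $\pbbD$ of $\fU|_{\hJ^2_k}$ with $\fU = D^{k-4}$, so that its base is $\gamma_{-R^2_k} = \hJ^1_k = J^-_{k-4}\cap g^{-R^2_k}(J^-_{k-4})$, which in particular is contained in $J^-_{k-4}$.

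First I would prove $U^1_k\subset D^{k-4}$, splitting into two cases. If $R^2_k = 0$ --- by \lemref{R pullback times} ii) this is precisely the case $a_{k+i}=1$ for $0\le i\le 3$ --- then $\hJ^2_k = J^-_{k-4}$ straight from the definition, so $U^1_k = U^2_k = D^{k-4}$ and the inclusion is trivial. Otherwise \lemref{R pullback times} ii) gives $R^2_k\ge q_k > r_{k-4}$, so \propref{disk pull in}, applied with $\fU = D^{k-4}$ (that is, $n=k-4$), pullback time $R^2_k\ge r_{k-4}$, and base $\hJ^1_k\subset J^-_{k-4}$, yields both $U^1_k\subset D^{k-4}$ and $\Gamma_{-R^2_k}\subset J^-_{k-4}$. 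In either case I record that $\overline{U^1_k}\cap\pbbD\subset\overline{J^-_{k-4}}$.

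Next I would prove $U^1_k\Subset U^2_{k-2}$. Since $U^2_{k-2} = D^{k-6}|_{\hJ^2_{k-2}}$, it suffices to show the compact set $\overline{U^1_k}$ lies in $U^2_{k-2}$, which I would verify by splitting $\overline{U^1_k}$ along $\pbbD$. Its part off $\pbbD$ sits in $\overline{D^{k-4}}\setminus\pbbD$, and since \propref{nested disks} gives $D^{k-4}\Subset D^{k-6}$, hence $\overline{D^{k-4}}\subset D^{k-6}$, this part lies in $D^{k-6}\setminus\pbbD\subset U^2_{k-2}$. Its part on $\pbbD$ is contained in $\overline{J^-_{k-4}}$; by \lemref{stage bases} iii) applied at level $k-2$ one has $J^-_{k-4}\Subset\hJ^2_{k-2}$, so $\overline{J^-_{k-4}}\subset\hJ^2_{k-2}\subset J^-_{k-6}\subset D^{k-6}$, and this part lies in $\hJ^2_{k-2}\cap D^{k-6}\subset U^2_{k-2}$. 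Putting the two pieces together gives $\overline{U^1_k}\subset U^2_{k-2}$, hence $U^1_k\Subset U^2_{k-2}$.

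The domain bookkeeping here is routine; the real content is carried by two combinatorial inputs already in place. The inequality $R^2_k > r_{k-4}$ needed to invoke \propref{disk pull in} --- together with the fact that the only alternative is $R^2_k=0$, which has to be treated by hand since \propref{disk pull in} does not apply there --- is exactly \lemref{R pullback times} ii), and the containment $J^-_{k-4}\Subset\hJ^2_{k-2}$ that keeps the circle-part of $\overline{U^1_k}$ clear of the slit defining $U^2_{k-2}$ is exactly \lemref{stage bases} iii); the sole geometric ingredient is the compact nesting of \propref{nested disks}. So the main obstacle I anticipate is purely organizational --- ensuring these earlier estimates have been cast in precisely the form this proposition consumes, and not overlooking the degenerate case $R^2_k=0$.
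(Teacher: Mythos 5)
Your proof is correct and follows essentially the same route as the paper: the first containment via \lemref{R pullback times}~ii) and \propref{disk pull in}, and the second via \propref{nested disks} together with $J^-_{k-4}\Subset\hJ^2_{k-2}$ from \lemref{stage bases}~iii). You merely make explicit two points the paper leaves implicit — the degenerate case $R^2_k=0$ (where the pullback is trivial) and the splitting of $\overline{U^1_k}$ along $\pbbD$ — both of which check out.
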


\begin{proof}
The first containment is an immediate consequence of \lemref{R pullback times} ii) and \propref{disk pull in}. By \propref{nested disks} and \lemref{stage bases} iii), we have
$$
U^1_k \subset D^{k-4} \Subset D^{k-6}
\matsp{and}
\hJ^1_k \subset J^-_{k-4} \Subset \hJ^2_{k-2} \subset J^-_{k-6}.
$$
The second containment follows.
\end{proof}
 
Define $U^0_k$ as the $R^1_k$th pullback of $U^1_k$ along $\pbbD$. Then 
$$
\hJ^0_k := g^{-R^1_k}(\hJ^1_k)
$$
is both the base and the full base of $U^0_k$, so that
$$
U^0_k \cap \pbbD = \hJ^0_k.
\matsp{and}
\overline{U^0_k} \cap \pbbD = \overline{\hJ^0_k}.
$$
Observe that by \lemref{stage bases} ii) and iii), we have
$$
J^i_k \Subset \hJ^i_k
\matsp{for}
i \in \{0, 1, 2\}.
$$

\begin{prop}\label{image contains pullbacks}
Let $k = 2\bk$ be an even number such that $n-N + 6 \leq k \leq n-4$. Then
$$
U^0_k \Subset U^0_{k-2} \subset D^{n-N}.
$$
\end{prop}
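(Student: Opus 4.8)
The plan is to establish the nesting $U^0_k \Subset U^0_{k-2}$ directly from \propref{stage 1 pullback}, and then to deduce the containment $U^0_{k-2}\subset D^{n-N}$ by induction on the even scale $k$. Two soft facts drive the proof. First, \emph{pulling back along $\pbbD$ preserves containment}: if $X\subset Y$ (resp.\ $X\Subset Y$) are open sets with connected traces on $\pbbD$, then their $j$th pullbacks $X_{-j},Y_{-j}$ along $\pbbD$ satisfy $X_{-j}\subset Y_{-j}$ (resp.\ $X_{-j}\Subset Y_{-j}$); indeed $X_{-j}$ is connected, contained in $F^{-j}(X)\subset F^{-j}(Y)$, meets $Y_{-j}$ along $\pbbD$, and $\overline{X_{-j}}\subset F^{-j}(\overline{X})$ is compact. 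Second, \emph{pullbacks along $\pbbD$ compose}: the $(j_1+j_2)$th pullback of a set is the $j_1$th pullback of its $j_2$th pullback.

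For the nesting, recall from \propref{stage 1 pullback} that $U^1_k\Subset U^2_{k-2}$, and that $U^1_{k-2}$ is by construction the $R^2_{k-2}$th pullback of $U^2_{k-2}$ along $\pbbD$. Hence the $R^2_{k-2}$th pullback of $U^1_k$ along $\pbbD$ is compactly contained in $U^1_{k-2}$. By \lemref{R pullback times} iii), $R^1_k=R^1_{k-2}+R^2_{k-2}$, so $U^0_k$ --- the $R^1_k$th pullback of $U^1_k$ --- is the $R^1_{k-2}$th pullback of the $R^2_{k-2}$th pullback of $U^1_k$; pulling the preceding compact containment back $R^1_{k-2}$ further times yields $U^0_k\Subset U^0_{k-2}$.

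For the containment I would argue by induction on $k$ with trivial inductive step --- if $U^0_{k-2}\subset D^{n-N}$ and $U^0_k\Subset U^0_{k-2}$, then $U^0_k\subset D^{n-N}$ --- leaving only the base scale $k_\ast:=n-N+4$, for which $D^{k_\ast-4}=D^{n-N}$. By \propref{stage 1 pullback}, $U^1_{k_\ast}\subset D^{n-N}$, and $U^1_{k_\ast}\cap\pbbD=\hJ^1_{k_\ast}\subset J^-_{k_\ast-4}=J^-_{n-N}$, so $U^1_{k_\ast}\subset D^{n-N}|_{\hJ^1_{k_\ast}}$ (as $U^1_{k_\ast}$ is connected and meets $\pbbD$ only in $\hJ^1_{k_\ast}$); consequently $U^0_{k_\ast}$ lies in the $R^1_{k_\ast}$th pullback of $D^{n-N}|_{\hJ^1_{k_\ast}}$ along $\pbbD$. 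I would bound this last set by $D^{n-N}$ using \propref{disk pull in} with $\fU=D^{n-N}$, which requires $R^1_{k_\ast}\geq r_{n-N}$ and $g^{-R^1_{k_\ast}}(\hJ^1_{k_\ast})=\hJ^0_{k_\ast}\subset J^-_{n-N}$. The first inequality is arithmetic: $R^1_{k_\ast}=q_{n-N+6}-r_{n-N+1}-r_{n-N+2}\geq r_{n-N+4}-r_{n-N+1}-r_{n-N+2}>\bfr_{n-N+2}-r_{n-N+1}-r_{n-N+2}\geq r_{n-N}$, using \lemref{exp time growth} i) and iv).

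The main obstacle is the second condition, $\hJ^0_{k_\ast}\subset J^-_{n-N}$: it asserts precisely that the times $R^0_k,R^1_k,R^2_k$ of \lemref{R pullback times} are calibrated so that pulling a $U^1$-type disk back along $\pbbD$ by $R^1$ returns its base to the correct closest-return arc. I would verify it by following the arcs $\hJ^2_{k_\ast}=g^{R^2_{k_\ast}}(J^-_{n-N})\cap J^-_{n-N}$, then $\hJ^1_{k_\ast}=g^{-R^2_{k_\ast}}(\hJ^2_{k_\ast})$, then $\hJ^0_{k_\ast}=g^{-R^1_{k_\ast}}(\hJ^1_{k_\ast})$, through the factorization $g^{q_{k+4}}=g^{R^2_k}\circ g^{R^1_k}\circ g^{R^0_k}$, comparing with the location of $J^0_{k_\ast}=g^{R^0_{k_\ast}}(J^-_{k_\ast})\Subset J^-_{n-N}$ and using the nesting estimates of \lemref{stage bases}. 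Everything else --- preservation of containments under pullback, composition of pullbacks, and the applications of \propref{disk pull in}, \propref{stage 1 pullback}, and \lemref{exp time growth} --- is routine.
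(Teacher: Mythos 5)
Your proposal is correct and follows the paper's proof in all essentials. The first containment is obtained exactly as in the paper: pull back the compact inclusion $U^1_k \Subset U^2_{k-2}$ of \propref{stage 1 pullback} through the time identity $R^1_k = R^1_{k-2}+R^2_{k-2}$ of \lemref{R pullback times}~iii). For the second containment the paper likewise reduces to the bottom scale $k_\ast = n-N+4$ and applies \propref{disk pull in} to a pullback of a slit copy of $D^{n-N}$; the only cosmetic difference is that it works with $U^2_{n-N+4} = D^{n-N}|_{\hJ^2_{n-N+4}}$ and the full composite pullback time, where you work with $D^{n-N}|_{\hJ^1_{k_\ast}}$ and the time $R^1_{k_\ast}$, and the time estimate is the same arithmetic from \lemref{exp time growth}. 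The one step you leave as a sketch --- the hypothesis $\hJ^0_{k_\ast} = g^{-R^1_{k_\ast}}(J^-_{n-N}) \cap g^{-R^1_{k_\ast}-R^2_{k_\ast}}(J^-_{n-N}) \subset J^-_{n-N}$ required by \propref{disk pull in} --- is precisely the hypothesis the paper's proof also leaves implicit: it verifies only the condition on the pullback time before citing \propref{disk pull in}, and never spells out the condition on the base arc. So you have not missed anything the paper supplies; you are simply more explicit that this one combinatorial check (two inverse translates of $J^-_{n-N}$ landing back inside $J^-_{n-N}$, verifiable by the same endpoint bookkeeping as in \lemref{stage bases}) still needs to be written down to make either argument fully rigorous.
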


\begin{proof}
Recall that $U^0_k$ and $U^0_{k-2}$ are the $R^1_k$th and $(R^1_{k-2}+R^2_{k-2})$th pullback of $U^1_k$ and $U^2_{k-2}$ along $\pbbD$ respectively. Hence, the first containment follows from \lemref{R pullback times} iii) and \propref{stage 1 pullback}.

We have
$$
U^2_{n-N+4} = D^{n-N}|_{\hJ^2_{n-N+4}}
$$
Observe that
$$
R^1_{n-N+2} = q_{n-N+4} \geq r_{n-N+2} > r_{n-N},
$$
where in the last inequality, we used \lemref{exp time growth} i). Since $U^0_{n-N+4}$ is the $R^1_{n-N+2}$th pullback of $U^2_{n-N+4}$, the second containment now follows from \propref{disk pull in}.
\end{proof}

\subsection{Modulus of $V \setminus \overline{V''}$}

Let $k = 2\bk$ be an even number such that $n - K + 6 \leq k \leq n - 4$, and let $\Lambda(n) >1$ be the constant given in \lemref{big lambda}.

\begin{lem}\label{lambda cover}
There exist uniform constants $\delta >0$ and $1 < \hlambda < \Lambda(n)$ such that
\begin{enumerate}[i)]
\item $J^-_{k-2}[\delta] \Subset \hJ^2_k$;
\item $J^-_{k-6} \Subset \hJ^1_k[\hlambda/2]$; and
\item $\hJ^0_{k-2} \Subset \hJ^0_k[\hlambda/2]$.
\end{enumerate}
\end{lem}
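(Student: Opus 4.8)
The plan is to deduce all three inclusions from the rigidity of $g$ supplied by \corref{hermancor}: the $K$-quasisymmetric conjugacy $h:\pbbD\to\pbbD$ with $h\circ g\circ h^{-1}=\rot_\rho$ turns every iterate of $g$ into an isometry of $\pbbD$, so two arcs related by a power of $g$ --- however large --- have \emph{exactly equal} length after applying $h$. Since $\hJ^1_k=g^{-R^2_k}(\hJ^2_k)$ and $\hJ^0_k=g^{-R^1_k}(\hJ^1_k)$, this gives $|h(\hJ^0_k)|=|h(\hJ^1_k)|=|h(\hJ^2_k)|$ (and likewise with $k-2$ in place of $k$). Combined with \thmref{geometry bound}, which for $\rho$ of bounded type yields $|h(J^-_m)|\asymp\mu^m$ and hence $|h(J^-_m)|\asymp|h(J^-_{m'})|$ whenever $|m-m'|=O(1)$, and with the elementary fact that the quasisymmetric maps $h^{\pm 1}$ distort the ratio of two abutting arcs by a factor bounded only in terms of their quasisymmetry constant, it will suffice to prove the three inclusions \emph{in the model} in the stated quantitative forms with uniform constants; transferring back by $h^{-1}$ --- and enlarging constants slightly to recover compactness --- then finishes the proof. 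The requirement $1<\hlambda<\Lambda(n)$ is harmless, since $\Lambda(n)\to\infty$ by \lemref{big lambda} and $n\geq n_0+N$, so it holds once $N$ is large.

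For i), \lemref{stage bases} iii) supplies the explicit chain $J^-_{k-2}=(-q_{k-2},-q_{k-1})_c\Subset(-q_{k-4}-q_{k-1},-q_{k-3}-q_k)_c\subset\hJ^2_k$; a short continued fraction computation (using $a_n\leq\tau$ and the identity $\|q_{m-1}\rho\|=a_m\|q_m\rho\|+\|q_{m+1}\rho\|$) shows that in the model each component of $(-q_{k-4}-q_{k-1},-q_{k-3}-q_k)_c\setminus J^-_{k-2}$ has length commensurate with $|h(J^-_{k-2})|$, whence $h(J^-_{k-2})[\delta']\Subset h(\hJ^2_k)$ for a uniform $\delta'>0$. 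In particular $|h(\hJ^2_k)|\geq|h(J^-_{k-2})|\asymp\mu^k$, while $\hJ^2_k\subset J^-_{k-4}$ by definition gives $|h(\hJ^2_k)|\leq|h(J^-_{k-4})|\asymp\mu^k$; so $|h(\hJ^0_k)|=|h(\hJ^1_k)|=|h(\hJ^2_k)|\asymp\mu^k$, and likewise $|h(\hJ^i_{k-2})|\asymp\mu^k$ for $i=0,1,2$ (using \lemref{stage bases} iii) for the index $k-2$, i.e. $J^-_{k-4}\Subset\hJ^2_{k-2}\subset J^-_{k-6}$, in place of part i)). For ii), $\hJ^1_k=J^-_{k-4}\cap g^{-R^2_k}(J^-_{k-4})\subset J^-_{k-4}\subset J^-_{k-6}$ by \lemref{nested arcs}, so $h(\hJ^1_k)\subset h(J^-_{k-6})$; since $|h(\hJ^1_k)|\asymp|h(J^-_{k-6})|$, both components of $h(J^-_{k-6})\setminus h(\hJ^1_k)$ have length $\leq C|h(\hJ^1_k)|$ for a uniform $C$, giving $h(J^-_{k-6})\subset h(\hJ^1_k)[C]$. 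For iii), \propref{image contains pullbacks} gives $U^0_k\Subset U^0_{k-2}$, hence $h(\hJ^0_k)\subset h(\hJ^0_{k-2})$, and since $|h(\hJ^0_k)|\asymp|h(\hJ^0_{k-2})|$ the same argument gives $h(\hJ^0_{k-2})\subset h(\hJ^0_k)[C']$ for a uniform $C'$. Setting $\hlambda:=2\max\{C,C'\}$ (enlarged to absorb the quasisymmetric distortion of $h^{-1}$ and to make the inclusions compact) then yields ii) and iii).

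The genuine obstacle --- and the reason \corref{hermancor} is essential, rather than the bounded real geometry of \thmref{geometry bound} alone --- is that $\hJ^1_k$ and $\hJ^0_k$ are preimages of $\hJ^2_k$ under the \emph{high} iterates $g^{-R^2_k}$, $g^{-R^1_k}$, whose return times $R^2_k,R^1_k$ are of order $q_{k+2}$, far beyond the scale $r_{k-4}$ at which $K$-bounded-distortion estimates for $g$ at level $k$ hold; no such estimate controls how $g^{-R^2_k}$ rescales an arc of size $\asymp|J^-_{k-4}|$. Conjugating to the rigid rotation eliminates this entirely, since iterates of $\rot_\rho$ preserve arclength exactly, the only cost being the fixed quasisymmetric distortion of $h$. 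The remaining points --- the continued fraction bookkeeping in i) and tracking the operation $I\mapsto I[\lambda]$ through a quasisymmetric map --- are routine given \corref{hermancor} and \thmref{geometry bound}.
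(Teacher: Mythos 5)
Your proposal is correct and follows essentially the same route as the paper, whose entire proof is the one-line remark that the lemma ``follows immediately from Lemma \ref{stage bases} iii) and Corollary \ref{hermancor}'': you have simply unwound that remark, using the quasisymmetric conjugacy to a rigid rotation to make all the arcs $\hJ^0_k$, $\hJ^1_k$, $\hJ^2_k$ have model length equal to $|h(\hJ^2_k)|\asymp|h(J^-_{k-2})|$ and then transferring the resulting commensurabilities back through $h^{-1}$. Your identification of why \corref{hermancor} (rather than \thmref{geometry bound} alone) is the essential input --- the return times $R^1_k, R^2_k$ are far beyond the scale at which bounded-distortion estimates apply --- matches the paper's intent, and your use of \propref{image contains pullbacks} for iii) is a legitimate (already-established) shortcut to the containment $\hJ^0_k\subset\hJ^0_{k-2}$.
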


\begin{proof}
The result follows immediately from \lemref{stage bases} iii) and \corref{hermancor}.
\end{proof}

Let $V$, $V'$ and $V''$ be the topological disks given in \eqref{eq:setup 2}, and consider the path family
$$
\Gamma := \Gamma_{V \setminus \overline{V''}}(\partial V'', \partial V).
$$
Then we have $\mod(V \setminus \overline{V''}) = \cL(\Gamma)$.

By \propref{image contains pullbacks}, the set $U_{k-2}^0 \setminus \overline{U_k^0}$ is a non-degenerate annulus. Its modulus is equal to the extremal length of the following path family:
$$
\Gamma^0_k := \Gamma_{U_{k-2}^0 \setminus \overline{U_k^0}}(\partial U_k^0, \partial U_{k-2}^0).
$$
Recall that $\Gamma^\pbbD(U_k^0, \hlambda)$ is a family of paths connecting bounding edges of $U_k^0$ to the arcs $\hJ^0_k(\hlambda)_\pm$ (see \eqref{eq:path to circle}). Define
$$
\tiGamma^0_k := \Gamma^0_k \cup \Gamma^\pbbD(U_k^0, \hlambda).
$$

\begin{lem}\label{base inner component}
Let $J''$ be the full base of $V''$. Then there exist uniform constants $\lambda >0$ and $\delta >0$ such that
\begin{enumerate}[i)]
\item $\hJ^0_{n-4}\Subset J''[\lambda]$; and
\item $J''[\delta] \Subset g^{\bfr_{n-4}-\bfr_{n-N}}(J^-_{n-4})$.
\end{enumerate}
\end{lem}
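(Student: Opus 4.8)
The plan is to identify the full base $J''$ explicitly in terms of the combinatorial arcs of \secref{sec:disks}, and then deduce both inclusions from the nesting relations proved there together with the real \emph{a priori} bounds (\thmref{geometry bound} and \corref{hermancor}). Recall from \eqref{eq:setup 1}--\eqref{eq:setup 2} that $G = F^{\bfr_{n-4}-\bfr_{n-N}}|_{D^{n-4}}$, $V = G(D^{n-4})$, and $V'' = G(D^n)$. Put $m_1 := \bfr_n - \bfr_{n-4}$ and $m_2 := \bfr_{n-4} - \bfr_{n-N}$; by \lemref{exp time growth}(iv) we have $m_2 \le \bfr_{n-4} < r_n \le m_1$. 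Since $F^{m_1}(D^n) = D^{n-4}|_{g^{m_1}(J^-_n)}$, it follows that $V'' = F^{m_2}(D^n)$ is the $(m_1 - m_2)$th pullback of $D^{n-4}|_{g^{m_1}(J^-_n)}$ along $\pbbD$, with base $g^{m_2}(J^-_n)$. As $m_1 - m_2 < \bfr_n < q_{n+m_0}$ for an absolute constant $m_0$, this pullback has uniformly bounded degree by \propref{crit in disk}; hence by \propref{edge}, \lemref{initial scale}(ii) and \thmref{geometry bound} (as in the proof of \propref{a priori geometry}) the full base $J''$ differs from $g^{m_2}(J^-_n)$ by at most two edge arcs commensurate in length to $|g^{m_2}(J^-_n)|$. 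In particular $|J''| \asymp |g^{m_2}(J^-_n)|$, while the base of $V$ equals $g^{m_2}(J^-_{n-4})$.

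\emph{Claim (ii).} Iterating \lemref{nested arcs} gives $J^-_n \Subset J^-_{n-4}$, and by \thmref{geometry bound} the two components of $J^-_{n-4} \setminus J^-_n$ are commensurate in length to $|J^-_n|$. Pushing forward by $g^{m_2}$ and using \corref{hermancor} — a uniformly quasisymmetric conjugacy to a rigid rotation distorts commensurabilities of abutting arcs only by a bounded factor — the components of $g^{m_2}(J^-_{n-4}) \setminus g^{m_2}(J^-_n)$ are commensurate to $|g^{m_2}(J^-_n)| \asymp |J''|$. Choosing $\delta > 0$ below the resulting uniform ratio forces $J''[\delta] \Subset g^{m_2}(J^-_{n-4})$.

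\emph{Claim (i).} It suffices to show $\hJ^0_{n-4}$ is commensurate to $J^0_{n-4} := g^{m_2}(J^-_{n-4})$, the base of $V$ (which equals $g^{R^0_k}(J^-_k)$ for $k = n-4$): then, since $J^0_{n-4} \Subset \hJ^0_{n-4}$ (recorded just before \propref{image contains pullbacks}), $J'' \subset J^0_{n-4}$ (because $V'' \Subset V$), and $|J''| \asymp |J^0_{n-4}|$ (first paragraph and \corref{hermancor}), one gets $J'' \subset \hJ^0_{n-4}$ with $|\hJ^0_{n-4}| \asymp |J''|$, i.e.\ $\hJ^0_{n-4} \Subset J''[\lambda]$ for a suitable uniform $\lambda > 0$. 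To bound $|\hJ^0_{n-4}|/|J^0_{n-4}|$, observe $\hJ^0_{n-4} = g^{-R^1_{n-4}}(\hJ^1_{n-4})$, $J^0_{n-4} = g^{-R^1_{n-4}}(J^1_{n-4})$, and likewise one index down $\hJ^1_{n-4} = g^{-R^2_{n-4}}(\hJ^2_{n-4})$, $J^1_{n-4} = g^{-R^2_{n-4}}(J^2_{n-4})$; by \corref{hermancor} the problem reduces to bounding $|\hJ^2_{n-4}|/|J^2_{n-4}|$. Now \lemref{stage bases}(ii) and (iii) give $J^2_{n-4} \Subset J^-_{n-5}$ and $J^-_{n-6} \Subset \hJ^2_{n-4} \subseteq J^-_{n-8}$, while $J^2_{n-4} = g^{q_n}(J^-_{n-4})$ (by \lemref{R pullback times}(i)) satisfies $|J^2_{n-4}| \asymp |J^-_{n-4}|$ because $h$ conjugates $g^{q_n}$ to a rigid rotation; together with \corref{hermancor}, which makes all $|J^-_m|$ with $|m - n|$ bounded mutually commensurate, these yield $|\hJ^2_{n-4}| \asymp |J^2_{n-4}|$.

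The mathematics here is light. The one step requiring genuine care is the first paragraph — recognizing $V''$ as a pullback of a puzzle disk along $\pbbD$ and controlling its full base $J''$ against the arc $g^{\bfr_{n-4}-\bfr_{n-N}}(J^-_n)$ — after which (i) and (ii) are routine applications of the arc-commensurability machinery already in place. The real labor is bookkeeping: unwinding the definitions of $R^0_k, R^1_k, R^2_k$ and $\hJ^i_k$ from \lemref{R pullback times} and \lemref{stage bases}, and matching them against the Covering-Lemma data $V, V', V''$.
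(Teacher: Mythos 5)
Your identification of $V''$ as the $(m_1-m_2)$th pullback of $D^{n-4}|_{g^{m_1}(J^{-}_{n})}$ along $\pbbD$ with base $g^{m_2}(J^{-}_{n})$ is correct, and claim (i) goes through: since $\overline{V''}\subset V$ gives $J''\subset g^{m_2}(J^{-}_{n-4})$ while $J''\supset g^{m_2}(J^{-}_{n})$ forces $|J''|\asymp|g^{m_2}(J^{-}_{n-4})|$, your reduction of $|\hJ^0_{n-4}|/|J^0_{n-4}|$ to $|\hJ^2_{n-4}|/|J^2_{n-4}|$ via \corref{hermancor} and \lemref{stage bases} is sound. The gap is in claim (ii). There you argue entirely at the level of the base: the components of $g^{m_2}(J^{-}_{n-4})\setminus g^{m_2}(J^{-}_{n})$ are commensurate to $|J''|$, so ``choose $\delta$ below that ratio.'' But $J''$ is the \emph{full} base, which by your own first paragraph exceeds $g^{m_2}(J^{-}_{n})$ by two edge arcs whose lengths you only bound \emph{above}, by $C|g^{m_2}(J^{-}_{n})|$ (coming from $J''\subset g^{-(m_1-m_2)}(J^{-}_{n-4})$, an arc merely commensurate to $J^{-}_{n-4}$). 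The constant $C$ and the lower bound $c$ on the buffer are unrelated, so nothing prevents the edge arcs from swallowing the entire buffer; indeed $g^{-(m_1-m_2)}(J^{-}_{n-4})$ need not even be contained in $g^{m_2}(J^{-}_{n-4})$ (already for the golden mean, $g^{-m_1}(J^{-}_{n-4})$ overshoots an endpoint of $J^{-}_{n-4}$). The soft containment $\overline{V''}\subset V$ does give $J''\subset g^{m_2}(J^{-}_{n-4})$, but no uniform $\delta$.

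What is missing is a combinatorially definite localization of the edge arcs, not just a length bound. The paper obtains it by instead viewing $V''$ as the $r$th pullback of $D^{n-1}|_{J^{+}_{n}}$ along $\pbbD$, where $r=r_n-(\bfr_{n-4}-\bfr_{n-N})$ satisfies $0<r<q_{n+2}$ by \lemref{exp time growth}. The full base of that pullback is contained in $g^{-r}(J^{-}_{n-1})=g^{m_2}\bigl(g^{-r_n}(J^{-}_{n-1})\bigr)$, and the compact inclusion $g^{-r_n}(J^{-}_{n-1})\Subset J^{-}_{n-2}$ (essentially \lemref{nested arcs}) pins $J''$ inside $g^{m_2}(J^{-}_{n-2})$, leaving the buffer $g^{m_2}(J^{-}_{n-4}\setminus J^{-}_{n-2})$ untouched; \corref{hermancor} then makes that buffer commensurate to $|J''|$ and yields the uniform $\delta$. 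Inserting this step (or any other argument confining the edge arcs to $g^{m_2}(J^{-}_{n-2})$) repairs claim (ii); the rest of your write-up then stands.
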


\begin{proof}
Denote
$$
r := r_n - \bfr_{n-4}+\bfr_{n-N}.
$$
Then by \lemref{exp time growth} i) and iv), we have $0 < r < q_{n+2}$. Observe that $V''$ is the $r$th pullback of $D^{n-1}|_{J^+_n}$ along $\pbbD$. Thus, we have
$$
g^{\bfr_{n-4}+\bfr_{n-N}}(J^-_n) \subset J'' \subset g^{-r}(J^-_{n-1}).
$$
Since
$$
\hJ^0_{n-4} \subset g^{-R^1_{n-4} - R^2_{n-4}}(J^-_{n-8}),
$$
claim i) follows from \corref{hermancor}.

Note
$$
J^+_n = (q_{n+1}, q_n)_c \Subset J^-_{n-1} = (-q_n, -q_{n-1})_c \subset (-q_n+q_{n+1}, -q_{n-1})_c.
$$
Taking the preimage under $g^{-r_n}$, we obtain
$$
g^{-r_n}(J^-_{n-1}) \Subset (-2q_n, -q_{n-1} -r_n)_c \Subset (-q_{n-2}, -q_{n-1})_c = J^-_{n-2}.
$$
Hence
$$
J'' \Subset g^{\bfr_{n-4}-\bfr_{n-N}}(J^-_{n-2}) \Subset g^{\bfr_{n-4}-\bfr_{n-N}}(J^-_{n-4}).
$$
Claim ii) now follows from \corref{hermancor}.
\end{proof}

\begin{prop}\label{cover length decomp}
There exist uniform constants $\epsilon_0, C>0$ independent of $n = 2\bn$ and $N = 2\bN$ such that
$$
\cL(\Gamma) > \min\left\{\epsilon_0, \; C \sum_{\bk=\bn-\bN+3}^{\bn-2} \cL(\tiGamma^0_{2\bk})\right\}.
$$
\end{prop}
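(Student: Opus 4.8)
The plan is to read $\cL(\Gamma)=\mod(V\setminus\overline{V''})$ and to cut every path joining $\partial V''$ to $\partial V$ across the successive, pairwise disjoint annuli
$$
B_{2\bk}:=U^0_{2\bk-2}\setminus\overline{U^0_{2\bk}},\qquad \bn-\bN+3\le\bk\le\bn-2 ,
$$
which by \propref{image contains pullbacks} are non-degenerate, nested (innermost around $U^0_{n-4}$, outermost around $U^0_{n-N+4}$), and all contained in $D^{n-N}$. Using \lemref{base inner component} and \propref{image contains pullbacks} to place $V''$ inside $\overline{U^0_{n-4}}$ and $\partial V$ outside $U^0_{n-N+4}$, one gets that the chain $\{B_{2\bk}\}$ separates $\partial V''$ from $\partial V$ inside $V\setminus\overline{V''}$. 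Consequently every $\gamma\in\Gamma$ contains, for each admissible $\bk$, a subpath $\gamma^{(\bk)}\subset\overline{B_{2\bk}}$ joining $\partial U^0_{2\bk}$ to $\partial U^0_{2\bk-2}$, and the $\gamma^{(\bk)}$ for distinct $\bk$ are disjoint.

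The second step is to show that each crossing $\gamma^{(\bk)}$ contains a subpath in
$$
\tiGamma^0_{2\bk}=\Gamma^0_{2\bk}\cup\Gamma^\pbbD(U^0_{2\bk},\hlambda) .
$$
If $\gamma^{(\bk)}$ avoids $\pbbD$ in its interior then, after discarding endpoints, it lies in $\Gamma^0_{2\bk}=\Gamma_{B_{2\bk}}(\partial U^0_{2\bk},\partial U^0_{2\bk-2})$. Otherwise $\gamma^{(\bk)}$ first meets $\pbbD$ in one of the two collar arcs $\hJ^0_{2\bk-2}\setminus\overline{\hJ^0_{2\bk}}$; since $\hJ^0_{2\bk-2}\Subset\hJ^0_{2\bk}[\hlambda/2]$ by \lemref{lambda cover}, the hitting point lies in $\hJ^0_{2\bk}[\hlambda]_\pm$, and the piece of $\gamma^{(\bk)}$ running from $\partial U^0_{2\bk}$ to that point must leave $U^0_{2\bk}$ through a bounding edge $\cE_\pm(U^0_{2\bk})$, since the equipotential arcs of $\partial U^0_{2\bk}$ separate it from $\partial V$ and not from $\pbbD$. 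The uniform transversality of the bounding edges to $\pbbD$ furnished by \propref{a priori geometry} lets one arrange this piece to lie in the region $W$ appearing in the definition of $\Gamma^\pbbD(U^0_{2\bk},\hlambda)$, so it contains a path in that family. In either case $\gamma^{(\bk)}$ contains the required subpath.

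Combining the two steps, $\Gamma$ disjointly overflows the families $\tiGamma^0_{2\bk}$, $\bn-\bN+3\le\bk\le\bn-2$, so \lemref{overflow} gives $\cL(\Gamma)\ge\sum_{\bk}\cL(\tiGamma^0_{2\bk})$, which is the assertion with $C=1$ (and $\epsilon_0$ unused). If the near-$\pbbD$ extraction in the second step is only carried out for the paths for which it is unproblematic, one instead splits $\Gamma=\Gamma_\flat\cup\Gamma_\sharp$ into this subfamily and its complement (mirroring the proof of \propref{thick in thin}), notes $\cL(\Gamma_\flat)\ge\sum_{\bk}\cL(\tiGamma^0_{2\bk})$ by disjoint overflow, and observes that each path of $\Gamma_\sharp$ must travel along $\pbbD$ across some $B_{2\bk}$ and hence crosses a collar next to $\pbbD$ of extremal width uniformly bounded by some $\omega$, via \propref{length to circle} and \lemref{union width}; then $\cW(\Gamma)\le\cW(\Gamma_\flat)+\cW(\Gamma_\sharp)\le\big(\sum_{\bk}\cL(\tiGamma^0_{2\bk})\big)^{-1}+\omega$, and inverting gives $\cL(\Gamma)>\tfrac12\min\{\omega^{-1},\ \sum_{\bk}\cL(\tiGamma^0_{2\bk})\}$, i.e. the claim with $\epsilon_0=\tfrac12\omega^{-1}$ and $C=\tfrac12$.

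I expect the main obstacle to be exactly the planar bookkeeping of the second step: verifying that a crossing which touches $\pbbD$ genuinely produces a path of $\Gamma^\pbbD(U^0_{2\bk},\hlambda)$ with endpoints in $\hJ^0_{2\bk}[\hlambda]_\pm$ and inside the correct domain $W$, and that the subpaths extracted at different levels stay disjoint even when several of them hug $\pbbD$. This relies entirely on the uniform transversality of the bounding edges to $\pbbD$ from \propref{a priori geometry} and on the collar comparisons of \lemref{lambda cover} (and \lemref{base inner component} for the innermost level); granting these, the rest is the extremal-length bookkeeping of \lemref{overflow}, \lemref{union width} and \propref{length to circle}.
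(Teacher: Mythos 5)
Your fallback argument in the last paragraph is essentially the paper's proof: the paper splits $\Gamma$ into a subfamily that disjointly overflows $\{\tiGamma^0_{2\bk}\}_{\bk=\bn-\bN+3}^{\bn-2}$ (contributing width at most $L^{-1}$ with $L=\sum_{\bk}\cL(\tiGamma^0_{2\bk})$, by \lemref{overflow}) and a subfamily overflowing the single near-circle family $\Gamma^\pbbD(V'',\lambda)$, whose width is bounded by a uniform $\omega$ via \propref{length to circle}; then \lemref{union width} gives $\cW(\Gamma)\le L^{-1}+\omega$ and the stated bound follows. Your primary argument (the one yielding $C=1$ with $\epsilon_0$ unused) does not work, and the flaw is concrete: \lemref{base inner component}~i) asserts $\hJ^0_{n-4}\Subset J''[\lambda]$, i.e.\ the base of $U^0_{n-4}$ sits inside an enlargement of the full base of $V''$ --- the opposite of what you need to ``place $V''$ inside $\overline{U^0_{n-4}}$.'' Since $V''$ and $U^0_{n-4}$ are different pullbacks along $\pbbD$ with merely commensurate bases, neither contains the other, so the nested annuli $U^0_{2\bk-2}\setminus\overline{U^0_{2\bk}}$ do not separate $\partial V''$ from $\partial V$: a path of $\Gamma$ can leave $\partial V''$ and escape along $\pbbD$ without crossing the innermost annuli at all. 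This escape route is exactly what the extra family $\Gamma^\pbbD(V'',\lambda)$ (anchored at $V''$, not at the $U^0_k$) and the $\min\{\epsilon_0,\cdot\}$ form of the conclusion are there to absorb; so the fallback is not optional but the actual proof.
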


\begin{proof}
Recall that
$$
V = G(D^{n-4}) = D^{n-N}|_{g^{\bfr_{n-4} - \bfr_{n-N}}(J^-_{n-4})}
$$
Let $\lambda >0$ be the constant given in \lemref{base inner component}. By \propref{length to circle}, there exists a uniform constant $\omega >0$ independent of $n$ such that
$$
\cW(\Gamma^\pbbD(V'', \lambda)) < \omega.
$$

Denote
$$
L := \sum_{\bk=\bn-\bN+3}^{\bn-2}\cL(\tiGamma^0_{2\bk}).
$$
By \lemref{lambda cover} iii) and \lemref{base inner component} ii), we see that $\Gamma$ disjointly overflows $\{\Gamma^\pbbD(V'', \lambda)\} \cup\{\tiGamma^0_{2\bk}\}_{\bk=\bn-\bN+3}^{\bn-2}$ Thus, by \lemref{overflow} and \lemref{union width}, we have
$$
\cW(\Gamma) \leq L^{-1} +\omega.
$$
The result follows.
\end{proof}

\begin{prop}\label{slit 0}
There exist uniform constants $\epsilon_0, C > 0$ such that
$$
\cL(\tiGamma^0_k) > \min \{\epsilon_0, \; C\cL(\Gamma^0_k)\}.
$$
\end{prop}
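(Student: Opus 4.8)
Proof of \propref{slit 0} is planned as follows. The idea is to view $\tiGamma^0_k = \Gamma^0_k \cup \Gamma^\pbbD(U^0_k,\hlambda)$ as $\Gamma^0_k$ with a single auxiliary family adjoined, show that this auxiliary family has uniformly bounded extremal width, and then finish with a two-line estimate on widths together with the elementary dichotomy $\cL(\Gamma^0_k)\gtrless 1$.

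The first step is to bring $\Gamma^\pbbD(U^0_k,\hlambda)$ under the hypotheses of \propref{length to circle}. Since pullback along $\pbbD$ composes additively in the pullback time, $U^0_k$ --- the $R^1_k$th pullback of $U^1_k$, which is in turn the $R^2_k$th pullback of $U^2_k = D^{k-4}|_{\hJ^2_k}$ --- is the $(R^1_k+R^2_k)$th pullback of $D^{k-4}|_{\hJ^2_k}$ along $\pbbD$. By \lemref{R pullback times} iii) this pullback time equals $R^1_{k+2} = q_{k+4}-\bfr_k+\bfr_{n-N}\leq q_{k+4} = q_{(k-4)+8}$, the inequality being $\bfr_{n-N}\leq\bfr_k$, valid since $n-N\leq k$ throughout the relevant range. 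The base $\hJ^2_k$ is a nonempty combinatorial subarc of $J^-_{k-4} = D^{k-4}\cap\pbbD$ by \lemref{stage bases} iii), and $\hlambda$ is a uniform constant with $1<\hlambda<\Lambda(k-4)$ once $n$ (hence $k$) is large, using \lemref{lambda cover} and $\Lambda(\cdot)\to\infty$. Thus \propref{length to circle} applies with puzzle disk $D^{k-4}$, pullback time $R^1_{k+2}$, $m=8$, and $\lambda=\hlambda$, yielding a uniform constant $C' = C'(8,\hlambda)>0$, independent of $n$ and $N$, with $\cL(\Gamma^\pbbD(U^0_k,\hlambda))>C'$; equivalently $\cW(\Gamma^\pbbD(U^0_k,\hlambda))<\omega := 1/C'$.

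The second step is the width computation. By \lemref{union width}, $\cW(\tiGamma^0_k)\leq\cW(\Gamma^0_k)+\cW(\Gamma^\pbbD(U^0_k,\hlambda))<\cW(\Gamma^0_k)+\omega$. Set $\epsilon_0 := C := 1/(1+\omega)$. If $\cL(\Gamma^0_k)\geq 1$, i.e. $\cW(\Gamma^0_k)\leq 1$, then $\cW(\tiGamma^0_k)<1+\omega = 1/\epsilon_0$, so $\cL(\tiGamma^0_k)>\epsilon_0$. If instead $\cL(\Gamma^0_k)<1$, i.e. $\cW(\Gamma^0_k)>1$, then $\omega<\omega\,\cW(\Gamma^0_k)$, so $\cW(\tiGamma^0_k)<(1+\omega)\,\cW(\Gamma^0_k) = \cW(\Gamma^0_k)/C$, whence $\cL(\tiGamma^0_k)>C\,\cL(\Gamma^0_k)$. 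In either case $\cL(\tiGamma^0_k)>\min\{\epsilon_0,\,C\,\cL(\Gamma^0_k)\}$, which is the claim; note $\epsilon_0$ and $C$ depend only on $\omega$, hence are independent of $n$ and $N$.

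The only step carrying real content is the first one: recognizing $U^0_k$ as a pullback along $\pbbD$, of time bounded by $q_{(k-4)+8}$, of the puzzle disk $D^{k-4}$ restricted to a combinatorial arc, so that the edge-geometry estimate \propref{length to circle} can be invoked with constants independent of $n$. This reduces to the time bookkeeping of \lemref{R pullback times} (specifically $R^1_{k+2} = R^1_k + R^2_k \leq q_{k+4}$) together with \lemref{stage bases} iii) and \lemref{lambda cover}; once these are assembled, the appearance of the minimum on the right-hand side is exactly what the width dichotomy produces, and nothing further is needed.
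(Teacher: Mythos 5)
Your proposal is correct and follows essentially the same route as the paper: identify $U^0_k$ as the $(R^1_k+R^2_k)$th pullback of $D^{k-4}|_{\hJ^2_k}$ along $\pbbD$ with pullback time bounded by $q_{k+4}$, invoke \propref{length to circle} to get a uniform width bound $\omega$ on $\Gamma^\pbbD(U^0_k,\hlambda)$, and conclude via \lemref{union width}. Your explicit dichotomy with $\epsilon_0=C=1/(1+\omega)$ just spells out the paper's closing ``the result follows.''
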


\begin{proof}
Recall that $U_k^0$ is the $(R^1_k + R^2_k)$th pullback of $D^{k-4}|_{\hJ^2_k}$ along $\pbbD$. By \lemref{R pullback times} i) and \propref{length to circle}, there exists a uniform constant $\omega >0$ independent of $n$ such that
$$
\cW(\Gamma^\pbbD(U_k^0, \hlambda)) < \omega.
$$
Applying \lemref{union width}, we have
$$
\cW(\tiGamma^0_k) < \frac{1}{\cL(\Gamma^0_k)} + \omega.
$$
The result follows
\end{proof}

\begin{prop}\label{slit 1}
There exist uniform constants $\epsilon_0, C > 0$ such that
$$
\cL(\Gamma^0_k)  > \min \{\epsilon_0, \; C\mod A^{k-4}\}.
$$
\end{prop}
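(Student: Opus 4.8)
The plan is to push the extremal length $\cL(\Gamma^0_k)=\mod(U^0_{k-2}\setminus\overline{U^0_k})$ forward under a dynamical map of uniformly bounded degree, landing inside a puzzle annulus which, after enlarging its inner disk, becomes a \emph{slit} copy of $A^{k-4}=D^{k-6}\setminus\overline{D^{k-4}}$; the modulus lost to the slits is then absorbed exactly as in the proof of \propref{thick in thin}, the key input being \propref{length to circle}.

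First I would set up the covering. Recall that $U^0_k$ is the $R^1_k$th pullback of $U^1_k$ along $\pbbD$, while $U^0_{k-2}$ is the $R^1_{k-2}$th pullback of $U^1_{k-2}$, hence the $(R^1_{k-2}+R^2_{k-2})$th pullback of $U^2_{k-2}=D^{k-6}|_{\hJ^2_{k-2}}$. Since $R^1_{k-2}+R^2_{k-2}=R^1_k$ by \lemref{R pullback times} (iii), the map $G:=F^{R^1_k}$ restricts to a branched covering $G:(U^0_{k-2},U^0_k)\to(U^2_{k-2},U^1_k)$ of topological disks, where $U^0_k\Subset U^0_{k-2}$ by \propref{image contains pullbacks} and $U^1_k\Subset U^2_{k-2}$ by \propref{stage 1 pullback}. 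Moreover
$$R^1_k=q_{k+2}-\bfr_{k-2}+\bfr_{n-N}<q_{k+2}+\bfr_{k-6}<q_{k+2}+r_{k-4}<q_{k+4},$$
using $\bfr_{n-N}\leq\bfr_{k-6}<r_{k-4}$ (\lemref{exp time growth} (iv)) and $r_{k-4}\leq q_{k-2}<q_{k+2}$, together with $q_{k+4}>2q_{k+2}$; hence $\deg G$ is bounded by a uniform constant $d'$ by \propref{crit in disk}. Now \lemref{pullback mod} gives $\cL(\Gamma^0_k)\geq\tfrac1{d'}\mod(U^2_{k-2}\setminus\overline{U^1_k})$.

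Next I would replace $U^1_k$ by the full puzzle disk $D^{k-4}$. By \propref{stage 1 pullback}, $U^1_k\subset D^{k-4}$, and $D^{k-4}\Subset U^2_{k-2}$ follows from \propref{nested disks} ($D^{k-4}\Subset D^{k-6}$) together with $J^-_{k-4}\Subset\hJ^2_{k-2}$ (\lemref{stage bases} (iii)). Since enlarging the inner disk only decreases the modulus, $\mod(U^2_{k-2}\setminus\overline{U^1_k})\geq\mod A'$, where $A':=U^2_{k-2}\setminus\overline{D^{k-4}}=(D^{k-6}\setminus\overline{D^{k-4}})|_{\hJ^2_{k-2}}$ is precisely $A^{k-4}$ with the two arcs of $J^-_{k-6}\setminus\hJ^2_{k-2}$ cut out as slits. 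To estimate $\mod A'=\cL(\Gamma)$, write $\Gamma:=\Gamma_{A'}(\partial D^{k-4},\,\partial D^{k-6}\cup(J^-_{k-6}\setminus\hJ^2_{k-2}))$ and split $\Gamma=\Gamma_{A'}(\partial D^{k-4},\partial D^{k-6})\cup\Gamma^\pbbD$, with $\Gamma^\pbbD$ the paths terminating on the slit arcs. The first family is a subfamily of $\Gamma_{A^{k-4}}(\partial D^{k-4},\partial D^{k-6})$, so its extremal length is $\geq\mod A^{k-4}$; and by \thmref{geometry bound} the slit arcs lie in $J^-_{k-4}[\lambda]_\pm$ for a uniform $\lambda>1$, so $\Gamma^\pbbD$ overflows $\Gamma^\pbbD(D^{k-4},\lambda)$ of \eqref{eq:path to circle}, whose extremal length is bounded below uniformly by \propref{length to circle} (applied with $D^{k-4}$ regarded as the $r_{k-4}$th pullback of $D^{k-5}|_{J^+_{k-4}}$, noting $r_{k-4}<q_{k-2}$). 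Thus $\cW(\Gamma^\pbbD)\leq\omega$ for a uniform $\omega$, and \lemref{union width} gives $\cW(\Gamma)\leq(\mod A^{k-4})^{-1}+\omega$, i.e. $\mod A'\geq\min\{(2\omega)^{-1},\tfrac12\mod A^{k-4}\}$. Chaining the three estimates proves the proposition with $\epsilon_0:=(2\omega d')^{-1}$ and $C:=(2d')^{-1}$.

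The conceptual heart is the last step—slitting along $\pbbD$ must not destroy the modulus of an annulus that is already nearly degenerate—but this is exactly the phenomenon \propref{length to circle} (and behind it the soft complex \emph{a priori} bounds of \secref{subsec:comp ext}) was designed to capture, so with that proposition in hand the remaining obstacle is combinatorial bookkeeping: confirming the time estimate $R^1_k<q_{k+4}$ so that $G$ has bounded degree, and locating the slit arcs $J^-_{k-6}\setminus\hJ^2_{k-2}$ inside a collar $J^-_{k-4}[\lambda]_\pm$ of bounded relative width so that \propref{length to circle} applies to the correct puzzle disk.
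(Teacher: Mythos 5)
Your argument is correct and follows essentially the same route as the paper: push $\Gamma^0_k$ forward by the uniformly bounded-degree covering $F^{R^1_k}:(U^0_{k-2},U^0_k)\to(U^2_{k-2},U^1_k)$, then split the resulting path family into a radial part dominated by $\mod A^{k-4}$ and a slit part whose width is controlled by \propref{length to circle}. The only (cosmetic) difference is that you first enlarge the inner disk from $U^1_k$ to $D^{k-4}$ by modulus monotonicity and apply \propref{length to circle} to $D^{k-4}$ itself, whereas the paper keeps $U^1_k$ as the inner disk, uses the overflow $\cL(\Gamma^1_k)>\mod A^{k-4}$, and applies \propref{length to circle} to $U^1_k$ viewed as the $R^2_k$th pullback of $D^{k-4}|_{\hJ^2_k}$; both variants rest on the same containments from \lemref{stage bases} and \lemref{lambda cover}.
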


\begin{proof}
Consider the pair of nested disks $U^0_k \Subset U^0_{k-2}$ and $U^1_k \Subset U^0_{k-2} = D^{k-6}|_{\hJ^2_{k-2}}$ (see Proposition \ref{stage 1 pullback} and \ref{image contains pullbacks}). The map $H : (U^0_k, U^0_{k-2}) \to (U^1_k, U^0_{k-2})$ defined by
$$
H:= F^{R^1_{k-2} + R^2_{k-2}} = F^{R^1_k}
$$
(see \lemref{R pullback times} iii)) is a branched covering between respective disks. By \propref{crit in disk}, \lemref{R pullback times} i) and \lemref{pullback mod}, there exists a uniform constant $C'>0$ independent of $n$ such that
$$
\cL(\Gamma^0_k) = \mod(U_{k-2}^0 \setminus \overline{U_k^0}) > C'\mod(U^0_{k-2} \setminus \overline{U^1_k}).
$$
The modulus of $U^0_{k-2} \setminus \overline{U^1_k}$ is equal to the extremal length of the following path family
$$
\Gamma_k := \Gamma_{U^0_{k-2} \setminus \overline{U^1_k}}(\partial U^1_k, \partial U^1_k).
$$

Denote
$$
\Gamma^1_k := \Gamma_{D^{k-6} \setminus \overline{U_k^1}}(\partial U_k^1, \partial D^{k-6}).
$$
Then
$$
\cL(\Gamma^1_k) > \mod A^{k-4}.
$$
Define
$$
\tiGamma^1_k := \Gamma^1_k \cup \Gamma^\pbbD(U_k^1, \hlambda).
$$

Recall that $U_k^1$ is the $R^1_k$th pullback of $D^{k-4}|_{\hJ^2_k}$. By \lemref{R pullback times} i) and \propref{length to circle}, there exists a uniform constant $\omega >0$ independent of $n$ such that
$$
\cW(\Gamma^\pbbD(U_k^1, \hlambda)) < \omega.
$$
Applying \lemref{union width}, we have
$$
\cW(\tiGamma^1_k) < \frac{1}{\cL(\Gamma^1_k)} + \omega.
$$

Finally, observe that by \lemref{lambda cover} i) and ii), the path family $\Gamma_k$ overflows $\tiGamma^1_k$. The result follows from \lemref{overflow}.
\end{proof}

\subsection{Proof of the triviality of $X_0$}

We are now ready to prove the main result of this section.

\begin{proof}[Proof of \thmref{lc at crit}]
Choose a large even number $N = 2\bN >>1$ to be specified later. Let $n \geq n_0 + N$. For concreteness, assume that $n = 2\bn$ is even. Throughout this proof, let $C >0$ stand for a uniform constant independent of $n$ and $N$.

Denote
$$
M := \mod(U \setminus \overline{U''}) = \mod \bfA^n > 0.
$$
Assume that \eqref{eq:min assu} holds for some sufficiently small $\epsilon$. Then by \propref{thick in thin}, we have
$$
\mod A^{n-2} > \min\{\epsilon_0, C\mod \bfA^{n-4}\} > CM.
$$
Hence, \eqref{eq:collar} holds with $\kappa = C$. Since \eqref{eq:cover conc 1} contradicts \eqref{eq:min assu}, \thmref{cover lem} implies that \eqref{eq:cover conc 2} holds.

By Proposition \ref{slit 0}, \ref{slit 1} and \ref{thick in thin}, we see that
$$
\cL(\tiGamma^0_k) > \min\{\epsilon_0, C\mod A^{k-4}\} > CM.
$$
for every even number $k = 2\bk$ such that $\bn - \bN +3 \leq \bk \leq \bn - 2$. Then by \propref{cover length decomp}, we have
$$
\mod(V\setminus \overline{V''}) = \cL(\Gamma) > \min\left\{\epsilon_0, \; C \sum_{\bk=\bn-\bN+3}^{\bn-2} \cL(\tiGamma^0_{2\bk})\right\} > CM(\bN-5).
$$
Using \lemref{cover degrees}, we see that $\bN$ can be made arbitrarily large without increasing $d_{\sm}$. This contradicts \eqref{eq:cover conc 2}.

Thus, there is some uniform lower bound on $\mod\bfA^n$. By \corref{thick if thin}, the same is true for $\mod(A^n)$. Since $A^n$ surrounds $X_0 \ni c_0$, we conclude by \lemref{grot} that $X_0 = \{c_0\}$.
\end{proof}

\section{Spreading Local Connectivity}\label{sec:lc on circ}

In \secref{sec:lc at crit}, we proved that the fiber $X_0$ rooted at the critical point $\xi_0 = c_0$ is trivial. To complete the proof of Theorem A stated in \secref{sec:intro}, we need to extend this result to fibers $X_s$ rooted at arbitrary points $\xi_s \in \pbbD$ with angles $s \in \bbR/\bbZ$.

\subsection{Combinatorial address of $s$}

For $n \in \bbN$, denote
$$
g_n := g^{-q_n}|_{I^-_{n-1}}.
$$
Let $\sigma_n = (\alpha_n, \beta_n)$ for some $0\leq \alpha_n < a_n$ and $\beta_n \in \{0,1\}$. Denote
$$
g^{\sigma_n} := g_n^{\alpha_n}\circ g_{n-1}^{\beta_n}.
$$
The inverse of $g^{\sigma_n}$ is denoted by $g^{-\sigma_n}$.

Let $\xi_s \in J^-_n = (-q_n, -q_{n+1})_c \subset \pbbD$, and assume that $\xi_s$ is not an iterated preimage of $\xi_0$. 

\begin{lem}\label{codes}
There exists a unique pair $\sigma_{n+1}(s) = (\alpha_{n+1}(s), \beta_{n+1}(s))$ such that
$$
g^{-\sigma_{n+1}(s)}(\xi_s) \in J^-_{n+1}.
$$
\end{lem}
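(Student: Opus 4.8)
The plan is to treat this as a purely combinatorial statement about the circle map $g$ and read it off the rigid rotation model. Via the conjugacy $h$ of \corref{hermancor} the marked points $c_k$ and the point $\xi_s$ have the same cyclic order on $\pbbD$ as $e^{2\pi i k\rho}$ and $e^{2\pi i s}$, so the arcs $I^-_m$, $J^-_m$ behave combinatorially exactly as for rotation by $\rho$, with nesting governed by the recursion $q_{m+2}=a_{m+1}q_{m+1}+q_m$. The first thing I would record is the refinement of the $n$-th dynamical partition $\cI^-_n$ (for $g^{-1}$, based at $c_0$) into $\cI^-_{n+1}$: only $I^-_n$ gets subdivided, into the arcs $g^{-q_n-j q_{n+1}}(I^-_{n+1})$ for $0\le j<a_{n+1}$ together with $I^-_{n+2}$, while $I^-_{n+1}$ survives intact. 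Thus, modulo the finitely many endpoints $c_k$ (none of which equals $\xi_s$, as $\xi_s$ is not an iterated preimage of $\xi_0$),
$$
J^-_n \;=\; \Bigl(\,\bigsqcup_{j=0}^{a_{n+1}-1} g^{-q_n-j q_{n+1}}(I^-_{n+1})\,\Bigr)\;\sqcup\; J^-_{n+1},
\qquad J^-_{n+1}=I^-_{n+2}\sqcup\{c_0\}\sqcup I^-_{n+1}.
$$

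Given this, \emph{existence} of $\sigma_{n+1}(s)$ is immediate by cases. If $\xi_s\in J^-_{n+1}$, take $\sigma_{n+1}(s)=(0,0)$, so $g^{-\sigma_{n+1}(s)}=\Id$. If $\xi_s$ lies in the piece $g^{-q_n-j q_{n+1}}(I^-_{n+1})$, take $\sigma_{n+1}(s)=(j,1)$; then
$$
g^{\sigma_{n+1}(s)}=g_{n+1}^{\,j}\circ g_n,
\qquad
g^{-\sigma_{n+1}(s)}=g_n^{-1}\circ g_{n+1}^{-j}=g^{q_n}\circ g^{jq_{n+1}},
$$
and a short induction on $j$ shows this composition of inverse first-return branches is legitimately defined at $\xi_s$: each intermediate image $g^{iq_{n+1}}(\xi_s)$ remains in $I^-_n$, the domain of $g_{n+1}$ (it is one of the refinement pieces of $I^-_n$), and the final $g^{q_n}$ is applied to a point of $I^-_{n+1}$, which lies in $I^-_{n-1}$, the domain of $g_n$, by the standard ordering of closest-return arcs on a common side of $c_0$. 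The outcome is $g^{q_n+jq_{n+1}}(\xi_s)\in I^-_{n+1}\subset J^-_{n+1}$.

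The real content, and the step I expect to be the main obstacle, is \emph{uniqueness}: no other admissible pair $(\alpha,\beta)$ with $0\le\alpha<a_{n+1}$, $\beta\in\{0,1\}$ should send $\xi_s$ into $J^-_{n+1}$. The clean way to get this is to show that, within the first-return tower over $J^-_{n+1}$ built from the commuting pair $(g_{n+1}|_{I^-_n},\,g_n|_{I^-_{n+1}})$, the maps $g^{-\sigma_{n+1}}$ attached to the admissible pairs have pairwise disjoint natural domains, and that these domains are exactly the pieces in the displayed decomposition of $J^-_n$; since $\xi_s$ belongs to a single piece, $\sigma_{n+1}(s)$ is forced. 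Establishing the disjointness amounts to tracking the positions of the endpoints $c_{-\beta q_n-\alpha q_{n+1}}$ and checking that the translates $g^{q_n+jq_{n+1}}(J^-_{n+1})$ and $g^{jq_{n+1}}(J^-_{n+1})$ never spill out of $J^-_n$ in a way that creates an overlap — here one uses the bounded-type bound $a_i\le\tau$ and the continued-fraction recursion to keep the orbit segments under control, and one must be scrupulous about the natural domain of each inverse-branch composition $g^{-\sigma_{n+1}}$ (a branch counts only on the part of $J^-_n$ it genuinely covers, not on the maximal set where $g^{q_n+jq_{n+1}}$ happens to be defined as a power of $g$). Once the domains of the $g^{-\sigma_{n+1}}$ are seen to tile $J^-_n$, uniqueness follows; everything outside this domain/overlap bookkeeping is a direct unwinding of the definitions.
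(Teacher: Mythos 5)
Your proposal is correct and follows essentially the same route as the paper: both proofs rest on the observation that $I^-_{n+2},\, I^-_{n+1},\, g_n(I^-_{n+1}),\, g_{n+1}\circ g_n(I^-_{n+1}),\ldots, g_{n+1}^{a_{n+1}-1}\circ g_n(I^-_{n+1})$ have pairwise disjoint interiors and cover $J^-_n$ up to iterated preimages of $\xi_0$, so $\xi_s$ lies in exactly one piece and the corresponding pair is forced. (The only cosmetic difference is that the paper treats the disjointness as the standard refinement property of dynamical partitions rather than re-deriving it, and it does not require the bounded-type bound $a_i\le\tau$ for this step.)
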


\begin{proof}
The intervals $I^-_{n+2}, I^-_{n+1}, g_n(I^-_{n+1}), g_{n+1} \circ g_n(I^-_{n+1}), \ldots, g_{n+1}^{a_{n+1}-1} \circ g_n(I^-_{n+1})$ have pairwise disjoint interiors, and they cover $J^-_n$ except iterated preimages of $\xi_0$. Thus, $\xi_s$ belongs to exactly one of these arcs, and there is a unique pair $\sigma_{n+1}(s)$ such that $g^{-\sigma_{n+1}(s)}$ brings this arc back to $J^-_{n+1}$.
\end{proof}

For $k \geq 0$, inductively define $s_k$ and $\sigma_{n+k+1}(s_k)$ by
$$
s_0 := s
\matsp{and}
\xi_{s_{k+1}} := g^{-\sigma_{n+k+1}(s_k)}(\xi_s) \in J^-_{n+k+1}.
$$
For $m \geq 1$, the {\it $(n, m)$th combinatorial address of $s$} is defined as the following $m$-tuple of pairs
$$
\Sigma^n_{n+m}(s) = (\sigma_{n+1}(s_0), \ldots, \sigma_{n+m}(s_{m-1})).
$$
We denote
$$
g^{\Sigma^n_{n+m}(s)} := g^{\sigma_{n+1}(s_0)} \circ \ldots \circ g^{\sigma_{n+m}(s_{m-1})}.
$$
The inverse of $g^{\Sigma^n_{n+m}}$ is denoted by $g^{-\Sigma^n_{n+m}}$. Lastly, we define $\Sigma^n_n(s)$ to be the trivial $0$-tuple. The following result is obvious.

\begin{lem}\label{comb add}
For $n \leq k \leq m$, let
$$
\xi_{s'} := g^{-\Sigma^n_k(s)}(\xi_s) \in J^-_k,
$$
and
$$
\Sigma^n_k(s) = (\sigma_{n+1}, \sigma_{n+2}, \ldots, \sigma_k)
\matsp{and}
\Sigma^k_m(s') = (\sigma_{k+1}, \sigma_{k+1}, \ldots, \sigma_m).
$$
Then
$$
\Sigma^n_m(s) = (\sigma_{n+1}, \sigma_{n+2}, \ldots, \sigma_m).
$$
\end{lem}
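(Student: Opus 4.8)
The plan is to unwind the inductive definition of the combinatorial address and reduce everything to the uniqueness statement of \lemref{codes}. The key observation I would isolate first is that, for any $\xi_x \in J^-_l$ that is not an iterated preimage of $\xi_0$, the pair $\sigma_{l+1}(x)$ is intrinsically attached to the point $\xi_x$ together with its scale $l$: by \lemref{codes} it is the unique pair for which $g^{-\sigma_{l+1}(x)}(\xi_x) \in J^-_{l+1}$. In particular $\sigma_{l+1}(x)$ does not remember the path along which $\xi_x$ was produced.

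Next I would fix the bookkeeping. Running the construction of the $s_j$ and $\sigma_{n+j+1}(s_j)$ from base scale $n$ at the point $s$ yields a sequence $s = s_0, s_1, s_2, \dots$ with $\xi_{s_j} \in J^-_{n+j}$ and $\xi_{s_{j+1}} = g^{-\sigma_{n+j+1}(s_j)}(\xi_{s_j})$, so that $g^{-\Sigma^n_{n+j}(s)}(\xi_s) = \xi_{s_j}$ and $\Sigma^n_{n+j}(s) = (\sigma_{n+1}(s_0), \dots, \sigma_{n+j}(s_{j-1}))$ by definition. Comparing with the hypothesis of the lemma, the intermediate point $s'$ is precisely $s_{k-n}$, and the first $k-n$ entries of the tuple I want to compute are exactly those of $\Sigma^n_k(s)$.

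Then I would run the same construction a second time, this time from base scale $k$ at the point $s'$, obtaining $s' = s'_0, s'_1, \dots$ with $\xi_{s'_i} \in J^-_{k+i}$, and prove by induction on $i$ that $s'_i = s_{k-n+i}$ for all $i \ge 0$. The base case is $s'_0 = s' = s_{k-n}$; for the step, once $\xi_{s'_i} = \xi_{s_{k-n+i}} \in J^-_{k+i}$, uniqueness in \lemref{codes} forces $\sigma_{k+i+1}(s'_i) = \sigma_{k+i+1}(s_{k-n+i})$, whence $\xi_{s'_{i+1}} = g^{-\sigma_{k+i+1}(s'_i)}(\xi_{s'_i}) = \xi_{s_{k-n+i+1}}$. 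Thus the entries $\sigma_{k+1}(s'_0), \dots, \sigma_m(s'_{m-k-1})$ of $\Sigma^k_m(s')$ coincide with $\sigma_{k+1}(s_{k-n}), \dots, \sigma_m(s_{m-n-1})$, which are the last $m-k$ entries of $\Sigma^n_m(s)$; concatenating with the first $k-n$ entries identified above gives $\Sigma^n_m(s) = (\sigma_{n+1}, \dots, \sigma_m)$ as claimed, with the degenerate cases $k=n$ and $k=m$ handled by the convention that $\Sigma^l_l$ is the empty tuple.

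I do not expect a genuine obstacle here: the lemma is a formal consequence of the definitions together with \lemref{codes}. The only thing requiring care is the index shift between the scale-$n$ and scale-$k$ enumerations, and the induction in the previous paragraph is designed precisely to track it.
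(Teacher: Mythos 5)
Your proof is correct and is exactly the verification the paper has in mind: the paper states \lemref{comb add} without proof (``The following result is obvious''), and the content is precisely what you identify, namely that by the uniqueness in \lemref{codes} the pair $\sigma_{l+1}(x)$ depends only on the point $\xi_x$ and the scale $l$, so the two runs of the inductive construction produce the same sequence of points up to the index shift you track. No further comment is needed.
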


\begin{lem}\label{comb spread}
Let $\Sigma^n_{n+4}(s) = (\sigma_{n+1},\sigma_{n+2}, \sigma_{n+3}, \sigma_{n+4})$. Then either
$$
\sigma_{n+3} = \sigma_{n+4} = (0, 0),
$$
or
$$
g^{\Sigma^n_{n+4}(s)}(J^-_{n+4}) \subset (-q_n-q_{n+5}, -q_{n+1}-q_{n+4}-q_{n+5})_c \Subset J^-_n.
$$
In the latter case, we have
$$
J^-_{n+4} \Subset (q_{n+5}-q_{n+4}, q_{n+4})_c \subset J^-_n  \cap g^{-\Sigma^n_{n+4}(s)}(J^-_n).
$$
\end{lem}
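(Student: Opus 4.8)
The plan is to reduce the statement to an explicit bookkeeping with combinatorial arcs and the continued–fraction recursion. First I would record the normal form of the building blocks: by the proof of \lemref{codes}, each pair $\sigma_{n+k}=(\alpha_{n+k},\beta_{n+k})$ is either $(0,0)$, with $g^{\sigma_{n+k}}=\Id$, or of the form $(\alpha_{n+k},1)$ with $0\le\alpha_{n+k}\le a_{n+k}-1$, in which case $g^{\sigma_{n+k}}=g_{n+k}^{\alpha_{n+k}}\circ g_{n+k-1}$ is the restriction of the rigid shift $g^{-\alpha_{n+k}q_{n+k}-q_{n+k-1}}$. Composing, $g^{\Sigma^n_{n+4}(s)}$ is the restriction to $\overline{J^-_{n+4}}$ of the rigid shift $g^{-T}$ and $g^{-\Sigma^n_{n+4}(s)}$ the restriction to $\overline{J^-_n}$ of $g^{T}$, where $T=\sum_{k=1}^4(\alpha_{n+k}q_{n+k}+\beta_{n+k}q_{n+k-1})$; both restrictions are injective since each is a pullback along $\pbbD$. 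Hence $g^{\Sigma^n_{n+4}(s)}(J^-_{n+4})=(-q_{n+4}-T,-q_{n+5}-T)_c$, and the lemma becomes a statement about how this translated arc, together with $g^T(J^-_n)$, sits relative to the named arcs, once we know which values of $T$ can occur. Note that for a general integer $T$ the arc $g^{-T}(J^-_{n+4})$ would be spread all over $\pbbD$, so the combinatorial constraints on $T$ coming from the $\sigma$-structure are essential.

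The heart of the matter is therefore to control $T$ — equivalently the residue $T\rho\bmod1$ — in the ``latter case'' $(\sigma_{n+3},\sigma_{n+4})\neq((0,0),(0,0))$. For this I would transport everything to the model circle by $h$ (so $c_k\leftrightarrow k\rho$, and $(j,m)_c$ becomes the short arc between $j\rho$ and $m\rho$), write $\theta_m:=\|q_m\rho\|$ so that $\theta_{m-1}=a_m\theta_m+\theta_{m+1}$ and $J^-_m$ is the arc around $c_0$ carrying $\theta_m$ on its $I^-_m$-side and $\theta_{m+1}$ on its $I^-_{m+1}$-side, and analyze one block. A direct computation with the recursion shows: when $\sigma_{n+k}=(\alpha_{n+k},1)$, the arc $g^{\sigma_{n+k}}(\overline{J^-_{n+k}})$ lies entirely on the $I^-_{n+k-1}$-side of $c_0$ and is contained in $\overline{J^-_{n+k-1}}$, except when $\alpha_{n+k}=0$, in which case it overshoots the endpoint $c_{-q_{n+k-1}}$ of $J^-_{n+k-1}$ by exactly the arc $(-q_{n+k-1},-q_{n+k-1}-q_{n+k+1})_c$, which is still well inside $J^-_{n+k-3}$; when $\sigma_{n+k}=(0,0)$ one has $g^{\sigma_{n+k}}=\Id$ and only $\overline{J^-_{n+k}}\subset\overline{J^-_{n+k-1}}$. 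I would then extract the compatibility constraints between consecutive blocks: since $g^{-\sigma_{n+k}}$ carries the covering interval of $J^-_{n+k-1}$ containing $\xi_{s_{k-1}}$ onto $I^-_{n+k}$, a nontrivial $\sigma_{n+k}$ forces $\xi_{s_k}\in I^-_{n+k}$; and because $I^-_{n+k}$ is itself a covering interval of both $J^-_{n+k}$ and $J^-_{n+k+1}$ (on the side opposite to $I^-_{n+k+1}$), a nontrivial $\sigma_{n+k}$ followed by $\sigma_{n+k+1}=(0,0)$ forces $\sigma_{n+k+2}=(0,0)$. This forbids exactly the configurations in which an overshoot created at some level fails to be reabsorbed within the next three levels.

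With these facts in hand I would assemble the four blocks, carrying the two endpoints of $\overline{J^-_{n+4}}$ through $g^{\sigma_{n+4}},g^{\sigma_{n+3}},g^{\sigma_{n+2}},g^{\sigma_{n+1}}$ in turn: each block is either the identity (preserving the relevant containment in the next $J^-$) or moves the current arc one level up onto an $I^-$-side, possibly with a $\theta_{\bullet+1}$-sized overshoot that, by the local description above and the no-consecutive-incompatible-blocks constraint, is absorbed three levels later. Collecting the extreme positions and bounding the net shift by $\theta_{m-1}=a_m\theta_m+\theta_{m+1}$, the image $g^{\Sigma^n_{n+4}(s)}(J^-_{n+4})$ has its two endpoints trapped at worst at $c_{-q_n-q_{n+5}}$ and $c_{-q_{n+1}-q_{n+4}-q_{n+5}}$, i.e. $g^{\Sigma^n_{n+4}(s)}(J^-_{n+4})\subset(-q_n-q_{n+5},-q_{n+1}-q_{n+4}-q_{n+5})_c$; and the inclusion of this arc compactly inside $J^-_n$ is immediate from $\theta_n-\theta_{n+5}<\theta_n$ and $\theta_{n+1}-\theta_{n+4}+\theta_{n+5}<\theta_{n+1}$, both consequences of the recursion together with $\theta_{n+4}>\theta_{n+5}$. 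For the last assertion, $J^-_{n+4}\Subset(q_{n+5}-q_{n+4},q_{n+4})_c$ and $(q_{n+5}-q_{n+4},q_{n+4})_c\subset J^-_n$ are elementary facts of the same kind, and $(q_{n+5}-q_{n+4},q_{n+4})_c\subset g^{-\Sigma^n_{n+4}(s)}(J^-_n)=g^{T}(J^-_n)$ is equivalent to $g^{-T}\big((q_{n+5}-q_{n+4},q_{n+4})_c\big)\subset J^-_n$, which follows from the worst-case bound above since that bound leaves a margin of size $\theta_{n+4}$ (resp.\ comparable) at each end while $(q_{n+5}-q_{n+4},q_{n+4})_c$ exceeds $J^-_{n+4}$ only by $\theta_{n+4}$ on one side and $\theta_{n+5}$ on the other.

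The step I expect to be the genuine obstacle is the second paragraph: making the overshoot analysis of the $\alpha_{n+k}=0$ blocks precise, and proving that the compatibility constraints between consecutive $\sigma_{n+k}$ forbid exactly the configurations that would let an overshoot escape $J^-_n$. Once that combinatorial mechanism is pinned down, everything else is routine arithmetic with the continued–fraction recursion.
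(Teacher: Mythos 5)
Your proposal is correct in substance, but it reaches the first containment by a different (and more laborious) route than the paper. All the ingredients you isolate do check out: each admissible $\sigma_{n+k}$ is indeed $(0,0)$ or $(\alpha_{n+k},1)$, so $g^{\Sigma^n_{n+4}(s)}$ is a single combinatorial shift $g^{-T}$ of $\overline{J^-_{n+4}}$; your one-block computation (image on the $I^-_{n+k-1}$-side of $c_0$, inside $\overline{J^-_{n+k-1}}$ except for an overshoot by $(-q_{n+k-1},-q_{n+k-1}-q_{n+k+1})_c$ when $\alpha_{n+k}=0$) is right; and so is the constraint that a nontrivial $\sigma_{n+k}$ followed by $\sigma_{n+k+1}=(0,0)$ forces $\sigma_{n+k+2}=(0,0)$. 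Running through the admissible trivial/nontrivial patterns of length four with these block contributions does yield that the shifted arc stays between $c_{-q_{n+1}-q_{n+4}-q_{n+5}}$ and $c_{-q_n-q_{n+5}}$, and your derivation of the final assertion from that bound is exactly the margin count the paper performs; so the step you flag as the obstacle does close. The paper, however, sidesteps the pattern-by-pattern overshoot bookkeeping with a global observation: the arcs $g^{\Sigma}(\overline{J^-_{n+4}})$, over all admissible addresses $\Sigma$, have pairwise disjoint interiors and tile a closed arc containing $\overline{J^-_n}$, so only the two extremal tiles can violate the stated containment; the extremal tiles are the ones whose $I^-_{n+5}$-piece is $g_n(I^-_{n+5})$, resp.\ whose $I^-_{n+4}$-piece is $g_{n+1}(I^-_{n+4})$, i.e.\ $T=q_n$ and $T=q_{n+1}$, and by uniqueness of addresses these are $((0,1),(0,0),(0,0),(0,0))$ and $((0,0),(0,1),(0,0),(0,0))$, both with $\sigma_{n+3}=\sigma_{n+4}=(0,0)$. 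That replaces your exhaustive case analysis by the inspection of two explicit addresses. One inaccuracy to fix in your write-up: $I^-_{n+k}$ is not itself a covering interval of $J^-_{n+k+1}$; what your argument actually needs is that $I^-_{n+k}\cap J^-_{n+k+1}=I^-_{n+k+2}$, the covering interval labelled $(0,0)$ at the next level, and with that correction your compatibility constraint stands.
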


\begin{proof}
For concreteness, assume that $c_{-q_n}$ and $c_{-q_{n+1}}$ are the left and right endpoints of $J^-_n$ respectively.

Consider the partition of $J^-_n$ by orbit of the arcs $I^-_{n+4}$ and $I^-_{n+5}$. It is not hard to see that the leftmost and the rightmost arcs are $g_n(I^-_{n+5})$ and  $g_{n+1}(I^-_{n+4})$ respectively, and all other arcs are contained in between these two arcs. By the uniqueness of combinatorial addresses given in \lemref{codes}, the first claim follows.

Suppose that the latter case is true. Denote
$$
g^{\Sigma^n_{n+4}(s)}(J^-_{n+4}) = (-m_-, -m_+)_c
$$
for some $m_\pm \in \bbN$. Write
$$
g^{\Sigma^n_{n+4}(s)}(J^-_n) \cap J^-_n  = I_- \sqcup (-m_-, -m_+)_c \sqcup I_+,
$$
where
$$
I_- \supset (-m_- +q_{n+5}, -m_-]_c
\matsp{and}
I_+ \supset [-m_+, -m_+ +q_{n+4} + q_{n+5})_c.
$$
Then
$$
J^-_n  \cap g^{-\Sigma^n_{n+4}(s)}(J^-_n) \supset g^{-\Sigma^n_{n+4}(s)}(I_-) \sqcup J^-_{n+4} \sqcup g^{-\Sigma^n_{n+4}(s)}(I_+),
$$
where
$$
g^{-\Sigma^n_{n+4}(s)}(I_-) \supset (-q_{n+4} +q_{n+5}, -q_{n+4}]_c
\matsp{and}
g^{-\Sigma^n_{n+4}(s)}(I_+) \supset [-q_{n+5}, q_{n+4})_c.
$$
\end{proof}

\subsection{Pulling back a puzzle annulus to $\xi_s$}

Henceforth, we extend the domain of $g_k$ from $I^-_{k-1}$ to $\pbbD$, so that we have $g_k := g^{-q_k}$.

Let $n_0 \in \bbN$ be the number given in \lemref{initial scale}. For concreteness, assume that $n_0$ is even, so that $n_0 = 2\bn_0$. For $n \geq n_0$, let $\xi_s \in J^-_n$, and assume that $\xi_s$ is not an iterated preimage of $\xi_0$. Let $\Sigma^n_{n+4}(s) = (\sigma_{n+1},\sigma_{n+2}, \sigma_{n+3}, \sigma_{n+4})$ be the $(n, 4)$th combinatorial address of $s$, and suppose that either $\sigma_{n+3}$ or $\sigma_{n+4}$ is not equal to $(0,0)$. Define
$$
\hJ_{n+4}(s) := g^{-\Sigma^n_{n+4}(s)}(J^-_n) \cap J^-_n.
$$
By \lemref{comb spread}, we have $J^-_{n+4} \Subset \hJ_{n+4}(s)$. Let $R_n(s) \geq 1$ be the number such that $g^{R_n(s)} = g^{-\Sigma^n_{n+4}(s)}$, and let $V^n(s)$ and $U^n(s)$ be the $R_n(s)$th pullback along $\pbbD$ of $D^n|_{\hJ_{n+4}(s)}$ and $D^{n+4}$ respectively.

\begin{lem}\label{shift time}
We have $R_n(s) < q_{n+5}$.
\end{lem}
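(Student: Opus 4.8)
The plan is to rewrite $g^{-\Sigma^n_{n+4}(s)}$ as a single power of $g$ and then bound the exponent by a telescoping sum of continued-fraction denominators. First recall that, by definition, $g^{\sigma_{n+j}} = g_{n+j}^{\alpha_{n+j}}\circ g_{n+j-1}^{\beta_{n+j}}$ with $g_k = g^{-q_k}$, so that $g^{-\sigma_{n+j}} = g^{\alpha_{n+j}q_{n+j}+\beta_{n+j}q_{n+j-1}}$. Since $g^{\Sigma^n_{n+4}(s)} = g^{\sigma_{n+1}(s_0)}\circ g^{\sigma_{n+2}(s_1)}\circ g^{\sigma_{n+3}(s_2)}\circ g^{\sigma_{n+4}(s_3)}$ is a composition of integer powers of $g$, which commute, inverting gives
$$
g^{-\Sigma^n_{n+4}(s)} = g^{R}, \qquad R := \sum_{j=1}^{4}\bigl(\alpha_{n+j}q_{n+j}+\beta_{n+j}q_{n+j-1}\bigr),
$$
and since $g$ has infinite order (its rotation number being irrational), $R_n(s)$ is precisely this nonnegative integer $R$.

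It then remains to estimate each summand. From $0\le\alpha_{n+j}<a_{n+j}$ and $\beta_{n+j}\in\{0,1\}$ we get $\alpha_{n+j}\le a_{n+j}-1$ and $\beta_{n+j}\le 1$, hence
$$
\alpha_{n+j}q_{n+j}+\beta_{n+j}q_{n+j-1} \le (a_{n+j}-1)q_{n+j}+q_{n+j-1} = \bigl(a_{n+j}q_{n+j}+q_{n+j-1}\bigr)-q_{n+j} = q_{n+j+1}-q_{n+j},
$$
where the last equality is the recursion $q_{m+1}=a_m q_m+q_{m-1}$ from \secref{sec:a priori} applied with $m=n+j$. Summing over $j=1,\dots,4$, the right side telescopes:
$$
R_n(s) \le \sum_{j=1}^{4}\bigl(q_{n+j+1}-q_{n+j}\bigr) = q_{n+5}-q_{n+1} < q_{n+5},
$$
which is the claim.

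I do not anticipate a genuine obstacle; the only care needed is bookkeeping. One must match the recursion index correctly against the convention $q_n=a_{n-1}q_{n-1}+q_{n-2}$ fixed in \secref{sec:a priori} (so the relevant identity is $q_{n+j+1}=a_{n+j}q_{n+j}+q_{n+j-1}$), and observe that although the pairs $\sigma_{n+j}=\sigma_{n+j}(s_{j-1})$ depend on the successive points $s_{j-1}$ produced while unwinding the address, the bounds $\alpha_{n+j}\le a_{n+j}-1$ and $\beta_{n+j}\le 1$ hold uniformly, so the estimate is unaffected by this dependence. The standing hypothesis that $\sigma_{n+3}$ or $\sigma_{n+4}$ is nontrivial plays no role in the upper bound; it only secures the complementary lower bound $R_n(s)\ge 1$ already recorded in the definition of $R_n(s)$.
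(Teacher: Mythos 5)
Your proof is correct and follows the same route as the paper: bound each contribution $\alpha_{n+j}q_{n+j}+\beta_{n+j}q_{n+j-1}$ by $q_{n+j+1}-q_{n+j}$ using $\alpha_{n+j}\le a_{n+j}-1$ together with the recursion $q_{m+1}=a_m q_m+q_{m-1}$, and then telescope to obtain $R_n(s)\le q_{n+5}-q_{n+1}<q_{n+5}$. Your version just spells out the bookkeeping (including the commuting of integer powers of $g$) in slightly more detail than the paper does.
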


\begin{proof}
For $n+1 \leq i \leq n+4$, write $\sigma_i = (\alpha_i, \beta_i)$, where $0\leq \alpha_i < a_i$ and $\beta_i \in \{0,1\}$. Recall that
$$
g_i := g^{-q_i}
\matsp{and}
g^{\sigma_i} := g_i^{\alpha_i}\circ g_{i-1}^{\beta_i}.
$$
Since
$$
\alpha_iq_i + \beta_i q_{i-1} \leq q_{i+1} - q_i,
$$
the result follows.
\end{proof}

\begin{prop}\label{shifted annuli}
We have $X_s \subset U^n(s) \Subset V^n(s) \subset D^n$.
\end{prop}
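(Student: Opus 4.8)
The plan is to identify $U^n(s)$ and $V^n(s)$ as puzzle‑disk pullbacks along $\pbbD$ and then feed them into the transformation lemmas of \secref{sec:disks}. First write $R:=R_n(s)$, so that $g^R=g^{-\Sigma^n_{n+4}(s)}$ is a genuine \emph{positive} iterate of $g$ — this is where the standing hypothesis that $\sigma_{n+3}$ or $\sigma_{n+4}$ is nontrivial is used — and $g^{-R}=g^{\Sigma^n_{n+4}(s)}$. By \lemref{shift time}, $R<q_{n+5}$. The arithmetic fact to record is a matching lower bound: since $R=\sum_{i=n+1}^{n+4}(\alpha_i q_i+\beta_i q_{i-1})$ with $\sigma_i=(\alpha_i,\beta_i)$, if $\sigma_{n+3}\neq(0,0)$ its contribution is at least $q_{n+2}$, and if $\sigma_{n+4}\neq(0,0)$ its contribution is at least $q_{n+3}$; as the remaining terms are nonnegative, in either case $R\geq q_{n+2}\geq q_{n+1}+q_n=r_n$. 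By construction $U^n(s)$ is the $R$th pullback along $\pbbD$ of the puzzle disk $D^{n+4}$ (restricted to its own full base $J^-_{n+4}$), so its base is $\gamma:=g^{-R}(J^-_{n+4})=g^{\Sigma^n_{n+4}(s)}(J^-_{n+4})$, while $V^n(s)$ is the $R$th pullback along $\pbbD$ of $D^n|_{\hJ_{n+4}(s)}$, so its base is $g^{-R}(\hJ_{n+4}(s))=g^{\Sigma^n_{n+4}(s)}(\hJ_{n+4}(s))=J^-_n\cap g^{\Sigma^n_{n+4}(s)}(J^-_n)\subset J^-_n$, using $\hJ_{n+4}(s)=g^{-\Sigma^n_{n+4}(s)}(J^-_n)\cap J^-_n$.

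With this in hand, the inclusion $X_s\subset U^n(s)$ follows from \propref{fiber in disks}: by construction $g^{-\Sigma^n_{n+4}(s)}(\xi_s)=\xi_{s_4}$ lies in the open arc $J^-_{n+4}$ (cf. \lemref{codes}), hence $\xi_s=g^{\Sigma^n_{n+4}(s)}(\xi_{s_4})\in\gamma$, the base of $U^n(s)$, and \propref{fiber in disks} applies. The inclusion $V^n(s)\subset D^n$ follows from \propref{disk pull in}, applied with $\fU=D^n$ and the combinatorial arc $\gamma_0=\hJ_{n+4}(s)\subset J^-_n$: we have just checked that the base $g^{\Sigma^n_{n+4}(s)}(\hJ_{n+4}(s))$ of $V^n(s)$ is contained in $J^-_n$, and $R\geq r_n$, so \propref{disk pull in} yields both $V^n(s)\subset D^n$ and that the full base of $V^n(s)$ lies in $J^-_n$.

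For the middle inclusion $U^n(s)\Subset V^n(s)$ I would argue downstairs and pull back. Applying \propref{nested disks} twice gives $D^{n+4}\Subset D^{n+2}\Subset D^n$, so $\overline{D^{n+4}}\subset D^n$; and by \lemref{comb spread} (nontrivial case) $\overline{J^-_{n+4}}\subset\hJ_{n+4}(s)$. Together these give $\overline{D^{n+4}}\subset D^n|_{\hJ_{n+4}(s)}$, and moreover the slitted region $(D^n\setminus\overline{D^{n+4}})|_{\hJ_{n+4}(s)}$ is a non‑degenerate topological annulus, since the two arcs of $J^-_n\setminus\hJ_{n+4}(s)$ that are excised touch only $\partial D^n$ and not $\partial D^{n+4}$. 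Now $F^R$ restricts to a branched covering of the relevant simply connected (slitted) domains, of uniformly bounded degree (by $R<q_{n+5}$, \propref{crit in disk}, and the simple connectivity of puzzle‑disk pullbacks established earlier), and $V^n(s)\setminus\overline{U^n(s)}$ is exactly the $R$th pullback of $(D^n\setminus\overline{D^{n+4}})|_{\hJ_{n+4}(s)}$ along $\pbbD$; since the pullback of a non‑degenerate annulus under such a map is non‑degenerate, we conclude $\overline{U^n(s)}\subset V^n(s)$, i.e. $U^n(s)\Subset V^n(s)$.

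The main obstacle I anticipate is not any single estimate but the bookkeeping with the slitted domains in the last step: one must verify that ``pulling back along $\pbbD$'' commutes with ``restricting to an arc'', so that $V^n(s)$ really is the slitted $R$th pullback of the full disk $D^n$ and $V^n(s)\setminus\overline{U^n(s)}$ really is the $R$th pullback of the slitted annulus above, and that the pullback map is proper onto its slitted target. Once these compatibilities are in place, the remainder — the arithmetic bound $R\geq r_n$ and the two applications of \propref{fiber in disks} and \propref{disk pull in} — is routine.
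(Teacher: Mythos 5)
Your proof is correct and follows the paper's argument: the first inclusion via \propref{fiber in disks}, the middle one by establishing $\overline{D^{n+4}}\subset D^n|_{\hJ_{n+4}(s)}$ downstairs (via \propref{nested disks} and \lemref{comb spread}) and pulling back, and the last via \propref{disk pull in}. Your explicit verification that $R_n(s)\geq q_{n+2}\geq r_n$ (using the nontriviality of $\sigma_{n+3}$ or $\sigma_{n+4}$) supplies a hypothesis of \propref{disk pull in} that the paper leaves implicit, which is a worthwhile addition.
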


\begin{proof}
The first inclusion is immediate from \propref{fiber in disks}. By \propref{nested disks} and \lemref{comb spread}, we have $D^{n+4} \Subset D^n|_{\hJ_{n+4}}$. Thus, $U^{n+4} \Subset V^{n+4}$. The last inclusion follows from \propref{disk pull in}.
\end{proof}

Define
\begin{equation}\label{eq:shifted annulus}
A^n(s) := V^n(s) \setminus \overline{U^n(s)}.
\end{equation}
By \propref{shifted annuli}, $A^n(s)$ is a non-degenerate annulus surrounding $X_s$.

\begin{prop}\label{spread annu}
There exists $\epsilon >0$ independent of $n$ such that
$$
\mod(A^n(s)) > \epsilon.
$$
\end{prop}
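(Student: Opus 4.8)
\emph{Overview.} The plan is to exhibit $A^n(s)$ as a pullback of uniformly bounded degree of the slitted annulus $A:=(D^n\setminus\overline{D^{n+4}})|_{\hJ_{n+4}(s)}$ under $F^{R_n(s)}$, and then to bound $\mod A$ from below by repeating, essentially verbatim, the argument used for \propref{thick in thin}. The two genuinely hard inputs---the uniform geometry of bounding edges near $\pbbD$ (\propref{length to circle}) and the definite modulus of the $c_0$-annuli (\thmref{lc at crit})---are already in hand, so the proof is largely bookkeeping.

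\emph{Setting up the pullback.} First I would observe, using \propref{nested disks} twice and \lemref{comb spread}, that $D^{n+4}\Subset D^{n+2}\Subset D^n$ and $J^-_{n+4}\Subset\hJ_{n+4}(s)$; since $\overline{D^{n+4}}\cap\pbbD=\overline{J^-_{n+4}}$, this gives $D^{n+4}\Subset D^n|_{\hJ_{n+4}(s)}$, so $A$ is a non-degenerate slitted annulus. Because $V^n(s)$ and $U^n(s)$ are simply connected puzzle-disk pullbacks and $A^n(s)=V^n(s)\setminus\overline{U^n(s)}$ is an annulus (\propref{shifted annuli}, \eqref{eq:shifted annulus}), the map $F^{R_n(s)}$ restricts to a branched covering of disk pairs $(V^n(s),U^n(s))\to(D^n|_{\hJ_{n+4}(s)},D^{n+4})$, with $U^n(s)$ the full preimage of $D^{n+4}$ in $V^n(s)$. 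By \lemref{shift time}, $R_n(s)<q_{n+5}$, so \propref{crit in disk} (with $m=5$) bounds the degree $d'$ of $F^{R_n(s)}|_{V^n(s)}$ uniformly, and \lemref{pullback mod} yields $\mod A^n(s)\ge (d')^{-1}\mod A$. It therefore suffices to bound $\mod A$ below by a uniform positive constant.

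\emph{Bounding the slitted annulus.} We have $\mod A=\cL(\Gamma)$ for $\Gamma:=\Gamma_A\big(\partial D^{n+4},\,\partial D^n\cup(J^-_n\setminus\hJ_{n+4}(s))\big)$. Split off the subfamily $\Gamma^\pbbD$ of paths running from $\partial D^{n+4}$ to the slit $J^-_n\setminus\hJ_{n+4}(s)$, so $\Gamma=\Gamma_A(\partial D^{n+4},\partial D^n)\cup\Gamma^\pbbD$. The first family satisfies $\cL(\Gamma_A(\partial D^{n+4},\partial D^n))\ge\mod(D^n\setminus\overline{D^{n+4}})\ge\mod A^{n+2}>c_0$ with $c_0$ the uniform constant of \thmref{lc at crit}, so its extremal width is $<1/c_0$. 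For $\Gamma^\pbbD$, the plan is to show it overflows $\Gamma^\pbbD(D^{n+4},\lambda)$ from \eqref{eq:path to circle} for a suitable uniform $\lambda>1$. By \lemref{comb spread} and \corref{hermancor} there are uniform constants $0<\delta<\lambda$ with $\lambda>1$, $J^-_{n+4}[\delta]\subset\hJ_{n+4}(s)$ and $J^-_n\subset J^-_{n+4}[\lambda]$ (the last because $J^-_n$ lies only four scales above $J^-_{n+4}$, hence is boundedly larger by \thmref{geometry bound}). Then any path in $A$ from $\partial D^{n+4}$ to a point of $J^-_n\setminus\hJ_{n+4}(s)\subset (J^-_{n+4}[\lambda]\cap J^-_n)\setminus J^-_{n+4}[\delta]$ must traverse the sector between a bounding edge $\cE_\pm(D^{n+4})$ and $J^-_{n+4}[\lambda]_\pm$, hence contains a path of $\Gamma^\pbbD(D^{n+4},\lambda)$---this is the same overflow step as in \propref{thick in thin}. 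Since $D^{n+4}$ is the $r_{n+4}$th pullback of $D^{n+3}|_{J^+_{n+4}}$ along $\pbbD$ and $r_{n+4}=q_{n+4}+q_{n+5}\le q_{n+6}$, \propref{length to circle} (with $m=3$) applies and gives $\cL(\Gamma^\pbbD(D^{n+4},\lambda))>C(3,\lambda)$, whence $\cW(\Gamma^\pbbD)\le\cW(\Gamma^\pbbD(D^{n+4},\lambda))<1/C(3,\lambda)=:\omega$ by \lemref{overflow}. (This uses $\lambda<\Lambda(n+3)$; since $\Lambda(n)\to\infty$ by \lemref{big lambda}, we may assume, enlarging $n_0$ if necessary, that $\Lambda(n+3)>\lambda$ for all $n\ge n_0$.) Then \lemref{union width} gives $\cW(\Gamma)<1/c_0+\omega$, i.e. $\mod A>(1/c_0+\omega)^{-1}$, and combining with the previous paragraph $\mod A^n(s)>\big(d'(1/c_0+\omega)\big)^{-1}=:\epsilon$, uniformly in $n$ and $s$.

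\emph{Main obstacle.} Analytically there is nothing new; the point requiring care is purely combinatorial---ensuring the shift time $R_n(s)$ is short enough for \propref{crit in disk} and \propref{length to circle} to apply (this is exactly \lemref{shift time} together with $r_{n+4}\le q_{n+6}$), and that $\hJ_{n+4}(s)$ sits between $J^-_{n+4}$ and $J^-_n$ with uniformly controlled collars so that the overflow $\Gamma^\pbbD$ over $\Gamma^\pbbD(D^{n+4},\lambda)$ and the non-degeneracy $D^{n+4}\Subset D^n|_{\hJ_{n+4}(s)}$ both hold (this is exactly \lemref{comb spread}). A minor bookkeeping point is that \propref{length to circle} needs $\lambda<\Lambda(n+3)$, which forces $n_0$ to be chosen large enough; this is harmless since $\Lambda(n)\to\infty$.
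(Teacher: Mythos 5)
Your proposal is correct and follows essentially the same route as the paper's proof: the same decomposition $\Gamma=\Gamma_A(\partial D^{n+4},\partial D^n)\cup\Gamma^\pbbD$, the same overflow of $\Gamma^\pbbD$ onto $\Gamma^\pbbD(D^{n+4},\lambda)$ via \lemref{comb spread} and \corref{hermancor}, the bound on the first family via \thmref{lc at crit}, and the final transfer by \propref{crit in disk}, \lemref{shift time} and \lemref{pullback mod}. Your version is in fact slightly more careful in checking the hypotheses of \propref{length to circle} (the value of $m$ and the constraint $\lambda<\Lambda$) and in citing $\mod A^{n+2}$ rather than $\mod A^n$ for the unslit subfamily, which matches the indexing convention $A^k=D^{k-2}\setminus\overline{D^k}$ more precisely.
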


\begin{proof}
Define
$$
A := D^n|_{\hJ_{n+4}(s)} \setminus \overline{D^{n+4}}.
$$
The modulus of $A$ is given by the extremal length of the following path family
$$
\Gamma := \Gamma_A(\partial D^{n+4}, \partial D^n \cup (J^-_n \setminus \hJ_{n+4}(s))).
$$
Let $\Gamma^\pbbD \subset \Gamma$ be the path family such that $\gamma \in \Gamma^\pbbD$ has one endpoint in $\partial D^{n+4}$ and the other endpoint in $J^-_n \setminus \hJ_{n+4}(s)$. Then
$$
\Gamma = \Gamma_A(\partial D^{n+4}, \partial D^n) \cup \Gamma^\pbbD.
$$

Let $\Lambda(n) > 0$ be the constant given in \lemref{big lambda}. By \corref{hermancor} and \lemref{comb spread}, there exist uniform constants $0 < \lambda < \Lambda(n)$ and $0 < \delta < \lambda$ such that
$$
\hJ_{n+4}(s) \subset J^-_n \Subset J^-_{n+4}[\lambda]
\matsp{and}
J^-_{n+4}[\delta] \Subset \hJ_{n+4}(s).
$$
Hence, $\Gamma^\pbbD$ overflows the path family $\Gamma^\pbbD(D^{n+4}, \lambda)$ defined in \eqref{eq:path to circle}. By \lemref{overflow} and \propref{length to circle}, there exists a uniform constant $\omega >0$ such that
$$
\cW(\Gamma^\pbbD) \leq \omega.
$$
Clearly,
$$
\cL(\Gamma_A(\partial D^{n+4}, \partial D^n)) \geq \mod A^n.
$$
\lemref{union width} implies that
$$
\cW(\Gamma) \leq \frac{1}{\mod A^n} + \omega.
$$
Since $\mod A^n$ has a uniform lower bound by \thmref{lc at crit}, we conclude that the same is true for $\cL(\Gamma)$.

The iterate $F^{R_n(s)}$ maps the nested disks $U(s) \Subset V(s)$ to $D^{n+4} \Subset D^n|_{\hJ_{n+4}(s)}$ as a branched cover. By \propref{crit in disk} and \lemref{shift time}, this happens with uniformly bounded degree. The result now follows from \lemref{pullback mod}.
\end{proof}

\subsection{Nested sequence of puzzle annuli pullbacks at $\xi_s$}

Let $\xi_s \in J^-_{n_0}$, and assume that $\xi_s$ is not an iterated preimage of $c_0$. For $n \geq n_0$, let
$$
\xi_{s_n} := g^{-\Sigma^{n_0}_n(s)}(\xi_s) \in J^-_n.
$$
Write
$$
\Sigma^n_m = (\sigma_{n+1}, \sigma_{n+2}, \ldots, \sigma_m) := \Sigma^n_m(s_n)
\matsp{for}
m > n \geq n_0.
$$
By \lemref{comb add}, this simplified notation is consistent for different values of $n$ and $m$. Let $\hn_0 \geq n_0$ be the largest even number such that $s_{\hn_0} = s_{n_0}$.

\begin{lem}\label{shift pullback times}
There exists an infinite sequence $\{n_i = 2\bn_i\}_{i=1}^\infty$ of even numbers such that
\begin{itemize}
\item $n_1 \in \{\hn_0, \hn_0 + 2\}$;
\item $n_{i+1} \geq n_i+4$ for $i \geq 1$;
\item for $k >1$, we have
$$
g^{\Sigma^{n_1}_{n_k+4}} = g^{\Sigma^{n_1}_{n_1+4}} \circ \ldots \circ g^{\Sigma^{n_k}_{n_k+4}};
\hspace{5mm} \text{and}
$$
\item for $i \geq 1$, either $\sigma_{n_i+3}$ or $\sigma_{n_i+4}$ is not equal to $(0,0)$.
\end{itemize}
\end{lem}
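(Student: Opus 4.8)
I would extract the sequence $\{n_i\}$ greedily from the infinite combinatorial address $(\sigma_{n_0+1},\sigma_{n_0+2},\dots)$ of $s$, using the concatenation rule of \lemref{comb add} and the dichotomy of \lemref{comb spread}. The first thing to record is that this address is not eventually trivial. Since $g|_{\pbbD}$ is conjugate to the irrational rotation $\rot_\rho$, the map $g^{-\Sigma^{n_0}_m(s)}$ equals the iterate $g^{T_m}$ with $T_m=\sum_{j=n_0+1}^{m}(\alpha_j q_j+\beta_j q_{j-1})\ge 0$, and $g^{T_m}=\Id$ iff $T_m=0$ iff $\sigma_j=(0,0)$ for all $n_0<j\le m$. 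Hence if $\sigma_j=(0,0)$ for every $j>N$, then $\xi_{s_m}=\xi_{s_N}$ for all $m\ge N$, so $\xi_{s_N}\in\bigcap_{m\ge N}J^-_m$; since $c_0\in J^-_m$ and $|J^-_m|\to 0$ by \thmref{geometry bound}, this forces $\xi_{s_N}=c_0$, contradicting that $\xi_s$ is not an iterated preimage of $c_0$. Thus $\cA:=\{\,j>n_0:\sigma_j\neq(0,0)\,\}$ is infinite. The same computation applied to the definition of $\hn_0$ shows $\sigma_j=(0,0)$ for $n_0<j\le\hn_0$ while $s_{\hn_0+2}\ne s_{\hn_0}$, so at least one of $\sigma_{\hn_0+1}$, $\sigma_{\hn_0+2}$ is nonzero.

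\textbf{Building the sequence.} For $n_1$, I would take $n_1:=\hn_0$ when $\sigma_{\hn_0+3}\ne(0,0)$ or $\sigma_{\hn_0+4}\ne(0,0)$, and $n_1:=\hn_0+2$ otherwise; tracking where $\xi_{s_{\hn_0}}$ falls in the successive partitions of \lemref{codes} and invoking \lemref{comb spread} should show that this $n_1\in\{\hn_0,\hn_0+2\}$ has $\sigma_{n_1+3}\ne(0,0)$ or $\sigma_{n_1+4}\ne(0,0)$. Inductively, given $n_i$ I would take $n_{i+1}$ to be the smallest even integer with $n_{i+1}\ge n_i+4$ and with $\sigma_{n_{i+1}+3}\ne(0,0)$ or $\sigma_{n_{i+1}+4}\ne(0,0)$; such an integer exists because $\cA$ is infinite. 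The second and fourth bullets of the lemma are then immediate from the construction.

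\textbf{The composition identity.} Iterating $g^{\Sigma^a_c}=g^{\Sigma^a_b}\circ g^{\Sigma^b_c}$ from \lemref{comb add} along the breakpoints $n_1+4\le n_2$, $n_2+4\le n_3$, \dots, yields
$$
g^{\Sigma^{n_1}_{n_k+4}}=g^{\Sigma^{n_1}_{n_1+4}}\circ g^{\Sigma^{n_1+4}_{n_2}}\circ g^{\Sigma^{n_2}_{n_2+4}}\circ\cdots\circ g^{\Sigma^{n_{k-1}+4}_{n_k}}\circ g^{\Sigma^{n_k}_{n_k+4}},
$$
so the third bullet is equivalent to each gap factor $g^{\Sigma^{n_i+4}_{n_{i+1}}}$ being the identity, i.e.\ to $\sigma_j=(0,0)$ for all $n_i+5\le j\le n_{i+1}$ (an empty condition when $n_{i+1}=n_i+4$). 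Minimality in the choice of $n_{i+1}$ forces $\sigma_{m+3}=\sigma_{m+4}=(0,0)$ for every even $m$ with $n_i+4\le m\le n_{i+1}-2$, hence $\sigma_j=(0,0)$ for $n_i+7\le j\le n_{i+1}+2$, and what remains is to rule out nonzero $\sigma_{n_i+5}$ or $\sigma_{n_i+6}$. For this I would exploit the arithmetic of the codes at scale $n_i$: a nonzero $\sigma_{n_i+3}$ or $\sigma_{n_i+4}$ pins down, via \lemref{codes}, the position of $\xi_{s_{n_i+4}}$ inside the nested arcs $I^-_{n_i+5}\supset I^-_{n_i+7}\supset\cdots$, and this location must be reconciled with the possibility of an isolated nonzero code at $n_i+5$ or $n_i+6$ — if necessary by refining the greedy rule so that it also synchronizes with the windows of even index $n_i+2,n_i+4,\dots$, thereby grouping all nonzero codes into windows of the correct parity.

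The point I expect to be the real obstacle is exactly this last reconciliation: the choice of $n_{i+1}$ must simultaneously respect the spacing $n_{i+1}\ge n_i+4$, capture a nonzero code in the last two slots of the window at $n_{i+1}$, and leave \emph{every} intervening code equal to $(0,0)$, and a naive greedy rule can strand a nonzero code in a gap. Settling this requires a careful case analysis of how a nonzero code $\sigma_j=(\alpha_j,1)$ propagates through the partitions of \lemref{codes}, together with the precise form of the two alternatives in \lemref{comb spread}. Once this bookkeeping is in place, the remaining verifications — and in particular the downstream use of the lemma in \propref{spread annu} — are routine.
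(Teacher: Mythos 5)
Your reduction of the third bullet to the vanishing of all codes in the gaps (via \lemref{comb add}), and your argument that the address is not eventually trivial — a point the paper only asserts — are both correct. But the forward greedy placement of the blocks does not work, and the obstacle you flag at the end is fatal rather than a matter of bookkeeping. Concretely, suppose $\sigma_{n_i+5}\neq(0,0)$ while $\sigma_j=(0,0)$ for, say, $n_i+6\le j\le n_i+20$. Any admissible next block $[n_{i+1},n_{i+1}+4]$ must satisfy $n_{i+1}\ge n_i+4$, must have every gap code $\sigma_{n_i+5},\dots,\sigma_{n_{i+1}}$ equal to $(0,0)$, and must have $\sigma_{n_{i+1}+3}$ or $\sigma_{n_{i+1}+4}$ nonzero; here the first two requirements are already incompatible for every choice of $n_{i+1}$. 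There is nothing to ``rule out'': the partitions in \lemref{codes} refine freely, so every admissible itinerary of codes is realized by some $\xi_s$, and no arithmetic at scale $n_i$ constrains $\sigma_{n_i+5}$ in terms of $\sigma_{n_i+3},\sigma_{n_i+4}$. The same defect already appears at the first step: the definition of $\hn_0$ only guarantees that $\sigma_{\hn_0+1}$ or $\sigma_{\hn_0+2}$ is nonzero, so if the next nonzero code after that is far away, neither $\hn_0$ nor $\hn_0+2$ satisfies the fourth bullet under your rule.

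The correct mechanism, and the one the paper uses, is backward rather than forward: the block containing a nonzero code $\sigma_j$ is forced to \emph{end} at $j$ or $j+1$ (whichever is even), so that $\sigma_j$ lands in the last two slots; and when this forced block overlaps the previously placed one, the previous block is discarded and its nonzero codes are absorbed into the \emph{first} two slots of the new block, which the fourth bullet does not constrain. This is exactly what the paper's recursion encodes: for each even $m$ it produces a finite sequence $\{n_i(m)\}$ with $\{n_i(m+2)\}=\{n_i(m)\}$ when $\sigma_{m+1}=\sigma_{m+2}=(0,0)$, and otherwise $\{n_i(m+2)\}$ obtained from $\{n_i(m-2)\}$ — note the jump back to $m-2$, which is the discard-and-replace step — by appending one new block. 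Because a replacement can shift the parity phase of the entire sequence (so that even $n_1(m+2)\neq n_1(m)$ is possible), these finite sequences are not nested, and the paper must finish with a pigeonhole argument selecting some $n_1\in\{\hn_0,\hn_0+2\}$ that occurs as $n_1(m)$ for infinitely many $m$, then passing to the limit along those $m$. Your greedy scheme cannot be repaired by a local case analysis; it has to be replaced by this anchor-at-the-code-and-revise-backward construction.
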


\begin{proof}
Let $m = 2\bm \geq \hn_0$ be an even number. Clearly, there exists a unique sequence of even numbers $\{n_i(m)\}_{i=1}^{k_m}$ for some $k_m \geq 1$ such that $n_1(m) \in \{\hn_0, \hn_0+2\}$, and
$$
g^{\Sigma^{n_1(m)}_m} = g^{\Sigma^{n_1(m)}_{n_1(m)+4}} \circ \ldots \circ g^{\Sigma^{n_{k_m}(m)}_{n_{k_m}(m)+4}}.
$$
If
$$
\sigma_{m+1} = \sigma_{m+2} = (0,0),
$$
then we have
$$
\{n_i(m+2)\}_{i=1}^{k_{m+2}} = \{n_i(m)\}_{i=1}^{k_m}.
$$
Otherwise,
$$
k_{m+2} = k_{m-2}+1,
$$
and
$$
\{n_i(m+2)\}_{i=1}^{k_{m+2}} = \{n_i(m-2)\}_{i=1}^{k_{m-2}} \cup \{m+2\}.
$$
Note that in the latter case, we may have $n_1(m+2) \neq n_1(m)$.

Since $\xi_s$ is not an iterated preimage of $\xi_0$, there must be infinitely many even numbers $m \geq \hn_0$ such that either $\sigma_{m+1}$ or $\sigma_{m+2}$ is not equal to $(0,0)$. It follows that for some $n_1 \in \{\hn_0, \hn_0 +2\}$, we have $n_1 = n_1(m)$ for infinitely many even numbers $m > n_1$.
\end{proof}

Let $\{n_i\}_{i=1}^\infty$ be the sequence of even numbers given in \lemref{shift pullback times}. For $i \geq 1$, let $R_{n_i} \geq 1$ be the number such that
$$
g^{R_{n_i}} = g^{-\Sigma^{n_i}_{n_i+4}} = g^{-\sigma_{n_i+4}} \circ \ldots \circ g^{-\sigma_{n_i+1}}.
$$
We also let $R_{n_0} \geq 0$ be the number such that  
$$
g^{R_{n_0}} = g^{-\Sigma^{n_0}_{n_1}} = g^{-\Sigma^{\hn_0}_{n_1}}.
$$

\begin{lem}\label{bound on spread}
Let
$$
\bfR_{n_k} = \sum_{i=0}^k R_{n_i}.
$$
Then $\bfR_{n_k} \leq q_{n_k+9}$.
\end{lem}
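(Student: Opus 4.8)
The plan is to bound each pullback time $R_{n_i}$ separately in terms of closest return times and then telescope the sum, using only the properties of the $R_{n_i}$ recorded just above and the gap condition $n_{i+1}\ge n_i+4$ from \lemref{shift pullback times}. The key observation is that each inverse branch $g^{-\sigma_j}$ is an honest iterate of $g$ whose shift is controlled by $q_j,q_{j-1}$: writing $\sigma_j=(\alpha_j,\beta_j)$ with $0\le\alpha_j<a_j$ and $\beta_j\in\{0,1\}$ and recalling $g_j=g^{-q_j}$, one has $g^{-\sigma_j}=g_{j-1}^{-\beta_j}\circ g_j^{-\alpha_j}=g^{\alpha_j q_j+\beta_j q_{j-1}}$, and since $q_{j+1}=a_j q_j+q_{j-1}$ this shift is at most $q_{j+1}-q_j$.

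Concretely I would proceed as follows. First, for $i\ge1$, since $g^{R_{n_i}}=g^{-\Sigma^{n_i}_{n_i+4}}$ is the composition of $g^{-\sigma_j}$ over $n_i<j\le n_i+4$ and $g$ has irrational rotation number (so exponents add), conclude $R_{n_i}=\sum_{j=n_i+1}^{n_i+4}(\alpha_j q_j+\beta_j q_{j-1})\le\sum_{j=n_i+1}^{n_i+4}(q_{j+1}-q_j)=q_{n_i+5}-q_{n_i+1}$. Second, apply the same computation to $g^{R_{n_0}}=g^{-\Sigma^{\hn_0}_{n_1}}$, a tuple of length $n_1-\hn_0\in\{0,2\}$, to get $R_{n_0}\le q_{n_1+1}-q_{\hn_0+1}<q_{n_1+1}$ (and $R_{n_0}=0$ when $n_1=\hn_0$). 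Third, use $n_{i+1}\ge n_i+4$, hence $q_{n_i+5}\le q_{n_{i+1}+1}$, to rewrite $R_{n_i}\le q_{n_{i+1}+1}-q_{n_i+1}$ for $1\le i\le k-1$ and telescope: $\sum_{i=1}^{k-1}R_{n_i}\le q_{n_k+1}-q_{n_1+1}$. Finally, combine with $R_{n_0}<q_{n_1+1}$ and $R_{n_k}\le q_{n_k+5}-q_{n_k+1}$ to obtain $\bfR_{n_k}<q_{n_1+1}+(q_{n_k+1}-q_{n_1+1})+(q_{n_k+5}-q_{n_k+1})=q_{n_k+5}\le q_{n_k+9}$, which is the claim (with room to spare).

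I do not expect a genuine obstacle; this is bookkeeping with the continued fraction recursion. The only points needing care are (i) correctly identifying the shift of each $g^{-\sigma_j}$ as exactly $\alpha_j q_j+\beta_j q_{j-1}\le q_{j+1}-q_j$ from the definitions of $g_j$ and $\sigma_j$, and (ii) handling the base term $R_{n_0}$, which is pinned to $n_1$ rather than $\hn_0$ but causes no trouble because $\hn_0\ge n_1-2$. The slack between the $q_{n_k+5}$ one actually gets and the stated $q_{n_k+9}$ is harmless and could equally be absorbed by a cruder summation that bounds each $q_{n_i+5}$ geometrically by $q_{n_k+5}$ using $q_{m+2}\ge 2q_m$.
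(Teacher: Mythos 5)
Your proof is correct and follows essentially the same route as the paper's: bound each $R_{n_i}$ by continued-fraction arithmetic on the shifts $\alpha_j q_j+\beta_j q_{j-1}$ of the branches $g^{-\sigma_j}$, then sum over the sparse sequence using $n_{i+1}\ge n_i+4$. Your sharper per-branch bound $\alpha_j q_j+\beta_j q_{j-1}\le q_{j+1}-q_j$ makes the sum telescope to $q_{n_k+5}$, whereas the paper uses the cruder bound $r<q_{n+2}$ for each $\sigma_n$ together with \lemref{exp time growth} to land at $q_{n_k+9}$; either suffices for the stated claim.
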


\begin{proof}
If $g^r = g^{-\sigma_n}$, then $r < q_{n+2}$. Thus,
$$
R_{n_i} < q_{n_i +3} + q_{n_i +4} + q_{n_i +5} + q_{n_i +6} = r_{n_i + 3} + r_{n_i+5}
\matsp{for}
i \geq 1.
$$

If $n_1 = \hn_0$, then $R_{n_0} = 0$. Otherwise, $n_1 = \hn_0+2$, and
$$
\Sigma^{\hn_0}_{n_1} = (\sigma_{n_1-1}, \sigma_{n_1}).
$$
In either case, we have
$$
R_{n_0} < q_{n_1+1}+q_{n_1+2} = r_{n_1+1}.
$$

Since $n_{i+1} \geq n_i +4$ for $i \geq 1$, we have
$$
\bfR_{n_k} < \bfr_{n_k +5} < q_{n_k+9}
$$
by \lemref{exp time growth} i) and iv).
\end{proof}

\begin{thm}\label{spread}
Let $\xi_s \in J^-_{n_0}$, and assume that $\xi_s$ is not an iterated preimage of $\xi_0$. Then the fiber $X_s$ rooted at $\xi_s$ is trivial.
\end{thm}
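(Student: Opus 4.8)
The plan is to construct an infinite sequence $\{\cA^i\}_{i\ge 1}$ of disjoint, nested annuli surrounding $X_s$, each of modulus at least a fixed positive constant, and then conclude by the Gr\"otzsch inequality (\lemref{grot}). The raw material is the family of annuli $A^{n_i}(s_{n_i}) = V^{n_i}(s_{n_i})\setminus\overline{U^{n_i}(s_{n_i})}$ of \eqref{eq:shifted annulus}, one for each scale $n_i$ in the sequence produced by \lemref{shift pullback times}: by \propref{spread annu} and \propref{shifted annuli}, each such annulus is non-degenerate, surrounds $X_{s_{n_i}}$, satisfies $X_{s_{n_i}}\subset U^{n_i}(s_{n_i})\Subset V^{n_i}(s_{n_i})\subset D^{n_i}$, and obeys $\mod A^{n_i}(s_{n_i})>\epsilon$ for a uniform $\epsilon>0$. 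First I would transport each of these back to a neighborhood of $\xi_s$: by \lemref{comb add} and the compositional identity in \lemref{shift pullback times}, the branch $g^{-\Sigma^{n_0}_{n_i}(s)}$ is the forward iterate $g^{\bfR_{n_{i-1}}}$ with $\bfR_{n_k}$ as in \lemref{bound on spread}, so I set $\cV^i$ and $\cU^i$ to be the $\bfR_{n_{i-1}}$th pullbacks along $\pbbD$ of $V^{n_i}(s_{n_i})$ and $U^{n_i}(s_{n_i})$, and put $\cA^i:=\cV^i\setminus\overline{\cU^i}$. Since $\bfR_{n_{i-1}}\le q_{n_{i-1}+9}\le q_{n_i+5}$ by \lemref{bound on spread}, and since the pullback time from $D^{n_i}$ to $V^{n_i}(s_{n_i})$ is $R_{n_i}(s_{n_i})<q_{n_i+5}$ by \lemref{shift time}, \propref{crit in disk} bounds $\deg\bigl(F^{\bfR_{n_{i-1}}}|_{\cV^i}\bigr)$ by a uniform $d'$; then \lemref{pullback mod} gives $\mod\cA^i\ge\epsilon/d'=:\epsilon'>0$, while \propref{fiber in disks} gives $X_s\subset\cU^i$. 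Hence each $\cA^i$ surrounds $X_s$ with modulus at least $\epsilon'$.

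The key step is to verify that the $\cA^i$ nest, that is, that $\cA^{i+1}$ lies in the inner component $\cU^i$ of $\cA^i$, and this is exactly what the bookkeeping of \lemref{shift pullback times} is arranged to furnish. Because $U^{n_i}(s_{n_i})$ is the $R_{n_i}(s_{n_i})$th pullback of $D^{n_i+4}$ along $\pbbD$ and $R_{n_i}(s_{n_i})=R_{n_i}$, the set $\cU^i$ is the $\bfR_{n_i}$th pullback of $D^{n_i+4}$ along $\pbbD$; likewise $\cV^{i+1}$ is the $\bfR_{n_i}$th pullback of $V^{n_{i+1}}(s_{n_{i+1}})$ along $\pbbD$. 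Since $n_{i+1}\ge n_i+4$, iterating \propref{disk pull in} gives $D^{n_{i+1}}\subseteq D^{n_i+4}$, so $V^{n_{i+1}}(s_{n_{i+1}})\subset D^{n_{i+1}}\subseteq D^{n_i+4}$; as the $k$th pullback along $\pbbD$ is monotone under inclusions of domains whose $\pbbD$-traces are nested arcs (the selected components then nest), we get $\cV^{i+1}\subseteq\cU^i$. Combined with $\cU^{i+1}\Subset\cV^{i+1}$ (inherited from \propref{shifted annuli} under pullback), this gives $\cA^{i+1}\subset\cU^i$, so the $\cA^i$ are disjoint and nested. Then $\sum_i\mod\cA^i\ge\sum_i\epsilon'=\infty$, and \lemref{grot} forces $X_s=\{\xi_s\}$; thus $X_s$ is trivial.

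I expect the main obstacle to be the bookkeeping of the second paragraph: showing precisely that $g^{-\Sigma^{n_0}_{n_i}(s)}=g^{\bfR_{n_{i-1}}}$, that $\cU^i$ really is the $\bfR_{n_i}$th pullback of $D^{n_i+4}$ (so that it nests with $\cV^{i+1}$), and, most delicately, that the slitting of puzzle disks along $\pbbD$, the difficulty that pervaded \secref{sec:lc at crit}, does not break either the containment $\cV^{i+1}\subseteq\cU^i$ or the uniform modulus bound; the latter is legitimate only because \propref{spread annu}, through \thmref{lc at crit}, has already paid the price of cutting those slits. A minor point: the hypothesis that $\xi_s$ is not an iterated preimage of $\xi_0$ is used only to run \lemref{shift pullback times}; in the excluded case $F^k(\xi_s)=c_0$ for some $k$, so $F^k(X_s)=X_0=\{c_0\}$ by \thmref{lc at crit}, and $X_s$, being a decreasing intersection of connected compacta, is the single point $\{\xi_s\}$.
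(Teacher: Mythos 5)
Your proposal is correct and follows essentially the same route as the paper: pull the uniformly thick annuli $A^{n_i}(s_{n_i})$ of \propref{spread annu} back to $\xi_s$ along $\pbbD$ by the return times of \lemref{shift pullback times}, control the degree via \lemref{bound on spread} and \propref{crit in disk}, apply \lemref{pullback mod}, and conclude with \lemref{grot}. Your explicit verification that the pulled-back annuli actually nest (via the monotonicity of pullbacks along $\pbbD$ and $V^{n_{i+1}}(s_{n_{i+1}})\subset D^{n_{i+1}}\subseteq D^{n_i+4}$) is a detail the paper leaves implicit, and it is carried out correctly.
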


\begin{proof}
For $i \geq 1$, denote
$$
\xi_{s_{n_i}} := g^{\bfR_{n_i}}(\xi_s) \in J^-_{n_i+4}.
$$
Consider the annulus
$$
A^{n_i}(s_{n_i}) = V^{n_i}(s_{n_i}) \setminus \overline{U^{n_i}(s_{n_i})}
$$
surrounding $X_{s_{n_i}}$ (see \eqref{eq:shifted annulus}). Let $\hV^{n_i}(s)$ and $\hU^{n_i}(s)$ be the $\bfR_{n_i}$th pullbacks of $V^{n_i}(s_{n_i})$ and $U^{n_i}(s_{n_i})$ along $\pbbD$. Then the annulus
$$
\hA^{n_i}(s) := \hV^{n_i}(s) \setminus \overline{\hU^{n_i}(s)}
$$
surrounds $X_s$. Moreover, by Proposition \ref{spread annu} and \ref{crit in disk}, and Lemma \ref{shift time} and \ref{pullback mod}, we see that $\mod(\hA^{n_i}(s))$ has uniform lower bound. The result now follows from \lemref{grot}.
\end{proof}

By combining \thmref{lc at crit} and \thmref{spread}, we obtain the following result.

\begin{cor}\label{last cor}
For $s \in \bbR/\bbZ$, the fiber $X_s$ rooted at $\xi_s \in \pbbD$ is trivial.
\end{cor}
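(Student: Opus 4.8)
The plan is to reduce the general statement to the two triviality results already established --- \thmref{lc at crit} (the fiber $X_0$ rooted at $\xi_0=c_0$ is a point) and \thmref{spread} (the fiber $X_s$ is a point whenever $\xi_s\in J^-_{n_0}$ and $\xi_s$ is not an iterated preimage of $\xi_0$) --- by transporting triviality around the grand orbit of $\pbbD$. The transport mechanism is the relation $F(X_s)=X_{s+\rho}$ together with the facts that $F$ has finite point-preimages and that the rotation $s\mapsto s+\rho$ is minimal. Two elementary observations are needed. First, $\xi_s\in X_s$ and $X_s$ is a continuum: by definition $X_s=\bigcap_n\overline{P^n(s)}$, and $\overline{P^{n+1}(s)}\subseteq\overline{P^n(s)}$ because the cutting equipotentials $\cQ^n_\pm$ recede strictly toward $\hJ_F\cup\partial\hE^0_F$ as $n$ grows, while the ray/bubble-ray skeleton $F^{-n}(\bfR_0)$ of $\cZ_n$ only refines; a decreasing intersection of nonempty continua is a nonempty continuum. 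Second, since $\rho$ is irrational the forward orbit $\{s+k\rho\bmod 1:k\geq 0\}$ is dense in $\bbR/\bbZ$, so --- $h$ being a homeomorphism carrying the arc $J^-_{n_0}$ onto an open subarc of $\pbbD$ containing $1$ --- there is $k_0\geq 0$ with $\xi_{s+k_0\rho}\in J^-_{n_0}$.

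Now fix $s\in\bbR/\bbZ$ and split into two cases. If $\xi_s$ is an iterated preimage of $\xi_0$, say $g^{k_0}(\xi_s)=\xi_0$ for some $k_0\geq 0$, then $F^{k_0}(X_s)=X_{s+k_0\rho}=X_0=\{c_0\}$ by \thmref{lc at crit}; hence $X_s\subseteq F^{-k_0}(c_0)$, a finite set, and being connected $X_s$ is a single point, which must be $\xi_s$ since $\xi_s\in X_s$. If instead $\xi_s$ is not an iterated preimage of $\xi_0$, choose $k_0$ as above with $\xi_{s+k_0\rho}\in J^-_{n_0}$; the property of being an iterated preimage of $\xi_0$ is invariant under $g$ (if $g^m(\xi_{s+k_0\rho})=\xi_0$ then $g^{m+k_0}(\xi_s)=\xi_0$), so $\xi_{s+k_0\rho}$ is not one either, and \thmref{spread} gives $X_{s+k_0\rho}=\{\xi_{s+k_0\rho}\}$. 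Then $F^{k_0}(X_s)=X_{s+k_0\rho}$ is a single point, so $X_s\subseteq F^{-k_0}(\xi_{s+k_0\rho})$ is again finite and connected, whence $X_s=\{\xi_s\}$.

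The step that genuinely requires care is the first auxiliary observation, namely that the puzzle neighborhoods $\{P^n(s)\}_n$ are nested, so that fibers are connected. This rests on the forward-invariance of $\bfR_0$ under $F$ (whence $F^{-n}(\bfR_0)\subseteq F^{-(n+1)}(\bfR_0)$, so the combinatorial skeletons of the $\cZ_n$ are monotone in $n$) together with the strict monotonicity of the levels $2^{\pm 1/d^n}$ of the cutting equipotentials $\cQ^n_\pm$; both are immediate from the construction in \secref{sec:puzzle}, since a depth-$(n{+}1)$ puzzle piece meeting $\pbbD$ neither crosses $F^{-n}(\bfR_0)$ nor escapes the region bounded by $\cQ^n_\pm$. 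Granting this, everything else is a purely formal consequence of $F(X_s)=X_{s+\rho}$, the finiteness of preimages, and the minimality of the irrational rotation, as carried out above.
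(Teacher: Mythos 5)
Your argument is correct and is exactly the intended way to deduce the corollary: the paper offers no proof beyond ``combining'' \thmref{lc at crit} and \thmref{spread}, and the transport mechanism you supply --- $F(X_s)\subseteq X_{s+\rho}$, minimality of the irrational rotation to land the forward orbit of $\xi_s$ in $J^-_{n_0}$, and finiteness of point-preimages plus connectedness of fibers to handle points in the backward orbit of $\xi_0$ --- is the standard (and essentially only) way to carry this out. The auxiliary facts you verify (forward invariance of $\bfR_0$ and the receding equipotentials, hence nestedness of the $\overline{P^n(s)}$ and connectedness of $X_s$) are correct and are implicitly assumed throughout the paper.
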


Theorem A stated in \secref{sec:intro} now follows from \corref{blaschke instead}, \propref{fiber to lc} and \corref{last cor}.

\section{Nonexistence of Bounded Type Siegel Hedgehogs}

Let $f : \hbbC \to \hbbC$ be a polynomial of degree $d \geq 2$ that has a Siegel disk $\Delta_f$ containing a Siegel fixed point $0$ of bounded rotation number $\rho$. Let $\hDelta_f$ be a Siegel continuum containing $\overline{\Delta_f}$. Recall that this means $\hDelta_f$ is a compact connected set that maps bijectively into itself by $f$. To prove Theorem B stated in \secref{sec:intro}, we need to show that $\hDelta_f = \overline{\Delta_f}$.

By \thmref{blaschke model}, there exists a Blaschke product model $F \in \cH^d_\rho$, the modified Blaschke product $\tiF$ obtained from $F$, and a quasiconformal map $\eta : \hbbC \to \hbbC$ such that
$$
f = \eta \circ \tiF \circ \eta^{-1}.
$$
Denote
$$
\hDelta_f^\partial := \hDelta_f \setminus \Delta_f
\matsp{and}
\chDelta_f^\partial := \eta^{-1}(\hDelta_f^\partial).
$$
Since $\eta$ maps $\overline{\bbD}$ homeomorphically to $\overline{\Delta_f}$, it suffices to show that $\chDelta_f^\partial \subset \partial \bbD$.

Consider the puzzle neighborhood $\bfP^n$ of $\pbbD$ of depth $n \geq 0$ defined in \eqref{eq:puzzle nbh}. By \corref{last cor}, we have
\begin{equation}\label{eq:circle fiber}
\bigcap_{n=0}^\infty \bfP^n = \bigcup_{s \in \bbR/\bbZ} X_s = \pbbD.
\end{equation}
Hence, there exists $N > 0$ such that for $n \geq N$, the set $\overline{\bfP^n}$ does not contain any fixed points of $F$.

The boundary of $\bfP^n$ is the union of arcs in iterated preimages of $\pbbD$, iterated preimages of fixed points (the landing points of bubble rays in the puzzle partition), and external rays. If $\chDelta_f^\partial$ crossed the boundary of $\bfP^n$, then this would contradict the fact that $\chDelta_f^\partial$ maps bijectively into itself. Hence, $\chDelta_f^\partial \subset \bfP^n$ for all $n \geq N$. Theorem B now follows from \eqref{eq:circle fiber}.


\end{document}